\documentclass[11pt,a4paper]{amsart}
\usepackage[T1]{fontenc}
\usepackage[utf8]{inputenc}
\usepackage{lmodern}
\usepackage{mathtools,amssymb,mathrsfs}
\usepackage[margin=27mm]{geometry}
\usepackage{microtype}
\usepackage{longtable,booktabs,array,calc,multirow}
\usepackage{graphicx,xcolor,enumitem,needspace}
\usepackage[colorlinks=true,linkcolor=blue,citecolor=blue,urlcolor=blue,bookmarksnumbered]{hyperref}
\usepackage{bookmark}
\usepackage{xurl}
\setlist{topsep=0.4em,itemsep=0.2em,parsep=0pt}
\allowdisplaybreaks[2]
\providecommand{\sslash}{\mathbin{/\mkern-6mu/}}

\theoremstyle{plain}
\newtheorem{theorem}{Theorem}[section]
\newtheorem{proposition}[theorem]{Proposition}
\newtheorem{lemma}[theorem]{Lemma}
\newtheorem{corollary}[theorem]{Corollary}
\theoremstyle{definition}

\newtheorem{example}[theorem]{Example}
\numberwithin{equation}{section}
\hypersetup{
 pdftitle={Oka manifolds are ubiquitous},
 pdfsubject={Oka manifolds and applications in complex and algebraic geometry},
 pdfauthor={Sicheng An, Yun-Heng Du, Bin Guo, Peng-Chao Wang, Song-Yan Xie},
 pdfkeywords={Oka manifolds, moduli of bundles, holomorphic mapping spaces, character varieties, abelian fibrations, compactified Jacobians}
}
\title[Oka manifolds are ubiquitous]{Oka manifolds are ubiquitous}
\author{Sicheng An}
\address[Sicheng An]{School of Mathematical Sciences, University of Chinese Academy of Sciences, Beijing 100049, China}
\email{ansicheng23@mails.ucas.ac.cn}

\author{Yun-Heng Du}
\address[Yun-Heng Du]{Academy of Mathematics and Systems Science, Chinese Academy of Sciences, Beijing 100190, China}
\email{duyunheng22@mails.ucas.ac.cn}

\author{Bin Guo}
\address[Bin Guo]{Academy of Mathematics and Systems Science, Chinese Academy of Sciences, Beijing 100190, China}
\email{guobin@amss.ac.cn}

\author{Peng-Chao Wang}
\address[Peng-Chao Wang]{Academy of Mathematics and Systems Science, Chinese Academy of Sciences, Beijing 100190, China}
\email{wangpengchao26@mails.ucas.ac.cn}

\author{Song-Yan Xie}
\address[Song-Yan Xie]{State Key Laboratory of Mathematical Sciences, Academy of Mathematics and Systems Science, Chinese Academy of Sciences, Beijing 100190, China; School of Mathematical Sciences, University of Chinese Academy of Sciences, Beijing 100049, China}
\email{xiesongyan@amss.ac.cn}

\subjclass[2020]{32Q56, 14D20, 14H60, 14J27, 14C05}
\keywords{Oka manifolds, moduli of bundles, holomorphic mapping spaces, character varieties, abelian fibrations, compactified Jacobians}
\date{}
\ifdefined\DeclareUnicodeCharacter
\DeclareUnicodeCharacter{03B4}{\ensuremath{\delta}}
\DeclareUnicodeCharacter{03BB}{\ensuremath{\lambda}}
\DeclareUnicodeCharacter{03BC}{\ensuremath{\mu}}
\DeclareUnicodeCharacter{03C0}{\ensuremath{\pi}}
\DeclareUnicodeCharacter{03C7}{\ensuremath{\chi}}
\DeclareUnicodeCharacter{03B1}{\ensuremath{\alpha}}
\DeclareUnicodeCharacter{03B2}{\ensuremath{\beta}}
\DeclareUnicodeCharacter{03B3}{\ensuremath{\gamma}}
\DeclareUnicodeCharacter{03B8}{\ensuremath{\theta}}
\DeclareUnicodeCharacter{03BA}{\ensuremath{\kappa}}
\DeclareUnicodeCharacter{03C1}{\ensuremath{\rho}}
\DeclareUnicodeCharacter{03C3}{\ensuremath{\sigma}}
\DeclareUnicodeCharacter{03C4}{\ensuremath{\tau}}
\DeclareUnicodeCharacter{03C6}{\ensuremath{\phi}}
\DeclareUnicodeCharacter{03C9}{\ensuremath{\omega}}
\DeclareUnicodeCharacter{0394}{\ensuremath{\Delta}}
\DeclareUnicodeCharacter{03A3}{\ensuremath{\Sigma}}
\DeclareUnicodeCharacter{0398}{\ensuremath{\Theta}}
\DeclareUnicodeCharacter{039B}{\ensuremath{\Lambda}}
\DeclareUnicodeCharacter{03A6}{\ensuremath{\Phi}}
\DeclareUnicodeCharacter{2208}{\ensuremath{\in}}
\DeclareUnicodeCharacter{2209}{\ensuremath{\notin}}
\DeclareUnicodeCharacter{2282}{\ensuremath{\subset}}
\DeclareUnicodeCharacter{2286}{\ensuremath{\subseteq}}
\DeclareUnicodeCharacter{2295}{\ensuremath{\oplus}}
\DeclareUnicodeCharacter{2297}{\ensuremath{\otimes}}
\DeclareUnicodeCharacter{2245}{\ensuremath{\cong}}
\DeclareUnicodeCharacter{2243}{\ensuremath{\simeq}}
\DeclareUnicodeCharacter{2265}{\ensuremath{\geq}}
\DeclareUnicodeCharacter{2264}{\ensuremath{\leq}}
\DeclareUnicodeCharacter{2260}{\ensuremath{\ne}}
\DeclareUnicodeCharacter{2192}{\ensuremath{\to}}
\DeclareUnicodeCharacter{2190}{\ensuremath{\leftarrow}}
\DeclareUnicodeCharacter{2194}{\ensuremath{\leftrightarrow}}
\DeclareUnicodeCharacter{21D2}{\ensuremath{\Rightarrow}}
\DeclareUnicodeCharacter{21D4}{\ensuremath{\Longleftrightarrow}}
\DeclareUnicodeCharacter{00D7}{\ensuremath{\times}}
\DeclareUnicodeCharacter{2205}{\ensuremath{\varnothing}}
\DeclareUnicodeCharacter{221E}{\ensuremath{\infty}}
\DeclareUnicodeCharacter{2202}{\ensuremath{\partial}}
\DeclareUnicodeCharacter{2211}{\ensuremath{\sum}}
\DeclareUnicodeCharacter{2102}{\ensuremath{\mathbb C}}
\DeclareUnicodeCharacter{2124}{\ensuremath{\mathbb Z}}
\DeclareUnicodeCharacter{2119}{\ensuremath{\mathbb P}}
\DeclareUnicodeCharacter{211D}{\ensuremath{\mathbb R}}
\DeclareUnicodeCharacter{2212}{\ensuremath{-}}
\DeclareUnicodeCharacter{2217}{\ensuremath{*}}
\DeclareUnicodeCharacter{2032}{\ensuremath{\prime}}
\DeclareUnicodeCharacter{2082}{\ensuremath{_2}}
\DeclareUnicodeCharacter{2081}{\ensuremath{_1}}
\DeclareUnicodeCharacter{2080}{\ensuremath{_0}}
\fi

\begin{document}
\begin{abstract}
We prove that a holomorphic family of connected complex Lie groups has
Oka total space if and only if its base is Oka. Extension presentations
and generating sections give Oka results for stable and parabolic
bundle moduli and for holomorphic mapping and embedding spaces.
Every nonempty degree component of maps from a compact Oka curve to
projective space is Oka; rational-function spaces are also
holomorphically elliptic. For a vector bundle on a smooth projective curve,
the Oka property of its nested Quot scheme is equivalent to that of
the symmetric powers determined by the length increments. Complete
Hamiltonian flows give holomorphic ellipticity of irreducible
\(\mathrm{SL}_2\)- and \(\mathrm{GL}_2\)-character and representation
loci, for closed surfaces of genus at least two and for surfaces with
fixed regular semisimple boundary classes. The Riemann--Hilbert
correspondence yields Oka moduli spaces of nonresonant parabolic
connections for arbitrary parabolic weights, and holomorphic
ellipticity when reducible monodromy is absent. A proper holomorphic fibration over a Riemann surface, with a
section and connected complex tori as smooth general fibres, has Oka
total space exactly when its base is Oka. For a Schoen fibre product
\(X\) of rational elliptic surfaces with disjoint singular values, we
construct an entire local biholomorphism \(\mathbb C^3\to X\) whose
image contains a dense Zariski-open subset. Further applications treat
Hilbert schemes, compactified relative Jacobians, and logarithmic
complements.
\end{abstract}
\maketitle
\setcounter{tocdepth}{1}
\tableofcontents

\section{Introduction}\label{sec:introduction}

Holomorphic families and moduli spaces provide a natural setting for
the Oka property. Their points represent geometric objects, while
deformations of those objects give holomorphic maps into the parameter
space. In this paper we use relative exponentials, extension spaces,
generating sections, and Hamiltonian flows to construct enough entire
maps to prove that these spaces are Oka. We also study how the Oka
property behaves when a family acquires singular fibres.

A complex manifold \(Y\) is \emph{Oka} if, for every integer \(m\ge1\),
every compact convex set \(K\subset\mathbb C^m\), and every holomorphic
map from a neighbourhood of \(K\) to \(Y\), uniform approximation on
\(K\) by holomorphic maps \(\mathbb C^m\to Y\) is possible. This
convex approximation property is equivalent to the usual Oka principle
with approximation and interpolation \cite{F06}. The theory originates
in the work of Oka and Grauert and was developed through Gromov's
theory of sprays \cite{Oka39,Grauert58,Gromov89}; see \cite{F17} for
background.

Our analytic input is the work of An, Du, Guo, Wang, and Xie
\cite{DW}. A manifold \(Y\) is \emph{strongly dominable} if every
\(y\in Y\) is the image of \(0\) under a holomorphic map
\(F:\mathbb C^{\dim Y}\to Y\) with \(dF_0\) invertible.
Strong dominability is equivalent to the Oka property
\cite[Theorem 1.4]{DW}. A complex manifold with a dense Zariski-open Oka
subset is Oka \cite[Theorem 1.9]{DW}, and a surjective holomorphic map
of connected complex manifolds carries the Oka property from its source
to its target \cite[Theorem 7.5]{DW}. We construct entire maps through
prescribed points, descend from Oka parameter spaces along surjective
classifying maps, and pass from dense Oka open loci to their smooth
ambient manifolds.

Some constructions produce a dominating holomorphic spray: a holomorphic
map \(s:E\to Y\) from a holomorphic vector bundle over \(Y\), with
\(s(0_y)=y\) and surjective fibre differential
\(d(s|_{E_y})_{0_y}:E_y\to T_yY\) at every \(y\in Y\).
A manifold admitting such a spray is \emph{holomorphically elliptic}
and is Oka by Gromov's theorem \cite{Gromov89}.
Every projective Oka manifold is holomorphically elliptic by
Forstneri\v{c}--L\'arusson \cite[Theorem 1.1]{FL2026}.

\subsection*{Holomorphic group families}
A holomorphic family of complex Lie groups is a holomorphic submersion
equipped with holomorphic multiplication, inversion, and an identity
section, inducing a complex Lie group on every fibre. Our first result
relates the total space to the base.

\begin{theorem}\label{prop:connected-group-family}
Let \(p:G\to B\) be a holomorphic family of connected complex Lie groups.
Then \(G\) is strongly dominable if and only if \(B\) is strongly dominable.
Consequently,
\[
G\text{ is Oka}\quad\Longleftrightarrow\quad B\text{ is Oka}.
\]
\end{theorem}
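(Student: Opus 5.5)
Both directions will be proved at the level of strong dominability; the Oka equivalence then follows from \cite[Theorem 1.4]{DW}. The direction \(G\Rightarrow B\) is straightforward: \(p\) is a surjective holomorphic submersion. It is surjective because every fibre contains the identity \(e(b)\). If \(B\) is disconnected we work componentwise, since each fibre is connected and so components of \(G\) map onto components of \(B\). The descent result \cite[Theorem 7.5]{DW} then carries the Oka property from \(G\) to \(B\). For strong dominability itself we can argue directly. Let \(F:\mathbb C^{\dim G}\to G\) have \(F(0)=g\) and \(dF_0\) invertible. Then \(dp_g\circ dF_0\) is surjective onto \(T_{p(g)}B\), so we may restrict \(p\circ F\) to a suitable linear subspace \(\mathbb C^{\dim B}\subset\mathbb C^{\dim G}\) and obtain a strongly dominating map at \(p(g)\).

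For \(B\Rightarrow G\) we use the relative exponential. The Lie algebras of the fibres form the holomorphic vector bundle \(\mathfrak g:=e^*T_{G/B}\to B\), where \(e\) is the identity section and \(T_{G/B}=\ker dp\). The fibrewise exponential \(\exp:\mathfrak g\to G\) is holomorphic: it is the time-one flow of the left-invariant vertical vector fields, and it depends holomorphically on the base because the group law is holomorphic. Fix \(g\in G\) with \(b=p(g)\), and choose \(f:\mathbb C^n\to B\) with \(n=\dim B\), \(f(0)=b\), and \(df_0\) invertible. Write \(k\) for the fibre dimension. The pullback \(f^*\mathfrak g\) is a holomorphic vector bundle over \(\mathbb C^n\), hence holomorphically trivial by Oka--Grauert. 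Choose a frame \(v_1,\dots,v_k\). Each fibre is connected, so \(\exp\) generates it, and we can write \(g=\exp(X_1)\cdots\exp(X_N)\) with \(X_j\in\mathfrak g_b\). We then need holomorphic sections \(\sigma:\mathbb C^n\to f^*G\) with \(\sigma(0)=g\). Trivializing \(f^*G\) over \(\mathbb C^n\) is not needed, because \(f^*\mathfrak g\) is trivial: the map \(z\mapsto\prod_j\exp(\sum_i c_{ji}v_i(z))\), with constants \(c_{ji}\) chosen so that the value at \(z=0\) is \(g\), is such a section.

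Define \(\Phi:\mathbb C^n\times\mathbb C^k\to G\) by
\[
\Phi(z,w)=\sigma(z)\cdot\exp\Bigl(\sum_i w_i v_i(z)\Bigr),
\]
where the product is taken in the fibre over \(f(z)\). Then \(\Phi(0,0)=g\). The partial differential in \(w\) at the origin is left translation by \(g\) applied to the frame, so it is an isomorphism onto the vertical space \(T_g(G_b)\). The partial differential in \(z\) composed with \(dp\) equals \(df_0\), which is invertible. Together these show that \(d\Phi_{(0,0)}\) is invertible, so \(G\) is strongly dominable.

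The main obstacle is the analytic justification of the relative exponential. We need completeness of the left-invariant vertical fields on each fibre, which holds because they are left-invariant on a Lie group. We also need holomorphic dependence on \(b\), which follows from the holomorphic dependence of flows of a holomorphic vertical vector field on \(\mathfrak g\times_B G\) whose fibrewise flows are complete. A further subtle point is that Oka--Grauert triviality of \(f^*\mathfrak g\) is used on the Stein manifold \(\mathbb C^n\). If one prefers to avoid it, it suffices to have sections near \(0\) extended globally. Triviality of topological bundles on contractible \(\mathbb C^n\) plus Grauert's theorem gives the trivialization cleanly, so this step should go through.
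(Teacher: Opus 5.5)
Your proposal is correct and follows essentially the same route as the paper. The paper also builds a relative exponential from the left-invariant vertical fields and takes an Oka--Grauert frame of \(f^*\mathfrak g\) along a dominating map \(f:\mathbb C^n\to B\); it lifts through the prescribed point by a product of exponentials with constant coefficients, and uses \(\sigma(z)\exp\bigl(\sum_i w_iv_i(z)\bigr)\) with its block-triangular differential. For the converse, it likewise restricts \(p\circ F\) to a linear complement of the kernel.
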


The relative exponential supplies the vertical directions. After
pullback along a dominating map from Euclidean space, the relative Lie
algebra is holomorphically trivial. Products of exponentials then give
a lift through any prescribed point of the total space. Together with
the map to the base, this yields a dominating entire map. The proof is
given in Section~\ref{sec:background}.

\subsection*{Moduli spaces and holomorphic maps}
For vector bundles on a curve, affine extension spaces play the role
of the Lie algebra. We show that the nonstable parameters have
codimension at least two in the ranges below. Their complement is Oka,
and holomorphic descent gives the Oka property of the coarse moduli
space. Parabolic flags add Schubert conditions whose codimension
compensates for the destabilizing weight contribution.

Let \(C\) be a smooth projective connected curve of genus \(g\ge2\),
let \(r\ge2\), and fix \(\xi\in\operatorname{Pic}^{d}(C)\). Write
\[
M_C(r,\xi)=SU_C^s(r,\xi)
\]
for the coarse moduli space of stable rank-\(r\) vector bundles with
determinant \(\xi\); \(SU_C(r,\xi)\) denotes the semistable moduli space.
The corresponding variable-determinant stable space is \(U_C^s(r,d)\).
For distinct marked points
\(\mathbf p=(p_1,\ldots,p_m)\), prescribed flag types \(\mathbf n\),
and real weights
\(0\le\alpha_{i1}<\cdots<\alpha_{it_i}<1\), write
\(\mathcal P^s(C,\mathbf p;r,\xi,\mathbf n,\boldsymbol\alpha)\)
for the corresponding stable parabolic moduli space.

\begin{theorem}\label{thm:A1}
If
\[
\gcd(r,d)=1
\quad\text{or}\quad
(r,g)\ne(2,2),
\]
then \(M_C(r,\xi)\) is Oka. The variable-determinant moduli space
\(U_C^s(r,d)\) is Oka in the same range. If \(\gcd(r,d)=1\), both
\(SU_C(r,\xi)\) and \(U_C^s(r,d)\) are smooth projective Oka
manifolds and are holomorphically elliptic.
\end{theorem}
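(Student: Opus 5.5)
The plan follows the strategy sketched in the introduction: build a connected complex manifold that is Oka and carries a surjective holomorphic classifying map onto \(M_C(r,\xi)\), then descend the Oka property by \cite[Theorem 7.5]{DW}. The parameter space is built inductively from extensions. Fix a line bundle \(L\) of large degree and consider extensions
\[
0\to \mathcal O_C^{\,r-1}\to E\to \xi\to 0,
\]
or, to keep every step smooth, iterated extensions of line bundles. The extension classes form the vector space \(V=H^1(C,\xi^{-1})^{\oplus(r-1)}\), which is \(\mathbb C^N\) and hence Oka. Every stable bundle of determinant \(\xi\) arises this way after twisting, because a generically generated stable bundle of large degree has \(r-1\) sections spanning a trivial subbundle; the twist is undone by tensoring with a fixed line bundle, which is an isomorphism of moduli spaces. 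Let \(V^s\subset V\) be the open set of classes whose middle term is stable. The classifying map \(V^s\to M_C(r,\xi)\) is holomorphic because the universal extension is a holomorphic family and the coarse space corepresents families of stable bundles. It is surjective by the choice of degree.

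The main obstacle is to show that \(V\setminus V^s\) has codimension at least two in \(V\). Once that holds, \(V^s\) is the complement of an analytic subset of codimension \(\ge2\) in \(\mathbb C^N\). I would then use the fact that \(V^s\) is Zariski open and dense in affine space, and apply \cite[Theorem 1.9]{DW} in the direction that a manifold containing a dense Zariski-open Oka subset is Oka. That direction alone does not make \(V^s\) Oka, so I would argue directly instead. Through each point of \(V^s\) I would pass affine lines, or more generally affine maps \(\mathbb C^N\to V\), that avoid the codimension-\(\ge2\) locus, giving strong dominability of \(V^s\). If that is too delicate, the fallback is to replace \(V^s\) by the stable locus inside a projective-bundle or Grassmannian variant, or to use the Zariski-open part of \(M_C\) directly with \cite[Theorem 1.9]{DW}.

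The codimension estimate is a standard dimension count over destabilizing subbundles \(F\subset E\) of rank \(k\) and degree \(e\) with \(e/k\ge d/r\). The locus of such extensions is bounded by
\[
\dim\{F\}+\dim\operatorname{Ext}^1(E/F,F)-\dim\operatorname{Hom}\text{-corrections},
\]
and its deficit from \(\dim V\) is roughly \(k(r-k)(g-1)+(\text{slope gap})\). This deficit is \(\ge2\) except when \(r=2\), \(g=2\) and the slopes are equal, which is exactly the excluded case \((r,g)=(2,2)\) with \(d\) even. When \(\gcd(r,d)=1\), strictly semistable bundles do not occur, so equality of slopes never happens and the bound holds for every genus \(g\ge2\). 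I expect this careful count, including the contribution of \(h^0\) jumps, to be the hardest step.

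For \(U_C^s(r,d)\), I would use the surjective holomorphic map \(M_C(r,\xi)\times \operatorname{Pic}^0(C)\to U_C^s(r,d)\), \((E,L)\mapsto E\otimes L\). Its source is Oka because products of Oka manifolds are Oka and complex tori are Oka, so \cite[Theorem 7.5]{DW} gives the Oka property of \(U_C^s(r,d)\). When \(\gcd(r,d)=1\), stability equals semistability, so \(SU_C(r,\xi)=M_C(r,\xi)\) is a smooth projective variety, and \(U_C^s(r,d)\) is smooth projective as well. Holomorphic ellipticity then follows from Forstneri\v{c}--L\'arusson \cite[Theorem 1.1]{FL2026}.
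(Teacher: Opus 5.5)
Your outline matches the paper's: a twisted extension space \(V\cong\mathbb C^N\), a stable locus \(V^s\) with complement of codimension at least two, surjective descent by \cite[Theorem 7.5]{DW}, twisting by \(\operatorname{Pic}^0(C)\) for variable determinant, and \cite[Theorem 1.1]{FL2026} in the coprime case. Using surjective descent directly for variable determinant is fine; the paper instead notes that the twisting map is finite \'etale and applies covering descent.

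The main gap is the codimension step. The number \(k(r-k)(g-1)+A\), with \(A=r\deg F-k\deg E\ge0\) the integral slope excess, is a codimension in the determinant-rigidified moduli stack \(\mathcal B_{r,M}\), not in \(V\). It transfers to \(V\) only where the classifying map \(V\to\mathcal B_{r,M}\) is smooth. Smoothness requires lifting the \(r-1\) sections \(\mathcal O_C^{r-1}\to E_v\) through square-zero deformations, that is, \(H^1(E_v)=0\). Where \(h^0(E_v)\) jumps, the fibres of \(V\to\mathcal B_{r,M}\) jump in dimension. Such jumps happen exactly when \(E_v\) maps nontrivially to \(K_C\), i.e.\ has a rank-one quotient of degree at most \(2g-2\), so they occur among unstable bundles. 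Your count therefore bounds nothing in \(V\) by itself.

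The missing idea is to twist so that the determinant \(M\) has degree \(e\) with \(e-rg\ge2\) and \(e/r>2g-1\), and to discard the locus \(B_1\) of \(v\) with \(H^1(E_v(-p))\ne0\) for some \(p\in C\). By Serre duality, such a failure is witnessed by a nonzero \(\alpha\in H^0(K_C)^{r-1}\). For fixed \((p,[\alpha])\), the condition is linear in \(v\) of codimension \(h^0(M(-p))=e-g\), because multiplication by a nonzero \(\alpha_i\) is injective. The witnesses form a family of dimension \((r-1)g\), so \(\operatorname{codim}_VB_1\ge e-rg\ge2\). On \(V\setminus B_1\) every \(E_v\) is globally generated with \(H^1(E_v)=0\), the classifying map is smooth, and your stack bound pulls back. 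The bound \(e/r>2g-1\) also puts every stable \(E_v\) in \(V\setminus B_1\).

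Your argument that \(V^s\) is Oka also fails as written. An affine map \(\mathbb C^N\to V\) with invertible differential is an affine isomorphism onto \(V\), so it cannot avoid a nonempty bad set. A single line through a point supplies only one direction. What you need is Gromov's theorem that the complement of a Zariski-closed subset of codimension at least two in \(\mathbb C^N\) is (algebraically) elliptic; the paper cites it as \cite[\S6.4]{F17}. Its proof is the correct form of your idea. For a generic direction \(w\), the projection of the bad set along \(w\) is a proper subvariety of \(V/\mathbb C w\). A polynomial \(h\) vanishing on it gives a complete shear field \((h\circ\pi_w)\,w\) whose flow preserves \(V^s\). Finitely many such flows compose to a dominating spray. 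Zariski-closedness is essential here: analytic codimension two alone does not suffice.

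Finally, surjectivity needs the twisted stable bundle to be globally generated, not just generically generated, with \(H^1=0\). Then a general \((r-1)\)-tuple of sections is everywhere independent: rank-drop matrices have codimension two, and moving the point along \(C\) adds only one parameter.
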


In the coprime case, \(SU_C(r,\xi)\) is a smooth Fano manifold by
Dr\'ezet--Narasimhan's canonical-bundle calculation
\cite[Theorem F]{DN1989}, so its Oka property also follows from the
theorem on rationally connected manifolds \cite[Theorem 1.10]{DW}. The extension construction
in Section~\ref{sec:bundle-moduli} treats this case together with the
noncoprime stable loci in Theorem~\ref{thm:A1}. Its codimension estimate
has a rank--genus contribution of at least \((r-1)(g-1)\), to which
coprimality adds a positive integer. For parabolic bundles, a nontrivial
flag supplies a positive excess of Schubert codimension over its weight
contribution.

\begin{theorem}\label{thm:A2}
Suppose \(g(C)\ge2\), \(r\ge2\), and at least one prescribed parabolic
flag is nontrivial. For every degree, determinant, and real weight
system as above,
\[
\mathcal P^s(C,\mathbf p;r,\xi,\mathbf n,\boldsymbol\alpha)
\]
is Oka, as is the corresponding variable-determinant stable moduli space.
Whenever semistability equals stability, the full parabolic
moduli spaces with fixed and variable determinant are smooth
projective Oka manifolds and are holomorphically elliptic; this
includes full flags with generic weights.
\end{theorem}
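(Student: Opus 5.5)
We follow the extension strategy outlined above for Theorem~\ref{thm:A1}, adapted to parabolic structures. Fix a nontrivial flag, at the point \(p_1\) say. Choose integers \(d_1+d_2=d\) and ranks \(r_1+r_2=r\), and consider the family of parabolic bundles obtained as extensions
\[
0\to E_1\to E\to E_2\to0
\]
with \(E_1\), \(E_2\) varying in Oka parameter spaces: products of Jacobians, symmetric powers of \(C\) when the \(E_i\) are line bundles, or tori. The flags vary in products of partial flag varieties \(\prod_i \mathrm{Fl}(\mathbf n_i)\), which are rational homogeneous and hence Oka. The extension classes \(\operatorname{Ext}^1(E_2,E_1)\) form a vector bundle over a Zariski-open subset of the base, and the determinant condition cuts out a torsor under a connected abelian group. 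By Theorem~\ref{prop:connected-group-family}, applied to the relevant family of connected Lie groups (the Jacobian acting, or the affine extension bundle viewed as a family of vector groups), the total parameter space \(\mathcal T\) is Oka. We choose the degrees so that every stable parabolic bundle with the given data is such an extension for generic parameters, e.g.\ via a maximal subbundle or elementary modification argument. Then the classifying map \(\Phi:\mathcal T^{s}\to\mathcal P^s\) from the stable locus is surjective.

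The key step is to show that the complement \(\mathcal T\setminus\mathcal T^{s}\) has codimension at least two. Granting this, \(\mathcal T^s\) is Oka: Oka-ness survives removal of a codimension \(\ge2\) analytic subset of an affine bundle over an Oka base, via strong dominability and general position of entire maps, or alternatively via \cite[Theorem 1.9]{DW} applied to a Zariski-open Oka subset. Then \cite[Theorem 7.5]{DW} descends the Oka property along \(\Phi\) to \(\mathcal P^s\). The variable-determinant space is treated the same way with the determinant left free, or as a quotient of \(\mathcal P^s\times\operatorname{Pic}^0(C)\) by the finite group \(J[r]\), which is a surjective image of an Oka manifold.

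The codimension estimate is the main obstacle. A destabilizing parabolic subbundle \(F\subset E\) of rank \(k\) satisfies
\[
\operatorname{pardeg}F/k\ge\operatorname{pardeg}E/r.
\]
We bound the dimension of the locus of such \(E\) by counting the choices of \(F\), of \(E/F\), of the extension class, and of the flags, and compare this with \(\dim\mathcal T\). The non-flag contribution gives the rank--genus term of at least \((r-1)(g-1)\ge1\). The weight shifts \(\alpha_{ij}\) can make the degree inequality easier to satisfy by less than one unit per marked point. We compensate with the Schubert codimension: requiring \(F_{p}\) to meet the flag in a prescribed position imposes at least one condition for each step the weights exploit. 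At a nontrivial flag, the excess of Schubert codimension over weight gain is strictly positive. Combined with the \((r-1)(g-1)\) term, this makes the total codimension at least two even in the delicate case \((r,g)=(2,2)\), where the non-parabolic count alone gives only one. I would first verify this for \(r=2\) and a single nontrivial flag, by explicitly comparing \(\dim\operatorname{Ext}^1\) with \(\dim\mathrm{Fl}\). I would then do the general case by induction over Harder--Narasimhan-type strata, summing the weight contributions point by point.

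Finally, when semistability equals stability, the moduli space is smooth and projective (Mehta--Seshadri, Boden--Hu), and it is Oka by the above. By Forstneri\v{c}--L\'arusson \cite[Theorem 1.1]{FL2026} it is then holomorphically elliptic. For full flags with generic weights, the condition that semistability equals stability holds because the walls form a finite union of hyperplanes in weight space.
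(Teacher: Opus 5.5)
\textbf{Verdict: there is a genuine gap, in the step from codimension two to the Oka property of $\mathcal T^s$.}

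Your codimension estimate is the paper's. For a destabilizing type the count is $sq(g-1)+A+\sum_i c_i$, and at a nontrivial flag the Schubert codimension $c(\mathbf a)=\sum_{u>v}a_ub_v$ strictly exceeds the weight contribution $w(\mathbf a)$. Since $A+\sum_i c_i$ is an integer larger than $A+\sum_i w_i\ge0$, it is at least one. This gives codimension $\ge(r-1)(g-1)+1\ge2$, also for $(r,g)=(2,2)$.

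The gap is the step you treat as routine: that the Oka property survives removal of a codimension-$\ge2$ analytic subset of an affine bundle over an Oka base ``via strong dominability and general position''. This is false. Already over a point, Rosay--Rudin constructed discrete sets in $\mathbb C^n$, $n\ge2$, whose complements admit no nondegenerate entire map from $\mathbb C^n$, so they are not Oka. Your alternative through \cite[Theorem 1.9]{DW} is circular, because it presupposes a dense Zariski-open Oka subset of $\mathcal T^s$.

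The codimension-two result that can be used is algebraic and works chart by chart. An algebraic subset of codimension $\ge2$ in $\mathbb A^N$ has Oka complement \cite[\S6.4]{F17}, and Kusakabe's localization \cite[Theorem 1.4]{Kus21} glues these over a cover by Zariski-open affine spaces. Your parameter space is an extension bundle, with flag bundles, over products of Jacobians or symmetric powers. It has no such charts: every morphism from $\mathbb A^N$ to a positive-dimensional abelian variety is constant. So even a correct codimension estimate does not make $\mathcal T^s$ Oka.

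The paper avoids this by fixing both ends of the extension. After a negative twist it uses $0\to\mathcal O_C^{r-1}\to E_v\to M\to0$ with $v\in V=H^1(M^{-1})^{\oplus(r-1)}$. Over the affine space $V$ the universal extension splits at the marked points. Hence the flagged space is $W\simeq V\times\prod_i\operatorname{Flag}_{\mathbf n_i}(\mathbb C^r)$, which is covered by finitely many Zariski-open copies of $\mathbb A^{\dim W}$.

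\textbf{A second step is missing.} Counting choices of $F$, $E/F$, the extension class and the flags bounds a locus in the quasi-parabolic bundle stack. It becomes a codimension in the parameter space only through a smooth classifying map; otherwise the fibres over bad bundles may be larger. The paper restricts to $W^{\rm reg}$, where $H^1(E_v(-p))=0$ for every $p$. There the $r-1$ sections lift along every deformation, so the map from $W^{\rm reg}$ to the stack is smooth. Separately, $V\setminus V^{\rm reg}$ has codimension $\ge e-rg\ge2$ by the Serre-duality witness count.

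The same twist, with $e/r>2g-1+m$, also settles surjectivity, which your ``maximal subbundle or elementary modification'' argument leaves unspecified. Every twisted stable parabolic bundle then has $\mu_{\min}>e/r-m>2g-1$. It is therefore globally generated with $H^1=0$ and has $r-1$ everywhere independent sections.

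The rest of your plan agrees with the paper: descent by \cite[Theorem 7.5]{DW}, the $J[r]$-cover for variable determinant, and \cite[Theorem 1.1]{FL2026} in the compact case.
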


Section~\ref{sec:bundle-moduli} proves these theorems by the same
extension construction. More generally, in every genus, a nonempty
parabolic moduli space for which semistability equals stability is
Oka and holomorphically elliptic
(Corollary~\ref{cor:compact-parabolic-moduli}).
An explicit codimension calculation also gives stable Oka loci in
genera zero and one, including weights on stability walls.

Generating sections lead to mapping spaces
(Section~\ref{sec:mapping-spaces}). For a smooth projective curve
\(C\) of genus \(g\), the space of degree-\(d\) maps to
\(\mathbb P^m\) is Oka if
\[
d\ge2g,\qquad m\ge\max\{2,g+1\}.
\]
The embedding space is Oka if
\[
d\ge2g+1,\qquad m\ge\max\{4,g+3\}.
\]
The degree bounds give global generation and very ampleness,
respectively; the target-dimension bounds provide enough sections for
the incidence and entire-map constructions. Both statements are
specializations of results for higher-dimensional sources.
Separate constructions for target \(\mathbb P^1\) prove that
\(\operatorname{Hol}_d(\mathbb P^1,\mathbb P^1)\) is holomorphically
elliptic for every \(d\ge1\), and
\(\operatorname{Hol}_d(E,\mathbb P^1)\) is Oka for every elliptic curve
\(E\) and every \(d\ge2\). The rational-function result answers Hanysz's
degree-by-degree Oka question \cite[Section 1]{Hanysz2015}; the
higher-degree elliptic-function problem is discussed by Bowman
\cite[Section 1.3]{Bowman2016}. Together with the projective-map theorem,
these results give the Oka property of every nonempty degree component
of maps from a compact Oka curve to any \(\mathbb P^m\),
\(m\ge1\) (Corollary~\ref{cor:oka-curve-projective-maps}).
The same section treats maps to Grassmannians with stable quotient
bundle.

Finite analytic subspaces and quotients give two further classes of
Oka moduli spaces (Section~\ref{sec:hilbert-quot}). If the ordered
configuration space of \(n\) points on a connected complex surface
\(S\) is Oka, then so is its Hilbert--Douady space \(S^{[n]}\).
In particular, all Hilbert--Douady spaces of points on a product of
two connected Oka curves are Oka. The configuration-space assertion holds more
generally for products of any two positive-dimensional connected Oka
manifolds. For a vector bundle \(V\) on a smooth projective curve
\(C\), and a nondecreasing length vector \(\mathbf d\), set
\(d_0=0\) and \(n_j=d_j-d_{j-1}\). Then
\[
\operatorname{Quot}_{\mathbf d}(V)\text{ is Oka}
\quad\Longleftrightarrow\quad
\prod_j\operatorname{Sym}^{n_j}C\text{ is Oka}.
\]
All length vectors are allowed in genus at most one. In genus \(g\ge2\),
every positive increment must be at least \(g\), while increments at
least \(2g-2\) suffice. In genus two the exact condition is
\(n_j\ne1\) for every \(j\) (Theorem~\ref{thm:N2}).

For surface-group character varieties, complete Hamiltonian twist
flows give the entire maps (Section~\ref{sec:character-connections}).
For closed surfaces of genus at least two, the irreducible
\(\mathrm{SL}_2(\mathbb C)\)- and \(\mathrm{GL}_2(\mathbb C)\)-character
and representation loci are holomorphically elliptic. The corresponding
relative irreducible character and representation loci with compatible
fixed regular semisimple boundary classes are also holomorphically elliptic.
In the closed-surface case, the full smooth representation loci are
Oka, as are resolutions of the character varieties that are
isomorphisms over their irreducible loci.
The Riemann--Hilbert correspondence gives Oka moduli spaces of
nonresonant rank-two parabolic connections for arbitrary parabolic
weights in the ranges of Theorem~\ref{thm:M11}. For generic weights this
includes fixed-time initial-data spaces of Garnier systems. When no signed sum of the residue eigenvalues is
integral, the whole connection moduli space is holomorphically
elliptic. We also describe each nonempty reducible inverse image as
the analytic product
\((\mathbb C^*)^{2g}\times\mathbb P^{2g+n-3}\)
(Lemma~\ref{lem:M11-exceptional}). For an elliptic curve with one
puncture, the nonresonant connection spaces are holomorphically elliptic
surfaces (Example~\ref{ex:one-punctured-elliptic-connections}).

\subsection*{Complex torus fibrations and compactifications}
A proper holomorphic fibration over a Riemann surface, with a
holomorphic section and connected complex tori as smooth general
fibres, has Oka total space if and only if the base is Oka.
On an unramified entire parametrization of an Oka base, translation
fields extend to complete commuting vector fields on the pulled-back
total space. Their flows give an entire map covering the general
fibres and having invertible differential at every preimage of them
(Theorem~\ref{thm:abelian-fibration}).

The resulting Calabi--Yau examples include Schoen fibre products of
rational elliptic surfaces with disjoint singular values, and
Gross--Popescu threefolds with their derived-equivalent dual partners.
For a Schoen fibre product we construct
\(F:\mathbb C^3\to X\) with \(dF\) invertible everywhere and image
containing a dense Zariski-open subset
(Theorem~\ref{thm:schoen-threefold}).

Compactified relative Jacobians have a different boundary: their
non-invertible points represent rank-one torsion-free sheaves. For a
one-node degeneration, the local model \(uv=t\) makes the compactified
total space smooth. Analytic extension from the generalized Jacobian
group family and the relative Abel map then give the following result.

\begin{theorem}\label{thm:J1}
Let \(B\) be a smooth connected projective curve and
\(f:S\to B\) a flat projective morphism from a smooth projective
complex surface, with a section. Suppose every fibre is geometrically
integral of arithmetic genus \(g\ge1\), and every singular fibre has a
unique singular point, which is an ordinary node. Then
\(\overline J^d(S/B)\) and \(\operatorname{Hilb}^n(S/B)\) are smooth
and projective for every \(d\in\mathbb Z\) and \(n\ge0\). Moreover,
the compactified Jacobians, and the relative Hilbert schemes with
\(n\ge2g-1\), are Oka exactly when \(g(B)\le1\); in that case they
are holomorphically elliptic.
\end{theorem}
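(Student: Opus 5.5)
We prove the three assertions in turn: smoothness and projectivity, then the Oka property when \(g(B)\le1\), then its failure when \(g(B)\ge2\).

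\textbf{Smoothness and projectivity.} Projectivity of \(\overline J^d(S/B)\) and \(\operatorname{Hilb}^n(S/B)\) over the projective curve \(B\) follows from Altman--Kleiman: fibres are integral and there is a section. The section also lets us identify all degrees \(d\) by twisting, so only smoothness needs argument. Away from the nodal fibres, \(\overline J^d\) is a smooth abelian-scheme fibration over a smooth curve, hence smooth.

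Near a node, \(S\) is smooth and \(f\) has the local form \(uv=t\). The deformation theory of a rank-one torsion-free sheaf that fails to be invertible at the node decomposes into a smooth part and the versal deformation of the module \(\mathbb C[[u,v]]/(uv)\)-ideal \((u,v)\). The latter is governed by \(uv=t\), so \(\overline J^d\) is locally \(\mathbb C^{g-1}\times\{xy=t\}\) over the \(t\)-line. Because the total space of the local model is smooth, \(\overline J^d\) is smooth.

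For \(\operatorname{Hilb}^n(S/B)\) we use the relative Abel map. For \(n\ge2g-1\) it is a projective bundle over \(\overline J^n\), since \(h^1\) vanishes uniformly on integral fibres, so smoothness transfers. For smaller \(n\), smoothness follows directly: the relative Hilbert scheme of points on a family of curves with planar singularities and smooth total surface is smooth (Shende / Altman--Iarrobino--Kleiman style local computation). Here we use the local embedding \(\operatorname{Hilb}^n(S/B)\subset S^{[n]}\) cut out by the pullback of \(t\), which is a submersion.

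\textbf{Oka when \(g(B)\le1\).} The open locus \(J^d\subset\overline J^d\) of invertible sheaves is a torsor, trivialized by the section, under the generalized Jacobian group family \(\operatorname{Pic}^0(S/B)\to B\). Its fibres are connected complex Lie groups: abelian varieties, or \(\mathbb C^*\)-extensions over the nodal fibres. Since \(B\) is \(\mathbb P^1\) or an elliptic curve, hence Oka, Theorem~\ref{prop:connected-group-family} shows \(J^d\) is Oka.

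The complement of \(J^d\) has codimension \(2\): it lies over finitely many points and has fibre dimension \(g-1\) inside a \((g+1)\)-dimensional space. That is not enough to conclude directly, since an Oka Zariski-open subset need not give the Oka property unless we use strong dominability. Instead we use the dominable extension. Given a boundary point, choose local coordinates \(\mathbb C^{g-1}\times\{xy=t\}\). The entire maps constructed in Theorem~\ref{prop:connected-group-family}, composed with the local degeneration, should extend holomorphically across \(t=0\) with invertible differential at the boundary point. Concretely, \(\overline J^d\) also carries the action of \(\operatorname{Pic}^0\), which extends the torsor action, and translation fields extend as holomorphic vector fields, in the spirit of Theorem~\ref{thm:abelian-fibration}. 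Alternatively, and this is the route I would try first, apply \cite[Theorem 1.9]{DW}: \(\overline J^d\) is a smooth manifold containing the dense Zariski-open Oka subset \(J^d\), so it is Oka.

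Holomorphic ellipticity then follows from Forstneri\v{c}--L\'arusson, since \(\overline J^d\) is projective. For \(\operatorname{Hilb}^n\) with \(n\ge2g-1\), the Abel map is a projective bundle over an Oka manifold, which is Oka; ellipticity follows again by projectivity.

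\textbf{Failure when \(g(B)\ge2\).} Any Oka manifold \(X\) admits dominating entire maps \(\mathbb C^N\to X\). Composing with the surjection to \(B\) would give a nonconstant entire map into a hyperbolic curve, which is impossible. Hence neither space is Oka.

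The main obstacle is the local smoothness at the boundary sheaves together with the Abel-map projective-bundle structure near nodal fibres. This is the step where the hypothesis of a unique ordinary node and a smooth total surface is used.
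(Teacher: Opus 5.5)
Your proposal is correct and follows essentially the paper's proof: the local model $\{uv=t\}\times\mathbb C^{g-1}$ at non-invertible sheaves gives smoothness; Theorem~\ref{prop:connected-group-family} applied to $\operatorname{Pic}^0(S/B)$, followed by analytic extension \cite[Theorem 1.9]{DW}, gives the Oka property of $\overline J^d$ (that theorem is exactly what settles your hesitation about the dense Zariski-open subset, so the detour through extending entire maps or translation fields is unnecessary); the Abel map realizes the relative Hilbert scheme as a $\mathbb P^{n-g}$-bundle over $\overline J^n$ for $n\ge 2g-1$; and \cite[Theorem 1.1]{FL2026} gives ellipticity. The one real difference is the converse: you use hyperbolicity of $B$ (a dominating map composed with the projection is nonconstant by the identity theorem), whereas the paper uses sections of $\overline J^d\to B$ and $\operatorname{Hilb}^n(S/B)\to B$ to make $B$ a holomorphic retract; separately, for small $n$ you should state your general smoothness claim for planar families with smooth total space only for the present case, where $xy=t$ is the versal deformation of the node (the paper cites Smith's local computation for exactly this model).
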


\subsection*{Further geometric applications}
Section~\ref{sec:geometric-constructions} gives applications to
deformation spaces and complements. Deformation to the normal cone
along a smooth closed submanifold has Oka total space exactly when
the ambient connected manifold is Oka. A triangular construction treats
coordinate arrangements in products of positive-dimensional connected
Oka manifolds. It also gives Oka complements and blowups along smooth
centres contained in these arrangements.

For a smooth projective surface \(S\) and a nonzero reduced simple
normal crossings divisor \(D\), ampleness of \(-(K_S+D)\) implies
that \(S\setminus D\) is holomorphically elliptic. The
higher-dimensional result gives the same conclusion for ample boundary
components with independent numerical classes whose deepest intersection
is rationally connected of dimension at least two
(Theorem~\ref{thm:multibranch-log-fano}).
McLean's Stein fourfold is Oka and holomorphically elliptic; it is
diffeomorphic to \(\mathbb R^8\) and not biholomorphic to
\(\mathbb C^4\).

Sections~\ref{sec:background}
and~\ref{sec:bundle-moduli} use relative exponentials and extension
parameters. Sections~\ref{sec:mapping-spaces}--\ref{sec:character-connections}
use generating families, dense torsor loci, and complete Hamiltonian
flows for mapping spaces, nested Quot schemes, and character loci.
Section~\ref{sec:fibrations} treats complex torus fibrations and
compactified Jacobians, proving
Theorems~\ref{thm:abelian-fibration} and~\ref{thm:J1}.

For a complex vector space or holomorphic vector bundle \(V\),
\(\mathbb P(V)\) parametrizes one-dimensional linear subspaces.

\section{Holomorphic families of complex Lie groups}\label{sec:background}

Let \(p:G\to B\) be a holomorphic family of complex Lie groups,
and write \(e:B\to G\) for its identity section. The proof of
Theorem~\ref{prop:connected-group-family} uses the exponential map in
each fibre, with holomorphic dependence on the base.

\begin{lemma}\label{lem:relative-exponential}
Let \(p:G\to B\) be a holomorphic family of complex Lie groups. The vertical
tangent spaces along the identity section form a holomorphic vector bundle
\[
\mathfrak g=e^*(\ker dp)\longrightarrow B.
\]
There is a holomorphic map \(\operatorname{Exp}:\mathfrak g\to G\) over
\(B\) whose restriction to each fibre is its exponential map. Its vertical
differential along the zero section is the identity.
\end{lemma}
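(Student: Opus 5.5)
The plan is to build \(\operatorname{Exp}\) as the time-one map of a holomorphic flow on the fibre product \(\mathfrak g\times_B G\), where that flow comes from relative left-invariant vector fields. Holomorphy in the base then follows from holomorphic dependence of ODE solutions on parameters.

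\emph{The bundle \(\mathfrak g\).} Since \(p\) is a holomorphic submersion, \(\ker dp\subset TG\) is a holomorphic subbundle of rank \(\dim G-\dim B\). Its pullback along the holomorphic section \(e\) is therefore a holomorphic vector bundle \(\mathfrak g\to B\). Its fibre at \(b\) is \(T_{e(b)}G_b\), which is the Lie algebra of \(G_b\).

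\emph{The relative left-invariant field.} Let \(\mu:G\times_BG\to G\) be the multiplication. Consider the complex manifold \(\mathfrak g\times_B G\), with points \((b,v,g)\) where \(v\in\mathfrak g_b\) and \(g\in G_b\). On it I define the vertical vector field
\[
\Xi(b,v,g)=\bigl(0,\;d(\mu(g,\cdot))_{e(b)}v\bigr),
\]
which points along the \(G_b\) factor. The differential of \(\mu\) in the second variable is holomorphic on \(G\times_BG\) and preserves \(\ker dp\), so \(\Xi\) is a holomorphic vector field tangent to \(\ker dp\). On each slice \(\{v\}\times G_b\), \(\Xi\) restricts to the left-invariant field \(X_v\) on \(G_b\). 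The integral curve of \(X_v\) through the identity is \(t\mapsto\exp_{G_b}(tv)\).

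\emph{Completeness and holomorphy.} Completeness is the main point to check. Each slice \(\{v\}\times G_b\) is invariant under the flow, and on it the flow is the flow of a left-invariant field on a Lie group. Such a flow is complete for real time, and for complex time as well, since \(t\mapsto\exp(tv)\) is a holomorphic one-parameter subgroup \(\mathbb C\to G_b\). So the flow \(\Phi^\Xi_t\) of \(\Xi\) exists for all \(t\in\mathbb C\). By holomorphic dependence of ODE solutions on initial data and complex time, the flow domain of a holomorphic vector field is open and the flow is holomorphic on it. Here that domain is all of \(\mathbb C\times(\mathfrak g\times_BG)\). I then set
\[
\operatorname{Exp}(v)=\operatorname{pr}_G\,\Phi^\Xi_1(b,v,e(b)).
\]
This is holomorphic, commutes with the maps to \(B\), and on each fibre equals \(\exp_{G_b}\).

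An alternative route avoids global flows: work locally on \(B\), trivialize \(\mathfrak g\), and check holomorphy locally, since the exponential is canonical and the local pieces glue. The flow argument handles everything at once.

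\emph{Differential along the zero section.} By linearity of the flow in the initial vector, \(\operatorname{Exp}(tv)=\Phi^\Xi_t(b,v,e(b))\). Differentiating at \(t=0\) gives \(\Xi(b,v,e(b))=v\). So the vertical differential of \(\operatorname{Exp}\) at \(0_b\) is the identity of \(\mathfrak g_b\).
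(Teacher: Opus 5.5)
Your proposal is correct and follows essentially the paper's argument. The left-invariant vertical field \(d(L_g)_{e(b)}v\) depends holomorphically on \((b,v)\), so parameter-dependent ODE theory gives it a holomorphic complex-time flow. \(\operatorname{Exp}\) is the time-one map of that flow started at \(e(b)\), and differentiating \(t\mapsto\operatorname{Exp}(tv)\) at \(t=0\) gives the identity.

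The only difference is packaging: you encode the parameters as a single holomorphic field on \(\mathfrak g\times_B G\), whereas the paper uses local frames of \(\mathfrak g\) glued by uniqueness. One small correction: the step you call linearity in the initial vector is really the rescaling identity \(\operatorname{Exp}(tv)=\operatorname{pr}_G\Phi^\Xi_t(b,v,e(b))\), and you omitted the \(\operatorname{pr}_G\) there.
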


\begin{proof}
For \(b\in B\) and \(v\in\mathfrak g_b\), left translation gives
the complete vertical vector field
\[
V_v(g)=d(L_g)_{e(b)}v,\qquad g\in G_b.
\]
In a local holomorphic frame of \(\mathfrak g\), its coefficients
depend holomorphically on \((b,v)\). Holomorphic dependence for
ordinary differential equations makes the flow \(\Phi_v^t(g)\)
holomorphic in \((b,v,t,g)\). Set
\[
\operatorname{Exp}(v)=\Phi_v^1(e(b)),\qquad
\operatorname{Exp}(tv)=\Phi_v^t(e(b)).
\]
Uniqueness of solutions makes the local maps agree on overlaps. The
identity
\[
\left.\frac{d}{dt}\right|_{t=0}\operatorname{Exp}(tv)=v
\]
gives the asserted vertical differential.
\end{proof}

\begin{proof}[Proof of Theorem~\ref{prop:connected-group-family}]
Write \(n=\dim B\) and \(r=\dim G-\dim B\). Suppose first that
\(B\) is strongly dominable, and fix \(g_0\in G_{b_0}\). Choose
\[
f:\mathbb C^n\longrightarrow B,\qquad f(0)=b_0,
\qquad df_0\text{ invertible}.
\]
The Oka--Grauert principle \cite{Grauert58}, in the vector-bundle
form of \cite[Theorem 5.3.1]{F17}, trivializes
\(f^*\mathfrak g\) on the contractible Stein space \(\mathbb C^n\).
Choose a holomorphic frame \(e_1,\ldots,e_r\).

The exponential image of a connected Lie group contains a neighbourhood
of the identity, and that neighbourhood generates the group. Hence
\[
g_0=\exp(v_1)\cdots\exp(v_s),\qquad v_i\in\mathfrak g_{b_0}.
\]
Express each \(v_i\) in the frame at zero and extend it with constant
coefficients to a section \(v_i(u)\) of \(f^*\mathfrak g\). By
Lemma~\ref{lem:relative-exponential}, the product
\[
\sigma(u)=\operatorname{Exp}(v_1(u))\cdots
\operatorname{Exp}(v_s(u))
\]
is a holomorphic lift of \(f\) through \(g_0\). It supplies the
horizontal directions in the map
\[
F:\mathbb C^n\times\mathbb C^r\longrightarrow G,
\qquad
F(u,z)=\sigma(u)\operatorname{Exp}\left(\sum_{j=1}^r z_je_j(u)\right).
\]
Relative to a complex-linear splitting of
\[
0\longrightarrow T_{g_0}G_{b_0}\longrightarrow T_{g_0}G
\xrightarrow{dp_{g_0}}T_{b_0}B\longrightarrow0,
\]
the differential is
\[
dF_{(0,0)}=
\begin{pmatrix}
df_0&0\\
*&d(L_{g_0})_{e(b_0)}\circ E_0
\end{pmatrix},
\qquad E_0(z)=\sum_{j=1}^r z_je_j(0).
\]
Both diagonal blocks are invertible, so \(F\) dominates \(G\) at
\(g_0\).

Conversely, suppose that \(G\) is strongly dominable. For any
\(b_0\in B\), choose \(H:\mathbb C^{n+r}\to G\) with
\(H(0)=e(b_0)\) and \(dH_0\) invertible. The differential of
\(p\circ H\) is surjective at zero. Restriction to an
\(n\)-dimensional complex-linear complement of its kernel gives an
entire map dominating \(B\) at \(b_0\). The equivalence of the Oka
properties follows from \cite[Theorem 1.4]{DW}.
\end{proof}

\section{Moduli of vector bundles}\label{sec:bundle-moduli}\label{sec:A1}

Open subsets of extension spaces parametrize stable bundles.
The proof of Theorem~\ref{thm:A1} estimates the codimension of the
nonstable locus in such a space and proves that its classifying map is
a surjective submersion. Adding flags gives the parabolic result in
Theorem~\ref{thm:A2}; the relative positions of the flags contribute the
additional codimension needed there.

\subsection{Stable bundles}

For a smooth projective connected curve \(C\) of genus \(g\ge2\), an integer \(r\ge2\), and \(\xi\in\operatorname{Pic}^{d}(C)\), the notation
\[
M_C(r,\xi)=SU_C^s(r,\xi)
\]
denotes the coarse moduli space of stable rank-\(r\) vector bundles with determinant isomorphic to \(\xi\).

\begin{proof}[Proof of Theorem~\ref{thm:A1}]
We first twist the bundles to obtain a uniform extension presentation.
Choose a line bundle \(L\) of sufficiently negative degree \(l\), so that
\[
M=\xi\otimes L^{-r},\qquad e=d-rl
\]
satisfy
\begin{equation}
e-rg\ge2,
\qquad
\frac er>2g-1.
\label{eq:A1.1}
\end{equation}
The first condition gives a codimension-two estimate for the
exceptional cohomology locus in the extension space. The second gives
stable bundles the cohomology vanishing needed for global generation
and section lifting.
The space
\[
V=H^1(C,M^{-1})^{\oplus(r-1)}
\]
parametrizes extensions of the fixed bundle \(M\) by
\(\mathcal O_C^{r-1}\). In rank two this is Bertram's extension-space
construction \cite[§2]{B1992}; Heinloth describes the construction for
arbitrary rank \cite[Lectures 1--3]{H2009}:
\begin{equation}
0\longrightarrow\mathcal O_C^{r-1}
\xrightarrow{i_v}E_v\longrightarrow M\longrightarrow0.
\label{eq:A1.2}
\end{equation}
This is Seshadri's extension presentation, as used by
Dr\'ezet--Narasimhan \cite[Sections 0.2 and 7]{DN1989}.
To construct the family on \(C\times V\), take a finite affine cover of
\(C\) and Čech cocycles representing a basis of \(H^1(M^{-1})\).
Their linear combinations give the upper-right blocks of the transition
matrices in \eqref{eq:A1.2}. The resulting algebraic family has a
specified determinant isomorphism \(\det E_v\simeq M\).

Let \(V^s\subset V\) be the Zariski-open locus on which \(E_v\) is
stable. It is enough to prove
\begin{equation}
\operatorname{codim}_V(V\setminus V^s)\ge2,
\qquad
V^s\longrightarrow M_C(r,\xi),\quad v\longmapsto[E_v\otimes L],
\text{ is a surjective submersion}.
\label{eq:A1.3}
\end{equation}
The codimension bound makes \(V^s\) Oka; surjective holomorphic descent
\cite[Theorem 7.5]{DW} then gives the fixed-determinant moduli space.

\smallskip
\noindent\emph{Regular extensions.}
Consider the locus
\[
B_1=\{v\in V:\text{ there is }p\in C
\text{ with }H^1(C,E_v(-p))\ne0\}.
\]
For every \(p\), the degree assumption gives \(H^1(M(-p))=0\).
Tensoring \eqref{eq:A1.2} by \(\mathcal O_C(-p)\) shows that
\(H^1(C,E_v(-p))\ne0\) precisely when the following connecting map
fails to be surjective:
\[
\delta_{v,p}:H^0(M(-p))\longrightarrow
H^1(\mathcal O_C(-p))^{r-1}.
\]
By Serre duality, such a failure is witnessed by a
nonzero tuple
\[
\alpha=(\alpha_1,\ldots,\alpha_{r-1})
\in H^0(K_C(p))^{r-1}=H^0(K_C)^{r-1}.
\]
The equality follows from the inclusion \(H^0(K_C)\subset H^0(K_C(p))\) and Riemann--Roch, which gives dimension \(g\) for both spaces.

For fixed \(p\) and \([\alpha]\), the equation \(\alpha\circ\delta_{v,p}=0\) is linear in \(v\). Its transpose under Serre duality is \[
\mu_\alpha:H^0(M(-p))\longrightarrow
H^0(MK_C)^{r-1},
\qquad
s\longmapsto(s\alpha_1,\ldots,s\alpha_{r-1}).
\]
At least one \(\alpha_i\) is nonzero, and multiplication by that
differential is injective on \(H^0(M(-p))\) because \(C\) is integral.
Thus \(\mu_\alpha\) is injective. Riemann--Roch gives
\[
h^0(M(-p))=e-g,
\]
so the linear conditions on \(V\) have codimension exactly \(e-g\).

The witnesses form the proper variety \[
C\times\mathbb P\bigl(H^0(K_C)^{r-1}\bigr),
\]
of dimension \((r-1)g\). The incidence is algebraic and its projection
to \(V\) is proper. Hence \(B_1\) is closed and
\begin{equation}
\operatorname{codim}_V B_1
\ge(e-g)-(r-1)g=e-rg\ge2.
\label{eq:A1.4}
\end{equation}
Set \(V^{\rm reg}=V\setminus B_1\). The sequence
\[
0\to E_v(-p)\to E_v\to(E_v)_p\to0
\]
shows that every \(E_v\), \(v\in V^{\rm reg}\), is globally generated and has \(H^1(E_v)=0\).

For a stable \(E_v\), a nonzero map \(E_v\to K_C(p)\) would have a
rank-one quotient image of degree at most \(2g-1\). Stability requires
that quotient to have slope larger than \(e/r>2g-1\), a contradiction.
Serre duality therefore gives \(H^1(E_v(-p))=0\) for every \(p\), so
\begin{equation}
V^s\subset V^{\rm reg}.
\label{eq:A1.5}
\end{equation}

\smallskip
\noindent\emph{The classifying morphism.}
The determinant-rigidified stack is
\[
\mathcal B_{r,M}=\{(E,\det E\xrightarrow{\sim}M)\}.
\]
It has dimension \((r^2-1)(g-1)\) \cite[Lectures 1--3]{H2009}.
Here the Picard stack has dimension \(g-1\), whereas the Picard variety
has dimension \(g\); the extension calculation uses the former.

The family \eqref{eq:A1.2} gives a smooth morphism \begin{equation}
V^{\rm reg}\longrightarrow\mathcal B_{r,M}.
\label{eq:A1.6}
\end{equation}
For infinitesimal lifting in \eqref{eq:A1.6}, consider a square-zero
deformation of \(E_v\) with its determinant isomorphism. The obstruction to
lifting each of the \(r-1\) sections in \(i_v\) lies in \(H^1(E_v)\)
tensored with the square-zero ideal, and \(H^1(E_v)=0\) on
\(V^{\rm reg}\). The lifted sections still have rank \(r-1\), since the
nonvanishing of a maximal minor persists under a nilpotent thickening.
They therefore give a lifted extension
\[
0\longrightarrow\mathcal O^{r-1}\longrightarrow\widetilde E_v
\longrightarrow\widetilde M\longrightarrow0,
\qquad \widetilde M\simeq\det\widetilde E_v.
\]
The prescribed determinant identifies \(\widetilde M\) with the pullback
of \(M\), so the extension, including its endpoint identifications, lifts
to the family \eqref{eq:A1.2}. This proves formal smoothness of
\eqref{eq:A1.6}; finite presentation gives smoothness.

In particular, the Kodaira--Spencer map
\[
T_vV=H^1(M^{-1})^{r-1}\longrightarrow
H^1(\operatorname{End}_0 E_v)\longrightarrow0
\]
is surjective.

\smallskip
\noindent\emph{Codimension of the nonstable locus.}
A nonstable bundle \(E\) has a saturated subbundle \(F\subset E\) of rank \(s\), degree \(f\), where \[
1\le s\le r-1,\qquad q=r-s,\qquad A=rf-se\ge0.
\]
For \(Q=E/F\), consider the stack of sequences \[
0\to F\to E\to Q\to0,
\qquad
\det F\otimes\det Q\simeq M.
\]
The stack of pairs \((F,Q)\) with this determinant identification has dimension \[
(s^2+q^2-1)(g-1).
\]
Indeed, the product of the two unrestricted bundle stacks has dimension \((s^2+q^2)(g-1)\), and fixing their product determinant subtracts the dimension \(g-1\) of the Picard stack.

For each pair, the stack of extensions has relative dimension \[
\dim\operatorname{Ext}^1(Q,F)-\dim\operatorname{Hom}(Q,F)
=-\chi(Q^*\otimes F).
\]
Now \(Q^*\otimes F\) has rank \(sq\) and degree \[
qf-s(e-f)=rf-se=A.
\]
Therefore its extension-stack contribution is \(sq(g-1)-A\), and the total extension stack has dimension \[
(r^2-sq-1)(g-1)-A.
\]
The morphism forgetting the subbundle is representable: over a fixed
\(E\), its fibre is the locally free quotient locus in a Quot scheme.
Its image has dimension at most that of the extension stack. Comparing
with \(\dim\mathcal B_{r,M}\) gives
\begin{equation}
\operatorname{codim}(\text{nonstable type }s,f)
\ge sq(g-1)+A.
\label{eq:A1.8}
\end{equation}
The same calculation includes \(A=0\), the slope-equality case.

Only finitely many degrees \(f\) occur in the extension family. A destabilizing \(F\) has \(f\ge\lceil se/r\rceil>0\), so it cannot lie in \(\mathcal O_C^{r-1}\): the determinant of a subsheaf of a trivial bundle has degree at most zero. Its image in \(M\) is \(M(-D)\) for an effective divisor \(D\). The kernel is a subsheaf of \(\mathcal O_C^{r-1}\), again of nonpositive degree, and hence \begin{equation}
\lceil se/r\rceil\le f\le e.
\label{eq:A1.9}
\end{equation}
Thus only finitely many pairs \((s,f)\) occur. Their relative Quot
schemes over \(V\) are of finite type, so the nonstable locus is covered
by finitely many such images.

Smooth pullback along \eqref{eq:A1.6} preserves the codimension bound.
The term \(sq(g-1)\), determined by rank and genus, is at least
\((r-1)(g-1)\). If \(\gcd(r,e)=\gcd(r,d)=1\), the slope excess
\(A=rf-se\) is a positive integer for \(0<s<r\), so \[
sq(g-1)+A\ge(r-1)(g-1)+1\ge2.
\]
For \(\gcd(r,d)>1\), the contribution \((r-1)(g-1)\) is itself at
least two when \((r,g)\ne(2,2)\). This explains the numerical range
of the theorem. Together with \eqref{eq:A1.4}, stability openness gives \[
\operatorname{codim}_V(V\setminus V^s)\ge2
\]
in the range stated in Theorem~\ref{thm:A1}.

\smallskip
\noindent\emph{Surjectivity and descent.}
Let \(E\) be a stable bundle with determinant \(\xi\). Then
\(E'=E\otimes L^{-1}\) has slope \(e/r>2g-1\). The quotient-slope
argument above makes \(E'\) globally generated and gives \(H^1(E')=0\).
At any \(p\in C\), evaluation
\[
H^0(E')^{r-1}\longrightarrow\operatorname{Mat}_{r\times(r-1)}(\mathbb C)
\]
is surjective. The matrices of rank at most \(r-2\) have codimension
two. Allowing \(p\) to vary gives an incidence of codimension at least
one in the section-tuple space, with closed projection because \(C\)
is projective. A tuple outside this projection is everywhere independent
and gives an extension
\[
0\to\mathcal O_C^{r-1}\to E'\to\det E'=M\to0.
\]
Thus \(E'\) occurs in \eqref{eq:A1.2}.

Consequently \(V^s\to M_C(r,\xi)\) is surjective. It is a holomorphic submersion by \eqref{eq:A1.6}: on the stable locus the determinant-rigidified stack is a \(\mu_r\)-gerbe over its smooth coarse moduli space. Over \(\mathbb C\), \(\mu_r\) is finite étale, and this gerbe has no infinitesimal directions; thus the coarse classifying map retains the surjective differential.

The affine codimension-two complement theorem \cite[§6.4]{F17} makes \(V^s\) Oka. Surjective holomorphic descent \cite[Theorem 7.5]{DW} proves the fixed-determinant assertion.
When \(\gcd(r,d)=1\), semistability equals stability, so
\(SU_C(r,\xi)=M_C(r,\xi)\) is smooth and projective.

For variable determinant, consider
\begin{equation}
SU_C^s(r,\xi)\times\operatorname{Pic}^0(C)
\longrightarrow U_C^s(r,d),
\qquad(E,N)\longmapsto E\otimes N,
\label{eq:A1.10}
\end{equation}
After identifying \(\operatorname{Pic}^d(C)\) with
\(\operatorname{Pic}^0(C)\) by \(\xi\), this is the base change of
multiplication by \(r\) along the determinant map. It is therefore a
finite étale surjection. Its source is Oka, since a Jacobian is a
complex Lie group. Covering descent
\cite[Corollaries 2.51--2.52]{For} proves the variable-determinant
assertion. In the coprime case, \(U_C^s(r,d)\) is also projective,
so both moduli spaces are holomorphically elliptic by
\cite[Theorem 1.1]{FL2026}.
\end{proof}

\subsection{Parabolic bundles}

Fix distinct marked points \(p_1,\ldots,p_m\) on a smooth projective connected curve \(C\), with a flag type at each \(p_i\) having positive graded dimensions \[
n_{i1},\ldots,n_{it_i},\qquad \sum_h n_{ih}=r,
\]
and real weights \[
0\le\alpha_{i1}<\cdots<\alpha_{it_i}<1.
\]
We use descending flags, so the deeper graded pieces have the larger weights. A flag type is nontrivial when \(t_i\ge2\). The parabolic degree is \[
\operatorname{pardeg}(E_*)
=\deg E+\sum_{i,h}n_{ih}\alpha_{ih}.
\]
Stability means that every nonzero proper saturated subbundle, equipped with the induced flags and weights, has strictly smaller parabolic slope \cite[§§1--2]{MS1980}.

Real weights give the same moduli spaces as suitable rational weights.
For a subbundle of rank \(s\), degree \(f\), and graded dimensions
\(a_{ih}\), the stability wall is
\[
rf-sd+\sum_{i,h}(r a_{ih}-s n_{ih})\alpha_{ih}=0.
\]
Only finitely many of these walls meet the weight polytope: ranks
and graded dimensions are bounded, and bounded weights bound \(f\)
on a wall \cite[Section 2]{BodenYokogawa1999}. Their defining functions
have rational coefficients; all other stability functions have constant
sign on the polytope. The relative face containing a given real weight
system therefore contains a rational system with the same signs for
all stability functions. Replacing the weights by this
system preserves stable and semistable objects, and hence their
coarse moduli spaces.

\begin{proof}[Proof of Theorem~\ref{thm:A2}]
We add the prescribed flags to the extension family used for
Theorem~\ref{thm:A1}. Choose \(L,M,e,V\) as there, increasing \(e\)
until
\begin{equation}
e-rg\ge2,\qquad \frac er>2g-1+m.
\label{eq:A2.1}
\end{equation}
Tensoring a parabolic bundle by \(L^{-1}\) tensors its flag at each
\(p_i\) by \(L_{p_i}^{-1}\) and preserves the weights and stability.
We work with these twists, whose determinant is \(M\) and degree is
\(e\). The additional \(m\) in \eqref{eq:A2.1} allows for the
parabolic weights in the quotient-slope estimate.

\smallskip
\noindent\emph{The flagged extension space.}
For the universal extension \(\mathcal E\) on \(C\times V\), form
the fibre product
\[
W=\prod_{i=1}^m{}_V
\operatorname{Flag}_{\mathbf n_i}(\mathcal E|_{\{p_i\}\times V}).
\]
For each marked point, restriction of the universal extension gives \[
0\to\mathcal O_V^{r-1}\to\mathcal E_{p_i}
\to M_{p_i}\otimes\mathcal O_V\to0.
\]
The extension splits algebraically because \(V\) is affine and
\(H^1(V,\mathcal O_V)=0\). Hence
\begin{equation}
W\simeq V\times\prod_i\operatorname{Flag}_{\mathbf n_i}(\mathbb C^r).
\label{eq:A2.2}
\end{equation}
The complement of \(W^{\rm reg}=W\times_V V^{\rm reg}\) has
codimension at least two by \eqref{eq:A1.4}. The morphism from
\(W^{\rm reg}\) to the determinant-rigidified quasi-parabolic bundle
stack is smooth. Indeed,
given a deformation of a flagged bundle, lift the \(r-1\) sections
using \(H^1(E)=0\), as in \eqref{eq:A1.6}. The resulting identification
with the extension family transports the prescribed flags. Since
\(W\) parametrizes every flag of these types, the transported flags
give the required lift in \(W\).

Every stable parabolic object is represented in \(W^{\rm reg}\).
To see this, let \(Q_*\) be a proper quotient of a stable parabolic
bundle \(E_*\). Stability gives \[
\operatorname{par}\mu(Q_*)>\operatorname{par}\mu(E_*).
\]
The sum of the average quotient weights over the \(m\) marked points is less than \(m\), while the corresponding sum for \(E_*\) is nonnegative. Therefore \begin{equation}
\mu(Q)>\frac er-m.
\label{eq:A2.3}
\end{equation}
The slope \(\mu(E)\) also satisfies this lower bound. Consequently \[
\mu_{\min}(E)>\frac er-m>2g-1.
\]
There can be no nonzero map \(E\to K_C(p)\) for any \(p\), because its image would be a rank-one quotient of degree at most \(2g-1\). Thus \(H^1(E(-p))=0\) for all \(p\), and \(E\) is globally generated with \(H^1(E)=0\). The rank-drop incidence used in the proof of Theorem~\ref{thm:A1} supplies \(r-1\) everywhere independent sections. Together with the prescribed flags in their fibres, these sections represent the object by a point of \(W^{\rm reg}\).

\smallskip
\noindent\emph{Relative positions and weights.}
We compare the Schubert codimension of each flag position with its
contribution to the parabolic slope, using Sun's estimates for flag
positions and weights \cite[Lemmas 5.1--5.2]{S1999}. Their difference will supply the
extra unit in the integral codimension bound.
For a saturated subbundle \(F\subset E\) of rank \(s\) and degree
\(f\), put
\[
q=r-s,\qquad A=rf-se\in\mathbb Z.
\]
At a marked point, suppressing the index \(i\), the graded dimensions are
\[
a_h=\dim\operatorname{gr}_h(F),\qquad
b_h=n_h-a_h.
\]
Then \(\sum_h a_h=s\) and \(\sum_h b_h=q\).

The locus of flags having this relative position with a fixed \(s\)-plane has codimension \begin{equation}
c(\mathbf a)=\sum_{u>v}a_u b_v.
\label{eq:A2.4}
\end{equation}
Fix the flag instead and vary the \(s\)-plane. Choose its graded subspaces in each block; these choices have dimension \(\sum_h a_hb_h\). In echelon form, the additional graph entries run from a selected vector in block \(u\) to an unselected vector in a deeper block \(v>u\), contributing \(\sum_{u<v}a_ub_v\) parameters. Thus the relative-position stratum in \(\operatorname{Gr}(s,r)\) has dimension \[
\sum_h a_hb_h+\sum_{u<v}a_ub_v.
\]
Subtracting this from \(sq=\sum_{u,v}a_ub_v\) gives \eqref{eq:A2.4}. The same codimension holds when the subspace is fixed and the flag varies: the incidence of subspace--flag pairs is homogeneous over the Grassmannian, and subtracting its base dimension gives exactly this fibre codimension.

The weight contribution to the stability inequality is \begin{equation}
\begin{aligned}
w(\mathbf a)
&=r\sum_h a_h\alpha_h-s\sum_h n_h\alpha_h\\
&=\sum_{u,v}a_u b_v(\alpha_u-\alpha_v).
\end{aligned}
\label{eq:A2.5}
\end{equation}
Subtracting \eqref{eq:A2.5} from \eqref{eq:A2.4} yields the exact identity \begin{equation}
\begin{aligned}
c(\mathbf a)-w(\mathbf a)
={}&\sum_{u>v}a_ub_v(1-\alpha_u+\alpha_v)\\
&+\sum_{u<v}a_ub_v(\alpha_v-\alpha_u).
\end{aligned}
\label{eq:A2.6}
\end{equation}
Every displayed coefficient is strictly positive because the distinct weights increase and have width less than one.

For a nontrivial flag, \eqref{eq:A2.6} is strictly positive. Indeed, if it were zero, every pair with \(a_u>0\) and \(b_v>0\) would have \(u=v\). Both collections are nonempty because \(s,q>0\). It follows that all \(a_u\) and all \(b_v\) are supported in a single common block. Every other graded multiplicity would then be \(n_h=a_h+b_h=0\), contradicting the positive multiplicities in a flag with at least two blocks. A trivial flag contributes zero.

A full flag has selected weight indices \[
I=\{i_1<\cdots<i_s\}\subset\{1,\ldots,r\}.
\]
The formulas then read
\begin{equation}
c(I)=\sum_{k=1}^s(i_k-k),
\qquad
w(I)=r\sum_{j\in I}\alpha_j-s\sum_{j=1}^r\alpha_j.
\label{eq:A2.7}
\end{equation}
For rank two, the generic line receives the lower weight. Equality with the distinguished flag line gives the higher weight and is a codimension-one condition, which also fixes the direction of the indexing convention.

For equally spaced full-flag weights \(\alpha_j=(j-1)/r\),
\eqref{eq:A2.7} becomes
\[
c(I)=\sum_{j\in I}j-\frac{s(s+1)}2,
\qquad
w(I)=\sum_{j\in I}j-\frac{s(r+1)}2.
\]
Their difference is independent of the relative position:
\begin{equation}
c(I)-w(I)=\frac{sq}{2}.
\label{eq:A2.13}
\end{equation}

The subbundle \(F\) violates parabolic stability exactly when
\begin{equation}
A+\sum_i w_i\ge0.
\label{eq:A2.8}
\end{equation}
At least one nontrivial flag makes \(\sum_i(c_i-w_i)>0\). Combining this with \eqref{eq:A2.8}, and using that \(A+\sum_i c_i\) is an integer, gives \begin{equation}
A+\sum_i c_i\ge1.
\label{eq:A2.9}
\end{equation}

The extension-stack computation leading to \eqref{eq:A1.8} applies to
every degree \(f\). As in Sun's codimension calculation
\cite[Proposition 5.1]{S1999}, adding the relative flag conditions gives
the following bound in the quasi-parabolic bundle stack: \begin{equation}
\begin{aligned}
\operatorname{codim}(s,f,\mathbf a_i)
&\ge sq(g-1)+A+\sum_i c_i\\
&\ge sq(g-1)+1\\
&\ge(r-1)(g-1)+1\ge2.
\end{aligned}
\label{eq:A2.10}
\end{equation}
Thus every destabilizing relative position has codimension at least
two, for every weight system.

Only finitely many ranks and relative-position arrays occur on \(W\). Since \(|w_i|<sq\), \eqref{eq:A2.8} implies \[
f>\frac{se-msq}{r}\ge s\left(\frac er-m\right)>0.
\]
A subbundle of positive degree cannot lie in \(\mathcal O_C^{r-1}\), so the projection to \(M\) gives \(f\le e\), as in \eqref{eq:A1.9}. The relative Quot schemes with these finitely many Hilbert polynomials are of finite type. Smooth pullback preserves \eqref{eq:A2.10}, and \(W\setminus W^{\rm reg}\) already has codimension at least two. The rational-weight reduction above makes parabolic stability Zariski open for each fixed weight system. Therefore, if \(W^s\) denotes the stable locus, \begin{equation}
\operatorname{codim}_W(W\setminus W^s)\ge2.
\label{eq:A2.11}
\end{equation}

\smallskip
\noindent\emph{Oka descent.}
Translates of the affine big cell cover each partial flag variety, and
quasi-compactness selects a finite subcover. Thus \eqref{eq:A2.2} gives
\[
W=\bigcup_{\lambda=1}^N U_\lambda,\qquad
U_\lambda\simeq\mathbb A^{\dim W},\qquad
\operatorname{codim}_{U_\lambda}(U_\lambda\setminus W^s)\ge2,
\]
where the last inequality is \eqref{eq:A2.11}. Every
\(U_\lambda\cap W^s\) is Oka by \cite[§6.4]{F17}, and Kusakabe's
localization theorem \cite[Theorem 1.4]{Kus21} makes \(W^s\) Oka.

The classifying map
\[
W^s\longrightarrow\mathcal P^s(C,\mathbf p;r,\xi,\mathbf n,\boldsymbol\alpha),
\]
which sends \((v,\{F_{i\bullet}\})\) to the class of
\((E_v\otimes L,\{F_{i\bullet}\otimes L_{p_i}\})\), is surjective
by the construction of the flagged extensions and is a submersion by
the section-lifting argument. A stable parabolic bundle
has only scalar parabolic endomorphisms, so specifying its determinant
isomorphism leaves stabilizer \(\mu_r\). The obstruction group is a
second cohomology group of the parabolic endomorphism sheaf on a
curve, hence vanishes. The stable coarse moduli space is therefore
smooth, and passage from the stable stack preserves the surjective
differential. Surjective holomorphic descent
\cite[Theorem 7.5]{DW} proves the fixed-determinant assertion.

Twisting by \(N\in\operatorname{Pic}^0(C)\) preserves all flag types, weights, and stability inequalities. The analogue of \eqref{eq:A1.10} is again the base change of \([r]\) on the Jacobian. This proves the variable-determinant assertion.
When semistability equals stability, both moduli spaces are projective
by the parabolic GIT construction \cite[Theorem 1.1]{S1999}. Their
smoothness and Oka property imply holomorphic ellipticity by
\cite[Theorem 1.1]{FL2026}.
\end{proof}

For \(g\ge2\), Boden--Yokogawa proved that the fixed-determinant
semistable parabolic moduli space is rational whenever some graded multiplicity
\(n_{ih}\) equals one \cite[Section 6]{BodenYokogawa1999}. This includes
full flags. The extension construction above treats arbitrary flag
types through the same codimension estimate. For compact moduli
spaces, its surjectivity also gives the following conclusion in every
genus.

\begin{corollary}\label{cor:compact-parabolic-moduli}
Let \(C\) be a smooth connected projective curve of any genus. Fix a
rank \(r\ge2\), degree \(d\), determinant
\(\xi\in\operatorname{Pic}^d(C)\), flag types, and real weights.
Suppose that the fixed-determinant semistable parabolic moduli space
is nonempty and every semistable object is stable. Then both the
fixed- and variable-determinant moduli spaces are smooth projective
and holomorphically elliptic, and hence Oka.
\end{corollary}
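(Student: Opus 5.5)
The plan is to reuse the flagged extension space \(W\) from the proof of Theorem~\ref{thm:A2}, but only for its surjectivity. The codimension estimate \eqref{eq:A2.10} used \(g\ge2\); it will be replaced by compactness together with the Forstneri\v{c}--L\'arusson theorem. First twist by a line bundle \(L\) of very negative degree so that \eqref{eq:A2.1} holds; this needs only \(e\) large and is valid in any genus. Form \(V\), \(V^{\rm reg}\), and \(W\simeq V\times\prod_i\operatorname{Flag}_{\mathbf n_i}(\mathbb C^r)\) exactly as before. The argument around \eqref{eq:A2.3} uses only the stability inequality and Serre duality. It shows in any genus that every stable parabolic bundle, after twisting, has \(\mu_{\min}>2g-1\). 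Hence it is globally generated with \(H^1(E(-p))=0\) for all \(p\), and the rank-drop incidence gives \(r-1\) everywhere independent sections. Therefore every point of the moduli space is represented in \(W^{\rm reg}\), and the stable locus \(W^s\) is a nonempty Zariski-open subset of \(W\); it is nonempty because the moduli space is.

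Next, the classifying map \(W^s\to\mathcal P^s\) is a holomorphic surjective submersion. The section-lifting argument requires only \(H^1(E)=0\), and the coarse space is smooth because the obstruction group \(H^2\) of the parabolic endomorphism sheaf vanishes on a curve and the stabilizer is \(\mu_r\). Since semistability equals stability, \(\mathcal P^s\) is the full GIT moduli space and is projective by \cite[Theorem 1.1]{S1999}. It is connected as the image of the irreducible \(W^s\).

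The key step is the Oka property without a codimension bound. Here \(W^s\) is a nonempty Zariski-open subset of \(W\), and \(W\) is a product of an affine space with flag varieties, so \(W^s\) is rational. A smooth projective variety dominated by a rational variety is unirational, hence rationally connected. I would then invoke \cite[Theorem 1.10]{DW}, which gives the Oka property for rationally connected manifolds; this makes \(\mathcal P^s\) Oka. Alternatively, \(\mathcal P^s\) contains a dense Zariski-open subset that is the image of an open subset of affine space under a submersion, but the rationally connected route is cleaner. The main obstacle I expect is whether \cite[Theorem 1.10]{DW} applies as stated, that is, to smooth projective rationally connected manifolds; if so the argument closes immediately. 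If it does not, I would fall back on the following chain: the surjective submersion from the open set \(W^s\) of a rational variety gives strong dominability at points of a dense open set, then \cite[Theorem 1.9]{DW} gives Oka for the whole space. Once \(\mathcal P^s\) is known to be projective and Oka, \cite[Theorem 1.1]{FL2026} gives holomorphic ellipticity.

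For variable determinant, I would use the map \((E_*,N)\mapsto E_*\otimes N\) from \(\mathcal P^s\times\operatorname{Pic}^0(C)\) onto the variable-determinant moduli space. As in \eqref{eq:A1.10}, it is the base change of multiplication by \(r\) on the Jacobian, so it is a finite étale surjection. Its source is a product of Oka manifolds, so covering descent \cite[Corollaries 2.51--2.52]{For} gives the Oka property of the target. Projectivity again comes from the parabolic GIT construction, and \cite[Theorem 1.1]{FL2026} then gives holomorphic ellipticity. In genus zero the Jacobian is a point and the two spaces coincide; in genus one the same descent works with an elliptic curve.
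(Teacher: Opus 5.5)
Your proposal is correct and is essentially the paper's proof. It shows the flagged extension construction is surjective in every genus, so the smooth projective target is unirational (hence rationally connected) as the image of the rational space \(W^s\) and is Oka by \cite[Theorem 1.10]{DW}; the finite \'etale twisting cover then handles variable determinant, and \cite[Theorem 1.1]{FL2026} gives ellipticity. That theorem applies exactly as you hope, so your fallback is never needed, which is fortunate: without the codimension bound you have no entire maps into \(W^s\) to feed it.
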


\begin{proof}
Choose the negative twist \(L\) so that \(e/r>2g-1+m\). The
quotient-slope argument above makes every twisted stable bundle
globally generated with \(H^1=0\). The rank-drop incidence supplies
\(r-1\) everywhere independent sections, in every genus. Thus the
extension and flag constructions give a surjective algebraic
classifying map from a nonempty open subset \(W^s\) of
\[
V\times\prod_i\operatorname{Flag}_{\mathbf n_i}(\mathbb C^r)
\]
to the fixed-determinant moduli space. This parameter space is
rational, so its image is unirational. The target is smooth by the
deformation argument above and projective because semistability
equals stability. It is therefore rationally connected, and hence
Oka by \cite[Theorem 1.10]{DW}.

Tensoring by degree-zero line bundles gives the finite étale cover
\eqref{eq:A1.10} in the parabolic setting, so the
variable-determinant space is Oka as well. Tensoring also transfers
the equality of stability and semistability to every determinant, so the
variable-determinant GIT space is smooth and projective. Both
projective Oka manifolds are
holomorphically elliptic by \cite[Theorem 1.1]{FL2026}.
\end{proof}

\subsection{Full flags in genus zero and one}

For equally spaced full-flag weights, the Schubert codimension
exceeds the weight contribution by half the product of the subbundle
and quotient ranks. With enough marked points, this excess compensates
for the term \(g-1\) in the codimension estimate, so the extension
construction also applies in genera zero and one.

\begin{theorem}\label{thm:A2-2}

Let \(C\) be a smooth connected projective curve of genus \(g=0\) or
\(1\), equipped with \(m\) distinct marked points. For any rank
\(r\ge2\), degree \(d\), and determinant
\(\xi\in\operatorname{Pic}^d(C)\), assign to each marked point a full
flag with weights \[
\alpha_{ij}=\frac{j-1}{r},\qquad j=1,\ldots,r.
\]
If \begin{equation}
(r-1)\left(g-1+\frac m2\right)>1,
\label{eq:A2.12}
\end{equation}
then the stable parabolic moduli space is nonempty, smooth, and Oka,
both with fixed and with variable determinant. Moreover, there are
arbitrarily close generic rational weight systems for which semistability
equals stability and the resulting moduli spaces are nonempty, smooth,
projective, and holomorphically elliptic.

In genus zero, condition \eqref{eq:A2.12} holds for \(m\ge5\) in
rank two, \(m\ge4\) in rank three, and \(m\ge3\) in every rank
\(r\ge4\). In genus one, the corresponding bounds are \(m\ge3\),
\(m\ge2\), and \(m\ge1\), respectively.

\end{theorem}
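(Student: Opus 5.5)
The plan is to rerun the proof of Theorem~\ref{thm:A2}, replacing the genus-dependent term \(sq(g-1)\) by the positive excess \eqref{eq:A2.13} coming from equally spaced full flags. First we twist by a sufficiently negative line bundle \(L\) so that \eqref{eq:A2.1} holds. The extension space \(V\) and the flagged space \(W\simeq V\times\prod_i\operatorname{Flag}(\mathbb C^r)\) from \eqref{eq:A2.2} are constructed in the same way. The regular-extension estimate \eqref{eq:A1.4} does not use \(g\ge2\): it only needs \(e-rg\ge2\) and Serre duality, so it holds for \(g=0,1\) as well. The same is true of the smoothness of \(W^{\rm reg}\) over the quasi-parabolic stack, of the representability of every stable parabolic object in \(W^{\rm reg}\) via the quotient-slope bound \eqref{eq:A2.3}, and of the finiteness of destabilizing types.

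The key step is the codimension bound. For a destabilizing type \((s,f,I_1,\dots,I_m)\), the extension-stack count together with the Schubert conditions gives, as in \eqref{eq:A2.10},
\[
\operatorname{codim}\ge sq(g-1)+A+\sum_i c(I_i).
\]
The destabilizing condition \eqref{eq:A2.8} reads \(A+\sum_i w(I_i)\ge0\). By \eqref{eq:A2.13}, \(c(I_i)=w(I_i)+sq/2\), so
\[
\operatorname{codim}\ge sq\bigl(g-1+\tfrac m2\bigr)+A+\sum_i w(I_i)\ge sq\bigl(g-1+\tfrac m2\bigr)\ge(r-1)\bigl(g-1+\tfrac m2\bigr).
\]
The last inequality uses \(sq\ge r-1\) and the fact that \(g-1+m/2>0\), which follows from \eqref{eq:A2.12}. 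Since codimension is an integer, \eqref{eq:A2.12} forces it to be at least \(2\). This holds for every \((s,f)\), including the wall case with equality in \eqref{eq:A2.8}, so \eqref{eq:A2.11} follows. Kusakabe localization over the big-cell cover then makes \(W^s\) Oka, and descent along the surjective submersion \(W^s\to\mathcal P^s\) gives the Oka property exactly as before. Smoothness follows from the vanishing of \(H^2\) and the \(\mu_r\)-gerbe argument. Nonemptiness follows because \(W\setminus W^s\) has codimension \(\ge2\) in the nonempty space \(W\), so \(W^s\ne\varnothing\). For variable determinant we use the \([r]\)-base-change cover \eqref{eq:A1.10}. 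When \(g=0\), \(\operatorname{Pic}^0\) is a point and the two spaces coincide.

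The step I expect to be the main obstacle is the perturbation statement, together with checking that the Sun-type codimension count \eqref{eq:A2.10} is valid in genus \(\le1\), where automorphism groups of bundles are larger. The stack dimension computation itself is genus-independent, since it uses only Euler characteristics, so I would verify that nothing else depends on \(g\ge2\). For perturbation, only finitely many walls meet a neighbourhood of the equally spaced weights. A generic rational weight system \(\boldsymbol\alpha'\) off all walls makes semistability equal to stability. If \(\boldsymbol\alpha'\) is close enough, every \(\boldsymbol\alpha\)-stable object remains \(\boldsymbol\alpha'\)-stable, because stability is an open condition in the weights. Hence the new moduli space is nonempty. Corollary~\ref{cor:compact-parabolic-moduli}, which holds in every genus, then gives smooth projective holomorphically elliptic moduli spaces.

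Finally, the numerical list is arithmetic in \eqref{eq:A2.12}:
\begin{itemize}
\item for \(g=0\), the condition \((r-1)(m/2-1)>1\) gives \(m\ge5\), \(4\), \(3\) for \(r=2\), \(3\), \(\ge4\);
\item for \(g=1\), the condition \((r-1)m/2>1\) gives \(m\ge3\), \(2\), \(1\).
\end{itemize}
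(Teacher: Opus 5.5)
Your proposal is correct and follows essentially the same route as the paper. The excess identity \eqref{eq:A2.13} turns the bound \eqref{eq:A2.10} into an integer lower bound of at least \(sq(g-1+m/2)\), hence at least \(2\). The extension atlas and Kusakabe localization are then rerun in genus zero and one: for \(g=0\) one has \(V^{\rm reg}=V\), and for \(g=1\) the estimate \eqref{eq:A1.4} applies with \(h^0(K_C)=1\). The generic-weight statement follows from openness of stability together with Corollary~\ref{cor:compact-parabolic-moduli}.

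The paper makes two details explicit that you leave implicit. First, for full flags every stability wall is a proper hyperplane, since its coefficients \(r-s\) and \(-s\) are both nonzero; this is what guarantees that generic weights off all walls exist. Second, \(\alpha_{i1}=0\) lies on the boundary of the weight polytope, so the perturbed weights must be moved into its interior, which the paper does by a common translation at each marked point.
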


For flag types \(\mathbf n_i\), the fixed-determinant stable moduli
space has dimension
\[
(r^2-1)(g-1)+\sum_i\sum_{h<k}n_{ih}n_{ik};
\]
the variable-determinant space has dimension \(g\) larger. For full
flags the sum is \(mr(r-1)/2\). Thus the cases
\((g,r,m)=(0,2,5)\), \((0,4,3)\), and \((1,4,1)\) in
Theorem~\ref{thm:A2-2} give fixed-determinant spaces of dimensions
two, three, and six, respectively.

In rank two with odd \(m\), semistability already equals stability
at the displayed weights. A line
subbundle of degree \(f\) meeting \(k\) marked flag lines would have
equal parabolic slope only if
\[
f+\frac{k}{2}=\frac d2+\frac m4,
\qquad 4f+2k=2d+m,
\]
whose last equality has an even left side and an odd right side.
In particular,
\((g,r,m)=(0,2,5)\) already gives a smooth projective
holomorphically elliptic surface at the displayed weights.

\begin{proof}
For a rank-\(s\) subbundle, put \(q=r-s\). The identity
\eqref{eq:A2.13} gives the same excess \(sq/2\) at every marked
point. A parabolic destabilizer satisfies \eqref{eq:A2.8}. Since
\(A+\sum_i c(I_i)\) is an integer, \[
A+\sum_i c(I_i)\ge\left\lceil\frac{msq}{2}\right\rceil.
\]
The same extension-stack computation therefore gives \begin{equation}
\operatorname{codim}(\text{bad type})
\ge sq(g-1)+\left\lceil\frac{msq}{2}\right\rceil
=\left\lceil sq\left(g-1+\frac m2\right)\right\rceil.
\label{eq:A2.14}
\end{equation}
Under \eqref{eq:A2.12}, the coefficient \(g-1+m/2\) is positive and the smallest value of \(s(r-s)\) is \(r-1\). Thus every number in \eqref{eq:A2.14} is at least two.

We next construct the extension atlas in these genera. If \(C=\mathbb P^1\) and \(e\ge0\), then for every point \(p\), \[
H^1(\mathcal O_C(-p))=0,
\qquad
H^1(M(-p))=0.
\]
The cohomology sequence of \eqref{eq:A1.2} shows \(H^1(E_v(-p))=0\) for every \(v\), so \(V^{\rm reg}=V\). The dual-witness space vanishes in genus zero. In genus one, the argument leading to \eqref{eq:A1.4} applies with \(h^0(K_C)=1\), and the bound \(e-r\ge2\) makes its exceptional set have codimension at least two.

Choose \(e\) large enough that \[
\frac er>m,\qquad \frac er>2g-1+m,
\]
and also \(e-r\ge2\) in genus one. These inequalities give the
quotient-slope bound \eqref{eq:A2.3}, the incidence estimate for
everywhere independent sections, and the section-lifting argument.
Destabilizing degrees lie in the finite interval \(0<f\le e\).
Together with \eqref{eq:A2.14}, the argument for
\eqref{eq:A2.11} therefore gives a complement of codimension at least
two in the nonempty irreducible space \(W\). Its stable locus is
nonempty and Oka, and the classifying map gives the claimed moduli
spaces.

For the generic-weight assertion, fix a stable object at the displayed
weights. Stability of this object persists in a neighbourhood of the
weight system: only finitely many walls meet the bounded weight
polytope, and its relevant stability inequalities are strict. A small
common translation of the weights at each marked point preserves
stability and moves the system into the interior. For full flags, every
stability wall is a proper hyperplane, since its coefficients are
\(r-s\) at selected indices and \(-s\) at unselected ones. Thus arbitrarily
close generic rational systems give nonempty moduli spaces with
semistability equal to stability. Corollary~\ref{cor:compact-parabolic-moduli}
makes these spaces smooth, projective, and holomorphically elliptic.

\end{proof}

\begin{example}\label{ex:four-parabolic-points}
For rank two, degree zero, four ordered points on \(\mathbb P^1\),
and weights \((0,1/2)\), the stable parabolic moduli space is
\[
\mathbb P^1\setminus\{0,1,\infty,\kappa\},
\]
where \(\kappa\notin\{0,1,\infty\}\) is the cross-ratio of the marked
points \cite[Proposition 5.1]{EFK2022}. This hyperbolic curve is
not Oka.

Indeed, write \(E=\mathcal O(a)\oplus\mathcal O(-a)\), \(a\ge0\).
Its parabolic slope is one, so stability forces \(a=0\).
Constant sublines then force the four flag lines to be distinct.
Their cross-ratio parametrizes
\(\mathbb P^1\setminus\{0,1,\infty\}\). A saturated subline
\(\mathcal O(-1)\) gives a degree-one map between the two projective
lines; it meets all four flags exactly when their cross-ratio is
\(\kappa\), producing equal parabolic slope. Every other such
subline has slope at most \(-1+3/2<1\), and a subline
\(\mathcal O(-a)\), \(a\ge2\), has slope at most \(-a+2<1\).
Thus precisely one further parameter is excluded.
Here \((r-1)(g-1+m/2)=1\), at the boundary of
\eqref{eq:A2.12}.
\end{example}

\section{Holomorphic maps and embeddings}\label{sec:mapping-spaces}

Mapping and embedding spaces carry their natural complex structures,
characterized by holomorphic families, and the compact-open topology;
see \cite[\S2.2.1]{Bowman2016}.
A family of globally generated bundles over an Oka base gives spaces
of generating tuples. An entire lift of a dominating map to the base
provides the horizontal directions; complete translations of the
sections provide the vertical directions. We first carry out this
construction, then apply it to projective maps, embeddings, and stable
quotient bundles on curves. Rational and elliptic functions admit
separate constructions in smaller target dimension.

\subsection{Generating sections over an Oka base}\label{sec:generating-sections}

Let \(X\) be a compact complex manifold of dimension \(k\ge1\),
\(B\) an Oka manifold of dimension \(b\), and
\(\mathcal F\to X\times B\) a holomorphic vector bundle of rank \(r\).
Let \(V\to B\) be a rank-\(R\) holomorphic vector bundle equipped with
a surjective evaluation morphism
\[
\epsilon:p^*V\longrightarrow\mathcal F,
\qquad p:X\times B\longrightarrow B.
\]
Each element of \(V_\beta\) gives a section of \(\mathcal F_\beta\).
For this evaluation family, set
\[
\operatorname{Gen}_N(\mathcal F,V/B)
=\{(\beta,s_1,\ldots,s_N)\in V^{\oplus N}:
s_1,\ldots,s_N\text{ generate }\mathcal F_\beta\text{ on }X\}.
\]
Here generation is tested after applying \(\epsilon\). The rank-drop
incidence is closed analytic over the compact manifold \(X\), so its
projection to \(V^{\oplus N}\) is proper. Its image has complement
\(\operatorname{Gen}_N(\mathcal F,V/B)\). When \(V=p_*\mathcal F\)
commutes with base change, this is the space of generating tuples of
sections of the fibres.

\begin{theorem}\label{thm:generating-sections}
If
\begin{equation}\label{eq:generating-sections-bound}
N\ge r+k+\max\{1,b\},
\end{equation}
then \(\operatorname{Gen}_N(\mathcal F,V/B)\) is nonempty, strongly
dominable, and Oka.
\end{theorem}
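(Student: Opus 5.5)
The plan is to prove strong dominability directly, then invoke \cite[Theorem 1.4]{DW}. Write \(\pi:V^{\oplus N}\to B\) for the projection and \(G=\operatorname{Gen}_N(\mathcal F,V/B)\), an open subset of the total space of \(V^{\oplus N}\). Fix a point \((\beta_0,s^0_1,\ldots,s^0_N)\in G\); nonemptiness will come out of the same dimension count, applied fibrewise. Since \(B\) is Oka, choose \(f:\mathbb C^b\to B\) with \(f(0)=\beta_0\) and \(df_0\) invertible. By the Oka--Grauert principle \cite[Theorem 5.3.1]{F17}, \(f^*V\) is holomorphically trivial on \(\mathbb C^b\), with frame \(e_1,\ldots,e_R\). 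Extending each \(s^0_j\) with constant coefficients gives sections \(s_j(u)\) of \(f^*V\). The map \(u\mapsto(f(u),s_1(u),\ldots,s_N(u))\) lies in \(G\) near \(u=0\), but may leave \(G\) elsewhere. This is the main obstacle: the horizontal lift must stay inside the generating locus for every \(u\in\mathbb C^b\).

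To handle this obstacle, I will count dimensions in the bad locus. For fixed \(u\), the set of \(N\)-tuples in \((\mathbb C^R)^{N}\) that fail to generate \(\mathcal F_{f(u)}\) is the image of a rank-drop incidence. That incidence lies over \(X\times\mathbb P^{r-1}\), namely a point \(x\) together with a covector \(\lambda\in\mathcal F_x^*\) killing all \(\epsilon(s_j)(x)\). By surjectivity of \(\epsilon\), each condition \(\lambda(\epsilon(s_j)(x))=0\) is a nonzero linear condition on \(s_j\). Hence the bad set in the fibre has codimension at least \(N-(k+r-1)\ge b+2\) when \(b\ge1\), and at least \(2\) in all cases. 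The same fibre count shows that \(G\) is nonempty.

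Over \(\mathbb C^b\), I then consider the family of tuples \(u\mapsto (s_j(u)+\sum_i c_{ji}(u)e_i(u))\) with holomorphic coefficients \(c\). The union over \(u\) of the bad loci, \(\Sigma\subset\mathbb C^b\times\mathbb C^{RN}\), is a closed analytic set of codimension at least \(N-k-r+1\ge b+1\) (this uses \(N\ge r+k+b\)). Its projection to \(\mathbb C^{RN}\) along \(\mathbb C^b\) therefore has measure zero, being the image of a set of dimension less than \(RN\). So a generic constant translation \(c\in\mathbb C^{RN}\) avoids \(\Sigma\) over all of \(\mathbb C^b\). I need \(c\) small, so that it is a perturbation through the base point. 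To keep the lift through \(s^0\) exactly, I will use \(c(u)=\sum u_\ell c^{(\ell)}\), linear in \(u\) and vanishing at \(0\). A parametric transversality (Baire) count shows that for generic \(c^{(\ell)}\) the graph misses \(\Sigma\). This holds because the map \((u,c^{(\ell)})\mapsto\) tuple is a submersion away from \(u=0\), and the bound \(N\ge r+k+\max\{1,b\}\) gives codimension exceeding \(b\). If that is delicate, the fallback is Forstneri\v c's avoidance of codimension-\(\ge2\) analytic sets by entire maps.

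Given such a lift \(\sigma(u)\in G\), I will use translations for the vertical directions. I define \(F(u,z)=\sigma(u)+\sum z_{ji}e_i(u)\) on \(\mathbb C^b\times\mathbb C^{RN}\) and restrict \(z\) so that \(F\) stays in \(G\). Here I again apply the codimension bound: the composite \(\mathbb C^{RN}\setminus(\text{bad})\) is Oka by the codimension-two complement theorem \cite[§6.4]{F17}. So instead I will take \(z=\phi(w)\), with \(\phi:\mathbb C^{RN}\to\mathbb C^{RN}\setminus\Sigma_u\) dominating at \(0\), chosen with parameters by the same generic-linear-perturbation trick. The differential at the origin is block triangular, with \(df_0\) and the identity on the fibre, so it is invertible. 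Hence \(G\) is strongly dominable, and thus Oka.
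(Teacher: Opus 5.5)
Your nonemptiness count and your horizontal lift are essentially the paper's. The perturbation \(c(u)=\sum_\ell u_\ell c^{(\ell)}\) is a submersion in the coefficient directions for \(u\ne0\), and \(N\ge r+k+b\) makes the bad parameter set null. (The fibre codimension is \(N-k-r+1\ge b+1\), not \(b+2\), but \(b+1\) is exactly what that count needs.)

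The gap is the vertical step. Strong dominability needs one entire map on \(\mathbb C^b\times\mathbb C^{RN}\) with values in \(G\). So the vertical correction must depend holomorphically on \(u\) and avoid the moving bad set \(\Sigma_u\) for every \((u,w)\). Fibrewise Oka-ness of \(\mathbb C^{RN}\setminus\Sigma_u\) only gives a separate map \(\phi_u\) for each \(u\), with no holomorphic dependence on \(u\). (Also, the theorem in \S6.4 of Forstneri\v{c}'s book concerns algebraic subvarieties; \(\Sigma_u\) is algebraic only because it is a cone, via Chow's theorem.)

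The ``same generic-linear-perturbation trick'' cannot supply the \(u\)-dependence. Perturb in \(w\in\mathbb C^{RN}\) with generic coefficients \(\mathbf c\). For \(w\ne0\) the bad incidence then has dimension at most \(b+RN+\dim\mathbf c-(N-k-r+1)\). You would therefore need \(N-k-r+1>b+RN\), which never holds since \(RN\ge N\).

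Nor can you fall back on a general avoidance theorem. \(\Sigma\subset\mathbb C^b\times\mathbb C^{RN}\) is only analytic in \(u\), and complements of closed analytic sets of codimension two in \(\mathbb C^n\) need not even be dominable (Rosay--Rudin's unavoidable discrete sets). The construction has to exploit the structure of \(\Sigma\).

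The missing idea is a family of complete vector fields that preserve the generating locus.
\begin{itemize}
\item Let \(A_j\subset\mathbb C^b\times(\mathbb C^R)^{N-1}\) be the locus where the sections other than \(s_j\) fail to generate. It is closed analytic by Remmert's theorem, since \(X\) is compact.
\item Cartan's Theorem A gives an entire \(h_j\) vanishing on \(A_j\).
\item The shear \(s_j\mapsto s_j+t\,h_j(u,s_{\widehat j})e_a\) exists for all \(t\in\mathbb C\), because its coefficient does not involve \(s_j\).
\item This shear preserves generation. If the other sections generate, changing \(s_j\) is harmless; otherwise \(h_j=0\) and the flow is stationary.
\end{itemize}

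These \(NR\) fields span the vertical space at \(p_0\) only if \(h_j(p_{0,\widehat j})\ne0\). That means deleting any one section of the prescribed tuple must still leave a generating tuple. The paper arranges this by a constant \(\mathrm{GL}_N(\mathbb C)\)-change of the tuple:
\begin{itemize}
\item a hyperplane \(\ker\ell\) is bad exactly when \(\ell\in\operatorname{im}q_x^*\) for some \(x\);
\item these \(\ell\) form a subset of \(\mathbb P((\mathbb C^N)^*)\) of dimension at most \(k+r-1<N-1\);
\item a basis avoiding this subset works.
\end{itemize}
This is where \(N\ge r+k+1\) is really used, rather than as a codimension-two condition. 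Composing these flows with your lift \(\sigma\) then gives the block-triangular differential with invertible diagonal blocks that you intended.
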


\begin{proof}
For a fixed \(\beta\in B\), the evaluation family is surjective at
every \(x\in X\). The rank-drop incidence in
\(X\times V_\beta^{\oplus N}\) therefore has codimension
\(N-r+1\). Its proper image in \(V_\beta^{\oplus N}\) has
codimension at least \(N-r+1-k>0\), so generating tuples exist.

Fix \(p_0=(\beta_0,s_1^0,\ldots,s_N^0)\) in the generation space.
We will construct an entire map through \(p_0\) with invertible
differential. In \eqref{eq:generating-sections-bound}, the inequality
\(N\ge r+k+1\) permits a constant change of sections after which
deleting any one preserves generation at the prescribed tuple.
For \(b>0\), the inequality \(N\ge r+k+b\) gives the incidence
estimate for an entire lift over the base. Complete vertical flows
then supply the remaining directions.

\smallskip
\noindent\emph{Redundant generating tuples.}
At \(x\in X\), the prescribed tuple gives a surjection
\[
q_x:\mathbb C^N\longrightarrow(\mathcal F_{\beta_0})_x.
\]
If \(0\ne\ell\in(\mathbb C^N)^*\), then \(q_x|_{\ker\ell}\) fails to be
surjective exactly when
\begin{equation}\label{eq:bad-hyperplane}
\ell\in\operatorname{im}q_x^*.
\end{equation}
Indeed, failure means that a nonzero functional on
\((\mathcal F_{\beta_0})_x\) annihilates \(q_x(\ker\ell)\), so its pullback
is a nonzero multiple of \(\ell\). The bad hyperplanes are therefore
parametrized by the image of
\[
\mathbb P(\mathcal F_{\beta_0}^*)
\longrightarrow\mathbb P((\mathbb C^N)^*).
\]
This image is closed analytic and has dimension at most
\(k+r-1<N-1\). Choose a basis \(\ell_1,\ldots,\ell_N\) of
\((\mathbb C^N)^*\) whose projective points avoid it, successively avoiding
both this proper analytic subset and the span of the preceding vectors.
In the dual basis, deleting any one section leaves a generating tuple.
The corresponding fixed \(\mathrm{GL}_N(\mathbb C)\)-change is an
automorphism of the generation space, so we work with this tuple and undo
the change at the end.

\smallskip
\noindent\emph{An entire lift over the base.}
Since \(B\) is Oka, there is a holomorphic map
\[
\varphi:\mathbb C^b\longrightarrow B,\qquad
\varphi(0)=\beta_0,\qquad d\varphi_0\text{ an isomorphism}.
\]
The Oka--Grauert principle \cite[Theorem 5.3.1]{F17} trivializes
\(\varphi^*V\). Choose an entire frame
\(e_1(u),\ldots,e_R(u)\), and write
\(s_j^0=\sum_a a_{ja}e_a(0)\). Suppose first that \(b\ge1\). With
\(M=NbR\) parameters \(\mathbf c=(c_{j\ell a})\), consider
\begin{equation}\label{eq:generating-parameter-family}
s_j(u;\mathbf c)=\sum_{a=1}^R
\left(a_{ja}+\sum_{\ell=1}^b c_{j\ell a}u_\ell\right)e_a(u),
\qquad 1\le j\le N.
\end{equation}
Every such family agrees with the prescribed tuple at \(u=0\).

On \(X\times(\mathbb C^b\setminus\{0\})\times\mathbb C^M\), the
evaluation matrix with columns \(s_j(u;\mathbf c)(x)\) has, in any local
trivialization, a differential in the \(\mathbf c\)-directions that
surjects onto \(\operatorname{Mat}_{r\times N}(\mathbb C)\): some
\(u_\ell\) is nonzero, and the evaluated frame spans
\((\mathcal F_{\varphi(u)})_x\). The matrices of rank \(t\) form a smooth
stratum of codimension \((r-t)(N-t)\). On a chart where a
\(t\times t\) minor \(A\) is invertible, writing a matrix as
\[
\begin{pmatrix}A&B'\\ C'&D\end{pmatrix}
\]
shows that rank \(t\) is equivalent to \(D=C'A^{-1}B'\), which gives the
claimed codimension; the rank-zero stratum is the zero matrix. Hence the
inverse image of every nonsurjective rank stratum has dimension at most
\begin{equation}\label{eq:generating-incidence-bound}
M+k+b-(N-r+1)<M,
\end{equation}
where the strict inequality follows from \(N\ge r+k+b\).

Each incidence stratum has a countable atlas of real dimension less
than \(2M\). Sard's theorem makes the image of each chart in
\(\mathbb C^M\) a null set. Taking the countable union over charts
and the finite union over ranks still gives a null set; this covers
all \(u\ne0\) in the noncompact base. A parameter \(\mathbf c\)
outside that image gives generation for every \(u\ne0\), and the
tuple at zero remains the prescribed generating tuple. Thus we obtain
an entire section
\[
\sigma:\mathbb C^b\longrightarrow
\varphi^*\operatorname{Gen}_N(\mathcal F,V/B)
\]
through the lift of \(p_0\). If \(b=0\), use the constant section.

\smallskip
\noindent\emph{Complete vertical flows.}
The chosen frame identifies the pulled-back generation space with an open
subset \(\widetilde G\subset\mathbb C^b\times(\mathbb C^R)^N\). For
\(1\le j\le N\), let \(A_j\) be the subset of
\[
T_j=\mathbb C^b\times(\mathbb C^R)^{N-1}
\]
where the sections other than the \(j\)th fail to generate. The
corresponding rank-drop incidence in \(X\times T_j\) is closed analytic
and projects properly to \(T_j\); Remmert's theorem \cite[Chapter 2]{F17} makes \(A_j\) closed
analytic. The point \(p_{0,\widehat j}\) obtained by deleting the
\(j\)th section lies outside \(A_j\). Cartan's theorem A \cite[§2.6]{F17} applied to its
ideal sheaf gives
\[
h_j\in\mathcal O(T_j),\qquad h_j|_{A_j}=0,\qquad
h_j(p_{0,\widehat j})\ne0.
\]

Writing \(s_{ja}\) for the frame coordinates, consider
\[
V_{ja}=h_j(u,s_{\widehat j})\frac{\partial}{\partial s_{ja}},
\qquad 1\le j\le N,\quad1\le a\le R.
\]
The flow is
\begin{equation}\label{eq:generating-shear-flow}
\Phi_{ja}^t:\quad
s_j\longmapsto s_j+t\,h_j(u,s_{\widehat j})e_a.
\end{equation}
The coefficient is independent of \(s_j\), so the flow exists for every
\(t\in\mathbb C\) and is inverted by replacing \(t\) with \(-t\). It
preserves \(\widetilde G\): if the remaining sections generate, changing
\(s_j\) preserves generation; otherwise \(h_j=0\). At the lifted point
\(p_0\), these \(NR\) fields span the vertical tangent space.

Order the fields as \(V_1,\ldots,V_{NR}\) and compose their flows with
\(\sigma\):
\[
F(u,t)=\Phi_{NR}^{t_{NR}}\circ\cdots\circ\Phi_1^{t_1}(\sigma(u)):
\mathbb C^{b+NR}\longrightarrow\widetilde G.
\]
At \((0,0)\), its differential is
\begin{equation}\label{eq:generating-full-rank-block}
dF_{(0,0)}=
\begin{pmatrix}
I_b&0\\
d(\sigma_2)_0&
\operatorname{diag}\bigl(h_1(p_{0,\widehat1})I_R,\ldots,
h_N(p_{0,\widehat N})I_R\bigr)
\end{pmatrix},
\end{equation}
where \(\sigma_2\) is the component of \(\sigma\) in
\((\mathbb C^R)^N\). This matrix is invertible; when \(b=0\), only
the lower-right block occurs.
Since \(d\varphi_0\) is invertible, the natural map from the pullback
to \(\operatorname{Gen}_N(\mathcal F,V/B)\) is locally biholomorphic
at the lifted point. Composing \(F\) with this map and undoing the
constant change of sections gives strong dominability at the original
\(p_0\). The strong dominability criterion \cite[Theorem 1.4]{DW}
now makes the generation space Oka.
\end{proof}

For a curve, the bound in \eqref{eq:generating-sections-bound} becomes
\(N\ge\max\{r+2,r+b+1\}\).

We will use the following pointwise observation for quotient
presentations. If \(q:Z\to Y\) is holomorphic, \(dq_z\) is surjective,
and \(Z\) is strongly dominable at \(z\), compose a dominating entire
map with \(q\) and restrict its source to a linear complement of the
differential kernel. This proves strong dominability of \(Y\) at \(q(z)\).

\subsection{Maps to projective space}\label{sec:projective-maps}

For projective maps, take a smooth connected projective variety
\(X\) of dimension \(k\ge1\), put
\[
q=h^1(X,\mathcal O_X),
\]
and fix a nonempty component \(B=\operatorname{Pic}^{\alpha}(X)\).
Suppose every \(L\in B\) is globally generated and
\begin{equation}
h^0(X,L)=R\qquad\text{for all }L\in B
\label{eq:projective-map-vanishing}
\end{equation}
for a fixed integer \(R\). Vanishing of \(H^1(X,L)\) throughout \(B\)
is a sufficient condition for this constancy.

\begin{theorem}\label{thm:projective-mapping-space}
If
\begin{equation}
m\ge k+\max\{1,q\},
\label{eq:projective-map-bound}
\end{equation}
then the full mapping space
\[
\operatorname{Hol}_{\alpha}(X,\mathbb P^m)
=\{f:X\to\mathbb P^m:f^*\mathcal O(1)\in B\}
\]
is smooth and Oka.
\end{theorem}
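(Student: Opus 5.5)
The plan is to apply Theorem~\ref{thm:generating-sections} to a suitable Poincaré bundle over \(B\) and descend to the mapping space. A holomorphic map \(f:X\to\mathbb P^m\) with \(f^*\mathcal O(1)=L\in B\) is the same as an \((m+1)\)-tuple of sections of \(L\) without common zero, modulo the common scaling by \(\mathbb C^*\).

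\textbf{Setup.} Since \(X\) is projective, it has a point, so normalize along a base point \(x_0\) to obtain a Poincaré line bundle \(\mathcal P\to X\times B\). The base \(B\) is a complex torus of dimension \(q\), hence a connected complex Lie group, and therefore Oka.

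By \eqref{eq:projective-map-vanishing} and Grauert's base change theorem, \(V=p_*\mathcal P\) is locally free of rank \(R\) and commutes with base change. The evaluation map \(p^*V\to\mathcal P\) is surjective because every \(L\in B\) is globally generated. Theorem~\ref{thm:generating-sections} with \(r=1\), \(N=m+1\) and \(b=q\) requires \(m+1\ge 1+k+\max\{1,q\}\), which is exactly \eqref{eq:projective-map-bound}. Hence \(G=\operatorname{Gen}_{m+1}(\mathcal P,V/B)\) is nonempty, strongly dominable and Oka.

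\textbf{Classifying map and descent.} The next step is to show that the classifying map
\[
\Psi:G\to\operatorname{Hol}_\alpha(X,\mathbb P^m),\qquad (\beta,s_0,\ldots,s_m)\mapsto[s_0:\cdots:s_m],
\]
is a surjective holomorphic submersion, and in fact a principal \(\mathbb C^*\)-bundle.

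\emph{Holomorphy.} \(\Psi\) is holomorphic because \(G\) carries a tautological holomorphic family of maps \(X\times G\to\mathbb P^m\).

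\emph{Surjectivity.} Every \(f\) arises from \(L=f^*\mathcal O(1)\in B\) together with the pulled-back coordinate sections, which generate.

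\emph{Fibres.} Two tuples give the same map exactly when \(L\cong L'\), which forces \(\beta=\beta'\), and the tuples differ by a scalar. The scalar is a global automorphism of \(L\), and \(H^0(\mathcal O_X^*)=\mathbb C^*\) since \(X\) is compact and connected. The \(\mathbb C^*\)-action is free because some \(s_j\neq0\).

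\emph{Smoothness of the target.} For this I would use the universal property. Any holomorphic family \(X\times T\to\mathbb P^m\) locally lifts to \(G\): the classifying map \(T\to B\) is holomorphic, and the pullback of \(\mathcal O(1)\) is locally isomorphic to the pulled-back Poincaré bundle, since the two differ by a line bundle from \(T\). This local lifting exhibits \(\operatorname{Hol}_\alpha\) as the quotient \(G/\mathbb C^*\), with local holomorphic sections of \(\Psi\). Since \(G\) is open in the vector bundle \(V^{\oplus(m+1)}\) and hence smooth, the quotient is smooth. Alternatively, one can check smoothness through the deformation space \(H^0(f^*T\mathbb P^m)\), whose Euler-sequence presentation matches \(T G\) modulo the \(\mathbb C^*\)-orbit direction, using \(H^1(\mathcal O_X)\cong T B\).

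\emph{Conclusion.} With \(\Psi\) a surjective submersion, the pointwise observation after Theorem~\ref{thm:generating-sections} transfers strong dominability from \(G\) to \(\operatorname{Hol}_\alpha\). The criterion \cite[Theorem 1.4]{DW}, or directly \cite[Theorem 7.5]{DW}, then gives the Oka property.

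\textbf{Main obstacle.} The hard step is identifying the natural complex structure of the mapping space with the quotient \(G/\mathbb C^*\), and thereby proving smoothness and the submersion property. The delicate point is the local lifting of families. I would handle it through the universal property of \(\operatorname{Pic}\), with the base point \(x_0\) rigidifying the Poincaré bundle. The infinitesimal count \(\dim G-1=q+(m+1)R-1=\dim H^0(f^*T\mathbb P^m)\) should serve as a check; it uses \(h^1(\mathcal O_X)=q\) and the vanishing implicit in \eqref{eq:projective-map-vanishing}.
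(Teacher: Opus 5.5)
Your proposal is correct and follows the paper's proof essentially step by step: the normalized Poincar\'e bundle with Grauert base change, the rank-one case of Theorem~\ref{thm:generating-sections} with \(N=m+1\) and \(b=q\), the identification of \(\operatorname{Hol}_{\alpha}(X,\mathbb P^m)\) with the scalar quotient of the generating-tuple space inside \(\mathbb P(E^{\oplus(m+1)})\) (the paper cites Aumonier for this, while you sketch the local-lifting argument), and the transfer of strong dominability through the submersion followed by \cite[Theorem 1.4]{DW}. One small correction to your side dimension check: the hypothesis gives only constancy of \(h^0\), not \(H^1(X,L)=0\); the count \(\dim H^0(f^*T\mathbb P^m)=(m+1)R-1+q\) still holds, because constancy forces the cup product \(H^1(X,\mathcal O_X)\otimes H^0(X,L)\to H^1(X,L)\) to vanish, so the connecting map in the Euler sequence is zero.
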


\begin{proof}
Normalize a Poincar\'e line bundle \(\mathcal P\) on \(X\times B\)
at a fixed point of \(X\). Grauert's cohomology and base change theorem
\cite[Section 7, Satz 5]{Grauert60}, applied to
\eqref{eq:projective-map-vanishing}, shows that
\[
E=p_*\mathcal P
\]
is a vector bundle with fibre \(H^0(X,L)\), that its formation commutes with base change, and that the evaluation map \(p^*E\to\mathcal P\) is surjective. Let
\[
\operatorname{Gen}_{m+1}(\mathcal P,E/B)\subset E^{\oplus(m+1)}
\]
be the open set of tuples that generate the corresponding line bundle at every point of \(X\).

A map to \(\mathbb P^m\) is represented by a line bundle and
\(m+1\) generating sections, up to a common nonzero scalar.
For a family with classifying map to \(B\), its line bundle differs
from the pulled-back normalized Poincar\'e bundle by a line bundle
from the parameter space. Projectivization absorbs this factor,
giving the isomorphism
\begin{equation}
\operatorname{Hol}_{\alpha}(X,\mathbb P^m)
\cong \operatorname{Gen}_{m+1}(\mathcal P,E/B)/\mathbb C^*,
\label{eq:projective-map-quotient}
\end{equation}
of natural complex spaces, as in Aumonier
\cite[Proposition 2.1]{A2024}. In particular, the mapping space is a
smooth open subset of \(\mathbb P(E^{\oplus(m+1)})\).

The base \(B\) is a complex torus, hence Oka. The numerical condition
\eqref{eq:projective-map-bound} is precisely the rank-one case of
Theorem~\ref{thm:generating-sections} for \(m+1\) sections, so the
generating-tuple space is strongly dominable. The scalar quotient in
\eqref{eq:projective-map-quotient} is a holomorphic submersion, so the
preceding observation proves strong dominability of the mapping space.
The criterion \cite[Theorem 1.4]{DW} gives the Oka property.
\end{proof}

Sufficiently positive line bundles satisfy these hypotheses uniformly
over the Picard component. If \(L_0\) and \(A\) are line bundles on \(X\), with \(A\) ample, relative Serre vanishing \cite[Tag 01XO]{A2026} on \(X\times\operatorname{Pic}^0(X)\) gives an integer \(N_0\) such that every
\[
L_0\otimes A^N\otimes P,
\qquad N\ge N_0,\quad P\in\operatorname{Pic}^0(X),
\]
has vanishing first cohomology and is globally generated. Applying the same argument after tensoring with the universal ideal of the proper length-two Hilbert scheme makes these bundles very ample as well, as in Aumonier's argument \cite[Lemma 3.7]{A2024}.

\begin{corollary}\label{cor:curve-projective-maps}
Let \(C\) be a smooth connected projective curve of genus \(g\). If
\[
d\ge2g,
\qquad
m\ge\max\{2,g+1\},
\]
then \(\operatorname{Hol}_d(C,\mathbb P^m)\) is Oka.
\end{corollary}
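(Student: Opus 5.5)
We apply Theorem~\ref{thm:projective-mapping-space} with \(X=C\), so that \(k=1\) and \(q=h^1(C,\mathcal O_C)=g\), and with \(B=\operatorname{Pic}^d(C)\). The numerical condition \eqref{eq:projective-map-bound} reads
\[
m\ge 1+\max\{1,g\}.
\]
This is exactly \(m\ge\max\{2,g+1\}\), the hypothesis of the corollary.

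It remains to check the standing hypotheses. Every \(L\in\operatorname{Pic}^d(C)\) must be globally generated, and \(h^0(C,L)\) must be constant in \(L\). We verify both using the bound \(d\ge2g\). For any \(L\) of degree \(d\) and any point \(p\in C\), the bundle \(L(-p)\) has degree \(d-1\ge2g-1>2g-2\). Serre duality therefore gives
\[
H^1(C,L(-p))\cong H^0(C,K_C\otimes L^{-1}(p))^*=0,
\]
since the line bundle on the right has negative degree. The sequence
\[
0\to L(-p)\to L\to L_p\to0
\]
then shows that \(H^0(L)\to L_p\) is surjective, so \(L\) is globally generated. The same degree argument gives \(H^1(C,L)=0\), so Riemann--Roch yields \(h^0(C,L)=d-g+1=:R\) for every \(L\in B\). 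This is \eqref{eq:projective-map-vanishing}, via the sufficient condition stated just before Theorem~\ref{thm:projective-mapping-space}.

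Theorem~\ref{thm:projective-mapping-space} then applies and shows that \(\operatorname{Hol}_d(C,\mathbb P^m)\) is smooth and Oka. When \(g=0\), the base \(B\) is a point; this case is covered by the \(\max\{1,b\}\) convention in Theorem~\ref{thm:generating-sections}, which yields the condition \(m\ge2\). There is no real obstacle here: the only point needing care is that global generation must hold uniformly over the whole Picard component, and \(d\ge2g\) is precisely the threshold at which the vanishing \(H^1(L(-p))=0\) holds for all \(L\) and all \(p\) simultaneously.
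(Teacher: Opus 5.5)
Your proposal is correct and is essentially the paper's proof. Both arguments use Serre duality on \(L(-p)\), with \(\deg(K_C\otimes L^{-1}(p))=2g-1-d<0\), to obtain global generation and \(H^1(C,L)=0\) uniformly over \(\operatorname{Pic}^d(C)\), and then apply Theorem~\ref{thm:projective-mapping-space} with \(k=1\) and \(q=g\).
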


\begin{proof}
Every degree-\(d\) line bundle \(L\) is globally generated and has \(H^1(C,L)=0\), because for every \(x\in C\),
\[
H^1(C,L(-x))\cong
H^0(C,K_C\otimes L^{-1}(x))^*=0.
\]
The vanishing follows from
\(\deg(K_C\otimes L^{-1}(x))=2g-1-d<0\).
Theorem~\ref{thm:projective-mapping-space} applies with \(k=1\) and
\(q=g\).
\end{proof}

\subsection{Rational functions}\label{sec:rational-functions}

For \(d\ge1\), write
\[
R_d=\operatorname{Hol}_d(\mathbb P^1,\mathbb P^1).
\]
The space \(R_2\) is homogeneous
\cite[Theorem 1.1 and Corollary 1.2]{Hanysz2015}. Hanysz proved that
\(R_3\) is strongly dominable and strongly \(\mathbb C\)-connected
\cite[Theorems 1.7--1.8]{Hanysz2015}.
Xie subsequently proved holomorphic ellipticity of \(R_3\)
\cite[Theorem 1.3]{XieCubic}. The proposition below answers the
degree-by-degree Oka question posed by Hanysz \cite[\S1]{Hanysz2015}.
For arbitrary positive degree, Euclidean coordinates give a dense Oka
open subset of the full mapping space.

\begin{proposition}\label{prop:rational-functions-oka}
For every \(d\ge1\), the manifold \(R_d\) is smooth affine of dimension
\(2d+1\), Oka, and holomorphically elliptic. For any \(p,q\in\mathbb P^1\),
the based space
\[
\{f\in R_d:f(p)=q\}
\]
is smooth affine of dimension \(2d\), Oka, and holomorphically elliptic.
\end{proposition}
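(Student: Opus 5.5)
The plan is to recognize the based space as a holomorphic family of connected complex Lie groups over affine space, apply Theorem~\ref{prop:connected-group-family}, and then pass to \(R_d\) through the evaluation bundle. Holomorphic ellipticity will come from complete vector fields whose flows compose to a dominating spray.

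\emph{Algebraic structure.} Write a degree-\(d\) rational map as \([P:Q]\) with \(P,Q\) binary forms of degree \(d\) without common zero. Then \(R_d\) is the complement in \(\mathbb P^{2d+1}\) of the resultant hypersurface \(\{\operatorname{Res}(P,Q)=0\}\). This hypersurface is an ample divisor, so \(R_d\) is smooth affine of dimension \(2d+1\).

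\emph{The based space.} Precomposing with a Möbius transformation of the source lets us take \(p=\infty\), and postcomposing with one of the target lets us take \(q=0\). The based space is then identified with pairs \((P,Q)\), where \(P\) is monic of degree \(d\), \(\deg Q\le d-1\), and \(P,Q\) are coprime. This is the complement of \(\{\operatorname{Res}=0\}\) in \(\mathbb C^{2d}\), hence affine of dimension \(2d\). The key observation is that coprimality of \(Q\) with \(P\) depends only on the class of \(Q\) in
\[
A_P=\mathbb C[z]/(P).
\]
Reduction modulo \(P\) identifies polynomials of degree \(<d\) with \(A_P\) via the basis \(1,z,\ldots,z^{d-1}\), and under this identification the based space is exactly the unit group \(A_P^\times\).

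\emph{The group family.} For each monic \(P\), the unit group \(A_P^\times\) is the complement of a hypersurface in a finite-dimensional commutative algebra, so it is a connected commutative complex Lie group. Multiplication in \(A_P\) depends polynomially on the coefficients of \(P\), and the identity section is \(Q=1\). Hence
\[
\{(P,Q)\}\longrightarrow\{\text{monic }P\}\cong\mathbb C^d
\]
is a holomorphic family of connected complex Lie groups over an Oka base. Theorem~\ref{prop:connected-group-family} then makes the based space strongly dominable and Oka.

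\emph{From the based space to \(R_d\).} Evaluation \(f\mapsto f(p)\) is equivariant for postcomposition by \(\mathrm{PSL}_2(\mathbb C)\). Therefore \(R_d\to\mathbb P^1\) is the holomorphic fibre bundle associated to \(\mathrm{PSL}_2\to\mathbb P^1\), with the based space as fibre. It is locally trivial with Oka base and Oka fibre, so \(R_d\) is Oka (Oka fibre bundles, \cite{F17}). Alternatively, \(\mathrm{PSL}_2\times\{\text{based}\}\to R_d\) is a surjective submersion, and descent \cite[Theorem 7.5]{DW} applies.

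\emph{Ellipticity: vertical directions.} The plan is to exhibit finitely many complete holomorphic vector fields spanning every tangent space; the composition of their flows is then a dominating spray. Vertical directions come from the fields \(Q\mapsto Q\cdot z^j \bmod P\) for \(0\le j\le d-1\). Their flows are
\[
Q\longmapsto Q\exp(tz^j)\bmod P,
\]
which are complete and preserve the unit group, and at every point they span the fibre \(A_P\).

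\emph{Ellipticity: horizontal directions.} For the directions along the base I would first try shears of the form
\[
P\longmapsto P+t\,h\,R,
\]
where \(R\) has degree \(<d\), so \(P\) stays monic, and \(h\) is a function independent of \(P\) that makes the flow linear and complete. Coprimality would be kept by choosing \(R\) as a multiple of \(Q\) (\(P\mapsto P+tcQ\) preserves \(\gcd(P,Q)\)), supplemented by the affine reparametrizations \(z\mapsto az+b\) of the source.

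\emph{Main obstacle.} The hardest step is producing enough of these horizontal complete fields to span all \(d\) base directions at every point, since multiples of \(Q\) of degree \(<d\) do not obviously fill the space. A fallback is to conjugate the multiplication flows by the inversion \((P,Q)\mapsto(Q,P)\) of the full space, together with the \(\mathrm{PSL}_2\times\mathrm{PSL}_2\) action on \(R_d\). Under this symmetry, roles of numerator and denominator swap, so vertical fields of one chart become horizontal for the other, and one would check spanning on the open dense set where both charts apply. Ellipticity of \(R_d\) then follows from that of the based space together with the transitive \(\mathrm{PSL}_2\) action on the target.
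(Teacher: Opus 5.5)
Your proof of the Oka property is correct, but it takes a different route from the paper. The paper works with the based space of maps sending \(\infty\) to \(0\). It uses Euclidean division, in Cazanave's continued-fraction coordinates, to find a dense Zariski-open subset \(U_d\cong\mathbb C^d\times(\mathbb C^*)^d\), and then applies analytic extension \cite[Theorem 1.9]{DW}. You instead observe that the fibre over a monic denominator \(P\) is the unit group of \(\mathbb C[z]/(P)\). This makes the based space a family of connected commutative complex Lie groups over \(\mathbb C^d\), so Theorem~\ref{prop:connected-group-family} applies. Inversion, which you did not mention, is holomorphic by Cramer's rule, since the norm is, up to sign, the resultant. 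Your route avoids both the dense open set and the extension theorem. In fact, commutativity gives more: the fibrewise exponential \((P,Y)\mapsto(P,\exp(Y)\bmod P)\) is already a surjective local biholomorphism from \(\mathbb C^{2d}\) onto the based space. Your passage to \(R_d\) through the evaluation bundle, or through descent, is fine.

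The gap is holomorphic ellipticity, which your proposal does not establish. Your multiplication flows do span the vertical directions. However, you never produce complete fields spanning the \(d\) base directions at every point. For a shear \(P\mapsto P+t\,hR\) with \(h,R\) constant along the flow, coprimality with \(Q\) for all \(t\in\mathbb C\) forces \(hR\) to vanish at every root of \(Q\). At a generic point, where \(\deg Q=d-1\) with simple roots, this leaves only multiples of \(Q\). The source affine maps add at most two more directions, so for larger \(d\) the base is not spanned.

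The fallback does not close this gap, for two reasons.
\begin{itemize}
\item Spanning on a dense open set does not give a dominating spray, which must be surjective at every point.
\item Ellipticity does not pass from the based fibre to \(R_d\) merely because \(\mathrm{PSL}_2\) acts transitively on the target. Your vertical fields are not invariant under the stabilizer of \(q\): postcomposition with \(w\mapsto w/(cw+1)\) acts by \((P,Q)\mapsto(P+cQ,Q)\), which changes the modulus of the multiplication flows. So the fields do not globalize.
\end{itemize}

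No explicit spray is needed. You have already shown that both spaces are affine, hence Stein, and Oka, and a Stein Oka manifold is holomorphically elliptic \cite[Proposition 5.6.15]{F17}. This is exactly how the paper concludes.
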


\begin{proof}
Pairs of degree-\(d\) homogeneous polynomials in two variables, modulo
a common nonzero scalar, identify \(R_d\) with the open subset of
\(\mathbb P^{2d+1}\) on which the resultant is nonzero. The resultant
is homogeneous of degree \(2d>0\), so this open subset is affine,
smooth, and of dimension \(2d+1\).

Source and target automorphisms identify all the based spaces.
The fibre of evaluation at infinity over \(0\) is
\[
\mathcal R_d=
\{(q,p):q\text{ monic of degree }d,\ \deg p<d,
\ \operatorname{Res}(q,p)\ne0\}\subset\mathbb C^{2d},
\]
under \((q,p)\mapsto p/q\). The monic normalization of \(q\)
makes this representation unique for maps taking \(\infty\) to
\(0\).

\smallskip
\noindent\emph{Euclidean coordinates.}
We parametrize the locus where each Euclidean division lowers the
degree by one. The linear quotients supply affine parameters, while
the remainder normalizations supply nonzero parameters.
We use the linear-partial-quotient case of Cazanave's decomposition
\cite[Lemma A.1]{FarbWolfson2017}, applied to \(q/p\) and written with
monic remainders.
For
\[
b_1,\ldots,b_d\in\mathbb C,
\qquad a,c_1,\ldots,c_{d-1}\in\mathbb C^*,
\]
set
\begin{equation}\label{eq:rational-functions-recursion}
r_d=1,\qquad r_{d-1}=z-b_d,\qquad
r_{i-1}=(z-b_i)r_i+c_i r_{i+1}
\quad(i=d-1,\ldots,1).
\end{equation}
Backward induction shows that \(r_i\) is monic of degree \(d-i\) and
that adjacent remainders are coprime. Hence
\((q,p)=(r_0,a r_1)\) lies in \(\mathcal R_d\).

Let \(U_d\subset\mathcal R_d\) consist of pairs with \(\deg p=d-1\)
and, when \(d\ge2\), successive nonzero monic remainders of degrees
\(d-2,\ldots,0\). Monic division recovers all the parameters:
\(a=\operatorname{lc}(p)\), \(r_0=q\), \(r_1=p/a\), and
\[
r_{i-1}=(z-b_i)r_i+\widetilde r_{i+1},\qquad
c_i=\operatorname{lc}(\widetilde r_{i+1}),\qquad
r_{i+1}=\widetilde r_{i+1}/c_i.
\]
Here \(\operatorname{lc}\) is the leading coefficient; the last
parameter is recovered from \(r_{d-1}=z-b_d\). At each division,
the required degree is imposed by the nonvanishing of the leading
coefficient. Division is regular on that open set, so \(U_d\) is
Zariski open and the two constructions are algebraic inverses:
\[
U_d\cong\mathbb C^d\times(\mathbb C^*)^d.
\]
For \(d=1\), the recursion is empty and
\(q=z-b_1\), \(p=a\ne0\), so
\(U_1=\mathcal R_1\cong\mathbb C\times\mathbb C^*\).
The open set \(U_d\) is nonempty and Oka. Since
\(\mathcal R_d\) is irreducible, analytic extension
\cite[Theorem 1.9]{DW} makes \(\mathcal R_d\) Oka.
Evaluation at infinity is a holomorphic fibre bundle
\[
\operatorname{ev}_\infty:R_d\longrightarrow\mathbb P^1
\]
with fibre \(\mathcal R_d\). Indeed, over the target chart
\(b=f(\infty)\in\mathbb C\), its trivialization is
\[
f=b+\frac pq,\qquad (q,p)\in\mathcal R_d.
\]
The monic normalization makes \(b,q,p\) depend algebraically on \(f\);
a target M\"obius transformation gives the other local trivialization.
Since the base and fibre are Oka, the bundle theorem
\cite[Corollaries 2.51--2.52]{For} makes \(R_d\) Oka.
Both spaces are affine and Stein, so they are
holomorphically elliptic \cite[Proposition 5.6.15]{F17}.
\end{proof}

\subsection{Elliptic functions}\label{sec:elliptic-functions}

For a complex elliptic curve \(E\), consider
\[
M_d=\operatorname{Hol}_d(E,\mathbb P^1).
\]
Bowman's homogeneity theorem treats \(M_2\)
\cite[Theorem 24]{Bowman2016}. For \(M_3\), his six-sheeted
branched surjection from a smooth Oka manifold
\cite[Theorem 34]{Bowman2016} gives the Oka property by holomorphic
descent \cite[Theorem 7.5]{DW}; Xie proved the stronger holomorphic
ellipticity statement \cite[Theorem 1.2]{XieCubic}. Bowman also proposed
studying the flexibility of these spaces in higher degree
\cite[\S1.3]{Bowman2016}. The construction below gives their Oka
property by exponential interpolation on two ramified fibres and
applies to every \(d\ge3\).

\begin{proposition}\label{prop:elliptic-functions-oka}
For every complex elliptic curve \(E\) and every \(d\ge2\), the
manifold \(M_d\) is smooth, irreducible, and Oka, of dimension \(2d\).
For every \(L\in\operatorname{Pic}^d(E)\), the submanifold
\[
M_d(L)=\{f\in M_d:f^*\mathcal O_{\mathbb P^1}(1)\cong L\}
\]
is smooth affine of dimension \(2d-1\) and holomorphically elliptic.
\end{proposition}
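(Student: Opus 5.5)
The plan is to split \(M_d\) along the classifying map to \(\operatorname{Pic}^d(E)\), settle each fibre \(M_d(L)\) as an affine space carrying complete flows, and then lift over the base as in Theorem~\ref{thm:projective-mapping-space}.

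\emph{Structure of \(M_d\) and the fibres.} Since \(d\ge2=2g\) with \(g=1\), every \(L\in\operatorname{Pic}^d(E)\) is globally generated with \(h^0(L)=d\) and \(H^1(L)=0\). Hence \eqref{eq:projective-map-quotient} applies with \(m=1\):
\[
M_d\cong\operatorname{Gen}_2(\mathcal P,E'/B)/\mathbb C^*,
\qquad B=\operatorname{Pic}^d(E),
\]
where \(E'=p_*\mathcal P\) is the rank-\(d\) bundle of sections. So \(M_d\) is a smooth open subset of the \(\mathbb P^{2d-1}\)-bundle \(\mathbb P(E'^{\oplus2})\) over the elliptic curve \(B\). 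It is irreducible because it is nonempty and Zariski open in an irreducible space, and its dimension is \(2d\).

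The fibre \(M_d(L)\) is the complement in \(\mathbb P(H^0(L)^{2})\cong\mathbb P^{2d-1}\) of the pairs \((s_0,s_1)\) having a common zero. That locus is the image of the incidence \(\{(x,[s_0:s_1]):s_0(x)=s_1(x)=0\}\), which has dimension \(1+2d-3=2d-2\). Its image is therefore a hypersurface, namely the zero set of a resultant-type section of some positive multiple of \(\mathcal O(1)\). This hypersurface is nonempty: for instance \(s_0=s_1\) gives a common zero. Consequently \(M_d(L)\) is affine of dimension \(2d-1\). Note that Theorem~\ref{thm:projective-mapping-space} does not apply directly, since \(m=1<k+q=2\); the argument has to go through the fibres.

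\emph{Oka property of \(M_d(L)\).} By \cite[Theorem 1.9]{DW} it suffices to find a dense Zariski-open Oka subset, in the spirit of the Euclidean coordinates of Proposition~\ref{prop:rational-functions-oka}. I would use the hint of interpolation on two ramified fibres. Normalize \(f\) by the divisor of poles \(D=f^{-1}(\infty)\) and the divisor of zeros \(f^{-1}(0)\), both lying in \(|L|\). Pairs of disjoint reduced divisors in \(|L|\), together with a scalar, give a dense open subset. This subset fibres over the open set of disjoint pairs in \(|L|\times|L|\) with fibre \(\mathbb C^*\).

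A cleaner route is to use complete vector fields: the shears \(s_1\mapsto s_1+t\,h(s_0)\,\sigma\), exactly as in \eqref{eq:generating-shear-flow}. On the affine cone, \(h\) can be taken to be a function vanishing where \(s_0\) alone has a double zero. This requires the auxiliary condition that the deletion-type locus has codimension large enough, which fails for two sections. So instead I would take the flows \(s_1\mapsto s_1+t\,s_0\) (complete and preserving generation), \(s_0\mapsto s_0+t\,s_1\), and the diagonal torus actions. Together these generate an \(\mathrm{SL}_2\)-action, and the quotient is the space of pencils.

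The essential remaining input is ellipticity over the Grassmannian \(\operatorname{Gr}(2,H^0(L))\) of base-point-free pencils. Its complement is the locus of pencils with a base point, which has dimension \(1+2(d-3)+\ldots\), that is, codimension \(d-2\). This is \(\ge2\) for \(d\ge4\), while \(d=2,3\) are known (Bowman, Xie). For \(d\ge4\) the complement of an algebraic subset of codimension \(\ge2\) in the union of affine big cells is Oka by \cite[§6.4]{F17} and Kusakabe localization \cite[Theorem 1.4]{Kus21}. The space \(M_d(L)\) is then a \(\mathrm{GL}_2/\mathbb C^*\)-principal bundle over this Oka space, hence Oka \cite[Corollaries 2.51--2.52]{For}. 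Being affine, it is holomorphically elliptic by \cite[Proposition 5.6.15]{F17}.

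\emph{Globalizing to \(M_d\).} Finally, \(M_d\to B\) is a holomorphic fibre bundle: translations of \(E\) act transitively on \(\operatorname{Pic}^d(E)\) and identify the fibres. Its fibre is Oka and its base is a complex torus, so the bundle theorem \cite[Corollaries 2.51--2.52]{For} makes \(M_d\) Oka.

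The main obstacle is the codimension count for the base-point locus in small degree and, more seriously, the case \(d=3\), where the codimension is one. There I would rely on the cited results of Bowman and Xie. If a uniform argument is wanted instead, I would try the two-ramified-fibre exponential interpolation to produce explicit dominating maps.
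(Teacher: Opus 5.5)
Your structural steps are sound and match the paper: smoothness, irreducibility, dimension \(2d\), and affineness of \(M_d(L)\). So is the globalization, since your translation-trivialized bundle \(M_d\to\operatorname{Pic}^d(E)\) is equivalent to the paper's finite unramified covering \(E\times M_d(L)\to M_d\), \((t,f)\mapsto f\circ\tau_t\).

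The gap is the Oka property of \(M_d(L)\) for \(d\ge4\), because your codimension count for pencils with a base point is wrong. Pencils with a base point at \(x\) are the \(2\)-planes in \(H^0(E,L(-x))\cong\mathbb C^{d-1}\), a family of dimension \(2d-6\). Letting \(x\) vary gives a locus of dimension \(2d-5\) inside \(\operatorname{Gr}(2,H^0(E,L))\), which has dimension \(2d-4\). So the base-point locus is a divisor for every \(d\ge3\): it is the Chow hypersurface of \(E\subset\mathbb P^{d-1}\) embedded by \(|L|\). For \(d=3\) it is the plane cubic \(E\) itself inside \(\operatorname{Gr}(2,3)\cong\mathbb P^2\), so your base is \(\mathbb P^2\setminus E\). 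Consequently the codimension-two theorem \cite[\S6.4]{F17} and localization \cite[Theorem 1.4]{Kus21} never apply. Moreover, \(M_d(L)\) is a principal \(\mathrm{PGL}_2(\mathbb C)\)-bundle over the base-point-free locus, so the two spaces are Oka simultaneously. Passing to pencils therefore does not reduce the difficulty at all.

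What is missing is a construction of entire maps that avoid this divisor, and the paper supplies one. Let \(A\) be the proper algebraic set of maps with a nonsimple ramification point or two critical points with equal value. For \(f=[a:b]\notin A\), take two distinct branch values \(c_1,c_2\). The section \(q_i=a-c_ib\) has exactly \(d-1\) distinct zeros. The vanishing \(H^1(E,L(-D_{i,\mathrm{red}}))=0\) lets you prescribe the values of \(b\) at these zeros as \(r_je^{tu_j/r_j}\), which never vanish. Hence \((q_i+c_ib(t),b(t))\) generates \(L\) for every \(t\in\mathbb C\).

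These centered entire families have velocities \([(c_iu,u)]\), \(u\in H^0(E,L)\). The two groups span the tangent space of \(M_d(L)\) because \(c_1\ne c_2\). Then \cite[Lemma 2.1 and Theorem 1.9]{DW} give the Oka property, and affineness gives ellipticity, uniformly for \(d\ge3\).

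If you instead cite Bowman for \(d=3\), his result concerns \(M_3\). You would still need the covering above in the opposite direction: covering permanence passes the Oka property to \(E\times M_3(L)\), and then to its retract \(M_3(L)\).
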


\begin{proof}
For \(d\ge2\), every degree-\(d\) line bundle \(L\) is globally
generated and has \(H^1(E,L)=0\): the positive degree of \(L(-p)\)
gives vanishing first cohomology for every \(p\in E\). A nonzero
section has finitely many zeros, all avoided by a general second
section. The normalized Poincar\'e line bundle on
\(E\times\operatorname{Pic}^d(E)\) therefore has a rank-\(d\)
direct image \(\mathcal V\). As in
\eqref{eq:projective-map-quotient}, \(M_d\) is therefore a nonempty
open subset of \(\mathbb P(\mathcal V\oplus\mathcal V)\), so it is
smooth and irreducible of dimension \(2d\). The forgetful map
\[
\alpha:M_d\longrightarrow\operatorname{Pic}^d(E)
\]
is a submersion. At a generating pair \((a,b)\) of sections of \(L\),
its vertical tangent space is
\begin{equation}\label{eq:elliptic-functions-vertical}
\bigl(H^0(E,L)\oplus H^0(E,L)\bigr)/\mathbb C(a,b).
\end{equation}
The fibre \(M_d(L)\) is affine. Indeed, the common-zero incidence
over \(E\) in \(\mathbb P(H^0(E,L)^{\oplus2})\) is a
\(\mathbb P^{2d-3}\)-bundle. Its projection has finite fibres, since
a nonzero section has finitely many zeros, and its image is the
proper hypersurface of nongenerating pairs. The complement is
\(M_d(L)\).

Bowman's homogeneity theorem gives the Oka assertion for \(d=2\).
In that degree a generating pair is a basis of \(H^0(E,L)\), so
\(M_2(L)\cong\mathrm{PGL}_2(\mathbb C)\) is holomorphically elliptic.
For \(d\ge3\), we construct local centered entire families whose
velocities span the tangent space outside a proper algebraic subset.

\smallskip
\noindent\emph{Two ramified fibres.}
The full tangent space at \(f\in M_d\) is
\(H^0(E,f^*T\mathbb P^1)=H^0(E,L^2)\). For every effective divisor
\(Z\) of length at most four,
\[
H^1(E,L^2(-Z))=0,
\]
because \(2d-\deg Z>0\). Tangent sections therefore realize arbitrary
jets on \(Z\). In \(E\times M_d\), the conditions
\(f'(p)=f''(p)=0\) have independent differentials in the map
variable: in local coordinates these differentials are
\((\eta'(p),\eta''(p))\) for \(\eta\in H^0(E,L^2)\), and length-three
jet evaluation is surjective. This locus has dimension
at most \(2d-1\); its image is closed because \(E\) is
proper. On \((E\times E\setminus\Delta)\times M_d\), the conditions
\[
f'(p)=f'(q)=0,\qquad f(p)=f(q)
\]
have map-variable differentials
\((\eta'(p),\eta'(q),\eta(p)-\eta(q))\). Evaluation on \(2p+2q\)
makes them independent. Their locus has dimension at most \(2d-1\), and the
Zariski closure of its image has dimension at most \(2d-1\).
The jet conditions are algebraic and independent of the local
coordinates. The first image and the closure of the second image
therefore form a proper closed algebraic subset \(A\subset M_d\).

A map \(f\notin A\) has \(2d\) simple ramification points and
distinct branch values, by Riemann--Hurwitz. Fix a target coordinate
in which two chosen branch values are finite. On a sufficiently small
neighbourhood \(V\) of \(f\), the corresponding critical points vary
holomorphically, as do their critical values \(c_1(y),c_2(y)\).
Take holomorphic
generating representatives \((a_y,b_y)\) of \(y\) on \(V\), with
\(L_y=y^*\mathcal O_{\mathbb P^1}(1)\). The sections
\[
q_i(y)=a_y-c_i(y)b_y,\qquad i=1,2,
\]
each have one double zero and \(d-2\) simple zeros. After shrinking
\(V\), their \(d-1\) distinct support points can be labelled
holomorphically.

\smallskip
\noindent\emph{Entire interpolation.}
Fix \(i\), write \(q_y=q_i(y)\) and \(r_y=b_y\), and let
\(D_{i,\mathrm{red}}(y)\) be the reduced zero divisor of \(q_y\).
Simple ramification lowers the length of this support from \(d\) to
\(d-1\). The remaining line bundle
\(L_y(-D_{i,\mathrm{red}}(y))\) has degree one, so its first cohomology
vanishes on the elliptic curve. This permits arbitrary interpolation
of values at the support points: the restriction map
\[
\operatorname{ev}_y:H^0(E,L_y)\longrightarrow\mathbb C^{d-1}
\]
is therefore surjective, where values are expressed in local
trivializations at the moving support points. The evaluation maps vary holomorphically, so after restricting \(V\)
they admit a holomorphic right inverse \(S_y\).
For a holomorphic family of sections \(u_y\), write \(r_j(y)\) and
\(u_j(y)\) for the corresponding evaluations. Every \(r_j(y)\)
is nonzero. We keep \(q_y\) fixed and interpolate the values of
\(r_y\) by exponentials, so that they stay nonzero for all
\(t\in\mathbb C\). The lift by \(S_y\), together with a correction
in \(\ker\operatorname{ev}_y\), gives initial velocity \(u_y\):
\begin{equation}\label{eq:elliptic-functions-interpolation}
\begin{split}
r_y(t)={}&r_y+
S_y\bigl((r_j(y)(e^{t u_j(y)/r_j(y)}-1))_j\bigr)\\
&+t\bigl(u_y-S_y(\operatorname{ev}_y u_y)\bigr).
\end{split}
\end{equation}
This is holomorphic on the full product \(V\times\mathbb C\), with
\[
r_y(0)=r_y,\qquad \partial_t r_y(0)=u_y,\qquad
\operatorname{ev}_j r_y(t)=r_j(y)e^{t u_j(y)/r_j(y)}\ne0.
\]
The nonzero values at every zero of \(q_y\) show that
\((q_y,r_y(t))\) generates \(L_y\) for all \(t\in\mathbb C\).
Returning to the fixed target coordinate gives the pair
\[
\bigl(q_y+c_i(y)r_y(t),\ r_y(t)\bigr)
\]
which represents a degree-\(d\) map and equals \((a_y,b_y)\) at
\(t=0\). In this deformation \(c_i(y)\) is fixed as \(t\) varies.
A local basis of sections for \(u_y\) gives \(d\) centered entire
families for each \(i\).

\smallskip
\noindent\emph{Tangent spanning.}
The velocities of the \(i\)th group in
\eqref{eq:elliptic-functions-vertical} are
\[
[(c_i u,u)],\qquad u\in H^0(E,L).
\]
The two branch values are distinct, so these groups are complementary
before taking the scalar quotient: the linear map
\[
(u,v)\longmapsto(c_1u+c_2v,\ u+v)
\]
is invertible on \(H^0(E,L)^2\). After the scalar quotient, the two
groups therefore span the \((2d-1)\)-dimensional vertical tangent
space.

For fixed \(L\), the same interpolation families lie in \(M_d(L)\).
The bad ramification conditions defining \(A\) are invariant under
source translation. With \(\tau_t(x)=x+t\), the map
\[
\theta_L:E\longrightarrow\operatorname{Pic}^d(E),
\qquad t\longmapsto\tau_t^*L
\]
is multiplication by \(-d\) after taking \(L\) as the origin of
\(\operatorname{Pic}^d(E)\). It is surjective, so the nonempty good
locus meets every fibre \(M_d(L)\). The two interpolation groups span
the tangent space of that fibre. Restricting their initial parameter
to \(V\cap M_d(L)\) gives centered entire families there for every
complex time. Lemma~2.1 and Theorem~1.9 of~\cite{DW} make \(M_d(L)\)
Oka, and its affine Stein structure gives holomorphic ellipticity
\cite[Proposition 5.6.15]{F17}.

The degree-\(d^2\) isogeny \(\theta_L\) trivializes the family
\(\alpha:M_d\to\operatorname{Pic}^d(E)\) after base change:
\[
E\times M_d(L)\ \xrightarrow{\ \sim\ }\
E\times_{\operatorname{Pic}^d(E)}M_d,
\qquad (t,f)\longmapsto(t,f\circ\tau_t).
\]
Its inverse sends \((t,h)\) to \((t,h\circ\tau_{-t})\).
Consequently \(E\times M_d(L)\to M_d\) is a finite unramified covering.
Its source is Oka, so covering permanence
\cite[Corollaries 2.51--2.52]{For} gives the Oka property of \(M_d\).
\end{proof}

Together with Corollary~\ref{cor:curve-projective-maps}, these
constructions treat all projective-space targets for compact Oka curves.

\begin{corollary}\label{cor:oka-curve-projective-maps}
Let \(C=\mathbb P^1\) or let \(C\) be a complex elliptic curve. For every
\(m\ge1\), each nonempty degree component
\(\operatorname{Hol}_d(C,\mathbb P^m)\) is Oka. The nonempty degrees are
\(d\ge0\) for \(\mathbb P^1\), and \(d=0\) or \(d\ge2\) for an elliptic
curve.
\end{corollary}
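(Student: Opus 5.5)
The plan is a case analysis on the curve, the degree \(d\), and the target dimension \(m\). Each case reduces to a result already proved in this section.

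\emph{Degree zero.} Here \(\operatorname{Hol}_0(C,\mathbb P^m)\) consists of the constant maps. Since \(C\) is compact and connected, a degree-zero map pulls \(\mathcal O(1)\) back to a trivial bundle, so it is given by sections of \(\mathcal O_C\) and is constant. The component is therefore biholomorphic to \(\mathbb P^m\), which is homogeneous and hence Oka.

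\emph{Which degrees are nonempty.} On \(\mathbb P^1\), every \(d\ge0\) occurs, for instance via \([s^d:t^d:0:\cdots:0]\). On an elliptic curve \(E\), a map of degree one to \(\mathbb P^m\) would have a line bundle \(L\) of degree one with two sections generating it. But \(h^0(L)=1\), so such an \(L\) is not globally generated unless the map is constant, which contradicts degree one. For \(d\ge2\), two general sections of a degree-\(d\) bundle generate it, as shown in the proof of Proposition~\ref{prop:elliptic-functions-oka}, so these degrees are nonempty.

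\emph{Target \(\mathbb P^1\), \(d\ge1\).} For \(C=\mathbb P^1\) this is Proposition~\ref{prop:rational-functions-oka}. For an elliptic curve and \(d\ge2\) it is Proposition~\ref{prop:elliptic-functions-oka}.

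\emph{Target \(m\ge2\), \(d\ge1\).} Apply Corollary~\ref{cor:curve-projective-maps}. For \(g=0\) it requires \(d\ge0\) and \(m\ge\max\{2,1\}=2\). For \(g=1\) it requires \(d\ge2\) and \(m\ge\max\{2,2\}=2\). Both hold in every remaining case.

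\emph{Where the difficulty lies.} The only delicate point is bookkeeping: checking that the numerical ranges of Corollary~\ref{cor:curve-projective-maps} together with the two \(\mathbb P^1\)-target propositions cover all pairs \((m,d)\). The gap \(m=1\) is exactly where the bound \(m\ge\max\{2,g+1\}\) fails, and it is filled by Propositions~\ref{prop:rational-functions-oka} and~\ref{prop:elliptic-functions-oka}.
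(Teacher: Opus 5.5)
Your proposal is correct and follows the paper's proof essentially verbatim. Degree zero gives \(\mathbb P^m\); \(m\ge2\) is Corollary~\ref{cor:curve-projective-maps}; \(m=1\) in positive degree is Propositions~\ref{prop:rational-functions-oka} and~\ref{prop:elliptic-functions-oka}; and degree one on an elliptic curve is excluded because \(f^*\mathcal O(1)\) must be globally generated while \(h^0(L)=1\). One small tidy-up: a degree-one line bundle on an elliptic curve is never globally generated, since its unique section up to scale vanishes at a point. So the clause ``unless the map is constant'' is unnecessary, and a map to \(\mathbb P^m\) involves \(m+1\) sections rather than two.
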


\begin{proof}
For \(m\ge2\), use Corollary~\ref{cor:curve-projective-maps}; for
\(m=1\) and positive degree, use
Propositions~\ref{prop:rational-functions-oka}
and~\ref{prop:elliptic-functions-oka}. In degree zero the mapping space
is \(\mathbb P^m\). A degree-one line bundle on an elliptic curve has
only one independent section and is not globally generated, so it
cannot be the pullback of \(\mathcal O_{\mathbb P^m}(1)\) by a map.
\end{proof}

\subsection{Embeddings into projective space}\label{sec:projective-embeddings}

Let \(X\) be a smooth connected projective variety of dimension
\(k\ge1\), set \(q=h^1(X,\mathcal O_X)\), and fix a nonempty component
\(B=\operatorname{Pic}^{\alpha}(X)\). Suppose every \(L\in B\) is very
ample and \(h^0(X,L)\) is constant on \(B\), as in
\eqref{eq:projective-map-vanishing}. For the projection
\(p:X\times B\to B\), retain the normalized Poincaré bundle
\(\mathcal P\) and \(E=p_*\mathcal P\) from
Subsection~\ref{sec:projective-maps}; write \(R=\operatorname{rank}E\).
We consider embeddings \(f:X\to\mathbb P^m\) with
\(f^*\mathcal O(1)\in B\). They are detected by separation on length-two
subschemes, which replaces pointwise generation in the preceding
construction.

\begin{theorem}\label{thm:projective-embedding-space}
If
\begin{equation}
m\ge2k+\max\{2,q+1\},
\label{eq:projective-embedding-bound}
\end{equation}
then the space \(\operatorname{Emb}_{\alpha}(X,\mathbb P^m)\) of parametrized closed embeddings is smooth and Oka.
\end{theorem}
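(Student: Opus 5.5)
We follow the proof of Theorem~\ref{thm:projective-mapping-space}, replacing pointwise generation by separation of length-two subschemes. Let \(H=\operatorname{Hilb}^2(X)\), a smooth projective variety of dimension \(2k\) (the blowup of the symmetric square along the diagonal). Over \(H\times B\), the rank-two bundle
\[
\mathcal F=(\pi_H\times\mathrm{id})_*\bigl(\mathcal P|_{\mathcal Z\times B}\bigr),
\]
with \(\mathcal Z\subset H\times X\) the universal subscheme, receives an evaluation morphism \(\epsilon:p^*E\to\mathcal F\). Very ampleness of every \(L\in B\) makes \(\epsilon\) surjective. An \((m+1)\)-tuple of sections of \(L\) then defines a closed embedding exactly when it generates \(L\) on \(X\) and its image under \(\epsilon\) generates \(\mathcal F_L\) at every \(\zeta\in H\). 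Surjectivity onto every length-two restriction forces base-point freeness, so the second condition already implies the first.

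\emph{Identification.} We take \(\mathcal F\) over \(H\times B\) with \(H\) playing the role of the compact manifold \(X\) in Theorem~\ref{thm:generating-sections}, so there \(k\) is replaced by \(2k\) and \(r=2\). The embedding tuples form the set \(\operatorname{Gen}_{m+1}(\mathcal F,E/B)\), open in \(E^{\oplus(m+1)}\). As in \eqref{eq:projective-map-quotient}, we then get
\[
\operatorname{Emb}_\alpha(X,\mathbb P^m)\cong\operatorname{Gen}_{m+1}(\mathcal F,E/B)/\mathbb C^*,
\]
an open subset of \(\mathbb P(E^{\oplus(m+1)})\), hence smooth. Openness of embeddings among maps, together with Aumonier's description, should justify that this holomorphic identification matches the natural complex structure.

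\emph{Numerics.} Theorem~\ref{thm:generating-sections} requires
\[
m+1\ge 2+2k+\max\{1,q\},
\]
that is, \(m\ge 2k+\max\{2,q+1\}\), which is exactly \eqref{eq:projective-embedding-bound}. The base \(B\) is a complex torus and hence Oka, so the theorem makes the generating space strongly dominable. The scalar quotient is a holomorphic submersion, so the pointwise observation following Theorem~\ref{thm:generating-sections} transfers strong dominability to \(\operatorname{Emb}_\alpha\), and \cite[Theorem 1.4]{DW} gives the Oka property.

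\emph{Main obstacle.} Theorem~\ref{thm:generating-sections} assumes \(\mathcal F\) is a holomorphic vector bundle and \(\epsilon\) is surjective on the compact manifold \(H\). So we must check that \(\mathcal F\) is locally free of rank two, which holds because \(\mathcal Z\to H\) is flat and finite of degree two, and that its formation commutes with base change in \(B\). Surjectivity of \(\epsilon\) at nonreduced points \(\zeta\), the tangent vectors, is exactly very ampleness separating tangents. The remaining check is that generation on all of \(H\) coincides with being a closed embedding, including injectivity on the differential; this is the standard criterion, and I expect it to be routine.
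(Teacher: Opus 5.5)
Your proposal is correct and matches the paper's proof: the Hilbert square \(H\) of dimension \(2k\), the rank-two tautological bundle on \(H\times B\) with evaluation made surjective by very ampleness, and embedding tuples identified with \(\operatorname{Gen}_{m+1}\). The paper likewise applies Theorem~\ref{thm:generating-sections} with the same numerics, then passes to the scalar quotient and uses \cite[Theorem 1.4]{DW}.
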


\begin{proof}
Put \(H=X^{[2]}\). This is a smooth projective manifold of dimension
\(2k\), with the description
\[
H\cong\operatorname{Bl}_{\Delta}(X\times X)/\mathfrak S_2
\]
recalled in \cite[Introduction, p.~v]{Gottsche94}. A tuple
\(s_0,\ldots,s_m\) of sections of \(L\) defines a closed embedding
precisely when
\begin{equation}\label{eq:length-two-test}
\mathbb C^{m+1}\longrightarrow H^0(Z,L|_Z)
\end{equation}
is surjective for every \(Z\in H\). This is the length-two separation
criterion \cite[Definition 4.1.1 and Remark 4.1.2(1)]{Gottsche94}:
reduced schemes separate points, and nonreduced schemes test tangent
directions.

Let \(\mathcal Z\subset X\times H\) be the universal length-two
subscheme, and let
\(\rho:\mathcal Z\times B\to H\times B\) be the finite flat projection.
Pulling \(\mathcal P\) back from \(X\times B\), form the rank-two
tautological bundle
\[
\mathcal T=\rho_*(\mathcal P|_{\mathcal Z\times B})
\longrightarrow H\times B.
\]
Its fibre at \((Z,L)\) is \(H^0(Z,L|_Z)\). For the projection
\(p_H:H\times B\to B\), very ampleness gives a surjective evaluation
\[
p_H^*E\longrightarrow\mathcal T.
\]
Thus the embedding tuples form exactly
\(\operatorname{Gen}_{m+1}(\mathcal T,E/B)\).

Apply Theorem~\ref{thm:generating-sections} with the compact parameter
manifold \(H\) of dimension \(2k\), the rank-two bundle \(\mathcal T\),
and the Oka base \(B\) of dimension \(q\). Its bound is
\[
m+1\ge2+2k+\max\{1,q\},
\]
which is \eqref{eq:projective-embedding-bound}. The tuple space is
therefore strongly dominable. The length-two test is open, and the
family description \eqref{eq:projective-map-quotient} identifies its
common scalar quotient with
\(\operatorname{Emb}_{\alpha}(X,\mathbb P^m)\), a smooth open subset of
\(\mathbb P(E^{\oplus(m+1)})\). The scalar projection is a submersion,
so the observation following Theorem~\ref{thm:generating-sections}
gives strong dominability of the embedding space. The criterion
\cite[Theorem 1.4]{DW} gives its Oka property.
\end{proof}

The fixed-line-bundle version also applies to compact complex sources.
Let \(X\) be a compact connected complex manifold of dimension \(k\ge1\),
and let \(L\) be globally generated. For \(m\ge k+1\), the space
\[
\operatorname{Hol}_{L}(X,\mathbb P^m)
=\{f:X\to\mathbb P^m:f^*\mathcal O(1)\cong L\}
\]
is nonempty, smooth, and Oka. Indeed, it is the generating open set in
\(\mathbb P(H^0(X,L)^{\oplus(m+1)})\); apply
Theorem~\ref{thm:generating-sections} with base a point and rank one,
then pass to the scalar quotient.
If \(L\) is very ample, the fixed-\(L\) embedding space is likewise
nonempty, smooth, and Oka for \(m\ge2k+2\). Very ampleness makes \(X\)
projective, so its Hilbert square is compact of dimension \(2k\).
The same theorem applied to the rank-two tautological bundle on this
Hilbert square gives the length-two separation condition, and the scalar
quotient gives the embedding space.

\begin{corollary}\label{cor:curve-projective-embeddings}
For a smooth connected projective curve \(C\) of genus \(g\), the space \(\operatorname{Emb}_d(C,\mathbb P^m)\) is Oka whenever
\[
d\ge2g+1,
\qquad
m\ge
\begin{cases}
4,&g=0,\\
g+3,&g\ge1.
\end{cases}
\]
The quotient \(\operatorname{Emb}_d(C,\mathbb P^m)/\operatorname{Aut}(C)\),
which parametrizes embedded images isomorphic to \(C\), is also a
smooth Oka manifold.
\end{corollary}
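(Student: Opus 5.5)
The first assertion is a direct specialization of Theorem~\ref{thm:projective-embedding-space} with \(k=1\) and \(q=g\). For \(d\ge2g+1\), every \(L\in\operatorname{Pic}^d(C)\) is very ample. Indeed, for every effective divisor \(Z\) of length two, Serre duality gives
\[
H^1(C,L(-Z))\cong H^0(C,K_C\otimes L^{-1}(Z))^*=0,
\]
since \(\deg(K_C\otimes L^{-1}(Z))=2g-d<0\). So \(L\) separates points and tangent directions, and \(H^1(C,L)=0\). Riemann--Roch then gives the constant value \(h^0(C,L)=d+1-g\) on the whole Picard component. This is the hypothesis needed to form \(E=p_*\mathcal P\).

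The bound \eqref{eq:projective-embedding-bound} becomes \(m\ge2+\max\{2,g+1\}\). For \(g=0\) this reads \(m\ge4\). For \(g\ge1\) it reads \(m\ge g+3\), since then \(g+1\ge2\). This is exactly the displayed range, so Theorem~\ref{thm:projective-embedding-space} makes \(\operatorname{Emb}_d(C,\mathbb P^m)\) smooth and Oka.

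For the quotient I would apply surjective holomorphic descent \cite[Theorem 7.5]{DW}, or better, the pointwise observation following Theorem~\ref{thm:generating-sections}. What is needed is that \(\operatorname{Aut}(C)\) acts freely and properly on \(\operatorname{Emb}_d(C,\mathbb P^m)\) by precomposition, and that the quotient is a complex manifold with the projection a holomorphic submersion.

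\emph{Freeness.} If \(f\circ\phi=f\) with \(f\) injective, then \(\phi=\mathrm{id}\).

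\emph{Properness.} This is where I expect the main obstacle, because the case split is by genus.
\begin{itemize}
\item For \(g\ge2\), \(\operatorname{Aut}(C)\) is finite, so properness is automatic.
\item For \(g=1\), \(\operatorname{Aut}(C)\) is a compact complex Lie group (translations extended by a finite group), so properness is again automatic.
\item For \(g=0\), \(\operatorname{Aut}(C)=\mathrm{PGL}_2\) is noncompact. Here I would argue directly. Suppose \(f_n\to f\) and \(f_n\circ\phi_n\to h\) are both embeddings. Then \(\phi_n=f_n^{-1}\circ(f_n\circ\phi_n)\) on the images, and the images converge as smooth curves. Hence \(\phi_n\) stays in a compact set: a diverging sequence in \(\mathrm{PGL}_2\) collapses an open set to a point, which would force \(h\) to be constant on an open set.
\end{itemize}

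A free proper action of a complex Lie group gives a complex-manifold quotient by the holomorphic slice theorem. The quotient map is then a holomorphic submersion, indeed a principal bundle.

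Surjectivity of the submersion then lets \cite[Theorem 7.5]{DW}, or the strong-dominability observation combined with \cite[Theorem 1.4]{DW}, transfer the Oka property to \(\operatorname{Emb}_d(C,\mathbb P^m)/\operatorname{Aut}(C)\). Finally, points of the quotient correspond to images \(f(C)\) isomorphic to \(C\), since two embeddings with the same image differ by an automorphism of \(C\). This is the parametrization claimed in the statement.
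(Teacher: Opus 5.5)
Your proposal is correct. The first assertion is handled exactly as in the paper, by specializing Theorem~\ref{thm:projective-embedding-space} with \(k=1\) and \(q=g\). Your Serre-duality check of very ampleness and of the constancy of \(h^0\) spells out hypotheses that the paper only asserts.

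For the quotient, you depart from the paper in two places.

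First, the paper proves properness uniformly in the genus. If \(f_\nu\to f\) and \(f_\nu\circ a_\nu\to h\), immersivity of \(f\) gives a lower bound on \(df_\nu\), hence a uniform bound on \(da_\nu\), and Ascoli's theorem produces a limiting automorphism. You instead split by genus. For \(g\ge1\) you use compactness of \(\operatorname{Aut}(C)\). For \(g=0\) you use that a divergent sequence in \(\mathrm{PGL}_2(\mathbb C)\) has a subsequence converging to a constant locally uniformly off one point; combined with uniform convergence \(f_n\to f\), this would make \(h\) constant on an open set. Your intermediate remark about \(f_n^{-1}\) and convergence of images is not what does the work; the collapsing argument is.

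Second, the paper concludes with the Oka fibre-bundle theorem, which requires knowing that the fibre \(\operatorname{Aut}(C)\) is Oka, and it checks this genus by genus. Your descent through the surjective submersion \(\operatorname{Emb}_d(C,\mathbb P^m)\to\operatorname{Emb}_d(C,\mathbb P^m)/\operatorname{Aut}(C)\) uses the pointwise strong-dominability observation together with \cite[Theorem 1.4]{DW}. It needs nothing about the group beyond the existence of the smooth quotient, so it is slightly more economical. In exchange, the bundle theorem yields the equivalence in both directions. If you use \cite[Theorem 7.5]{DW} instead, note that it requires connectedness. This holds because the embedding space is a nonempty Zariski-open subset of the irreducible projective bundle \(\mathbb P(E^{\oplus(m+1)})\).

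Both routes rest on the same standard fact: a free proper holomorphic action gives a holomorphic principal bundle over a complex-manifold quotient.
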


\begin{proof}
A degree-\(d\) line bundle is very ample for \(d\ge2g+1\), and the
numerical bound is \eqref{eq:projective-embedding-bound} with \(k=1\)
and \(q=g\).

The precomposition action of \(\operatorname{Aut}(C)\) is free and
proper. Freeness follows from injectivity of the embeddings. For
properness, suppose \(f_\nu\to f\) and
\(h_\nu=f_\nu\circ a_\nu\to h\), with both limits embeddings. Fix
Hermitian metrics on \(C\) and \(\mathbb P^m\). Holomorphic convergence
on the compact curve gives uniform convergence of derivatives. Since
\(f\) is immersive, the differentials \(df_\nu\) have a uniform positive
lower bound, while \(dh_\nu\) are uniformly bounded. The identity
\(dh_\nu=df_\nu\circ da_\nu\) therefore bounds \(da_\nu\) uniformly.
Ascoli's theorem gives a subsequence converging to a holomorphic map
\(a:C\to C\), with \(f\circ a=h\). This map is injective and
nonconstant, hence a biholomorphism of compact curves.

The free proper action gives a holomorphic principal
\(\operatorname{Aut}(C)\)-bundle over the smooth quotient. The group
is Oka: it is \(\mathrm{PGL}_2(\mathbb C)\) for \(g=0\), a finite
extension of the translation group of \(C\) for \(g=1\), and finite
for \(g\ge2\). The bundle theorem
\cite[Corollaries 2.51--2.52]{For} proves that the quotient is Oka.
\end{proof}

\subsection{Maps to Grassmannians with stable quotient bundles}\label{sec:stable-quotient-maps}

For Grassmannian targets, a map is represented by a generated vector
bundle. Let \(C\) be a smooth projective
curve of genus \(g\ge2\), and let the rank \(r\ge2\) and degree
\(d\) satisfy
\begin{equation}
d>r(2g-1).
\label{eq:M4.1}
\end{equation}
Assume that
\begin{equation}
(r,g)\ne(2,2)\qquad\text{or}\qquad d\text{ is odd}.
\label{eq:M4.2}
\end{equation}
Use the quotient convention for \(\operatorname{Gr}(r,N)\), and denote its rank-\(r\) universal quotient by \(Q\). Consider the mapping space
\[
\mathcal M^s_{d,N}
=
\{f:C\to\operatorname{Gr}(r,N):
f^*Q\text{ is stable of rank }r\text{ and degree }d\}.
\]
Its fixed-determinant locus for a degree-\(d\) line bundle \(\xi\) is
\[
\mathcal M^s_{\xi,N}
=
\{f\in\mathcal M^s_{d,N}:
\det(f^*Q)\cong\xi\}.
\]
Here the determinant is specified only by its isomorphism class
\(\xi\).

\begin{theorem}\label{thm:M4}

Under \eqref{eq:M4.1}--\eqref{eq:M4.2}, the following spaces are smooth and Oka:
\begin{enumerate}[label=\textup{(\roman*)}]
\item \(\mathcal M^s_{\xi,N}\) for \(N\ge r+(r^2-1)(g-1)+1\), with
\[
\dim\mathcal M^s_{\xi,N}=Nd+r(N-r)(1-g)-g;
\]
\item \(\mathcal M^s_{d,N}\) for \(N\ge r+r^2(g-1)+2\), with
\[
\dim\mathcal M^s_{d,N}=Nd+r(N-r)(1-g).
\]
\end{enumerate}

\end{theorem}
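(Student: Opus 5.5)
The plan is to identify the mapping spaces with quotients of spaces of generating tuples of sections of stable bundles by the scalar action. Then I would rerun the proof of Theorem~\ref{thm:generating-sections} with an entire parametrization coming from the extension atlas of Theorem~\ref{thm:A1}, instead of from a universal bundle on the moduli space, which need not exist when \(\gcd(r,d)>1\).

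\emph{Smoothness and dimension.} A map \(f:C\to\operatorname{Gr}(r,N)\) corresponds to a quotient \(\mathcal O_C^N\to E=f^*Q\to0\) with kernel \(S\), and \(T_f=H^0(S^*\otimes E)\). By \eqref{eq:M4.1}, every quotient of the stable bundle \(E\) has slope \(>2g-1\), so \(H^1(E)=0\) and \(E\) is globally generated, exactly as in the quotient-slope argument in the proof of Theorem~\ref{thm:A1}. Dualizing \(0\to S\to\mathcal O^N\to E\to0\) and tensoring with \(E\) gives a surjection \(H^1(E)^N\to H^1(S^*\otimes E)\), so \(H^1(S^*\otimes E)=0\). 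Hence \(\mathcal M^s_{d,N}\) is an open subset of a smooth Hom scheme. Its dimension is \(\chi(S^*\otimes E)\), and since \(\deg(S^*\otimes E)=rd+(N-r)d=Nd\), this equals \(Nd+r(N-r)(1-g)\). The determinant map to \(\operatorname{Pic}^d(C)\) is a submersion, because twisting by \(\operatorname{Pic}^0\) acts transitively up to \(r\)-torsion; so the fibres \(\mathcal M^s_{\xi,N}\) are smooth of codimension \(g\).

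\emph{Strong dominability.} Condition \eqref{eq:M4.2} places us in the range of Theorem~\ref{thm:A1}: if \(r=2=g\) and \(d\) is odd, then \(\gcd(r,d)=1\). The proof of Theorem~\ref{thm:A1} then gives an Oka manifold \(V^s\) with a universal family \(E_v\) (twisted by \(L\)) and a surjective submersion \(V^s\to M_C(r,\xi)\).

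Fix \(f\), represented by \((E,s_1,\dots,s_N)\), and choose \(v_0\in V^s\) with \(E_{v_0}\otimes L\cong E\). Strong dominability of \(V^s\) at \(v_0\) and the observation after Theorem~\ref{thm:generating-sections} give
\[
\psi:\mathbb C^n\to V^s,\qquad n=(r^2-1)(g-1),
\]
with \(\psi(0)=v_0\) and with the composite to \(M_C(r,\xi)\) having invertible differential at \(0\).

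Since \(H^1(E_v\otimes L)=0\) on \(V^s\), Grauert base change makes \(p_*(\psi^*\mathcal E\otimes L)\) a vector bundle with surjective evaluation. Now run the proof of Theorem~\ref{thm:generating-sections} verbatim with \(\varphi\) replaced by \(\psi\): the redundant-tuple step, the Sard lift, and the complete shear flows only use the entire map and the pulled-back family, not the Oka property of an abstract base. The required bound \(N\ge r+1+n\) is exactly hypothesis~(i). This yields \(F:\mathbb C^{n+NR}\to\operatorname{Gen}_N\) through \((v_0,s)\) with invertible differential.

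Composing with the classifying map \(\operatorname{Gen}_N\to\mathcal M^s_{\xi,N}\) gives a map whose differential at the base point is surjective. Indeed, horizontally it surjects onto \(H^1(\operatorname{End}_0E)\) by the choice of \(\psi\). Vertically it hits \(H^0(E)^N/\mathbb C\), which is the kernel of \(T_f\to H^1(\operatorname{End}_0E)\): the sequence \(H^0(\operatorname{End}E)\to H^0(E)^N\to H^0(S^*\otimes E)\to H^1(\operatorname{End} E)\to0\) has \(H^0(\operatorname{End}E)=\mathbb C\) by stability. Restricting to a linear complement of the kernel gives strong dominability at \(f\), and \cite[Theorem 1.4]{DW} gives the Oka property.

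\emph{Variable determinant.} For (ii), I would use the family \(E_v\otimes L\otimes P\) over \(V^s\times\operatorname{Pic}^0(C)\). Its composite to \(U^s_C(r,d)\) is a submersion by \eqref{eq:A1.10}. Dominating it by \(\mathbb C^{n+g}\), the required bound becomes \(N\ge r+1+(r^2-1)(g-1)+g=r+r^2(g-1)+2\), which is hypothesis~(ii). The rest of the argument is unchanged.

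The main obstacle I expect is justifying that Theorem~\ref{thm:generating-sections} can be run along a non-injective entire map \(\psi\) into a family rather than over a genuine Oka base with a bundle. I will check that each step (trivialization on \(\mathbb C^n\), the incidence estimate, Cartan's theorem on \(T_j\)) only uses the pulled-back data, and that local biholomorphicity at the lifted point is replaced by the surjectivity of the Kodaira--Spencer composite \(T_0\mathbb C^n\to H^1(\operatorname{End}_0E)\).
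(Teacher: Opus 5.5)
Your proposal is correct. Its skeleton matches the paper's proof:
\begin{itemize}
\item maps are quotient presentations modulo scalars;
\item the proof of Theorem~\ref{thm:generating-sections} is run over an entire map that carries a genuine family of stable bundles;
\item the composite differential surjects onto \(T_f\), with \(H^0(E)^{\oplus N}/\mathbb C s\) supplied by the vertical flows and \(H^1(\operatorname{End}_0E)\) by the base directions;
\item the numerical bounds \(N\ge r+1+b\) are the same, with \(b=(r^2-1)(g-1)\) in (i) and \(b=r^2(g-1)+1\) in (ii).
\end{itemize}
You also correctly identify and repair the one step that does not transfer verbatim. Local biholomorphicity onto a generation space over the base is replaced by surjectivity of the composite onto the mapping space, which is also how the paper concludes.

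The genuine difference is where the family comes from.

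\emph{The paper's route.} It takes a dominating map \(\varphi\) into the coarse space \(SU^s_C(r,\xi)\), whose Oka property comes from Theorem~\ref{thm:A1} via surjective descent. It then lifts \(\varphi\) to the Bhosle--Biswas projective bundle \(P_y\), which carries a universal vector bundle. The lift uses holomorphic triviality of \(\varphi^*P_y\) from the Oka--Grauert principle. The paper also glues projective presentations to describe \(\mathcal M^s\) locally as an open subset of \(\mathbb P((p_*\mathcal E)^{\oplus N})\).

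\emph{Your route.} You dominate the extension atlas \(V^s\) itself. It is already Oka by the codimension-two estimate and carries the family \(\mathcal E\otimes L\). So you need neither the cited universal bundle on \(P_y\), nor the projective-frame trivialization, nor descent through \(V^s\to M_C(r,\xi)\). You only use that this classifying map is a surjective submersion. Your variable-determinant step with the Poincar\'e bundle on \(V^s\times\operatorname{Pic}^0(C)\) mirrors the paper's \(P_y\times J\) construction.

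\emph{What each buys.} The paper's route treats the Oka property of the coarse moduli space as a black box, so it would apply to any Oka moduli base with such a Brauer--Severi cover. Yours is more self-contained within the paper. Your smoothness and dimension argument via \(H^1(S^*\otimes E)=0\) is the tangent calculation the paper records at the end of its proof.

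\emph{One phrasing fix.} Twisting by \(\operatorname{Pic}^0(C)\) does not act on the mapping space, since a quotient \(\mathcal O_C^N\to E\) does not twist to a quotient of \(\mathcal O_C^N\). To show the determinant map is a submersion, compose two surjections:
\begin{itemize}
\item the forgetful differential \(H^0(S^*\otimes E)\to H^1(\operatorname{End}E)\), which is surjective because \(H^1(E)=0\);
\item the trace \(H^1(\operatorname{End}E)\to H^1(\mathcal O_C)\), which is split by the scalar inclusion.
\end{itemize}
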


\begin{proof}
We first treat the fixed determinant in \textup{(i)}.
Theorem~\ref{thm:A1} gives the Oka property of the stable-bundle moduli
spaces
\[
U_C^s(r,d),\qquad SU_C^s(r,\xi).
\]
We will lift a dominating entire map to a genuine family
of bundles and apply Theorem~\ref{thm:generating-sections} to its
quotient presentations.

\smallskip
\noindent\emph{Quotient presentations.}
For a stable bundle \(E\) of rank \(r\) and degree \(d\), Serre duality gives \[
H^1(C,E(-x))^*
\cong\operatorname{Hom}(E(-x),K_C).
\]
A nonzero map would give a rank-one torsion-free quotient of \(E(-x)\) whose degree is at most \(2g-2\). Stability forces the slope of a proper positive-rank quotient to be strictly greater than \[
\mu(E(-x))=d/r-1>2g-2,
\]
a contradiction. Thus \(H^1(E(-x))=0\) for every \(x\). The exact sequence for evaluation at \(x\) then proves global generation. The same slope argument gives \(H^1(E)=0\).

Consequently
\begin{equation}
R=h^0(C,E)=d+r(1-g)
\label{eq:M4.4}
\end{equation}
is constant on the stable base. Every genuine family of these
bundles therefore has a locally free direct image that commutes with
base change, and its evaluation morphism is surjective.

A map \(f:C\to\operatorname{Gr}(r,N)\) is equivalent to a quotient \[
\mathcal O_C^N\twoheadrightarrow E
\]
of rank \(r\), by the functorial description of the Grassmannian \cite[Tag 089R]{A2026}.

For fixed stable \(E\), quotient presentations are generating \(N\)-tuples in \(H^0(C,E)\) modulo \(\operatorname{Aut}(E)=\mathbb C^*\). Thus the fibre of the forgetful map to the stable-bundle moduli space is \begin{equation}
\operatorname{Gen}_N(E)/\mathbb C^*.
\label{eq:M4.5}
\end{equation}

\smallskip
\noindent\emph{A genuine family over the dominating source.}
Put \(B=SU_C^s(r,\xi)\). Restricting the universal projective bundle
at a fixed \(y\in C\) gives a holomorphic
\(\mathbb P^{r-1}\)-bundle
\[
p_y:P_y\longrightarrow B.
\]
Its total space carries a genuine universal vector bundle \begin{equation}
\mathcal U_y\longrightarrow C\times P_y
\label{eq:M4.7}
\end{equation}
whose fibre at a point of \(P_y\) represents the stable bundle specified by \(p_y\).

Bhosle and Biswas \cite[§2.1, equations (2.4) and (2.5)]{BB2012}
construct \eqref{eq:M4.7} by realizing \(P_y\) as a parabolic moduli
space with one multiplicity-one parabolic weight. Forgetting the flag
in the universal parabolic bundle gives the required vector bundle.
Their smooth-curve specialization applies in the numerical range
of \eqref{eq:M4.2}.

Analytic local triviality of \(P_y\) gives local holomorphic sections.
Restricting \eqref{eq:M4.7} along such a section gives a local universal
vector bundle. On an overlap, two such families differ by tensoring
with a line bundle from the base. Projectivization absorbs this factor,
so their projective quotient presentations glue.

On an analytic open set of that base carrying a universal family \(\mathcal E\), \eqref{eq:M4.5} gives an open subset of \[
\mathbb P((p_*\mathcal E)^{\oplus N}).
\]
In particular the total mapping space is locally smooth, and the forgetful map is a holomorphic submersion. Its vertical tangent space at a tuple \(s\) is \begin{equation}
H^0(C,E)^{\oplus N}/\mathbb C s.
\label{eq:M4.6}
\end{equation}

For the horizontal directions, put \(b=(r^2-1)(g-1)\) and choose a
dominating entire map
\[
\varphi:\mathbb C^b\to B,\qquad
\varphi(0)=b_0,\qquad d\varphi_0\text{ an isomorphism}.
\]
The projective-space bundle
\(\varphi^*P_y\) has structure group \(\mathrm{PGL}_r(\mathbb C)\).
Its principal bundle of projective frames is holomorphically trivial
by the Oka--Grauert principle of Grauert \cite{Grauert58}, in the
form of \cite[Theorem 5.3.1]{F17}, because \(\mathbb C^b\) is
contractible Stein. Thus \(\varphi^*P_y\) is holomorphically trivial.
An entire section through any prescribed point over zero gives a lift
\[
\widetilde\varphi:\mathbb C^b\to P_y,\qquad
p_y\widetilde\varphi=\varphi.
\]
Pulling back \eqref{eq:M4.7} yields a genuine bundle \[
\mathcal F\longrightarrow C\times\mathbb C^b
\]
whose classifying map is exactly \(\varphi\).

At the prescribed point of \(\mathcal M^s_{\xi,N}\), identify its
quotient bundle with the fibre of \(\mathcal F\) at zero. The
vanishings \(H^1(E)=H^1(E(-x))=0\) and the constant rank
\eqref{eq:M4.4} give a locally free direct image for \(\mathcal F\),
compatible with base change and with surjective evaluation. Hence
Theorem~\ref{thm:generating-sections}, with \(k=1\), gives, whenever
\[
N\ge r+1+\max\{1,b\}=r+b+1,
\]
an entire map dominating the generating-section space at the chosen
tuple. Since \(b=(r^2-1)(g-1)\ge3\), this bound also gives the
redundant tuples required in that construction.

Compose with the map to \(\mathcal M^s_{\xi,N}\). The isomorphism
\(d\varphi_0\) supplies the base directions, and the generating-section
construction supplies all vertical directions before passage to the
scalar quotient \eqref{eq:M4.6}. The composite differential is
surjective. The same linear-slice observation proves strong
dominability, and \cite[Theorem 1.4]{DW} gives assertion \textup{(i)}.

\smallskip
\noindent\emph{Variable determinant.}
Let \(J=\operatorname{Pic}^0(C)\). The map
\begin{equation}
SU_C^s(r,\xi)\times J\longrightarrow U_C^s(r,d),
\qquad(E,L)\longmapsto E\otimes L.
\label{eq:M4.8}
\end{equation}
is finite étale. Indeed, identify \(\operatorname{Pic}^d(C)\) with
\(J\) by tensoring with \(\xi^{-1}\). Then \eqref{eq:M4.8} is the
base change of \[
[r]:J\to J,\qquad L\mapsto L^{\otimes r},
\]
along the determinant map. The deck group \(J[r]\) acts by \[
T\cdot(E,L)=(E\otimes T,L\otimes T^{-1}).
\]
Translation on the second factor is free for \(T\ne\mathcal O_C\), so \eqref{eq:M4.8} is unramified even when \(E\) admits a self-twist.

The product \(SU_C^s(r,\xi)\times J\) is Oka. On a Euclidean
dominating source for this product, pull back the family obtained
from \(P_y\) on the first factor and tensor it with the Poincaré
line bundle from the second factor. The resulting genuine family
has a classifying map to \(U_C^s(r,d)\) with full differential.
The generating-section theorem and the scalar-quotient calculation
apply with
\[
b=r^2(g-1)+1.
\]
The resulting numerical condition is \(N\ge r+r^2(g-1)+2\), which proves assertion \textup{(ii)}.

The variable-determinant dimension is \[
\begin{aligned}
\dim\mathcal M^s_{d,N}
&=r^2(g-1)+1+N(d+r(1-g))-1\\
&=Nd+r(N-r)(1-g).
\end{aligned}
\]
Fixing the determinant subtracts \(g\). The tangent-space calculation
gives the same dimension: the exact sequence
\[
0\to\operatorname{End}(E)\to E^{\oplus N}
\to f^*T\operatorname{Gr}(r,N)\to0
\]
and \(H^1(E)=0\) give
\(H^1(f^*T\operatorname{Gr}(r,N))=0\).
\end{proof}

For example, on any fixed genus-two curve, \(\mathcal M^s_{7,8}\), the space of maps with stable rank-two quotient bundle of degree seven, is Oka of dimension \(44\). With determinant fixed, \(\mathcal M^s_{\xi,6}\) is Oka of dimension \(32\).

\section{Hilbert schemes and nested Quot schemes}\label{sec:hilbert-quot}

Configurations of distinct points give an Oka open part of a
complex surface's Hilbert--Douady space. For nested Quot schemes on
curves, the Oka property is equivalent to that of the symmetric powers
associated with the length increments. A dense affine-torsor locus
proves sufficiency, and a section of the support map proves necessity.

\subsection{Hilbert schemes of points on surfaces}\label{sec:surface-hilbert}

For a complex manifold \(S\), write
\[
F_r(S)=\{(p_1,\ldots,p_r)\in S^r:p_i\ne p_j\text{ for }i\ne j\}.
\]
For a complex surface \(S\), let \(S^{[n]}\) denote its
Hilbert--Douady space of length-\(n\) analytic subspaces; for an
algebraic surface this is the usual Hilbert scheme. The free
symmetric-group quotient \(F_n(S)/\mathfrak S_n\) is its reduced-subspace
locus.

\begin{proposition}\label{thm:surface-hilbert}
Let \(S\) be a connected complex surface. If \(F_n(S)\) is Oka, then
\(S^{[n]}\) is Oka. When \(S\) is projective, \(S^{[n]}\) is also
holomorphically elliptic.
\end{proposition}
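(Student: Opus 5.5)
The plan is to exhibit a dense Zariski-open Oka subset of \(S^{[n]}\) and then apply the analytic extension theorem \cite[Theorem 1.9]{DW}. The natural candidate is the reduced locus \(F_n(S)/\mathfrak S_n\).

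First we record the standard structure of \(S^{[n]}\). By Fogarty's theorem, in its Douady-space form, \(S^{[n]}\) is a connected complex manifold of dimension \(2n\) when \(S\) is a connected surface. Inside it, the reduced locus \(U=F_n(S)/\mathfrak S_n\) is the complement of the closed analytic subset of nonreduced subspaces. That subset is the preimage of the big diagonal under the Douady--Barlet map \(S^{[n]}\to S^{(n)}\). The Briançon-type fibre dimension estimates show it is a divisor, so \(U\) is dense and Zariski open. In the algebraic case this is literally a Zariski-open subset; in general it is the complement of a proper analytic subset, which is the setting of \cite[Theorem 1.9]{DW}. I expect this to be the main point to verify carefully. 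What is needed is irreducibility (connectedness plus smoothness of \(S^{[n]}\)) and density of the reduced locus in the non-algebraic, possibly noncompact case. The local statement reduces to \(\mathbb C^2\) via the product decomposition of \(S^{[n]}\) near a subspace with distinct support points, and there it is Fogarty's irreducibility.

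Next we show \(U\) is Oka. The symmetric group acts freely and properly discontinuously on \(F_n(S)\), so \(F_n(S)\to U\) is a finite unramified holomorphic covering. Its total space is Oka by hypothesis, so covering permanence \cite[Corollaries 2.51--2.52]{For} gives that \(U\) is Oka. Alternatively, surjective descent \cite[Theorem 7.5]{DW} applies once \(U\) is known to be connected, which follows because \(F_n(S)\) is connected: it is the complement of a codimension-two analytic subset of the connected manifold \(S^n\). Then \cite[Theorem 1.9]{DW} makes \(S^{[n]}\) Oka.

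For projective \(S\), \(S^{[n]}\) is a smooth projective variety, again by Fogarty. A projective Oka manifold is holomorphically elliptic by Forstneri\v{c}--L\'arusson \cite[Theorem 1.1]{FL2026}, which gives the final assertion.
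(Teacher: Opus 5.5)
Your proposal is correct and follows essentially the same route as the paper. The paper also takes connectedness, smoothness and density of \(F_n(S)/\mathfrak S_n\) from the analytic form of Fogarty's theorem, citing \cite[Theorem 2.3.1]{deCataldoMigliorini2000}, which settles the point you flag as needing care; it then uses covering permanence \cite[Corollaries 2.51--2.52]{For}, analytic extension \cite[Theorem 1.9]{DW}, and \cite[Theorem 1.1]{FL2026} in the projective case.
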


\begin{proof}
The analytic form of Fogarty's theorem makes \(S^{[n]}\) a connected
smooth complex manifold of dimension \(2n\), with dense reduced-subspace
locus \(F_n(S)/\mathfrak S_n\); see
\cite[Theorem 2.3.1]{deCataldoMigliorini2000}, and
\cite[Proposition 2.3 and Theorem 2.4]{F1968} in the algebraic case.
The symmetric-group action is free, so covering permanence
\cite[Corollaries 2.51--2.52]{For} makes this quotient Oka.
Its complement is a proper closed analytic subset of \(S^{[n]}\).
Analytic extension \cite[Theorem 1.9]{DW} proves the Oka assertion.
If \(S\) is projective, so is \(S^{[n]}\), and
\cite[Theorem 1.1]{FL2026} gives holomorphic ellipticity.
\end{proof}

\begin{corollary}\label{cor:product-curves-hilbert}
Let \(X,Y\) be positive-dimensional connected Oka manifolds. Then
\(F_n(X\times Y)\) is Oka for every \(n\ge1\). If \(X,Y\) are curves,
then \((X\times Y)^{[n]}\) is Oka as well, and is holomorphically
elliptic when both curves are projective.
\end{corollary}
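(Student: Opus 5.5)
Our goal is to show that \(F_n(X\times Y)\) is strongly dominable, and then appeal to \cite[Theorem 1.4]{DW}. For the curve case, Proposition~\ref{thm:surface-hilbert} then gives the Oka property of \((X\times Y)^{[n]}\). If both curves are projective, \(X\times Y\) is projective, and the same proposition supplies holomorphic ellipticity. The main work is therefore the configuration-space statement.

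Write \(a=\dim X\) and \(c=\dim Y\), and fix a configuration \((p_1,\ldots,p_n)\) with \(p_i=(x_i,y_i)\). We use a triangular construction in the spirit of the coordinate-arrangement results mentioned in the introduction. Points of \(X\times Y\) are distinct exactly when, for each pair \(i\ne j\), either the \(X\)-coordinates or the \(Y\)-coordinates differ.

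First, we modify the configuration so that all \(X\)-coordinates are distinct. This does not follow automatically: two points may share \(x_i=x_j\) while \(y_i\ne y_j\). Instead we produce an entire map \(\mathbb C^{n(a+c)}\to F_n(X\times Y)\) of the following form. We pick dominating maps \(f_i:\mathbb C^a\to X\) with \(f_i(0)=x_i\), which exist because \(X\) is strongly dominable \cite[Theorem 1.4]{DW}. The \(X\)-coordinates are then \(\xi_i=f_i(u_i)\), and these are allowed to collide.

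The delicate part is the \(Y\)-coordinates, which must separate the points whenever the \(\xi_i\) collide. Let \(S\subset\{(i,j)\}\) be the set of pairs with \(x_i=x_j\). Since the \(X\)-coordinates of such a pair can collide again at any \(u\), we need \(\eta_i\ne\eta_j\) whenever \(f_i(u_i)=f_j(u_j)\). We propose to handle this with a sequential deformation. We rewrite \(\eta_i=g_i(u,w_i)\), where \(g_i\) is a dominating map of \(Y\) at \(y_i\). Then we compose flows that move point \(i\) only in directions which vanish wherever point \(i\) coincides with another point, exactly as in the shear flows \eqref{eq:generating-shear-flow}. Concretely, we treat \(F_n(X\times Y)\) as an open subset of \((X\times Y)^n\), pull it back along the product of dominating maps \(\prod_i(f_i\times g_i):\mathbb C^{n(a+c)}\to(X\times Y)^n\), and obtain an open subset \(\Omega\subset\mathbb C^{n(a+c)}\) containing \(0\). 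For each point \(i\) and each coordinate direction \(\partial/\partial z_{i\ell}\), we multiply by a holomorphic function \(h_i\) of the other points' coordinates together with the remaining coordinates of point \(i\). This function should vanish on the analytic set where the moved point could meet another point.

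The key obstacle is that this bad set depends on the coordinate being flowed, so its coefficient cannot be independent of that coordinate. The triangular structure is what resolves this. When we flow the \(Y\)-coordinates \(w_i\), we let the coefficient depend only on \(u\) and the other \(w_j\). The coefficient vanishes where \(f_i(u_i)=f_j(u_j)\) for some \(j\) (an analytic set defined without using \(w_i\)), so outside that set the \(X\)-coordinates already separate point \(i\) from all others. On that set the flow is stationary. Dually, when we flow the \(u_i\), the coefficient vanishes where \(g_i(w_i)=g_j(w_j)\) for some \(j\), a condition that does not involve \(u_i\).

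Each flow is then a complete shear and preserves \(\Omega\). We need nonvanishing at the base point, which requires \(x_i\ne x_j\) for all \(j\) (respectively \(y_i\ne y_j\)). This fails in general, so we first make a preliminary move to a nearby configuration in which all \(x_i\) and all \(y_i\) are distinct. This is possible because \(\dim X,\dim Y>0\), and we connect it to the original configuration by composing with the flows. Strong dominability only requires invertibility at one point of the source. We handle this by starting the composition of flows from the generic configuration, reaching the original one as \(\Phi(t_0)\), and then using that compositions of complete flows are automorphisms of \(\Omega\). These automorphisms carry a dominating map at the generic point to one at the target point. Cartan's theorem A provides the functions \(h\), and the differential is block triangular as in \eqref{eq:generating-full-rank-block}.
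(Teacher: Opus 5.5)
\textbf{The proposal has a genuine gap in its final step}, where a nearby generic configuration is connected back to the original one by composing your flows.

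By construction, a \(w_i\)-flow has coefficient vanishing wherever \(f_i(u_i)=f_j(u_j)\) for some \(j\ne i\). A \(u_i\)-flow has coefficient vanishing wherever \(g_i(w_i)=g_j(w_j)\) for some \(j\ne i\). Since the coefficient does not depend on the block being moved, the flow is stationary on these sets.

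Now take \(n=4\), points \(a\ne b\) in \(X\) and \(c\ne d\) in \(Y\), and the configuration \(p=\bigl((a,c),(a,d),(b,c),(b,d)\bigr)\in F_4(X\times Y)\). Every point shares its \(X\)-coordinate with one point and its \(Y\)-coordinate with another. So in your chart \(\Omega\) centred at \(p\), every coefficient vanishes at \(0\), and every shear fixes \(0\). Two consequences follow:
\begin{itemize}
\item The group generated by the flows fixes \(0\). Hence no composition \(\Psi\) satisfies \(\Psi(q)=0\) with \(q\ne0\).
\item Composing the flows starting at \(0\) gives a constant map, so the flows cannot dominate directly at \(0\) either.
\end{itemize}
The same obstruction persists for every configuration containing such a \(2\times2\) grid. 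The set where four given indices form a grid pattern is invariant under all your flows: flows moving other points do not touch the grid coordinates, and flows moving a grid point are stationary there. So for \(n\ge4\) your argument does not give strong dominability at every point of \(F_n(X\times Y)\), and the appeal to \cite[Theorem 1.4]{DW} is unjustified.

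\textbf{What your construction does prove, and how to finish.} Consider a configuration whose \(X\)-coordinates are pairwise distinct and whose \(Y\)-coordinates are pairwise distinct. There all \(n(a+c)\) coefficients can be chosen nonzero at \(0\). Composing the flows with \(\prod_i(f_i\times g_i)\) then gives an entire map into \(F_n(X\times Y)\) with invertible differential. This set is the complement of a proper closed analytic subset of \(F_n(X\times Y)\). That manifold is connected, because the pairwise diagonals have codimension \(\dim X+\dim Y\ge2\) in \((X\times Y)^n\). You can therefore finish with analytic extension \cite[Theorem 1.9]{DW}, combined with \cite[Proposition 2.2]{DW} exactly as in the proofs of Theorems~\ref{thm:abelian-fibration} and~\ref{thm:schoen-threefold}. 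The grid-type configurations are then covered by extension rather than by explicit entire maps.

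\textbf{The paper's route.} The paper reaches the same dense open set without flows. Let \(A\subset X^n\) and \(B\subset Y^n\) be the unions of pairwise diagonals. The product-complement theorem \cite[Theorem 7.8]{DW} makes \((X^n\times Y^n)\setminus(A\times B)\) Oka. This set lies in \(F_n(X\times Y)\), with proper closed analytic complement there, and Theorem~1.9 of \cite{DW} concludes.

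Your reduction of the curve case to Proposition~\ref{thm:surface-hilbert} matches the paper.
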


\begin{proof}
For \(n=1\), the configuration space is \(X\times Y\). For \(n\ge2\),
let \(A\subset X^n\) and \(B\subset Y^n\) be the respective unions of
pairwise diagonals. Both are proper closed analytic subsets, and both
ambient powers are Oka. The product-complement theorem
\cite[Theorem 7.8]{DW} makes
\[
U=(X^n\times Y^n)\setminus(A\times B)
\]
Oka. Under \((X\times Y)^n\cong X^n\times Y^n\), a point of \(U\)
has either all first coordinates distinct or all second coordinates
distinct. Thus \(U\subset F_n(X\times Y)\). The configuration space
is connected, since the pairwise diagonals have complex codimension
\(\dim X+\dim Y\ge2\) in the connected manifold \((X\times Y)^n\).
The complement of \(U\) in the configuration space is the proper closed analytic subset
\(F_n(X\times Y)\cap(A\times B)\). Analytic extension makes the
configuration space Oka. When \(X,Y\) are curves,
Proposition~\ref{thm:surface-hilbert} applies to the product surface.

\end{proof}

In the surface case, either factor may be \(\mathbb P^1\), \(\mathbb C\),
\(\mathbb C^*\), or an elliptic curve. Thus the corollary includes
\(\mathbb C^2\), \(\mathbb C\times\mathbb C^*\),
\((\mathbb C^*)^2\), products with \(\mathbb P^1\), and products of
elliptic curves.

In dimension two, the first construction below gives Kummer K3
surfaces. Forstneri\v{c}--L\'arusson proved their strong dominability
and identified the Oka complement of the sixteen exceptional curves
\cite[Corollary 3 and Lemma 7]{FL2014}; Alarc\'on--Forstneri\v{c}
proved the Oka-1 property for compact complex surfaces bimeromorphic to
Kummer surfaces \cite[Proposition 8.4]{AF2025}.

\begin{proposition}\label{prop:kummer-torus-hilbert}
Let \(T\) be a complex torus of dimension \(m\ge2\). The blowup of
\(T/\{\pm1\}\) at its singular points is a smooth Oka manifold.
If \(m=2\), the Hilbert square \(T^{[2]}\) is Oka. When \(T\) is an
abelian variety, these manifolds are projective and holomorphically
elliptic.
\end{proposition}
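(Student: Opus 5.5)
The plan is to pass each space through a dense Zariski-open Oka subset and then apply analytic extension \cite[Theorem 1.9]{DW}, the same mechanism used for Proposition~\ref{thm:surface-hilbert}.

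\emph{The Kummer manifold.} Let \(K\) denote the blowup of \(T/\{\pm1\}\) at its \(2^{2m}\) singular points. These are the images of the two-torsion points \(T[2]\). The manifold \(K\) is smooth, since each singular point is locally \(\mathbb C^m/\{\pm1\}\), and blowing up the origin of this cone resolves it by a single exceptional \(\mathbb P^{m-1}\). Away from the exceptional divisors, \(K\) is biholomorphic to \((T\setminus T[2])/\{\pm1\}\), and the involution acts freely there. The first step is to show that \(T\setminus T[2]\) is Oka: it is a complex torus with finitely many points removed, and \(m\ge2\) makes this a complement of codimension at least two.

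\begin{itemize}
\item To prove this, I would use strong dominability directly. Translates of the universal covering map \(\mathbb C^m\to T\) are locally biholomorphic. Composing with an entire map \(\mathbb C^m\to\mathbb C^m\) that has invertible differential at \(0\) and whose image avoids the discrete preimage \(\pi^{-1}(T[2])\) gives the required entire maps. Such maps exist because \(\mathbb C^m\) minus a discrete set, with \(m\ge2\), is Oka by \cite[§6.4]{F17}; alternatively, a Fatou--Bieberbach domain avoiding a tame discrete set can be used.
\item This "discrete set in \(\mathbb C^m\)" step is the main obstacle. The lattice \(\tfrac12\Lambda\) is tame, being a linear lattice, so the standard theorem that complements of tame discrete sets are Oka applies. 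I would cite that theorem rather than codimension-two algebraic results, since the set is not algebraic.
\end{itemize}

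Once \(T\setminus T[2]\) is Oka, covering permanence \cite[Corollaries 2.51--2.52]{For} makes the free quotient by \(\{\pm1\}\) Oka. The exceptional divisors form a proper closed analytic subset of the connected manifold \(K\), so \cite[Theorem 1.9]{DW} gives that \(K\) is Oka.

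\emph{The Hilbert square.} For \(m=2\), I would apply Proposition~\ref{thm:surface-hilbert} with \(n=2\), which reduces the claim to showing that \(F_2(T)\) is Oka. The map
\[
T\times(T\setminus\{0\})\to F_2(T),\qquad (a,b)\mapsto(a,a+b)
\]
is a biholomorphism. Its source is a product of Oka manifolds: \(T\) is a complex Lie group, and \(T\setminus\{0\}\) is Oka by the tame-discrete-set argument above. Hence \(F_2(T)\) is Oka, and Proposition~\ref{thm:surface-hilbert} shows \(T^{[2]}\) is Oka.

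\emph{The abelian case.} When \(T\) is an abelian variety, \(T/\{\pm1\}\) is projective, and blowing up finitely many points preserves projectivity, so \(K\) is projective. The Hilbert square \(T^{[2]}\) of a projective surface is projective by Fogarty. Both are therefore projective Oka manifolds, and \cite[Theorem 1.1]{FL2026} shows they are holomorphically elliptic.
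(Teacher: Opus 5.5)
Your proposal is correct and is essentially the paper's proof. The paper cites \cite[Corollary 2.25]{For} for the Oka property of a torus of dimension at least two minus finitely many points, and that corollary is precisely the universal-cover, tame-lattice and covering argument you unpack. The rest matches as well: the free \(\{\pm1\}\)-quotient, analytic extension \cite[Theorem 1.9]{DW} across the exceptional divisors (local model \(\operatorname{Tot}\mathcal O_{\mathbb P^{m-1}}(-2)\)), the subtraction biholomorphism \(F_2(T)\cong T\times(T\setminus\{0\})\), and \cite[Theorem 1.1]{FL2026}.

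Do delete the assertion in your first bullet that \(\mathbb C^m\) minus an arbitrary discrete set is Oka by \cite[\S6.4]{F17}. That is false: Rosay--Rudin constructed discrete sets met by every nondegenerate entire map \(\mathbb C^m\to\mathbb C^m\). As your second bullet already recognizes, it is the tameness of the lattice \(\tfrac12\Lambda\) that makes this step work.
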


\begin{proof}
The complement \(T\setminus T[2]\) is Oka by
\cite[Corollary 2.25]{For}, which treats complex tori of dimension at
least two with finitely many points removed. Its free unramified quotient
is the smooth locus of \(T/\{\pm1\}\), and is Oka. Blowing up the vertex
of the local model \(\mathbb C^m/\{\pm1\}\) gives the total space of
\(\mathcal O_{\mathbb P^{m-1}}(-2)\). Thus the global blowup is smooth,
with the Oka smooth locus as the complement of its closed analytic
exceptional locus. Analytic extension \cite[Theorem 1.9]{DW} proves the
first assertion.

When the torus is two-dimensional, subtraction gives the biholomorphism
\[
F_2(T)\longrightarrow T\times(T\setminus\{0\}),
\qquad (x,y)\longmapsto(x,y-x).
\]
The product is Oka, so Proposition~\ref{thm:surface-hilbert} gives
the assertion for \(T^{[2]}\).
For an abelian variety \(T\), the finite quotient and its blowup are
projective, as is \(T^{[2]}\) when \(m=2\). Their holomorphic ellipticity
follows from \cite[Theorem 1.1]{FL2026}.
\end{proof}

\subsection{Nested Quot schemes on curves}\label{sec:nested-quot}

For a vector bundle on a curve, the support of the successive finite
quotients defines a map to a product of symmetric powers. This product
determines the Oka property of the nested Quot scheme.

\begin{theorem}\label{thm:N2}
Let \(C\) be a smooth connected projective curve of genus \(g\), and
let \(V\) be a vector bundle of rank \(r\ge1\). For integers
\(0=d_0\le d_1\le\cdots\le d_s\), put \(n_j=d_j-d_{j-1}\), and
let \(\mathcal Q=\operatorname{Quot}_{\mathbf d}(V)\) parametrize
\[
K_s\subset\cdots\subset K_1\subset V,\qquad
\operatorname{length}(V/K_j)=d_j.
\]
Then \(\mathcal Q\) is smooth, irreducible, and projective of dimension
\(rd_s\), and
\[
\mathcal Q\text{ is Oka}
\quad\Longleftrightarrow\quad
\prod_j\operatorname{Sym}^{n_j}C\text{ is Oka}
\quad\Longleftrightarrow\quad
\operatorname{Sym}^{n_j}C\text{ is Oka for every }j.
\]
Whenever these conditions hold, \(\mathcal Q\) is holomorphically
elliptic. If \(g\ge2\), they require every positive increment to
satisfy \(n_j\ge g\). They hold if every positive increment satisfies
\(n_j\ge2g-2\). All length vectors are allowed when \(g\le1\);
when \(g=2\), they hold exactly when no increment equals one.
\end{theorem}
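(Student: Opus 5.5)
The plan is to compare \(\mathcal Q\) with the product of symmetric powers through the \emph{support map}
\[
\sigma:\mathcal Q\longrightarrow\prod_j\operatorname{Sym}^{n_j}C,\qquad
(K_\bullet)\longmapsto\bigl(\operatorname{supp}(K_{j-1}/K_j)\bigr)_j,
\]
with \(K_0=V\) and supports counted with multiplicity. Sufficiency will come from a dense open subset of \(\mathcal Q\) that is an affine bundle over the product. Necessity will come from a holomorphic section of \(\sigma\).

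\emph{Basic geometry.} I expect smoothness, irreducibility and \(\dim\mathcal Q=rd_s\) to be the standard facts for nested Quot schemes of torsion quotients on a smooth curve. Projectivity holds because \(\mathcal Q\) is a closed subscheme of a product of Quot schemes. Smoothness should follow because the obstruction groups are \(\operatorname{Ext}^1\)-groups between a locally free sheaf and a torsion sheaf on a curve. For the dimension, I would build \(\mathcal Q\) inductively, one length-\(n_j\) quotient of \(K_{j-1}\) at a time; each step is a Quot scheme of dimension \(rn_j\). Irreducibility follows from the dense open set below.

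\emph{Sufficiency.} Fix a line subbundle \(N\subset V\) with locally free quotient \(W=V/N\) of rank \(r-1\); for instance, take \(N\) sufficiently negative. Let \(\mathcal Q^\circ\subset\mathcal Q\) be the Zariski-open locus on which \(N\to V/K_s\) is surjective. Then each \(V/K_j\) equals \(N|_{D_1+\cdots+D_j}\) for divisors \(D_j\) of degree \(n_j\), and \(\sigma\) records \((D_j)\).

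With \(D=D_1+\cdots+D_s\), the remaining data is an extension of the fixed quotient \(N\to N|_D\) to \(V\to N|_D\). Such extensions form a torsor under \(\operatorname{Hom}(W,N|_D)\), a vector space of dimension \((r-1)d_s\). The intermediate quotients \(V\to N|_{D_1+\cdots+D_j}\) are then obtained by restriction. Relativizing over \(\prod_j\operatorname{Sym}^{n_j}C\), this exhibits \(\mathcal Q^\circ\) as an affine bundle, modelled on the pushforward of \(W^\vee\otimes N\) restricted to the universal divisor, of total dimension \(d_s+(r-1)d_s=rd_s\).

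An affine bundle over an Oka base is Oka, since its fibres \(\mathbb C^k\) are Oka and \cite[Corollaries 2.51--2.52]{For} applies. Since \(\mathcal Q\) is irreducible, analytic extension \cite[Theorem 1.9]{DW} then gives the Oka property of \(\mathcal Q\). Holomorphic ellipticity follows from projectivity and \cite[Theorem 1.1]{FL2026}.

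\emph{Necessity.} Choose a surjection \(V\to N'\) onto a line bundle. The map
\[
(D_j)_j\longmapsto K_j=\ker\bigl(V\to N'|_{D_1+\cdots+D_j}\bigr)
\]
is a holomorphic section of \(\sigma\). Thus \(\prod_j\operatorname{Sym}^{n_j}C\) is a holomorphic retract of \(\mathcal Q\), and retracts of Oka manifolds are Oka. The same retract argument, using the inclusion of one factor at a base point together with the projection, shows that the product is Oka if and only if every factor is.

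\emph{Numerics.} Each numerical clause reduces to deciding when a single \(\operatorname{Sym}^nC\) is Oka.
\begin{itemize}
\item For \(g\le1\), \(\operatorname{Sym}^nC\) is \(\mathbb P^n\), or a \(\mathbb P^{n-1}\)-bundle over the elliptic curve; in both cases it is Oka.
\item For \(n\ge2g-1\), the Abel--Jacobi map makes \(\operatorname{Sym}^nC\) a projective bundle over the Jacobian, hence Oka.
\item For \(n=2g-2\), it is a \(\mathbb P^{g-2}\)-bundle over \(\operatorname{Pic}^{2g-2}\setminus\{K_C\}\), which is Oka by \cite[Corollary 2.25]{For} when \(g\ge2\). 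The dense-open extension \cite[Theorem 1.9]{DW} then gives the whole space.
\item For \(1\le n<g\), Abel--Jacobi is birational onto the proper subvariety \(W_n\subset J\). A dominating entire map into \(\operatorname{Sym}^nC\) would give a dominant entire map into \(W_n\). This contradicts the Bloch--Ochiai/Kawamata theorem: \(W_n\) is not stabilized by any positive-dimensional abelian subvariety, so entire curves in it are not Zariski dense.
\item In genus two this leaves exactly \(n_j\ne1\).
\end{itemize}

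The step I expect to be the main obstacle is checking that \(\mathcal Q^\circ\) is genuinely a holomorphically locally trivial affine bundle over the whole product, including non-reduced and colliding divisors, with consistent nesting. I would first handle this by writing the universal divisor as a finite flat family and using the surjection \(N\to V/K_s\) to identify all \(K_j\) scheme-theoretically. The second delicate point is the \(n<g\) non-Oka argument, where the degeneracy theorem for entire curves in \(W_n\) has to be cited correctly.
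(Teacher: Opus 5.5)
Your proposal is essentially the paper's proof: a dense Zariski-open affine-torsor locus over \(\prod_j\operatorname{Sym}^{n_j}C\) cut out by a line subbundle, followed by analytic extension, a section of the support map built from a line-bundle quotient, and Abel--Jacobi plus Bloch--Ochiai for the numerics; you package the torsor as a single one under \(\operatorname{Hom}(W,N|_D)\), where the paper uses a tower of torsors under \(\operatorname{Hom}(W,L'_{j-1}/L'_j)\). Two small points: density of \(\mathcal Q^\circ\) already uses irreducibility of \(\mathcal Q\), so take irreducibility from Monavari--Ricolfi or from your iterated relative-Quot fibration rather than ``from the dense open set''; and for \(0<n<g\) your finite-stabilizer claim does hold (via \(W_n+W_{g-1-n}=W_{g-1}\) and the ample theta divisor), though the paper avoids it by noting that the Zariski closure of a dense entire curve, obtained by interpolating a dense sequence, would be an abelian subvariety containing the Abel curve, hence all of \(\operatorname{Jac}(C)\).
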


\begin{proof}
By \cite[Theorem 1.4 and Proposition 2.1]{MonavariRicolfi2022},
\(\mathcal Q\) is smooth and connected of dimension \(rd_s\), hence
irreducible. It is projective by the Quot construction \cite{Gro61}.
Its universal kernels are vector bundles: they are flat over the
parameter scheme, their fibres are locally free on the smooth curve,
and the fibrewise local-freeness criterion
\cite[Tag 080Q]{A2026} gives local freeness on the product.

\smallskip
\noindent\emph{A dense affine-torsor locus.}
Choose a line subbundle \(L\subset V\) with vector bundle quotient
\(W\). Such a subbundle is obtained by saturating the rank-one subsheaf
defined by a nonzero section of a sufficiently positive twist of \(V\).
Let \(U\subset\mathcal Q\) be the locus where
\[
L\longrightarrow V/K_s
\]
is surjective. It is open: the cokernel has support proper over
\(\mathcal Q\), and the complement of its image is \(U\). For a flag
in \(U\), the maps \(L\to V/K_j\) are all surjective. Hence
\[
L'_j=K_j\cap L=L(-D_j),\qquad
D_1\le\cdots\le D_s,\qquad \deg D_j=d_j,
\]
and every \(K_j\) maps onto \(W\). The divisors are parametrized by
\[
S=\prod_{j=1}^s\operatorname{Sym}^{n_j}C.
\]

Put \(K_0=V\) and \(L'_0=L\). After \(K_{j-1}\) is chosen, the
possible \(K_j\) correspond to splittings of
\begin{equation}\label{eq:nested-top-splitting}
0\longrightarrow L'_{j-1}/L'_j
\longrightarrow K_{j-1}/L'_j
\longrightarrow W\longrightarrow0.
\end{equation}
A splitting lifts to \(K_j\subset K_{j-1}\); its intersection with
\(L\) is \(L'_j\), and its image is \(W\). Conversely,
\(K_j/L'_j\to W\) is an isomorphism and recovers the splitting.
The finite support of \(L'_{j-1}/L'_j\) gives
\[
\operatorname{Ext}^1(W,L'_{j-1}/L'_j)=0.
\]
The splittings form a torsor under
\(\operatorname{Hom}(W,L'_{j-1}/L'_j)\), of dimension \((r-1)n_j\).

These torsors are Zariski locally trivial in families. Indeed, over
a parameter scheme \(B\), the finite sheaf
\(\mathcal T=L'_{j-1}/L'_j\) is flat of relative length \(n_j\).
For \(p:C\times B\to B\), the sheaf
\(\mathcal H=W^\vee\otimes\mathcal T\) is flat of relative length
\((r-1)n_j\), and
\[
p_*\mathcal H\text{ is locally free of rank }(r-1)n_j,
\qquad R^qp_*\mathcal H=0\quad(q>0).
\]
Finite pushforward commutes with base change. On an affine open of
\(B\), Leray gives
\(H^1(C\times B,\mathcal H)=H^1(B,p_*\mathcal H)=0\), so
\eqref{eq:nested-top-splitting} splits there. Differences of splittings
are sections of \(p_*\mathcal H\).

This construction identifies \(U\), in families, with an affine-torsor
tower over \(S\). In the reverse direction, each \(V/K_j\) on \(U\)
is a flat finite quotient of \(L\), so its kernel is \(L(-D_j)\) for
a relative effective Cartier divisor \(D_j\). The quotients and
intersections commute with base change, and the inverse constructions
above recover the flag. Each torsor has nonempty fibres; consequently
\(U\) is nonempty and irreducible, with dimension \(rd_s\).
Since \(\mathcal Q\) is irreducible, \(U\) is dense.

\smallskip
\noindent\emph{The support map and its section.}
Put \(\mathcal K_0=V\) in the universal flag over
\(C\times\mathcal Q\). The exact sequence
\[
0\longrightarrow\mathcal K_{j-1}/\mathcal K_j
\longrightarrow V/\mathcal K_j
\longrightarrow V/\mathcal K_{j-1}\longrightarrow0
\]
has two cumulative quotients flat over \(\mathcal Q\).
Hence \(\mathcal T_j=\mathcal K_{j-1}/\mathcal K_j\) is flat of relative
length \(n_j\), with finite support over \(\mathcal Q\).
The determinant of
\(\mathcal K_j\hookrightarrow\mathcal K_{j-1}\) cuts out a relative
effective Cartier divisor \(\Delta_j\) of degree \(n_j\); its ideal is
\(\operatorname{Fitt}_0(\mathcal T_j)\). Formation of these divisors
commutes with base change, giving the increment-support morphism
\[
\sigma:\mathcal Q\longrightarrow
S=\prod_j\operatorname{Sym}^{n_j}C.
\]
For the usual cumulative support morphism, see
\cite[\S1.2, equation (1.2)]{MonavariRicolfi2022}.

Choose a line bundle quotient \(\pi:V\twoheadrightarrow M\), obtained
by saturating a line subsheaf in \(V^\vee\) and dualizing.
On \(C\times S\), let \(D_j\) be the universal increment divisors and
put \(E_j=D_1+\cdots+D_j\). The kernels
\[
K_j=\ker\bigl(V\longrightarrow M/M(-E_j)\bigr)
\]
form a nested family: their quotients are finite and flat of relative
length \(d_j\). They define a morphism \(i:S\to\mathcal Q\).
A local splitting of \(\pi\) identifies
\(K_j\) with \(\ker\pi\oplus M(-E_j)\), so the successive Fitting
divisors are exactly \(D_j\). Thus \(\sigma\circ i=\mathrm{id}_S\).

\smallskip
\noindent\emph{The Oka property.}
If \(S\) is Oka, its affine-torsor tower \(U\) is Oka by
\cite[Corollaries 2.51--2.52]{For}. The complement
\(\mathcal Q\setminus U\) is proper closed algebraic, so analytic
extension \cite[Theorem 1.9]{DW} makes \(\mathcal Q\) Oka.
Conversely, \(S\) is a holomorphic retract of \(\mathcal Q\), so
inherits the Oka property. Each factor of \(S\) is a retract of the
product, proving the stated equivalences. The projectivity of
\(\mathcal Q\) then gives holomorphic ellipticity
\cite[Theorem 1.1]{FL2026}.

The zeroth symmetric power is a point. For \(n\ge2g-1\), the
Abel--Jacobi morphism
\[
\operatorname{Sym}^{n}C\longrightarrow\operatorname{Pic}^{n}(C)
\]
is a projective bundle: every degree-\(n\) line bundle has vanishing
first cohomology and \(n-g+1\) independent sections, whose spaces form
a vector bundle after normalizing the Poincar\'e bundle. Thus these
symmetric powers are Oka.

For \(g\ge2\) and \(n=2g-2\), restrict the Abel--Jacobi morphism to
\(\operatorname{Pic}^{2g-2}(C)\setminus\{K_C\}\). Serre duality gives
\(H^1(C,L)=0\) there, and Riemann--Roch gives \(h^0(C,L)=g-1\).
Its inverse image is therefore a \(\mathbb P^{g-2}\)-bundle.
The base is a complex torus of dimension \(g\ge2\) with one point
removed, so is Oka by \cite[Corollary 2.25]{For}. The bundle theorem
makes this dense open subset Oka, and analytic extension makes
\(\operatorname{Sym}^{2g-2}C\) Oka. This proves the numerical range.
For the necessary bound, suppose \(0<n<g\) and
\(\operatorname{Sym}^nC\) is Oka. Interpolating a countable dense
sequence at the integers gives an entire curve
\(h:\mathbb C\to\operatorname{Sym}^nC\) with dense image.
The Zariski closure of its Abel--Jacobi image is
\(W_n\subset\operatorname{Pic}^n(C)\). By the Bloch--Ochiai theorem
\cite[Lemma 5.4]{NWY2002}, \(W_n\) is a translate of an abelian
subvariety of the Jacobian. Identify \(\operatorname{Pic}^n(C)\)
with \(\operatorname{Jac}(C)\) using \(\mathcal O_C(np_0)\).
The divisor \(np_0\) gives \(0\in W_n\), so \(W_n\) is an abelian
subvariety. It also contains the Abel curve
\(p\mapsto\mathcal O_C(p-p_0)\), which generates the Jacobian.
Thus \(W_n=\operatorname{Jac}(C)\), contradicting
\(\dim W_n\le n<g\). The equivalence already proved now gives the
necessary bound for every positive increment. Together with the
sufficient range, it gives the exact criterion in genus two.
\end{proof}

\section{Character varieties and parabolic connections}\label{sec:character-connections}

For the rank-two character varieties below, simple trace functions have
complete Hamiltonian twist flows. A finite generating set of traces
therefore gives a dominating spray on the irreducible locus. We use this
locus to treat resolutions, representation varieties, and moduli spaces of
parabolic connections under the Riemann--Hilbert correspondence.

\subsection{Character varieties and Hamiltonian flows}\label{sec:character-representations}\label{sec:M8}

For a closed oriented surface \(\Sigma_g\) of genus \(g\ge2\), its \(\mathrm{SL}_2(\mathbb C)\)-character variety is
\[
\mathcal X_g=
\operatorname{Hom}(\pi_1\Sigma_g,\mathrm{SL}_2(\mathbb C))
\sslash\mathrm{SL}_2(\mathbb C).
\]
Its regular locus is the irreducible-character locus
\cite[Lemma 2.4]{BS2023}. Goldman's construction equips it with a
holomorphic symplectic form, and its dimension is \(6g-6\)
\cite{Gold84}.

For \(G=\mathrm{SL}_2(\mathbb C)\) or \(\mathrm{GL}_2(\mathbb C)\), write
\[
\mathcal X_g(G)=
\operatorname{Hom}(\pi_1\Sigma_g,G)\mathbin{/\mkern-6mu/}G.
\]

For a simple closed curve \(\gamma\), the trace function is
\[
f_\gamma([\rho])=\operatorname{tr}\rho(\gamma).
\]
For \(\mathrm{SL}_2\), the character ring of a closed surface and each
fixed-trace relative character ring have a finite generating family of
simple traces \cite[Theorem 2.1 and its proof in Sections 2.1--2.2]{GX2011}.
The construction first generates the ambient complex character ring;
restriction gives the generators on every fixed-trace fibre. The identity
\begin{equation}
\operatorname{tr}(AB)+\operatorname{tr}(AB^{-1})
=\operatorname{tr}(A)\operatorname{tr}(B),
\qquad A,B\in\mathrm{SL}_2,
\label{eq:M8.2}
\end{equation}
resolves crossings in trace polynomials; see
\cite[Fact 4.1 and Theorem 7.1]{PS1998} for its skein interpretation.

We shall use the following finite-flow construction. Let
\(V_1,\ldots,V_N\) be complete holomorphic vector fields on a complex
manifold \(Y\), with flows \(\Phi_i^t\), whose values span \(T_yY\) at
every \(y\in Y\). Then
\[
\mathfrak s(y,t_1,\ldots,t_N)
=\Phi_N^{t_N}\circ\cdots\circ\Phi_1^{t_1}(y)
\]
is a dominating spray on \(Y\times\mathbb C^N\), since
\[
\mathfrak s(y,0)=y,\qquad
d\mathfrak s_{(y,0)}(0,\tau_1,\ldots,\tau_N)
=\sum_i\tau_iV_i(y).
\]
The same differential makes each orbit of the subgroup generated by
these flows open. Since the orbits partition \(Y\), this subgroup acts
transitively on every connected component; compare
\cite[Lemmas 3.1--3.2]{GX2011}.

This construction also applies to a holomorphic principal
\(\mathrm{PGL}_2\)-bundle \(q:R\to Y\) when the spanning fields on \(Y\)
have complete holomorphic lifts to \(R\). Three complete fundamental
fields from a basis of \(\mathfrak{sl}_2\) span \(\ker dq\), and the
lifts supply the remaining tangent directions. The resulting finite
family of complete fields gives a dominating spray on \(R\).

\begin{proposition}\label{prop:goldman-ellipticity}
For \(G=\mathrm{SL}_2(\mathbb C)\) or \(\mathrm{GL}_2(\mathbb C)\),
the irreducible-character locus \(\mathcal X_g(G)^{\mathrm{irr}}\) is
holomorphically elliptic and hence Oka.
\end{proposition}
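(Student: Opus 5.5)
The plan is to apply the finite-flow construction just described, using Goldman's Hamiltonian twist flows of simple trace functions on the smooth symplectic manifold \(\mathcal X_g(\mathrm{SL}_2(\mathbb C))^{\mathrm{irr}}\). For a simple closed curve \(\gamma\), let \(V_\gamma\) be the Hamiltonian vector field of \(f_\gamma\) with respect to Goldman's symplectic form. Goldman's twist formula describes its flow. On the open set where \(\rho(\gamma)\) is diagonalizable, \(\rho\) is deformed on one side of \(\gamma\) by conjugation with a one-parameter subgroup of the centralizer of \(\rho(\gamma)\); in the amalgamated or HNN splitting along \(\gamma\), this is \(\exp(t\,\mathrm{tr}_0\rho(\gamma))\), where \(\mathrm{tr}_0\) denotes the trace-free part. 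I will write the flow globally as
\[
\rho_t=\rho\ \text{twisted by}\ \exp\bigl(t\,(\rho(\gamma)-\tfrac12\operatorname{tr}\rho(\gamma)I)\bigr).
\]
The matrix \(\rho(\gamma)-\tfrac12\operatorname{tr}\rho(\gamma)I\) commutes with \(\rho(\gamma)\) and depends polynomially on \(\rho\), so this formula makes sense for every \(t\in\mathbb C\) and every \(\rho\), including the parabolic case. It preserves the surface relation, since the twisting element centralizes \(\rho(\gamma)\), and it is \(G\)-equivariant. This gives a complete holomorphic flow on the representation variety that descends to the character variety. Irreducibility is preserved along the flow because the flow is invertible (replace \(t\) by \(-t\)) and preserves the irreducible locus: a common invariant line for \(\rho_t\) would give one for \(\rho=(\rho_t)_{-t}\). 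So each \(V_\gamma\) is a complete holomorphic field on \(\mathcal X_g^{\mathrm{irr}}\), and one checks that its generator is the Hamiltonian field of \(f_\gamma\), as in Goldman's computation.

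The spanning step comes next. Take the finite generating family of simple traces \(f_{\gamma_1},\dots,f_{\gamma_N}\) from \cite{GX2011}. Since these generate the coordinate ring, the differentials \(df_{\gamma_i}\) span the cotangent space \(T^*_y\) at every smooth point \(y\). Here I use that the irreducible locus is the regular locus \cite[Lemma 2.4]{BS2023}, so the embedding by generators is an immersion there. Nondegeneracy of the symplectic form turns this into spanning of \(T_y\) by the fields \(V_{\gamma_i}(y)\). The finite-flow construction then gives a dominating spray, so \(\mathcal X_g(\mathrm{SL}_2)^{\mathrm{irr}}\) is holomorphically elliptic, hence Oka by Gromov.

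For \(\mathrm{GL}_2\), I would reduce to the \(\mathrm{SL}_2\) case with extra central directions. The group \(\operatorname{Hom}(\pi_1\Sigma_g,\mathbb C^*)\cong(\mathbb C^*)^{2g}\) acts on \(\mathcal X_g(\mathrm{GL}_2)\) by tensoring, preserving irreducibility. Its \(2g\) one-parameter subgroups \(\rho\mapsto\chi_{t}\otimes\rho\), where \(\chi_t\) sends a chosen generator to \(e^t\), give complete commuting fields tangent to the determinant directions. The twist flows above still make sense for \(\mathrm{GL}_2\), using the same trace-free centralizer element, and they preserve determinants. At an irreducible \(\mathrm{GL}_2\) point the tangent space splits into the determinant directions \(H^1(\pi_1,\mathbb C)\) and the trace-free part \(H^1(\pi_1,\mathfrak{sl}_2)\), and the twist fields span the latter. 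The trace-free part is locally the tangent space of an \(\mathrm{SL}_2\) character variety twisted by a square root of the determinant, so the span there follows from the \(\mathrm{SL}_2\) case. Together with the central flows, this gives a spanning finite family of complete fields, and the same construction yields a dominating spray.

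I expect the main obstacle to be checking that the polynomial twist formula is genuinely the Hamiltonian flow and is well defined globally, particularly at points where \(\rho(\gamma)=\pm I\) or \(\rho(\gamma)\) is parabolic. There the naive centralizer description degenerates. If the Hamiltonian identification is delicate at such points, I would avoid it altogether. In that case it is enough that the fields \(V_{\gamma_i}\) defined by the polynomial twist are complete and that they equal (multiples of) the Hamiltonian fields on a dense open set. Then the spanning requirement at the remaining points becomes a closed condition, which I would handle by enlarging the finite family with extra simple curves.
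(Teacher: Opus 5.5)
Your proposal is correct. For $\mathrm{SL}_2$ it follows the paper's argument: Goldman's twist by $\exp(tF(\rho(\gamma)))$ with $F(A)=A-\tfrac12\operatorname{tr}(A)I$, Goldman--Xia simple-trace generators whose differentials span the cotangent space at smooth points, and the symplectic form converting these into spanning complete fields.

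Two small remarks on that part:
\begin{enumerate}
\item The irreducibility step needs one observation you leave implicit. An invariant line of $\rho_t$ is invariant under $\rho_t(\gamma)=\rho(\gamma)$. It is therefore invariant under $\zeta_{-t}$, which is the exponential of a polynomial in $\rho(\gamma)$. Hence the line stays invariant for $(\rho_t)_{-t}=\rho$. This is how the paper argues.
\item The obstacle you anticipate does not arise. Goldman's formula is valid at every representation; where $\rho(\gamma)=\pm I$ one simply has $F=0$ and a stationary flow. Your fallback is also unnecessary, because two holomorphic vector fields that agree on a dense open set agree everywhere.
\end{enumerate}

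For $\mathrm{GL}_2$ your route differs in bookkeeping but not in substance.

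The paper works upstairs on the finite unramified cover $\mathcal X_g(\mathrm{SL}_2)^{\mathrm{irr}}\times T\to\mathcal X_g(\mathrm{GL}_2)^{\mathrm{irr}}$. It weights the $\mathrm{SL}_2$ Hamiltonian fields by $\chi(\gamma_j)$ so that they become invariant under the sign-character deck group, using $\varepsilon\Phi_j^s=\Phi_j^{\varepsilon(\gamma_j)s}\varepsilon$. It then adds the logarithmic torus fields and descends. Spanning is immediate from the product structure upstairs.

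You instead define the twist directly on $\mathrm{GL}_2$ representations using the same traceless element. Well-definedness, determinant preservation and completeness downstairs are then automatic, and no deck-invariance check is needed. The cost is a local comparison with $\mathrm{SL}_2$ to get spanning.

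The two families are in fact identical. Since $F(cA)=cF(A)$, the $\mathrm{GL}_2$ twist at $\chi\rho'$ is $\chi\cdot\rho'_{\chi(\gamma)t}$. This is exactly the descent of the paper's field $\chi(\gamma_j)W_j$, and your tensoring flows are the descended torus fields. That homogeneity identity, together with $\chi(\gamma)\ne0$, is what your sentence ``the span there follows from the $\mathrm{SL}_2$ case'' needs. With it made explicit, your argument via the splitting of the tangent space into determinant and trace-free parts is complete.
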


\begin{proof}
We first treat \(G=\mathrm{SL}_2(\mathbb C)\). Under the trace pairing,
the variation of \(f_\gamma\) is represented by
\[
F(\rho(\gamma))
=\rho(\gamma)-\tfrac12\operatorname{tr}(\rho(\gamma))I.
\]
The twist along the curve uses the one-parameter subgroup
\begin{equation}
\zeta_t=
\exp\!\left(tF(\rho(\gamma))\right),
\qquad t\in\mathbb C.
\label{eq:M8.1}
\end{equation}
If the curve is nonseparating, cut the surface and multiply the
appropriate HNN stable letter by \(\zeta_t\). If it is separating,
conjugate the representation on one side by \(\zeta_t\). Commutation
of \(\zeta_t\) with \(\rho(\gamma)\) preserves the gluing relation.

The matrix exponential depends holomorphically on the representation
and on every \(t\in\mathbb C\). The twisted representation \(\rho_t\)
satisfies
\[
\rho_t(\gamma)=\rho(\gamma),\qquad \zeta_{-t}=\zeta_t^{-1},
\]
so time \(-t\) gives the inverse. To check preservation of
irreducibility, suppose that a line \(\ell\subset\mathbb C^2\) is
invariant under \(\rho_t\). Then
\(\rho(\gamma)\ell=\ell\), and \eqref{eq:M8.1} gives
\(\zeta_t\ell=\ell\). In the separating case, undoing conjugation by
\(\zeta_t\) preserves \(\ell\) on both sides of the cut. In the
nonseparating case, the cut-surface group and the original HNN stable
letter preserve \(\ell\), since its twisted stable letter differs by
\(\zeta_t\). Thus an invariant line for the twisted representation
would also be invariant for the original one, and \(\rho_t\) is
irreducible whenever \(\rho\) is.

The twists are equivariant under conjugation and descend to holomorphic
\(\mathbb C\)-flows on \(\mathcal X_g^{\mathrm{irr}}\).
These are Goldman's Hamiltonian twist flows \cite{Gold86}, with the
complex \(\mathrm{SL}_2\) formula given in
\cite[§2.3, equation (2.5), and Lemma 2.3.1]{G2004}.

Choose a finite family \(f_1,\ldots,f_N\) of simple-trace generators as above.

The morphism \[
(f_1,\ldots,f_N):\mathcal X_g\hookrightarrow\mathbb C^N
\]
is a closed embedding, so \(df_1,\ldots,df_N\) span the cotangent
space at every smooth point. The symplectic form turns them into
spanning Hamiltonian vector fields, each complete by
\eqref{eq:M8.1}.

The finite-flow construction above gives a dominating spray on
\(\mathcal X_g^{\mathrm{irr}}\times\mathbb C^N\), proving holomorphic
ellipticity and hence the Oka property.

The irreducible locus of \(\mathcal X_g(\mathrm{GL}_2)\) is also
holomorphically elliptic. To construct a spray, put
\[
T=\operatorname{Hom}(\pi_1\Sigma_g,\mathbb C^*)
\cong(\mathbb C^*)^{2g}.
\]
Twisting by rank-one characters gives the finite unramified covering
\[
p:\mathcal X_g(\mathrm{SL}_2)^{\mathrm{irr}}\times T
\longrightarrow\mathcal X_g(\mathrm{GL}_2)^{\mathrm{irr}},
\qquad ([\rho],\chi)\longmapsto[\chi\rho].
\]
Its deck group is
\(H=\operatorname{Hom}(\pi_1\Sigma_g,\{\pm1\})\), acting by
\((\rho,\chi)\mapsto(\varepsilon\rho,\varepsilon\chi)\); the action is
free on \(T\) \cite[Lemma 2.3 and diagram (6)]{BS2023}.

Let \(W_j\) be the Hamiltonian field of the simple trace
\(f_{\gamma_j}\) used above, and write \(\Phi_j^s\) for its complete
flow. Central-character twisting leaves the adjoint local system and
Goldman's symplectic form unchanged, while
\[
f_{\gamma_j}(\varepsilon[\rho])
=\varepsilon(\gamma_j)f_{\gamma_j}([\rho]).
\]
Consequently
\begin{equation}\label{eq:central-twist-equivariance}
\varepsilon\Phi_j^s(x)
=\Phi_j^{\varepsilon(\gamma_j)s}(\varepsilon x).
\end{equation}
The nowhere-zero character monomial
\(a_j(\chi)=\chi(\gamma_j)\) transforms by the same sign. Thus the
vector field
\[
V_j(x,\chi)=\bigl(a_j(\chi)W_j(x),0\bigr)
\]
on the covering space is \(H\)-invariant. It is complete: its flow is
\((x,\chi)\mapsto(\Phi_j^{a_j(\chi)s}(x),\chi)\). The logarithmic
coordinate fields \(\chi_i\partial_{\chi_i}\) on \(T\) are likewise
complete and \(H\)-invariant. These fields descend through \(p\).
Since every \(a_j\) is nonzero, their values span the tangent space of
the covering space at every point. The differential of \(p\) is an
isomorphism, so the descended fields also span everywhere. Their finite
flow composition gives a dominating spray on a trivial vector bundle.
\end{proof}

\begin{corollary}\label{cor:character-resolutions}
For \(G=\mathrm{SL}_2(\mathbb C)\) or \(\mathrm{GL}_2(\mathbb C)\) and
\(g\ge2\), suppose
\[
\pi:\widetilde{\mathcal X}_g(G)\longrightarrow\mathcal X_g(G)
\]
is a resolution of singularities that is an isomorphism over
\(\mathcal X_g(G)^{\mathrm{irr}}\). Then
\(\widetilde{\mathcal X}_g(G)\) is Oka. In particular, for \(g=2\),
the symplectic resolutions obtained by blowing up the reduced singular
loci of the \(\mathrm{SL}_2\) and \(\mathrm{GL}_2\) character varieties
are Oka.
\end{corollary}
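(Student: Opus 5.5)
The plan is to deduce the corollary from Proposition~\ref{prop:goldman-ellipticity} together with the analytic extension theorem \cite[Theorem 1.9]{DW}: a connected complex manifold containing a dense Zariski-open Oka subset is Oka.

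\emph{Step 1: an Oka open subset of the resolution.}
Since \(\pi\) is an isomorphism over \(\mathcal X_g(G)^{\mathrm{irr}}\), the open set
\[
U=\pi^{-1}\bigl(\mathcal X_g(G)^{\mathrm{irr}}\bigr)\subset\widetilde{\mathcal X}_g(G)
\]
is biholomorphic to the irreducible locus. By Proposition~\ref{prop:goldman-ellipticity}, \(U\) is therefore Oka.

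\emph{Step 2: \(U\) is dense and Zariski-open.}
The reducible locus is Zariski closed in \(\mathcal X_g(G)\), so its preimage is a closed analytic (algebraic) subset of \(\widetilde{\mathcal X}_g(G)\). To see that it is proper and that \(U\) is dense, I would use irreducibility.

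For \(\mathrm{SL}_2\), the character variety is irreducible of dimension \(6g-6\) by \cite{Gold84}, and its smooth locus is the irreducible locus by \cite[Lemma 2.4]{BS2023}. The reducible characters form a proper closed subset: they are characters of diagonal representations, giving dimension \(2g<6g-6\). Since \(\mathcal X_g\) is irreducible, the irreducible locus is dense.

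For \(\mathrm{GL}_2\), the finite covering \(p\) from the proof of Proposition~\ref{prop:goldman-ellipticity} transfers these facts. Equivalently, \(\mathcal X_g(\mathrm{GL}_2)\) is the quotient of \(\mathcal X_g(\mathrm{SL}_2)\times T\) by \(H\), hence irreducible.

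A resolution is proper, bimeromorphic, and isomorphic over a dense open set. Hence \(\widetilde{\mathcal X}_g(G)\) is a connected manifold in which \(U\) is dense, with complement the proper closed analytic set \(\pi^{-1}(\text{reducible locus})\). Connectedness follows from irreducibility of the target, taking the resolution to be irreducible in the usual sense.

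\emph{Step 3: conclusion.}
Applying \cite[Theorem 1.9]{DW} gives the Oka property of \(\widetilde{\mathcal X}_g(G)\).

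For the genus-two statement, it remains to check that the cited symplectic resolutions, obtained by blowing up the reduced singular loci, are isomorphisms over the irreducible locus. This holds because a blowup is an isomorphism away from its centre, and the singular locus equals the reducible locus by \cite[Lemma 2.4]{BS2023}. The same equality must be confirmed for \(\mathrm{GL}_2\), via the covering \(p\).

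The main point requiring care is the density and irreducibility in Step 2. If the character variety had several components, the resolution might acquire a component disjoint from \(U\). I would handle this by citing irreducibility of \(\mathcal X_g(\mathrm{SL}_2)\) for \(g\ge2\), and deducing it for \(\mathrm{GL}_2\) as the image of the irreducible space \(\mathcal X_g(\mathrm{SL}_2)\times T\).
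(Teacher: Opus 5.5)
Your proposal is correct and is essentially the paper's argument: the preimage of the irreducible locus (the smooth locus, by \cite[Lemma 2.4]{BS2023}) is Oka by Proposition~\ref{prop:goldman-ellipticity}, its complement is a proper closed analytic subset of the resolution, \cite[Theorem 1.9]{DW} concludes, and the genus-two blowups are isomorphisms away from centres contained in the singular loci. Your extra density check is something the paper leaves implicit, but irreducibility of the character variety is not what \cite{Gold84} supplies; density follows without it by comparing the $4g$-dimensional reducible locus of $R_g$ with the pure dimension $6g-3$ recalled in Theorem~\ref{thm:M10}, together with surjectivity of $R_g\to\mathcal X_g$.
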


\begin{proof}
The irreducible locus is the smooth locus of \(\mathcal X_g(G)\)
\cite[Lemma 2.4]{BS2023}. It is Oka by
Proposition~\ref{prop:goldman-ellipticity}. Since \(\pi\) is an
isomorphism there, its inverse image
is Oka. The complement is a proper closed complex analytic subset of the
smooth resolution, so \cite[Theorem 1.9]{DW} applies.

The genus-two blowups are smooth symplectic resolutions by
\cite[Corollaries 2.6--2.7]{BS2023}; their centres are contained in the
singular loci, so they are isomorphisms over the irreducible loci.
\end{proof}

\begin{proposition}\label{prop:relative-goldman-ellipticity}
Let \(\Sigma_{g,n}\) be a connected oriented surface of genus \(g\)
with \(n\ge1\) punctures or boundary components. For
\(G=\mathrm{SL}_2(\mathbb C)\) or \(\mathrm{GL}_2(\mathbb C)\), fix
regular semisimple boundary conjugacy classes
\(\mathcal C_1,\ldots,\mathcal C_n\) satisfying
\(\prod_i\det\mathcal C_i=1\).
The irreducible relative-character and representation loci are
holomorphically elliptic and hence Oka.
\end{proposition}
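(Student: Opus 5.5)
I will follow the closed-surface argument of Proposition~\ref{prop:goldman-ellipticity}, with the fixed boundary classes built into every step. Throughout, write \(\mathcal X^{\mathcal C}\) for the irreducible relative character locus, i.e.\ irreducible representations with \(\rho(\partial_i)\in\mathcal C_i\), modulo conjugation.

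\textbf{Step 1: \(\mathrm{SL}_2\), smoothness and symplectic structure.} I first treat \(G=\mathrm{SL}_2(\mathbb C)\). On the irreducible locus the stabilizer is the centre. Regular semisimple boundary classes make \(\mathcal X^{\mathcal C}\) a smooth symplectic leaf of the Poisson character variety of the free group \(\pi_1\Sigma_{g,n}\), with Goldman's relative form; this is standard quasi-Hamiltonian reduction. The case where \(\mathcal X^{\mathcal C}\) is empty is trivial.

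\textbf{Step 2: complete twist flows.} For every simple closed curve \(\gamma\), define the twist by \(\zeta_t=\exp(tF(\rho(\gamma)))\) exactly as in \eqref{eq:M8.1}: multiply an HNN stable letter if \(\gamma\) is nonseparating, and conjugate one side if it is separating. For a separating curve I choose the side so that the boundary generators on it are conjugated and not altered. Conjugation keeps each \(\rho(\partial_i)\) in \(\mathcal C_i\), so the flow preserves the relative locus. Peripheral curves are Casimirs and have zero Hamiltonian field on the leaf, so nothing is lost by ignoring them. The invariant-line argument from the closed case shows that irreducibility is preserved. The flows are defined for all \(t\in\mathbb C\), hence complete.

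\textbf{Step 3: spanning, which I expect to be the main obstacle.} By \cite[Theorem 2.1]{GX2011}, finitely many simple traces \(f_1,\ldots,f_N\) generate the coordinate ring of the relative fibre. I then need \((f_1,\ldots,f_N)\) to be a closed embedding of the fixed-trace fibre, so that the \(df_i\) span the cotangent space at smooth points. The difficulty is that fixing the traces cuts out a fibre scheme-theoretically, and this fibre could be nonreduced or differ from the leaf. Regular semisimplicity is what rescues this: the trace determines the class \(\mathcal C_i\), and \(\operatorname{tr}:\mathrm{SL}_2\to\mathbb C\) is smooth away from \(\pm I\). Consequently the fibre of the boundary trace map is smooth along the irreducible locus and coincides with the symplectic leaf there. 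Once that holds, the \(df_i\) span \(T^*\), and their Hamiltonian fields span \(T\). The finite-flow construction then gives a dominating spray.

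\textbf{Step 4: the \(\mathrm{GL}_2\) case and the representation loci.} For \(\mathrm{GL}_2\) I mimic the covering argument. Let \(T^{\mathcal C}\) be the set of characters \(\chi\) of \(\pi_1\Sigma_{g,n}\) with \(\chi(\partial_i)^2=\det\mathcal C_i\). The condition \(\prod_i\det\mathcal C_i=1\) makes \(T^{\mathcal C}\) nonempty; it is a finite union of torsors under \((\mathbb C^*)^{2g}\) times a finite group. Pairing with \(\mathrm{SL}_2\) relative loci whose boundary classes are \(\mathcal C_i/\chi(\partial_i)\) gives a finite unramified cover. The fields \(a_j(\chi)W_j\) and the logarithmic fields are \(H\)-invariant as in \eqref{eq:central-twist-equivariance}, so they descend, span, and remain complete. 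For the representation loci, the projection to the irreducible character locus is a principal \(\mathrm{PGL}_2\)-bundle. The twist flows lift completely to representations, since they are defined on \(\rho\) itself. Adding the three fundamental fields and applying the principal-bundle variant of the finite-flow construction gives ellipticity there as well. Oka then follows by Gromov's theorem.
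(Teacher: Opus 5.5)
Your proposal is correct and follows essentially the paper's route. The steps match: smoothness of the irreducible relative locus, complete Goldman twist flows attached to finitely many simple-trace generators (boundary traces omitted), the principal \(\mathrm{PGL}_2\)-bundle with lifted twists and fundamental fields for representations, and a scalar-twisting cover with sign-weighted fields \(\chi(\beta_j)W_j\) and logarithmic torus fields for \(\mathrm{GL}_2\). The differences are minor. The reducedness worry in Step~3 is unnecessary: generators of the free-group character ring restrict to generators of the relative ring, so you get a closed embedding and \(T^*Y\) is spanned at smooth points. Your ``consequently'' there would in any case need the centralizer/moment-map argument, not just smoothness of \(\operatorname{tr}\) on \(\mathrm{SL}_2\), because the last boundary element is a word in the other generators. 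Your cover uses all square-root characters, with \(2^{n-1}\) components and the full sign group. This is equivalent to the paper's choice of fixed \(\kappa_i\) with deck group \(\{\pm1\}^{2g}\).
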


\begin{proof}
Write \(R_{\mathcal C}(G)\) for the representations with boundary values
in \(\mathcal C_1,\ldots,\mathcal C_n\), and
\(X_{\mathcal C}(G)=R_{\mathcal C}(G)\sslash G\) for their character
variety. First let \(G=\mathrm{SL}_2(\mathbb C)\). Its irreducible
relative-character locus \(Y\) is a smooth complex symplectic manifold.
Indeed, on
\(\mathrm{SL}_2^{\,2g}\times\prod_i\mathcal C_i\), the cokernel of the
relator differential is dual, under the trace pairing, to the common
Lie centralizer of the representation. This centralizer vanishes at
irreducible representations. The relative representation space is
therefore smooth there, and its free \(\mathrm{PGL}_2\)-quotient is
smooth. Its tangent space is parabolic cohomology; the trace pairing and
relative Poincar\'e duality give the nondegenerate holomorphic two-form,
whose closedness follows from the relative Goldman construction
\cite[Sections 3--4 and 7--8]{GHJW1997}.
Choose simple-trace generators \(f_j=f_{\beta_j}\) of
\(\mathbb C[X_{\mathcal C}(\mathrm{SL}_2)]\), omitting the constant
boundary traces. On \(Y\), with Goldman form \(\omega\), their
Hamiltonian fields satisfy
\[
\operatorname{span}\{df_i(x):1\le i\le N\}=T_x^*Y,
\qquad \iota_{W_i}\omega=df_i,
\qquad \operatorname{span}\{W_i(x):1\le i\le N\}=T_xY.
\]
Twists along interior simple curves preserve the prescribed boundary
classes \cite[Section 4]{GX2011}. The cutting construction
\eqref{eq:M8.1} preserves irreducibility and gives each \(W_i\) a
holomorphic flow for every complex time. Their finite flow composition
is a dominating spray on \(Y\times\mathbb C^N\).

The quotient of the irreducible representation locus by
\(\mathrm{PGL}_2\) is a holomorphic principal bundle: the
\(\mathrm{SL}_2\)-stabilizer is its centre, so Luna's slice theorem
gives analytic local sections \cite[proof of Theorem 53]{S2012}.
Restricting this bundle to the fixed boundary classes gives
\[
R_{\mathcal C}(\mathrm{SL}_2)^{\mathrm{irr}}\longrightarrow Y.
\]
The twists lift to complete fields \(\widetilde W_i\) on this
representation locus. The principal-bundle construction above, using
these lifts and the conjugation fields, gives its dominating spray.

For \(\mathrm{GL}_2\), choose
\(\kappa_i^2=\det\mathcal C_i\) with \(\prod_i\kappa_i=1\), and put
\(\mathcal C'_i=\kappa_i^{-1}\mathcal C_i\subset\mathrm{SL}_2\).
Rank-one characters with boundary values \(\kappa_i\) form
\[
T_\kappa=
\{\chi:\pi_1\Sigma_{g,n}\to\mathbb C^*:
\chi(\gamma_i)=\kappa_i\}
\cong(\mathbb C^*)^{2g},
\]
where \(\gamma_i\) are the positively oriented boundary loops.
Taking square roots of the handle determinant values shows that
scalar twisting gives finite unramified coverings
\[
\begin{aligned}
X_{\mathcal C'}(\mathrm{SL}_2)^{\mathrm{irr}}\times T_\kappa
&\longrightarrow X_{\mathcal C}(\mathrm{GL}_2)^{\mathrm{irr}},\\
R_{\mathcal C'}(\mathrm{SL}_2)^{\mathrm{irr}}\times T_\kappa
&\longrightarrow R_{\mathcal C}(\mathrm{GL}_2)^{\mathrm{irr}}.
\end{aligned}
\]
Their deck group consists of the sign characters with boundary values
all equal to one; it is \(\{\pm1\}^{2g}\) and acts freely on
\(T_\kappa\). Formula \eqref{eq:central-twist-equivariance} makes
\(\chi(\beta_j)W_j\) and
\(\chi(\beta_j)\widetilde W_j\) complete and deck-invariant, exactly
as in the closed case. Together with the logarithmic handle-coordinate
fields, and with the conjugation fields in the representation case,
they descend and span everywhere. Finite flow composition gives the
required sprays.
\end{proof}

For the four-holed sphere, the relative character varieties are the
Fricke cubic surfaces \(\{F=0\}\) of Cantat--Loray
\cite[§1.1, equations (1.4)--(1.5)]{CantatLoray2009}. Their three coordinate
Hamiltonian fields are
\[
W_x=F_z\partial_y-F_y\partial_z,\qquad
W_y=F_x\partial_z-F_z\partial_x,\qquad
W_z=F_y\partial_x-F_x\partial_y.
\]
They span the tangent space at every smooth point. Each fixes one
coordinate, while the other two satisfy an affine linear differential
equation with constant coefficients, so its flow exists for every
complex time \cite[Lemma 2.1 and Remark 2.4]{Andrist2025}. The composition
of the three flows is a dominating spray on the smooth locus.

\subsection{Smooth representation varieties}

For the representation variety itself, reducible representations can
also be smooth. The Schur condition identifies the smooth locus, and its
reducible part has an explicit bundle structure.
Write
\[
R_g=\operatorname{Hom}(\pi_1\Sigma_g,\mathrm{SL}_2(\mathbb C)),
\qquad g\ge2.
\]

A representation is called Schur if
\(\operatorname{End}_{\pi_1\Sigma_g}(\mathbb C^2)=\mathbb C I\).

\begin{theorem}\label{thm:M10}
For a closed surface \(\Sigma_g\) of genus \(g\ge2\), the following hold.
\begin{enumerate}[label=\textup{(\roman*)}]
\item The smooth locus of
\(R_g=\operatorname{Hom}(\pi_1\Sigma_g,\mathrm{SL}_2(\mathbb C))\)
is precisely its Schur locus.
\item The reducible Schur locus is a closed smooth Oka submanifold of
\((R_g)_{\mathrm{reg}}\). Sending a representation to its unique invariant
line and the character on that line makes it a holomorphically locally
trivial bundle over
\[
\mathbb P^1\times
\bigl((\mathbb C^*)^{2g}\setminus\{\pm1\}^{2g}\bigr)
\]
with fibre \(\mathbb C\times(\mathbb C^{2g-2}\setminus\{0\})\).
\item The irreducible loci of the \(\mathrm{SL}_2(\mathbb C)\)- and
\(\mathrm{GL}_2(\mathbb C)\)-representation varieties are
holomorphically elliptic. Both smooth representation loci are Oka, of
dimensions \(6g-3\) and \(8g-3\), respectively.
\end{enumerate}
\end{theorem}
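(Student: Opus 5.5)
Part (i) follows from the standard deformation theory of representation varieties. Realize \(R_g\) as the fibre over \(I\) of the relator map
\[
\mu:\mathrm{SL}_2^{2g}\to\mathrm{SL}_2,\qquad (A_i,B_i)\mapsto\prod_i[A_i,B_i].
\]
The Zariski tangent space at \(\rho\) is \(Z^1(\pi_1\Sigma_g,\mathfrak{sl}_2)\). The cokernel of \(d\mu_\rho\) is dual, under the trace pairing and Poincaré duality, to \(H^0(\pi_1\Sigma_g,\mathfrak{sl}_2)\), the Lie centralizer. This is the same computation used in the proof of Proposition~\ref{prop:relative-goldman-ellipticity}. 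The representation is Schur exactly when this centralizer vanishes, so at Schur points \(\mu\) is a submersion and \(R_g\) is smooth of dimension \(6g-3\). At non-Schur points I would compare dimensions. There \(\dim Z^1=\dim B^1+\dim H^1\), and \(\dim H^1=\dim H^0+(2g-2)\dim\mathfrak{sl}_2+\dim H^2\) with \(H^2\cong (H^0)^*\ne0\). This exceeds \(6g-3\), while the irreducible component through such a point has dimension \(6g-3\), since the Schur points are dense. I expect this to be the main obstacle: central representations (\(\rho=\pm I\)-valued) and reducible non-Schur ones, such as diagonal or unipotent ones, must each be checked to have tangent dimension larger than \(6g-3\). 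I would handle them case by case via the formula \(\dim Z^1=3(2g-1)+\dim H^0\).

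For (ii), a reducible Schur \(\rho\) has a unique invariant line \(\ell\). On \(\ell\) it acts by a character \(\lambda\in(\mathbb C^*)^{2g}\), and on the quotient by \(\lambda^{-1}\). Schur forces \(\lambda^2\ne1\), so \(\lambda\notin\{\pm1\}^{2g}\), and forces the extension class to be nonzero in \(H^1(\pi_1,\mathbb C_{\lambda^2})\). After choosing a complementary line, the off-diagonal part is a cocycle in \(Z^1(\pi_1,\mathbb C_{\lambda^2})\). This space has dimension \(2g-1\) when \(\lambda^2\ne1\): the coboundaries contribute a \(1\)-dimensional summand \(\mathbb C\), and \(H^1\cong\mathbb C^{2g-2}\) by the Euler characteristic. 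The Schur condition removes the zero class, giving the fibre \(\mathbb C\times(\mathbb C^{2g-2}\setminus\{0\})\). Local triviality over \(\mathbb P^1\times((\mathbb C^*)^{2g}\setminus\{\pm1\}^{2g})\) comes from conjugating by local sections of \(\mathrm{SL}_2\to\mathbb P^1\). It also uses the fact that \(Z^1(\pi_1,\mathbb C_{\lambda^2})\) forms a vector bundle of constant rank over the base, which is trivial on Stein opens. The base is Oka by \cite[Corollary 2.25]{For} and the product-complement results. The fibre is Oka because \(\mathbb C^{2g-2}\setminus\{0\}\) is Oka for \(2g-2\ge2\). The bundle theorem \cite[Corollaries 2.51--2.52]{For} then gives the Oka property of the total space. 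Closedness in \((R_g)_{\mathrm{reg}}\) holds because reducibility is a closed condition.

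For (iii), holomorphic ellipticity of the irreducible loci is the principal-bundle version of the finite-flow construction over Proposition~\ref{prop:goldman-ellipticity}. The map \(R_g^{\mathrm{irr}}\to\mathcal X_g^{\mathrm{irr}}\) is a principal \(\mathrm{PGL}_2\)-bundle by Luna slices. The twists lift to complete fields on it, and the conjugation fields span the fibres. For \(\mathrm{GL}_2\), I would use the \(\{\pm1\}^{2g}\)-covering by \(R^{\mathrm{irr}}(\mathrm{SL}_2)\times(\mathbb C^*)^{2g}\) with the sign-rescaled fields \(\chi(\gamma_j)\widetilde W_j\), exactly as in Proposition~\ref{prop:goldman-ellipticity}. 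The smooth locus is then the union of the Oka irreducible locus and the reducible Schur locus, a closed analytic subset of positive codimension (\(2g+1<6g-3\)). Connectedness of the smooth locus follows because the irreducible locus is connected and dense. Analytic extension \cite[Theorem 1.9]{DW} therefore makes the whole smooth locus Oka. The \(\mathrm{GL}_2\) dimension is \(4(2g)-4+1=8g-3\), and the same argument applies there.
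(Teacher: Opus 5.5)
Parts (ii) and (iii) follow the paper's route. Part (i), however, has a genuine gap in the direction ``non-Schur \(\Rightarrow\) singular''.

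The case-by-case check you anticipate is not the difficulty. Your formula \(\dim Z^1=6g-3+h^0\) already shows, uniformly, that the Zariski tangent space is too large at every non-Schur point. The comparison needs two global facts that you have not established.
\begin{enumerate}[label=\textup{(\alph*)}]
\item Every irreducible component of \(R_g\) through a non-Schur point must have dimension \(6g-3\). Your phrase ``since the Schur points are dense'' asserts this without proof. The submersion argument shows only that the Schur locus is open and smooth; it does not rule out a component of \(R_g\) lying entirely in the non-Schur locus.
\item \(Z^1\) is the tangent space of the representation \emph{scheme}. To conclude that the variety is singular, you need the scheme to be reduced, so that \(Z^1\) is also the variety's tangent space.
\end{enumerate}
The paper obtains both facts from Simpson's theorem that the \(\mathrm{GL}_2\) surface-group representation scheme is a reduced complete intersection of dimension \(8g-3\). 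It transfers this to \(R_g\) through the finite \'etale determinant-square-root map \(R_g\times(\mathbb C^*)^{2g}\to\operatorname{Hom}(\pi_1\Sigma_g,\mathrm{GL}_2)\).

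You could also close the gap directly. Irreducible representations are Schur, so the non-Schur locus lies in the reducible locus. The reducible locus has dimension at most \(4g\): it is parametrized by a point of \(\mathbb P^1\), a character \(\lambda\in(\mathbb C^*)^{2g}\), and a cocycle in a space of dimension \(2g-1\), or \(2g\) for the finitely many \(\lambda\) with \(\lambda^2=1\). Every component of the fibre of the relator map \(\mu\) over \(I\) has dimension at least \(6g-3>4g\), so each component meets the Schur locus. Hence \(R_g\) is a complete intersection of pure dimension \(6g-3\) that is generically smooth, and Serre's criterion gives reducedness.

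Some smaller corrections:
\begin{enumerate}[label=\textup{(\roman*)}]
\item The reducible Schur locus has dimension \(4g\), not \(2g+1\). Its codimension is therefore \(2g-3\), which is one when \(g=2\) but still positive.
\item \cite[Corollary 2.25]{For} concerns compact tori. The base \((\mathbb C^*)^{2g}\setminus\{\pm1\}^{2g}\) should instead be handled by the product-complement result \cite[Corollary 7.9(ii)]{DW}, using \(\{\pm1\}^{2g}=\{\pm1\}^g\times\{\pm1\}^g\).
\item In (ii) you still need the holomorphic dependence of the unique invariant line, for instance as an eigenline of a generator \(\gamma\) with \(\lambda(\gamma)^2\ne1\). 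This is what shows that your bundle parametrization is a holomorphic embedding onto a closed submanifold.
\item Connectedness of the irreducible locus is unnecessary for \cite[Theorem 1.9]{DW}; density suffices.
\end{enumerate}
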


\begin{proof}
\textup{(i)} The \(\mathrm{GL}_n\) surface-group representation scheme
is reduced and a complete intersection for \(g>1\), by Simpson
\cite[Proposition 11.3 and Corollary 11.6]{Simpson94}; see also
Bellamy--Schedler \cite[Theorem 2.1]{BS2023}. For \(n=2\), its
dimension is \(8g-3\).
The finite étale determinant-square-root map
\[
R_g\times(\mathbb C^*)^{2g}
\longrightarrow
\operatorname{Hom}(\pi_1\Sigma_g,\mathrm{GL}_2)
\]
transfers reducedness and gives pure dimension \(6g-3\) for \(R_g\).

An infinitesimal deformation written as
\(\rho_\varepsilon(\gamma)=(I+\varepsilon u(\gamma))\rho(\gamma)\),
with \(\varepsilon^2=0\), gives
\[
u(\gamma\delta)=u(\gamma)+\operatorname{Ad}_{\rho(\gamma)}u(\delta).
\]
Thus \(T_\rho R_g=Z^1(\Sigma_g,\operatorname{ad}\rho)\).
The standard cell decomposition has cochain groups
\[
C^0=\mathfrak{sl}_2,\qquad
C^1=\mathfrak{sl}_2^{\,2g},\qquad
C^2=\mathfrak{sl}_2.
\]
Put \(h^0=\dim H^0(\Sigma_g,\operatorname{ad}\rho)\). The adjoint
local system is self-dual under the trace pairing, so Poincaré duality
gives \(\dim H^2=h^0\).
The Euler characteristic and \(\dim B^1=3-h^0\) now give
\[
\dim H^1=6g-6+2h^0,\qquad
\dim Z^1=\dim H^1+3-h^0,
\]
and hence
\begin{equation}
\dim T_\rho R_g=6g-3+h^0.
\label{eq:M10.1}
\end{equation}
Reducedness identifies the scheme tangent space with the tangent space
of the representation variety; see Sikora \cite{S2012} for this
distinction. Comparing \eqref{eq:M10.1} with the pure dimension
\(6g-3\) shows that \(\rho\) is smooth exactly when \(h^0=0\), which
is the Schur condition.

\textup{(ii)} Put a reducible representation in upper-triangular form
and write its diagonal character as
\[
\lambda=(\lambda_1,\mu_1,\ldots,\lambda_g,\mu_g)
\in(\mathbb C^*)^{2g}.
\]
It is Schur precisely when the diagonal characters
\(\lambda,\lambda^{-1}\) are distinct and the extension is nonsplit.

If \(\lambda^2\ne1\), diagonalize a generator with distinct eigenvalues.
A commuting traceless matrix then has the form \(A=\operatorname{diag}(a,-a)\).
For any other generator with matrix \(\left(\begin{smallmatrix}u&c\\0&u^{-1}\end{smallmatrix}\right)\),
\[
\left[A,\begin{pmatrix}u&c\\0&u^{-1}\end{pmatrix}\right]
=\begin{pmatrix}0&2ac\\0&0\end{pmatrix}.
\]
A nonsplit representation has some \(c\ne0\), forcing \(a=0\); a split
one admits \(A=\operatorname{diag}(1,-1)\) in its centralizer. If
\(\lambda^2=1\), all matrices are scalar multiples of upper unipotent
matrices and commute with \(\left(\begin{smallmatrix}0&1\\0&0\end{smallmatrix}\right)\).

The diagonal-character parameters therefore range over
\[
B=(\mathbb C^*)^{2g}\setminus\{\pm1\}^{2g}.
\]
The decomposition
\((\mathbb C^*)^{2g}=(\mathbb C^*)^g\times(\mathbb C^*)^g\) has two
positive-dimensional Oka factors, and the omitted set is finite.
Thus \(B\) is Oka by \cite[Corollary 7.9(ii)]{DW}.

The remaining parameters are cocycles. In an upper-triangular frame,
the representation is
\[
\rho(\gamma)=
\begin{pmatrix}
\lambda(\gamma)&c(\gamma)\lambda(\gamma)^{-1}\\
0&\lambda(\gamma)^{-1}
\end{pmatrix}.
\]
The homomorphism condition reads
\[
c(\gamma\delta)
=c(\gamma)+\lambda(\gamma)^2c(\delta).
\]
For the standard generators \(a_i,b_i\), put \(p_i=c(a_i)\) and
\(q_i=c(b_i)\). The surface relation is
\begin{equation}
\sum_{i=1}^g
\bigl((1-\mu_i^2)p_i+(\lambda_i^2-1)q_i\bigr)=0.
\label{eq:M10.2}
\end{equation}

To check this relation, set \(\chi=\lambda^2\). Applying the cocycle
identity to \(\gamma\gamma^{-1}=1\) gives
\[
c(\gamma^{-1})=-\chi(\gamma)^{-1}c(\gamma).
\]
For \([a_i,b_i]=a_i b_i a_i^{-1}b_i^{-1}\), it then gives
\[
\begin{aligned}
c([a_i,b_i])
&=p_i+\lambda_i^2q_i
 -\lambda_i^2\mu_i^2\lambda_i^{-2}p_i
 -\mu_i^2\mu_i^{-2}q_i\\
&=(1-\mu_i^2)p_i+(\lambda_i^2-1)q_i.
\end{aligned}
\]
Since \(\chi([a_i,b_i])=1\), the product of commutators gives
\eqref{eq:M10.2}.

On \(B\), the linear functional in \eqref{eq:M10.2} is nonzero, so
its kernel is a rank-\((2g-1)\) vector bundle \(\mathcal Z\). Split
extensions form the line subbundle \(\mathcal B\) generated by
\begin{equation}
(1-\lambda_1^2,1-\mu_1^2,\ldots,
1-\lambda_g^2,1-\mu_g^2).
\label{eq:M10.3}
\end{equation}
This vector is nowhere zero on \(B\). In a frame adapted to
\((\mathcal Z,\mathcal B)\), the nonsplit locus in a fibre is
\begin{equation}
\mathbb C\times(\mathbb C^{2g-2}\setminus\{0\}).
\label{eq:M10.4}
\end{equation}

To let the unique invariant line vary in \(\mathbb P^1\), associate
\(\mathcal Z\setminus\mathcal B\) to the Borel principal bundle
\[
\mathrm{SL}_2\to\mathbb P^1.
\]
Upper-triangular conjugation preserves the diagonal character and the
split-extension subbundle. Local triangular frames, together with frames
adapted to \((\mathcal Z,\mathcal B)\), give a holomorphically locally
trivial bundle over
\[
\mathbb P^1\times B
\]
with fibre \eqref{eq:M10.4}.

Indeed, the conjugation action is
\[
\begin{pmatrix}t&u\\0&t^{-1}\end{pmatrix}
\begin{pmatrix}\lambda&c\lambda^{-1}\\0&\lambda^{-1}\end{pmatrix}
\begin{pmatrix}t&u\\0&t^{-1}\end{pmatrix}^{-1}
=
\begin{pmatrix}
\lambda&
\bigl(t^2c+tu(1-\lambda^2)\bigr)\lambda^{-1}\\
0&\lambda^{-1}
\end{pmatrix}.
\]
Thus a cocycle vector transforms by \(c\mapsto t^2c+tu\,v_\lambda\),
where \(v_\lambda\) is the nowhere-zero vector \eqref{eq:M10.3}.
This is a holomorphic affine automorphism of each fibre of
\(\mathcal Z\), preserving both the line \(\mathcal B\) and its
complement. On an open subset of \(B\), take a vector-bundle frame with
first vector \(v_\lambda\). The complement is then
\[
\{(z_0,z_1,\ldots,z_{2g-2}):(z_1,\ldots,z_{2g-2})\ne0\}.
\]
Combining these frames with local sections of
\(\mathrm{SL}_2\to\mathbb P^1\) gives the local trivializations. The
total space parametrizes the reducible Schur representations.

The reducible locus is closed in \(R_g\), being the proper image of
the closed invariant-line incidence in \(R_g\times\mathbb P^1\).
Its intersection with the Schur locus is consequently closed.

A nonsplit reducible representation has a unique invariant line, and
that line depends holomorphically on the representation. To see this
locally near \(\rho_0\), choose a generator \(\gamma\) with distinct
eigenvalues and label its holomorphically varying eigenlines
\(\ell_+(\rho),\ell_-(\rho)\) so that \(\ell_+(\rho_0)\) is the common
invariant line. Choose another generator \(\eta\) satisfying
\[
\rho_0(\eta)\ell_-(\rho_0)\ne\ell_-(\rho_0).
\]
This inequality persists near \(\rho_0\). On this reducible
neighbourhood, \(\ell_+(\rho)\) is therefore the unique common
invariant line. It gives a holomorphic inverse to the bundle
parametrization \eqref{eq:M10.2}--\eqref{eq:M10.4}, for the induced
complex structure on the closed reducible Schur locus.

The base \(\mathbb P^1\times B\) is Oka. Since \(2g-2\ge2\), the
fibre \eqref{eq:M10.4} is Oka as well, proving the Oka assertion in
\textup{(ii)}.

\textup{(iii)} The effective \(\mathrm{PGL}_2\)-action on
\(R_g^{\mathrm{irr}}\) is free. Since the stabilizer for the original
\(\mathrm{SL}_2\)-action is its centre, Luna's slice theorem gives
analytic local sections of the quotient map
\cite[proof of Theorem 53]{S2012}. Thus
\[
q:R_g^{\mathrm{irr}}\longrightarrow\mathcal X_g^{\mathrm{irr}}
\]
is a holomorphic principal \(\mathrm{PGL}_2\)-bundle.

The twists in \eqref{eq:M8.1} give complete holomorphic lifts
\(\widetilde W_j\) of the spanning fields \(W_j\). The principal-bundle
construction above therefore proves the holomorphic ellipticity of
\(R_g^{\mathrm{irr}}\).

For \(\mathrm{GL}_2\), use the finite étale cover
\[
R_g^{\mathrm{irr}}\times T\longrightarrow
\operatorname{Hom}(\pi_1\Sigma_g,\mathrm{GL}_2)^{\mathrm{irr}}
\]
with the same deck action as above. Formula
\eqref{eq:central-twist-equivariance} also holds for the twists before
taking conjugacy classes, as follows directly from their stable-letter
multiplication and separating-side conjugation formulas. Hence the
weighted fields \(\chi(\gamma_j)\widetilde W_j\) are complete and
deck-invariant. The three conjugation fields and the logarithmic torus
fields are also complete and deck-invariant. Their values span the
tangent bundle of the covering space, so the descended fields give a
dominating spray on the irreducible \(\mathrm{GL}_2\)-representation
locus.

By \textup{(ii)}, the reducible locus in \((R_g)_{\mathrm{reg}}\) is
closed analytic of dimension \(1+2g+(2g-1)=4g\). Its codimension is
\(2g-3\), equal to one for \(g=2\). It contains no component of the
smooth locus, and its complement \(R_g^{\mathrm{irr}}\) is Oka by the
ellipticity just proved. Analytic extension \cite[Theorem 1.9]{DW} now
makes \((R_g)_{\mathrm{reg}}\) Oka.

Finally, the finite étale determinant-square-root quotient preserves
smooth loci and gives the Oka property for the smooth
\(\mathrm{GL}_2\)-representation locus. Its dimension is \(8g-3\);
the \(\mathrm{SL}_2\) locus has dimension \(6g-3\).
\end{proof}

\subsection{Parabolic connections and Garnier spaces}\label{sec:M11}

Fix a smooth projective curve \(C\) of genus \(g\), a reduced marked
divisor
\[
D=t_1+\cdots+t_n
\]
and exponents
\begin{equation}
\lambda_i\in\mathbb C,\qquad 2\lambda_i\notin\mathbb Z.
\label{eq:M11.1}
\end{equation}
Consider rank-two logarithmic parabolic connections with trivial
determinant and its trivial connection. At each puncture, the residue
eigenvalues are \(\lambda_i,-\lambda_i\), and the marked line
\(\ell_i\subset E_{t_i}\) satisfies
\[
(\operatorname{Res}_{t_i}\nabla-\lambda_i I)\ell_i=0.
\]
As in Section~\ref{sec:bundle-moduli}, use descending flags. For
\(E_{t_i}\supset\ell_i\supset0\), choose arbitrary real weights
\(0\le\alpha_{i,1}<\alpha_{i,2}<1\) on
\(E_{t_i}/\ell_i\) and \(\ell_i\), respectively.
An invariant line with residues \(\varepsilon_i\lambda_i\) has degree
\(-\sum_i\varepsilon_i\lambda_i\), by the residue formula for a line
connection. Thus only sign patterns with integral sum can occur, and
their stability scores are
\begin{equation}
\delta_\varepsilon(\boldsymbol\alpha)
=-\sum_i\varepsilon_i\lambda_i+
\frac12\sum_i\varepsilon_i(\alpha_{i,2}-\alpha_{i,1}).
\label{eq:M11-score}
\end{equation}
Stability depends only on the signs of these finitely many rational
affine functions. Adding the same small constant to both weights at
each marked point preserves every score. Choose these shifts to make
all weights strictly positive and pairwise distinct across marked
points. A sufficiently near rational point in the same relative face
of the wall arrangement preserves these properties and gives exactly
the same stable objects. We use the resulting moduli space for the
original real weights.

We work in the range
\begin{equation}
g=0,\ n\ge4;\qquad
g=1,\ n\ge2;\qquad
g\ge2,\ n\ge1.
\label{eq:M11.2}
\end{equation}
Write the stable moduli space as
\[
\mathcal M_{\mathrm{dR}}^{\boldsymbol\alpha}(C,D;\lambda).
\]

\begin{theorem}\label{thm:M11}

Under \eqref{eq:M11.1}--\eqref{eq:M11.2}, the moduli space
\(\mathcal M_{\mathrm{dR}}^{\boldsymbol\alpha}(C,D;\lambda)\) is a
nonempty smooth irreducible quasiprojective Oka variety of dimension
\[
6g-6+2n.
\]
If
\[
\sum_i\varepsilon_i\lambda_i\notin\mathbb Z
\qquad\text{for every }\varepsilon\in\{\pm1\}^n,
\]
then it is holomorphically elliptic. This includes \(n=1\).
For \(C=\mathbb P^1\) and generic parabolic weights, fixing the
marked curve gives the \(2(n-3)\)-dimensional initial-data moduli space
of a nonresonant Garnier system in \(n-3\) variables; \(n=4\) gives
Painlev\'e VI.

\end{theorem}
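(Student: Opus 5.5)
The plan is to transfer the Oka property from the relative character variety through the Riemann--Hilbert correspondence, then extend across the locus of reducible monodromy. Nonresonance \eqref{eq:M11.1} makes each local monodromy \(\exp(2\pi i\operatorname{Res}_{t_i}\nabla)\) regular semisimple with eigenvalues \(e^{\pm2\pi i\lambda_i}\). For stable parabolic connections with these residues, the Riemann--Hilbert map
\[
\mathrm{RH}:\mathcal M_{\mathrm{dR}}^{\boldsymbol\alpha}(C,D;\lambda)\longrightarrow X_{\mathcal C}(\mathrm{SL}_2)
\]
to the relative character variety with boundary classes \(\mathcal C_i\ni\operatorname{diag}(e^{2\pi i\lambda_i},e^{-2\pi i\lambda_i})\) is a proper surjective holomorphic map. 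It restricts to a biholomorphism from the irreducible-monodromy locus \(\mathcal M^{\circ}\) onto \(X_{\mathcal C}(\mathrm{SL}_2)^{\mathrm{irr}}\); this is in the style of Inaba--Iwasaki--Saito and Inaba, whose analytic statement I would quote. Here the parabolic line \(\ell_i\) is determined by the residue eigenspace, so the weights play no role on the irreducible locus. Every irreducible-monodromy connection is automatically \(\boldsymbol\alpha\)-stable, and the nonresonance ensures that the eigenline \(\ell_i\) is uniquely attached.

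Smoothness, the dimension \(6g-6+2n\), irreducibility and quasiprojectivity I would take from the GIT construction of the moduli space together with deformation theory. The tangent space is the hypercohomology of the parabolic de Rham complex. Its obstruction \(\mathbb H^2\) is dual to the parabolic endomorphisms, which are scalar for stable objects and hence trace-free zero. Nonemptiness and irreducibility in range \eqref{eq:M11.2} I would deduce from the corresponding facts for the relative character variety: the range \eqref{eq:M11.2} is exactly where \(2g-2+n>0\) with enough freedom, so \(X_{\mathcal C}^{\mathrm{irr}}\) is nonempty and connected of dimension \(6g-6+2n\). Density of \(\mathcal M^\circ\) then follows once the reducible part is shown to be a proper closed subset.

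For the Oka property, Proposition~\ref{prop:relative-goldman-ellipticity} makes \(X_{\mathcal C}(\mathrm{SL}_2)^{\mathrm{irr}}\), and hence \(\mathcal M^\circ\), holomorphically elliptic. The complement \(\mathcal M\setminus\mathcal M^\circ\) consists of connections with an invariant sub-line-connection. By the residue formula, such a subconnection occurs only for sign patterns \(\varepsilon\) with \(\sum\varepsilon_i\lambda_i\in\mathbb Z\), and it is a closed analytic subset. It suffices to show it is proper in the irreducible manifold \(\mathcal M\), since then \cite[Theorem 1.9]{DW} gives the Oka property of \(\mathcal M\).

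\textbf{Main obstacle: the dimension count of the reducible locus.} For each admissible \(\varepsilon\), I would parametrize the reducible objects as extensions. The sub line bundle with connection ranges over a torsor for \(\operatorname{Pic}^0\), giving \(2g\) parameters as \((\mathbb C^*)^{2g}\) of rank-one local systems. The extension data lie in a projectivized \(\mathbb H^1\) of dimension \(2g+n-3\). This is the description \((\mathbb C^*)^{2g}\times\mathbb P^{2g+n-3}\) of Lemma~\ref{lem:M11-exceptional}, whose total dimension \(4g+n-3\) is less than \(6g-6+2n\) exactly in the range \eqref{eq:M11.2}. Stability with respect to \(\boldsymbol\alpha\) selects the nonsplit extensions in a fixed direction according to the sign of \(\delta_\varepsilon(\boldsymbol\alpha)\), so only one of the two extension directions occurs for each \(\varepsilon\). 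Verifying that these stratifications are analytic and exhaust the reducible locus is the step I expect to need the most care.

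Under the non-integrality hypothesis there is no reducible monodromy at all, so \(\mathrm{RH}\) is a global biholomorphism onto the irreducible relative character variety. Holomorphic ellipticity then follows directly from Proposition~\ref{prop:relative-goldman-ellipticity}, including the case \(n=1\). The Garnier statement is the specialization \(g=0\), where the dimension becomes \(2(n-3)\), with generic weights identifying the moduli space with the Garnier initial-data space.
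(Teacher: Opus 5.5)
Your Oka and ellipticity arguments match the paper's. \(\mathrm{RH}\) identifies the weight-independent irreducible locus \(\mathcal M^\circ\) with \(X_{\mathcal C}(\mathrm{SL}_2)^{\mathrm{irr}}\), Proposition~\ref{prop:relative-goldman-ellipticity} makes it elliptic, Lemma~\ref{lem:M11-exceptional} bounds the reducible inverse images by dimension \(4g+n-3\), and \cite[Theorem 1.9]{DW} extends across them.

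The gap is in nonemptiness and irreducibility. You take both from the assertion that \(X_{\mathcal C}^{\mathrm{irr}}\) is nonempty and connected throughout \eqref{eq:M11.2}, but you give no argument or source. It is not a consequence of a count such as \(2g-2+n>0\). Nonemptiness is a Deligne--Simpson-type existence statement. Connectedness is essentially the irreducibility of the relative character variety, which is a genuine theorem in its own right. Deformation theory plus the bound \(4g+n-3<6g-6+2n\) show only that \(\mathcal M^\circ\) is dense in \(\mathcal M^{\boldsymbol\alpha}\). (Density does need this dimension bound and pure dimensionality, not just properness of the reducible locus.) They say nothing about \(\mathcal M^\circ\) being nonempty or connected.

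The paper obtains this input on the connection side:
\begin{enumerate}
\item It replaces \(\boldsymbol\alpha\) by rational \(\boldsymbol\alpha_0\) in the same relative face.
\item It chooses generic \(\boldsymbol\beta\) nearby, preserving the signs of all nonzero scores.
\item It imports nonemptiness and irreducibility of \(\mathcal M^{\boldsymbol\beta}\) from Inaba.
\item It uses stability openness to realize \(\mathcal M^{\boldsymbol\alpha}\) as the Zariski-open subset of \(\mathcal M^{\boldsymbol\beta}\) left after deleting the zero-score reducible inverse images.
\end{enumerate}
Your route closes with the same citation and without the open immersion. \(\mathcal M^\circ\) is dense open in the irreducible \(\mathcal M^{\boldsymbol\beta}\), by the lemma's dimension count for \(\boldsymbol\beta\), hence nonempty and connected. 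It is dense in \(\mathcal M^{\boldsymbol\alpha}\) by your dimension count, so the smooth \(\mathcal M^{\boldsymbol\alpha}\) is connected and therefore irreducible. Existence and quasiprojectivity for real weights still use the rational-face replacement and the shift to positive, pairwise distinct weights, which the paper makes before the theorem.

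Finally, drop the claim that \(\mathrm{RH}\) is proper and surjective. If \(\delta_\varepsilon(\boldsymbol\alpha)=0\), Lemma~\ref{lem:M11-exceptional} gives \(\mathrm{RH}^{-1}(Z_{[\varepsilon]})=\varnothing\), so the claim fails for weights on a wall. You never use it.
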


The nonresonance and sign-sum conditions together are the generic-exponent
conditions of Inaba \cite[Definition 2.2]{I2012}.

Over a component of reducible characters, a nonempty stable inverse
image under the Riemann--Hilbert morphism is a character torus times a
projective space. We construct these products in holomorphic families,
then compare arbitrary weights with nearby generic weights. Put
\[
\mu_i=e^{-2\pi i\lambda_i}.
\]
By \eqref{eq:M11.1}, \(\mu_i\ne\mu_i^{-1}\). Let \(\mathcal X\) be
the relative \(\mathrm{SL}_2\) character variety with puncture traces
\(\mu_i+\mu_i^{-1}\).

Inaba--Iwasaki--Saito \cite[Theorems 1.1, 1.3, 1.4, and 1.5]{IIS2006}
construct the smooth moduli space in genus zero, and Inaba
\cite[Theorem 2.1]{I2012} constructs the fine moduli space for general
marked curves. The determinant map is smooth \cite[Section 5]{I2012};
its fibre with fixed determinant and connection is therefore smooth and
quasiprojective. For generic weights this fibre is nonempty and
irreducible in the ranges \eqref{eq:M11.2}
\cite[Propositions 5.1--5.3 and Remark 6.1]{I2012}.
The holomorphic Riemann--Hilbert morphism is
\[
\mathrm{RH}:\mathcal M_{\mathrm{dR}}^{\boldsymbol\alpha}
\longrightarrow\mathcal X.
\]
Changing the weights leaves the irreducible connection locus
unchanged. For generic weights, \(\mathrm{RH}\) is an analytic
isomorphism over irreducible characters
\cite[Section 6, pp.~461--462]{I2012}, and hence gives the same
isomorphism for every weight system. Over a reducible component, its
inverse image has the following form. Define the finite set
\[
\mathcal E(\lambda)=
\left\{\varepsilon\in\{\pm1\}^n:
\sum_i\varepsilon_i\lambda_i\in\mathbb Z\right\}
\big/\bigl(\varepsilon\sim-\varepsilon\bigr).
\]

\begin{lemma}\label{lem:M11-exceptional}
The reducible character locus is a disjoint union of closed tori
\(Z_{[\varepsilon]}\cong(\mathbb C^*)^{2g}\), indexed by
\([\varepsilon]\in\mathcal E(\lambda)\).
If \(\delta_\varepsilon(\boldsymbol\alpha)=0\), then
\(\mathrm{RH}^{-1}(Z_{[\varepsilon]})=\varnothing\).
For each component with
\(\delta_\varepsilon(\boldsymbol\alpha)\ne0\),
\begin{equation}
\mathrm{RH}^{-1}(Z_{[\varepsilon]})\cong
(\mathbb C^*)^{2g}\times\mathbb P^{2g+n-3}
\label{eq:M11.9}
\end{equation}
as complex manifolds. Each nonempty inverse image is a closed smooth
Oka submanifold of the stable connection moduli space.
\end{lemma}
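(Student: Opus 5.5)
I will establish the three assertions in order: the structure of the reducible locus, the description of its inverse image, and the Oka property.

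\emph{Reducible characters.} A reducible representation of \(\pi_1(C\setminus D)\) into \(\mathrm{SL}_2\) has an invariant line and a character \(\chi\) on it. The boundary traces \(\mu_i+\mu_i^{-1}\) with \(\mu_i\ne\mu_i^{-1}\) force \(\chi(\gamma_i)=\mu_i^{\varepsilon_i}\) for a sign \(\varepsilon_i\). The relation \(\prod[a_j,b_j]\prod\gamma_i=1\) restricted to \(\chi\) gives \(\prod_i\mu_i^{\varepsilon_i}=1\). This is exactly \(\sum\varepsilon_i\lambda_i\in\mathbb Z\). The semisimplification \(\chi\oplus\chi^{-1}\) determines the character, and \(\varepsilon\) and \(-\varepsilon\) give the same point. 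The handle values \(\chi(a_j),\chi(b_j)\) are free, so each component is \(Z_{[\varepsilon]}\cong(\mathbb C^*)^{2g}\). Distinct classes give disjoint closed loci, because the boundary eigenvalue pattern is constant on each.

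\emph{Fibres over \(Z_{[\varepsilon]}\).} A connection over \(Z_{[\varepsilon]}\) has a rank-one subconnection \((L,\nabla_L)\) with residues \(\varepsilon_i\lambda_i\) (choosing the sign class representative), hence \(\deg L=-\sum\varepsilon_i\lambda_i\). The quotient is \(L^{-1}\). By \eqref{eq:M11-score}, stability forces the parabolic slope of the subobject to be negative. So if \(\delta_\varepsilon=0\) nothing is stable, which gives emptiness. Otherwise exactly one of \(\pm\varepsilon\) has \(\delta\) of the stabilizing sign. The stable objects are then the nonsplit extensions
\[
0\to(L,\nabla_L)\to(E,\nabla)\to(L^{-1},\nabla_{L^{-1}})\to0
\]
with the forced parabolic lines, up to scalar. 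The nonsplitness condition is needed because the split sum and the reversed extensions are destabilized by the \(L\) summand. This sign bookkeeping is what I expect to be the fussy point.

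Rank-one logarithmic connections with these residues form a torsor over \(\operatorname{Pic}^0\)-type connections. Via Riemann--Hilbert they are parametrized by \(\chi\in(\mathbb C^*)^{2g}\). The extension group is the parabolic de Rham \(\mathbb H^1\) of \(L^2\) with residues \(2\varepsilon_i\lambda_i\notin\mathbb Z\). This equals \(H^1(\pi_1,\chi^2)\), which has dimension \(2g-2+n\) since \(H^0=H^2=0\) because \(\chi^2\) is nontrivial at the punctures. Nonsplit classes modulo the \(\mathbb C^*\) automorphism give \(\mathbb P^{2g+n-3}\).

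\emph{Families, triviality and Oka.} To globalize, take the universal rank-one local system over the torus \(T=(\mathbb C^*)^{2g}\). The group cohomology \(H^1(\pi_1,\chi^2)\) forms a vector bundle \(\mathcal H\) over \(T\) because its rank is constant. This gives \(\mathrm{RH}^{-1}(Z)\cong\mathbb P(\mathcal H)\), with holomorphic identification via the fine moduli property of \cite{I2012} and uniqueness of the destabilizing line. The main obstacle is producing a product rather than merely a projective bundle. I would choose the cocycle model as in the proof of Theorem~\ref{thm:M10}: \(\mathcal H\) is a subquotient of the trivial bundle of values on generators, cut out by one linear relation with nowhere-vanishing coefficients, modulo coboundaries spanned by a nowhere-zero vector. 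A vector bundle on the Stein torus that is a kernel and quotient of trivial bundles by such explicit nowhere-zero sections is stably trivial. I would show triviality directly by exhibiting a frame, using a boundary loop \(\gamma_i\) with \(\chi^2(\gamma_i)\ne1\) to normalize coordinates, as with \eqref{eq:M10.3}. This yields \eqref{eq:M11.9}.

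Closedness and smoothness then follow because \(\mathrm{RH}^{-1}(Z)\) is the preimage of a closed union of components, and the description is a holomorphic submanifold with holomorphic inverse given by the invariant sub-line-connection. The product \((\mathbb C^*)^{2g}\times\mathbb P^{2g+n-3}\) is Oka as a product of Oka manifolds.
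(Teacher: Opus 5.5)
Your outline follows the paper's proof in its main steps, but the holomorphicity of the identification is asserted rather than proved. The matching steps are these. Sign patterns with integral sum give the tori. The relation \(\delta(\nu^{-1})=-\delta(\nu)\) selects a unique stable extension direction, or none when the score vanishes. The extension space is \(H^1(\pi_1,\chi^2)\) of dimension \(2g+n-2\). A puncture loop normalizes the cocycles.

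\textbf{The gap.} The fine moduli property of \cite{I2012} turns a holomorphic family of parabolic connections into a holomorphic map to the moduli space. It does not manufacture that family.
\begin{enumerate}
\item On each affine chart of \((\mathbb C^*)^{2g}\times\mathbb P^{2g+n-3}\), normalized cocycles give a holomorphic family of monodromy representations.
\item From it you must build a holomorphic family of logarithmic connections with residue branches \(\pm\lambda_i\), trivial determinant connection and the induced flags.
\item The construction must be unique enough that families differing by conjugation on chart overlaps give the same classifying map.
\end{enumerate}
This relative Riemann--Hilbert step is exactly what the paper proves first. It builds disc models \(d+A_i\,dz/z\) from the spectral projectors \(P_i^\pm\) of the puncture monodromy, so that \(e^{-2\pi iA_i}=T_i\) varies holomorphically. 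It glues these models to the flat bundle on \(C\setminus D\). It proves uniqueness with a normalizing gauge whose convergence is uniform in the parameters (the majorant series), plus the fact that single-valued horizontal maps between constant-residue models are constant diagonal matrices.

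An alternative would be a de Rham-side construction of the universal extension over the rank-one logarithmic connections, plus an analytic comparison of its hypercohomology bundle with your Betti bundle \(\mathcal H\). Without one of these, you have a bijection and at best a holomorphic map \(\mathrm{RH}^{-1}(Z)\to(\mathbb C^*)^{2g}\times\mathbb P^{2g+n-3}\). That does not yet show that \(\mathrm{RH}^{-1}(Z)\) is smooth, nor give the asserted isomorphism of complex manifolds.

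\textbf{The step you call the main obstacle is easy.} It needs no stable-triviality argument. On the whole component, \(\chi(\gamma_{i_0})=\mu_{i_0}^{\varepsilon_{i_0}}\) is constant, so \(1-\chi^2(\gamma_{i_0})\) is a nonzero constant. The group \(\pi_1(C\setminus D)\) is free of rank \(2g+n-1\), so on free generators there is no surface relation to handle. The normalization \(c(\gamma_{i_0})=0\) then trivializes \(\mathcal H\) globally, exactly as in the paper.

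\textbf{Smaller omissions.}
\begin{enumerate}
\item You need a holomorphic inverse to the torus parametrization of \(Z_{[\varepsilon]}\); the paper uses the trace formula \eqref{eq:M11.4}.
\item You need to prove that the reducible locus is closed, via proper projection of the invariant-line incidence.
\item With \(L\) chosen of negative score, the split sum and the reversed nonsplit extensions are destabilized by the subline \(L^{-1}\), not by \(L\).
\end{enumerate}
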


\begin{proof}
The product coordinates consist of handle monodromies for the line
character and normalized cocycles for its extension class. We first
construct the logarithmic correspondence in holomorphic families.
The stability score selects the extension direction, and recovering
the same coordinates from a connection gives the holomorphic inverse.

\smallskip
\noindent\emph{The nonresonant family correspondence.}
With the prescribed residue branches, the distinct local eigenlines
determine the logarithmic lattice uniquely. Deligne's regular-singular
extension \cite[Chapter II, Proposition 5.4]{Del70}, followed by the
prescribed integer elementary transformations along those eigenlines,
gives the required residue branches. Inaba
\cite[Proposition 3.1 and the proof of Proposition 6.1]{I2012}
uses these transformations and the relative extension theorem in
holomorphic families. The local construction below
gives this dependence for the monodromy families used in the product
parametrization.

For a puncture monodromy matrix \(T_i\) in a holomorphic family, set
\[
P_i^+=\frac{T_i-\mu_i^{-1}I}{\mu_i-\mu_i^{-1}},
\qquad
P_i^-=\frac{T_i-\mu_i I}{\mu_i^{-1}-\mu_i},
\qquad
A_i=\lambda_iP_i^+-\lambda_iP_i^-.
\]
Cayley--Hamilton gives \((T_i-\mu_iI)(T_i-\mu_i^{-1}I)=0\), and hence
\[
(P_i^\pm)^2=P_i^\pm,\quad
P_i^+P_i^-=0,\quad P_i^++P_i^-=I,\quad
e^{-2\pi iA_i}=T_i.
\]
Consequently
\[
d+A_i\,\frac{dz}{z}
\]
is a holomorphic family of logarithmic connections on the disc with the
required residue branches and local monodromy. Over the punctured disc,
a frame at one point determines a flat isomorphism with the given local
system. This isomorphism is holomorphic in the parameters. Gluing the
disc models to the flat bundle on \(C\setminus D\) gives the family.
Its determinant monodromy is trivial, and
\(\operatorname{tr}A_i=0\) at every puncture. A nonzero flat determinant
section on \(C\setminus D\) therefore extends without zeros in the disc
models, giving
\[
(\det E,\det\nabla)\simeq(\mathcal O_C,d)
\]
locally in the parameter space.

To check uniqueness and holomorphic dependence, choose holomorphic local
eigenframes in the parameter space and write a logarithmic connection as
\[
d+A(z)\frac{dz}{z},\qquad
A(z)=A_0+\sum_{j\ge1}A_jz^j,\qquad
A_0=\operatorname{diag}(\lambda_i,-\lambda_i).
\]
A gauge \(G(z)=I+\sum_{j\ge1}G_jz^j\) to the constant-residue model
must satisfy
\[
zG'=GA_0-A(z)G,
\]
and comparing coefficients gives
\[
\bigl(qI+\operatorname{ad}_{A_0}\bigr)G_q
=-\sum_{j=1}^qA_jG_{q-j},
\qquad q\ge1.
\]
The eigenvalues on the left are \(q,q,q+2\lambda_i,q-2\lambda_i\),
all nonzero by \eqref{eq:M11.1}. One constant \(K\) bounds the inverse
norm by \(K/q\): this holds for large \(q\), and enlarging \(K\)
handles the finitely many smaller \(q\). On a sufficiently small
parameter neighbourhood, the coefficient norms \(a_j=\sup\|A_j\|\)
have positive convergence radius. The following majorant gives a common
disc for the normalizing gauges. For \(g_0=1\) and
\[
qg_q=K\sum_{j=1}^qa_jg_{q-j},
\]
induction gives \(\|G_q\|\le g_q\). The majorant series is
\[
\sum_{q\ge0}g_qz^q
=\exp\!\left(K\sum_{j\ge1}\frac{a_j}{j}z^j\right),
\]
so \(G\) converges locally uniformly and depends holomorphically on
the parameters. Since \(G(0)=I\), it is invertible on a smaller disc.

A single-valued horizontal morphism \(H\) between two constant-residue
models satisfies
\[
zH'=HA_0-A_0H.
\]
Each matrix coefficient has the form \(c\,z^\beta\), with \(\beta=0\)
on the diagonal and \(\beta=\pm2\lambda_i\) off the diagonal.
Single-valuedness forces the off-diagonal coefficients to vanish, while
the diagonal coefficients are constant. A flat isomorphism therefore
extends uniquely across the puncture and preserves the labelled
eigenlines. This completes the family logarithmic correspondence.

\smallskip
\noindent\emph{Reducible characters and extension parameters.}
Every reducible semisimple character is \(\nu\oplus\nu^{-1}\). At
the punctures its line character satisfies
\begin{equation}
\nu(\gamma_i)=\mu_i^{\varepsilon_i},\qquad \varepsilon_i\in\{1,-1\},
\qquad
\prod_{i=1}^n\nu(\gamma_i)=1.
\label{eq:M11.3}
\end{equation}
The product relation is equivalent to
\(\sum_i\varepsilon_i\lambda_i\in\mathbb Z\). For each such pattern,
the \(2g\) handle values of \(\nu\) range over the torus
\(T_\varepsilon\cong(\mathbb C^*)^{2g}\).

Simultaneous inversion pairs the patterns. The two patterns in each
pair are distinct because the local eigenvalues are distinct. Choosing
one representative of each \([\varepsilon]\in\mathcal E(\lambda)\),
the handle values parametrize the corresponding component
\(Z_{[\varepsilon]}\) by \((\mathbb C^*)^{2g}\).

The parametrization has an algebraic inverse. For a puncture loop
\(\gamma_0\) with selected eigenvalue \(\mu_0\) and any handle generator
\(a\),
\begin{equation}
\nu(a)=
\frac{\operatorname{tr}\rho(\gamma_0a)
-\mu_0^{-1}\operatorname{tr}\rho(a)}
{\mu_0-\mu_0^{-1}}.
\label{eq:M11.4}
\end{equation}
Trace coordinates therefore recover the line character. Applying the
same formula to the other puncture loops recovers their discrete
eigenvalue choices. The invariant-line incidence is closed in
the product of the relative representation variety with
\(\mathbb P^1\), and its projection is proper. Its image is conjugation
invariant and closed, so its image
in the good quotient is closed as well. The finitely many spectral
patterns recovered by the trace coordinates separate this closed
reducible locus into the disjoint closed components
\(Z_{[\varepsilon]}\).

If \(n=1\), the rank-one relation would force \(\nu(\gamma_1)=1\),
contrary to \eqref{eq:M11.1}. The reducible locus is then empty, and
the irreducible argument covers the whole moduli space.

Now suppose a reducible component exists, so \(n\ge2\). The
punctured-surface group is free of rank
\[
s=2g+n-1.
\]
Choose free generators \(\eta_0,\ldots,\eta_{s-1}\), with
\(\eta_0=\gamma_{i_0}\) a puncture loop. Its fixed value satisfies
\(\nu(\eta_0)^2\ne1\).

Nonsplit extensions with invariant line character \(\nu\) are
classified by nonzero classes in
\begin{equation}
\operatorname{Ext}^1(\nu^{-1},\nu)
=H^1(C\setminus D,\nu^2).
\label{eq:M11.5}
\end{equation}
A cocycle on the free group is specified by arbitrary values at its
\(s\) generators. A coboundary changes these values by
\[
c(\gamma)\longmapsto
c(\gamma)+(1-\nu(\gamma)^2)h,
\qquad h\in\mathbb C.
\]
The fixed nonzero scalar \(1-\nu(\eta_0)^2\) gives a unique choice
of \(h\) for which \(c(\eta_0)=0\). The values
\(c(\eta_1),\ldots,c(\eta_{s-1})\) are unrestricted. This scalar is
constant along \(T_\varepsilon\), so the same normalization depends
holomorphically on the character throughout the torus.

Let \(\mathcal H_\varepsilon\to T_\varepsilon\) be the cohomology bundle with fibre
\((\mathcal H_\varepsilon)_\nu=H^1(C\setminus D,\nu^2)\).
The remaining cocycle values therefore give the global trivialization
\begin{equation}
\mathcal H_\varepsilon\cong T_\varepsilon\times\mathbb C^{2g+n-2}.
\label{eq:M11.6}
\end{equation}
To pass from extension classes to isomorphism classes, quotient by scalar
changes of the two line factors. This gives
\begin{equation}
\mathbb P(\mathcal H_\varepsilon)\cong
T_\varepsilon\times\mathbb P^{2g+n-3}.
\label{eq:M11-projectivization}
\end{equation}
Indeed, an upper-triangular change of frame preserving
\(c(\eta_0)=0\) has zero unipotent parameter. A diagonal
determinant-one change acts on the remaining coordinates by
\(c\mapsto t^2c\). Since \(t\mapsto t^2\) surjects onto
\(\mathbb C^*\), its nonzero orbits are the scalar projective classes.
On the chart where \(c_j\) is nonzero, the ratios \(c_\ell/c_j\),
for \(\ell\ne j\), are holomorphic quotient coordinates with the
usual projective transition functions.

\smallskip
\noindent\emph{The stable extension direction.}
Stability depends on the parabolic degree of the invariant line. The
logarithmic extension of the rank-one local system \(\nu\) has residues
\(\varepsilon_i\lambda_i\), which fix its ordinary degree:
\begin{equation}
\deg L_\nu=-\sum_i\varepsilon_i\lambda_i.
\label{eq:M11.7}
\end{equation}
To compute this degree, choose a nonzero meromorphic section
\(s\) of \(L_\nu\) and write \(\nabla s=\omega s\). In a local
holomorphic frame at \(p\), writing \(s=z^{m_p}e\) gives
\[
\omega=m_p\,\frac{dz}{z}+\frac{\nabla e}{e}.
\]
Thus \(\operatorname{res}_p\omega=m_p\) away from \(D\), while
\(\operatorname{res}_{t_i}\omega=m_{t_i}+\varepsilon_i\lambda_i\)
at a puncture. The sum of the residues of \(\omega\) is zero, so
\[
0=\sum_p m_p+\sum_i\varepsilon_i\lambda_i
=\deg L_\nu+\sum_i\varepsilon_i\lambda_i,
\]
which proves \eqref{eq:M11.7}. The sum is integral by
\eqref{eq:M11.3}. At each puncture, the induced parabolic weight is
determined by the chosen eigenline and the fixed weights, independently
of the handle monodromy and extension class.

By the flag convention above, the induced weight is
\[
w_i(\nu)=
\begin{cases}
\alpha_{i,2},&\varepsilon_i=1,\\
\alpha_{i,1},&\varepsilon_i=-1.
\end{cases}
\]
The stability score is
\[
\delta(\nu)=
\deg L_\nu+\sum_iw_i(\nu)
-\frac12\sum_i(\alpha_{i,1}+\alpha_{i,2}).
\]
The opposite line character has the opposite degree and complementary
weights, giving
\begin{equation}
\delta(\nu^{-1})=-\delta(\nu).
\label{eq:M11.8}
\end{equation}
Here \(\delta(\nu)=\delta_\varepsilon(\boldsymbol\alpha)\) in
\eqref{eq:M11-score}. A nonsplit extension has a unique invariant line,
and is stable exactly when that line has negative score. If the score
is zero, both extension directions have score zero and the stable
inverse image of \(Z_{[\varepsilon]}\) is empty. If it is nonzero,
exactly one direction is stable, and every nonzero class in that
direction is stable. A split connection has both invariant lines, whose
opposite scores make it unstable for every weight system.

For each nonzero-score component, choose its representative
\(\varepsilon\) so that \(\delta(\nu)<0\). The stable classes then fill
the projective fibre in \eqref{eq:M11-projectivization}.

\smallskip
\noindent\emph{The analytic product structure.}
Fix a nonzero-score component \(Z=Z_{[\varepsilon]}\) and use its stable
extension direction. On each projective chart of
\(\mathbb P(\mathcal H_\varepsilon)\), the normalized cocycles
\(c(\eta_0)=0\) in \eqref{eq:M11.5}--\eqref{eq:M11.6} and the ratios
\(c_\ell/c_j\) give a holomorphic monodromy family. Changes of
frame exist locally on the overlaps, and conjugate families induce the
same map to the coarse moduli space. The family logarithmic
correspondence therefore gives compatible maps to the stable connection
moduli space. The line character specifies their image under the
Riemann--Hilbert morphism, yielding a holomorphic map
\[
T_\varepsilon\times\mathbb P^{2g+n-3}
\longrightarrow\mathrm{RH}^{-1}(Z).
\]

Conversely, restrict the universal family supplied by
\cite[Theorem 2.1]{I2012} to the fixed-determinant connection moduli
space and choose a local frame at a fixed point of \(C\setminus D\).
On the reducible locus, the unique invariant line is recovered from the distinct
eigenlines of the fixed puncture monodromy and varies holomorphically.
The character on that line gives the handle coordinates. Applying the
normalization \(c(\eta_0)=0\) gives
\[
(E,\nabla)\longmapsto
\bigl(\nu,[c(\eta_1):\cdots:c(\eta_{s-1})]\bigr),
\qquad s=2g+n-1.
\]
The projective coordinates are invariant under the remaining frame
changes by \eqref{eq:M11-projectivization}, so these local expressions
give a holomorphic map on the moduli space. The two constructions recover
the same handle values and projective cocycle coordinates; their
compositions are identities. This proves the analytic product
\eqref{eq:M11.9}.

The product identification \eqref{eq:M11.9} proves smoothness. Since
\(Z\) is closed analytic in the character variety,
\(\mathrm{RH}^{-1}(Z)\) is closed analytic in the connection moduli
space, hence a closed complex submanifold. Both factors in
\eqref{eq:M11.9} are Oka, so the inverse image is Oka.
\end{proof}

\begin{proof}[Proof of Theorem~\ref{thm:M11}]
First replace \(\boldsymbol\alpha\) by rational weights
\(\boldsymbol\alpha_0\) in the same relative face, as above. Choose
generic rational weights \(\boldsymbol\beta\) sufficiently close to
\(\boldsymbol\alpha_0\) to preserve the signs of all its nonzero scores.
For \(\boldsymbol\beta\), the connection moduli space is nonempty,
smooth, irreducible and quasiprojective, of dimension \(6g-6+2n\), by
the results of Inaba recalled above. Lemma~\ref{lem:M11-exceptional}
identifies its reducible locus as a finite union of closed smooth
submanifolds, each of dimension
\[
2g+(2g+n-3)=4g+n-3.
\]
Their codimension is \(2g+n-3>0\) by \eqref{eq:M11.2}. Its irreducible
connection locus \(U\) is therefore nonempty and dense. Under
\(\mathrm{RH}\), it is analytically isomorphic to the irreducible
character locus, and hence Oka by
Proposition~\ref{prop:relative-goldman-ellipticity}.

Every \(\boldsymbol\alpha_0\)-stable connection is
\(\boldsymbol\beta\)-stable. Indeed, irreducible connections are stable
for every weight system, and a stable reducible connection has a unique
invariant line with negative score, which remains negative under the
chosen perturbation. Inaba's stability openness
\cite[Proposition 4.1]{I2012}, applied to the universal family over the
\(\boldsymbol\beta\)-moduli space, gives a Zariski open immersion
\[
\mathcal M_{\mathrm{dR}}^{\boldsymbol\alpha}
=\mathcal M_{\mathrm{dR}}^{\boldsymbol\alpha_0}
\hookrightarrow
\mathcal M_{\mathrm{dR}}^{\boldsymbol\beta}.
\]
Its image is obtained by deleting the entire reducible inverse images
whose score at \(\boldsymbol\alpha_0\) is zero. It contains the same
nonempty dense open set \(U\), so it is nonempty, smooth, irreducible
and quasiprojective, with the stated dimension. The remaining reducible
inverse images form a proper closed analytic subset. Analytic extension
\cite[Theorem 1.9]{DW} proves the Oka property.

If every sign sum is nonintegral, then
\(\mathcal E(\lambda)=\varnothing\), so every character is irreducible.
The Riemann--Hilbert morphism is an analytic isomorphism on the whole
moduli space, and transporting the spray from
Proposition~\ref{prop:relative-goldman-ellipticity} proves holomorphic
ellipticity. For \(n=1\), condition \eqref{eq:M11.1} already makes
\(\pm\lambda_1\) nonintegral.
\end{proof}

\begin{example}\label{ex:cayley-parabolic-connections}
Let \(C=\mathbb P^1\), take four marked points, and set
\(\lambda_i=1/4\) for every \(i\). The admissible sign patterns are
the all-positive and all-negative patterns, together with the six
patterns having two signs of each kind. Modulo simultaneous inversion,
they give four reducible characters. All four puncture traces are zero,
so the relative character surface is the Cayley cubic
\cite[§1.1, equations (1.4)--(1.5)]{CantatLoray2009},
\[
x^2+y^2+z^2+xyz=4.
\]
Its four nodes are
\[
(-2,-2,-2),\qquad (-2,2,2),\qquad
(2,-2,2),\qquad (2,2,-2).
\]
For generic parabolic weights, their inverse images under
\(\mathrm{RH}\) are four pairwise disjoint copies of \(\mathbb P^1\).
Theorem~\ref{thm:M11} makes the smooth connection surface Oka for every
parabolic weight system.
\end{example}

\begin{example}[One puncture on an elliptic curve]
\label{ex:one-punctured-elliptic-connections}
Let \(E\) be an elliptic curve with one marked point and let
\(2\lambda\notin\mathbb Z\). For every parabolic weight system, the
rank-two stable logarithmic connection moduli space with trivial
determinant connection and residue eigenvalues \(\lambda,-\lambda\)
is a nonempty,
smooth, irreducible quasiprojective surface and is holomorphically
elliptic.

Put \(t=2\cos(2\pi\lambda)\), so \(t\ne\pm2\). The relative character
surface is the Fricke cubic
\[
V_t=\{x^2+y^2+z^2-xyz-2=t\}
\]
in the coordinates
\((x,y,z)=(\operatorname{tr}A,\operatorname{tr}B,\operatorname{tr}AB)\)
\cite[Introduction]{GoldmanNeumann2005}. For
\(\kappa=x^2+y^2+z^2-xyz-2\), its critical-point equations are
\(2x=yz\), \(2y=xz\), \(2z=xy\). They imply either
\((x,y,z)=0\), with critical value \(-2\), or
\(x^2=y^2=z^2=4\) and \(xyz=8\), with critical value \(2\).
Thus \(V_t\) is smooth. It is nonempty by taking \(x=y=0\) and
\(z^2=t+2\). Viewed as a monic quadratic in \(z\), its equation has
discriminant
\[
(x^2-4)y^2+4(t+2-x^2).
\]
This is not a square in \(\mathbb C[x,y]\), since a square of degree two
in \(y\) would have a square leading coefficient in \(\mathbb C[x]\),
whereas \(x^2-4\) is not a square. Hence \(V_t\) is irreducible.
A reducible pair \(A,B\) is simultaneously upper triangular and has
\(\operatorname{tr}[A,B]=2\). All characters on \(V_t\) are therefore
irreducible.

Inaba's construction \cite[Theorem 2.1]{I2012} supplies the stable
connection moduli space. The irreducible representation quotient over
\(V_t\) is a holomorphic principal \(\mathrm{PGL}_2\)-bundle, so local
sections give holomorphic monodromy families. The nonresonant family
construction in the proof of Lemma~\ref{lem:M11-exceptional} extends
them to logarithmic families with the prescribed residues and trivial
determinant connection. Every such connection is irreducible and stable
for all weights. Conjugate monodromy families define the same classifying
map, so the local maps glue. Conversely, local framings of the universal
connection family give holomorphic monodromy maps. Uniqueness of the
nonresonant logarithmic extension makes these maps inverse to one
another. Thus the Riemann--Hilbert morphism is a biholomorphism onto
\(V_t\). Proposition~\ref{prop:relative-goldman-ellipticity} gives its
holomorphic ellipticity, and the construction and the cubic model give
the remaining assertions.
\end{example}

\section{Complex torus fibrations and compactifications}\label{sec:fibrations}

\subsection{Complex torus fibrations over curves}\label{sec:general-abelian-fibrations}

The translation fields of smooth complex torus fibres extend over
singular fibres as commuting vertical vector fields. Properness makes
their flows complete. Applying these flows to a section gives the
following criterion and entire maps.

\begin{theorem}\label{thm:abelian-fibration}
Let \(f:X\to B\) be a proper surjective holomorphic map from a
connected complex manifold of dimension \(r+1\), \(r\ge1\), to a
connected Riemann surface, with a holomorphic section \(\sigma\).
Let \(B^\circ\subset B\) be a nonempty open set with proper closed
analytic complement. Suppose that \(f\) is smooth over \(B^\circ\)
and that every fibre there is a connected complex torus of dimension
\(r\). Then \(X\) is Oka if and only if \(B\) is Oka.

Suppose \(B\) is Oka. Let \(\alpha:\mathbb C\to B\) be a
biholomorphism onto an affine chart when \(B=\mathbb P^1\), or a
universal covering otherwise. There is a holomorphic map
\[
F:\mathbb C\times\mathbb C^r\longrightarrow X
\]
such that
\[
f(F(z,t))=\alpha(z),\qquad
U:=f^{-1}\bigl(B^\circ\cap\alpha(\mathbb C)\bigr)
\subset F(\mathbb C^{r+1}),
\]
and \(dF_q\) is an isomorphism for every \(q\in F^{-1}(U)\).
If \(X\) is projective and \(B\) is Oka, then \(X\) is also
holomorphically elliptic.
\end{theorem}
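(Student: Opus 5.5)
The plan is to get necessity by surjective descent, and sufficiency by building the explicit map \(F\) from complete vertical translation flows applied to the section, then passing to all of \(X\) by analytic extension.

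\emph{Necessity.} If \(X\) is Oka, then \(B=f(X)\) is Oka by surjective holomorphic descent \cite[Theorem 7.5]{DW}, since \(f\) is surjective between connected manifolds. The same step gives the converse direction of the equivalence once \(F\) is built. The final assertion about projective \(X\) then follows from \cite[Theorem 1.1]{FL2026}.

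\emph{Pullback and its Lie algebra.} Suppose \(B\) is Oka. Then \(B\) is \(\mathbb P^1\), \(\mathbb C\), \(\mathbb C^*\) or a complex torus, so \(\alpha\) as in the statement exists and is locally biholomorphic. Form the pullback \(X_\alpha=\mathbb C\times_B X\), with proper map \(f_\alpha\), section \(\sigma_\alpha\), and smooth torus fibres over the open set \(\Omega=\alpha^{-1}(B^\circ)\), whose complement \(S\) is discrete. Over \(\Omega\) the fibrewise Lie algebras form the bundle \(\sigma_\alpha^*\ker df_\alpha\). To control it near \(S\), I would use the dual description: it is dual to \(f_{\alpha*}\omega_{X_\alpha/\mathbb C}\) restricted to \(\Omega\). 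By Grauert's direct image theorem this sheaf is coherent and torsion free, hence locally free on the curve \(\mathbb C\). By \cite[Theorem 5.3.1]{F17} it is trivial on \(\mathbb C\), so we get a frame \(e_1,\dots,e_r\) extending across \(S\). Each \(e_j\) defines a translation-invariant vertical field \(V_j\) on \(f_\alpha^{-1}(\Omega)\). These fields commute there, since each fibre is abelian.

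\emph{Extension across singular fibres: the main obstacle.} I expect the key difficulty to be that the \(V_j\) a priori extend only meromorphically across \(f_\alpha^{-1}(S)\). The plan is to show they have at most finite-order poles along these fibres. Locally over a disc around \(s\in S\), the fields are determined by the coherent extension above, so they are pairings with relative forms that have meromorphic growth. I would then choose, by Weierstrass, an entire function \(h\) that vanishes to sufficiently high order at each point of \(S\) and nowhere else. Replacing \(V_j\) by \(W_j=(h\circ f_\alpha)V_j\) gives holomorphic vertical fields on all of \(X_\alpha\). They still commute, by the identity theorem, since \(h\circ f_\alpha\) is constant on fibres. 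Being vertical with compact fibres (properness), each \(W_j\) is complete.

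\emph{The map \(F\) and the Oka conclusion.} Set
\[
F(z,t)=\pi\bigl(\Phi_{W_1}^{t_1}\circ\cdots\circ\Phi_{W_r}^{t_r}(\sigma_\alpha(z))\bigr),
\]
where \(\pi:X_\alpha\to X\) is the projection, so that \(f\circ F=\alpha\). For \(z\in\Omega\), \(h(z)\neq0\), and \(t\mapsto F(z,t)\) is the exponential of a connected torus, hence surjective onto the fibre. It is also a local biholomorphism. The \(z\)-derivative is transverse to the fibre because \(\alpha\) is unramified. This gives \(U\subset F(\mathbb C^{r+1})\) and \(dF_q\) invertible on \(F^{-1}(U)\). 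Consequently every point of \(U\) admits a centred entire family with invertible differential. For \(B=\mathbb P^1\), a second affine chart covers the fibre over \(\infty\) when it lies in \(B^\circ\). The complement of \(U\) in \(X\) is a proper closed analytic set. As in the proof of Proposition~\ref{prop:elliptic-functions-oka}, Lemma~2.1 and Theorem~1.9 of \cite{DW} then make \(X\) Oka.
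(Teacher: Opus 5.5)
Your outline is sound, but you get the vertical fields by a more roundabout route than the paper, and that detour is what creates the obstacle you flag.

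\textbf{What the paper does instead.} The paper never dualizes. It pushes forward the sheaf of vertical vector fields itself, \(\mathcal V=\ker(df:T_X\to f^*T_B)\).
\begin{itemize}
\item \(\mathcal E=f_*\mathcal V\) is coherent by Grauert.
\item It is torsion-free, because \(\mathcal V\subset T_X\) and \(f^*h\) vanishes on no open subset of the connected manifold \(X\). Hence it is locally free on the curve.
\item Base change over \(B^\circ\) gives rank \(r\) and the evaluation isomorphism onto \(T_{X^\circ/B^\circ}\).
\end{itemize}
A frame of \(\alpha^*\mathcal E\) is then, by definition, a family of vertical fields holomorphic on all of the pullback \(X_\alpha\), singular fibres included. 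So no poles, no Weierstrass function \(h\) and no order estimate are needed. Commutation (by density) and completeness (by properness) follow exactly as in your last steps.

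The paper's fields need not vanish on singular fibres, whereas your \(W_j\) do. So your \(F\) is constant on \(\{z\}\times\mathbb C^r\) for \(z\in S\). This is harmless, since the statement only concerns \(U\). For necessity the paper uses the retraction \(f\circ\sigma=\mathrm{id}_B\) rather than surjective descent; both are valid.

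\textbf{If you keep your route, the extension step must be proved.} A holomorphic field on the complement of the singular fibres need not be meromorphic across them, so ``pairings with relative forms that have meromorphic growth'' is not yet an argument. Cramer's rule supplies it:
\begin{itemize}
\item Near \(x_0\in f_\alpha^{-1}(s)\), lift a local frame \(\eta_1,\dots,\eta_r\) of your extension to holomorphic \(1\)-forms \(\tilde\eta_k\).
\item Then \(V_j\) solves \(\tilde\eta_k(V_j)=c_{jk}\circ f_\alpha\) and \(df_\alpha(V_j)=0\), with \(c_{jk}=e_j(\eta_k)\) holomorphic.
\item The determinant of this linear system is the coefficient of \(\tilde\eta_1\wedge\cdots\wedge\tilde\eta_r\wedge df_\alpha\). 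This is a holomorphic function whose zeros lie in \(f_\alpha^{-1}(s)\), because translation-invariant forms give a basis of each smooth fibre's cotangent spaces.
\item R\"uckert's Nullstellensatz and compactness of the fibre give an integer \(N_s\) with \((f_\alpha-s)^{N_s}V_j\) holomorphic near \(f_\alpha^{-1}(s)\).
\end{itemize}

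\textbf{Two smaller repairs are needed.}
\begin{itemize}
\item If \(\omega_{X_\alpha/\mathbb C}\) means the relative dualizing sheaf, its direct image is torsion-free but has rank one. That is the wrong rank when \(r\ge2\). The sheaf to dualize is \(f_{\alpha*}\Omega^1_{X_\alpha/\mathbb C}\).
\item That sheaf has the right rank, but its torsion-freeness is not automatic: \(\Omega^1_{X_\alpha/\mathbb C}\) itself has torsion wherever \(df_\alpha\) vanishes along a divisor. Pass to its torsion-free quotient before dualizing; this changes nothing over \(\Omega\).
\end{itemize}
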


\begin{proof}
The identity \(f\circ\sigma=\mathrm{id}_B\) makes \(B\) a
holomorphic retract of \(X\), so \(X\) Oka implies \(B\) Oka.
For the converse, assume \(B\) is Oka. By
\cite[Corollary 2.10]{For}, it is \(\mathbb P^1\), \(\mathbb C\),
\(\mathbb C^*\), or an elliptic curve, so \(\alpha\) exists as stated.

The coherent sheaf of vertical vector fields and its direct image are
\[
\mathcal V=\ker(df:T_X\longrightarrow f^*T_B),
\qquad \mathcal E=f_*\mathcal V.
\]
Grauert's proper direct image theorem makes \(\mathcal E\) coherent
on \(B\) \cite[Section 6, Hauptsatz I]{Grauert60}.
It is torsion-free: for a nonzero local holomorphic function \(h\)
on \(B\) and a local section \(v\) of \(\mathcal E\),
\[
hv=0\quad\Longrightarrow\quad(f^*h)v=0\quad\Longrightarrow\quad v=0.
\]
Here \(f^*h\) is not identically zero on any open subset of the
connected manifold \(X\), and \(T_X\) is locally free. Thus
\(\mathcal E\) is locally free on the smooth curve \(B\).

Over \(X^\circ=f^{-1}(B^\circ)\), one has
\(\mathcal V=T_{X^\circ/B^\circ}\). Every holomorphic vector field
on a complex torus \(A\) is translation-invariant, since its
coefficients in the translation trivialization are holomorphic
functions on a compact connected manifold. Hence
\[
H^0(A,T_A)\longrightarrow T_xA
\quad\text{is an isomorphism for every }x\in A,
\qquad h^q(A,T_A)=r\binom rq.
\]
Grauert's cohomology and base change theorem
\cite[Section 7, Satz 5]{Grauert60} gives
\[
\mathcal E\otimes k(b)\simeq H^0(X_b,T_{X_b}),
\qquad b\in B^\circ,
\]
and the evaluation isomorphism
\begin{equation}\label{eq:abelian-evaluation}
f^*(\mathcal E|_{B^\circ})\longrightarrow T_{X^\circ/B^\circ}.
\end{equation}
In particular, \(\mathcal E\) has rank \(r\).

Form the pullback
\[
Y=X\times_B\mathbb C,\qquad
g:Y\to\mathbb C,\qquad q:Y\to X.
\]
The map \(q\) is a local biholomorphism and \(g\) is proper, so \(Y\)
is smooth. The Oka--Grauert principle
\cite[Theorem 5.3.1]{F17} trivializes \(\alpha^*\mathcal E\).
Evaluating a holomorphic frame gives vertical holomorphic vector
fields \(V_1,\ldots,V_r\) on \(Y\). Over
\(\alpha^{-1}(B^\circ)\), their values form a basis of the fibre
tangent spaces by \eqref{eq:abelian-evaluation}; the fields are
translation-invariant there. Their brackets vanish on this dense
open set and hence on all of \(Y\).

Properness makes the fields complete. For a compact neighbourhood
\(K\subset\mathbb C\), the set \(g^{-1}(K)\) is compact. The local
holomorphic ODE theorem on finitely many coordinate charts gives a
common \(\varepsilon_K>0\) for the flow \(\Phi_j(t,y)\) whenever
\(y\in g^{-1}(K)\) and \(|t|<\varepsilon_K\).
Verticality gives
\[
g(\Phi_j(t,y))=g(y).
\]
For any \(t\in\mathbb C\), choose \(N\) with
\(|t|/N<\varepsilon_K\) and iterate the local flow \(N\) times.
All intermediate points stay in the same compact fibre.
Uniqueness and holomorphic dependence give a jointly holomorphic
flow \(\Phi_j:\mathbb C\times Y\to Y\), with
\[
\Phi_j(t+s,y)=\Phi_j(t,\Phi_j(s,y)).
\]
This argument also applies over the singular fibres.

The commuting flows define a \(g\)-preserving holomorphic action
\[
\Psi(t,y)=\Phi_r(t_r,\cdot)\circ\cdots\circ\Phi_1(t_1,\cdot)(y).
\]
On every fibre over \(\alpha^{-1}(B^\circ)\), the orbit maps have
surjective differential. All orbits are open; connectedness of the
fibre makes the action transitive. The pulled-back section is
\(\widetilde\sigma(z)=(\sigma(\alpha(z)),z)\). Set
\[
F(z,t)=q\bigl(\Psi(t,\widetilde\sigma(z))\bigr).
\]
Then \(f\circ F=\alpha\circ\operatorname{pr}_z\), and transitivity
shows that \(F\) covers \(U\). At each \(a=(z,t)\in F^{-1}(U)\),
the vectors \(\partial_{t_j}F(a)\) form a basis of the vertical
tangent space, while
\[
df_{F(a)}(\partial_zF(a))
=d\alpha_z(\partial/\partial z)\ne0.
\]
Thus \(dF_a\) is an isomorphism.

The complement \(X\setminus U\) is proper closed analytic.
Translating \(F\) at a preimage of each point of \(U\) gives the
dominating entire maps required by Proposition~2.2 and
Theorem~1.9 of~\cite{DW}; hence \(X\) is Oka. If \(X\) is
projective, \cite[Theorem 1.1]{FL2026} gives holomorphic ellipticity.
\end{proof}

\subsection{Calabi--Yau threefolds from fibrations}\label{sec:calabi-yau-fibrations}\label{sec:gross-popescu}

In particular, a smooth connected projective Calabi--Yau threefold
with a morphism to \(\mathbb P^1\), a section, and connected abelian
surfaces as general fibres is Oka and holomorphically elliptic.

\subsubsection{Schoen fibre products}

The two elliptic fibrations in Schoen's construction \cite{Schoen}
provide two additive actions on the fibre product. Composing them along
a section gives an entire map that covers a dense Zariski-open subset
and is a local biholomorphism at every point of its source.

\begin{theorem}\label{thm:schoen-threefold}
Let \(p_i:S_i\to\mathbb P^1\), \(i=1,2\), be relatively minimal elliptic
fibrations with connected fibres and sections on smooth projective
rational surfaces. If their sets of singular values are disjoint, then
\[
X=S_1\times_{\mathbb P^1}S_2
\]
is a smooth simply connected projective threefold that is Oka and
holomorphically elliptic, with \(K_X\cong\mathcal O_X\) and
\(H^1(X,\mathcal O_X)=H^2(X,\mathcal O_X)=0\). There is a holomorphic
local biholomorphism \(F:\mathbb C^3\to X\) whose image contains a
dense Zariski-open subset.
\end{theorem}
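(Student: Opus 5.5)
The plan has two parts: first establish the geometric properties of \(X\), then build \(F\) from the two elliptic group actions, in the spirit of Theorem~\ref{thm:abelian-fibration}. For smoothness, near a point of \(X\) over \(b\in\mathbb P^1\) at least one of \(p_1,p_2\) is smooth, because the singular values are disjoint. If, say, \(p_2\) is a submersion there, then locally \(X\cong S_1\times\mathbb C\) after choosing coordinates, so \(X\) is smooth of dimension three. Projectivity is immediate, since \(X\) is closed in \(S_1\times S_2\).

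For the Calabi--Yau invariants, use \(K_{S_i}=p_i^*\mathcal O(-1)\); this holds for relatively minimal rational elliptic surfaces with section, since \(\chi(\mathcal O_{S_i})=1\). Adjunction for the fibre product, viewed as the pullback of the diagonal \(\Delta\subset\mathbb P^1\times\mathbb P^1\) with normal bundle \(\mathcal O(2)\), gives
\[
K_X=(K_{S_1}\boxtimes K_{S_2})\otimes\mathcal O(2)|_X\cong\mathcal O_X .
\]
Simple connectivity and the vanishing of \(H^1(X,\mathcal O_X)\) and \(H^2(X,\mathcal O_X)\) are Schoen's computations \cite{Schoen}. I would cite them, or argue as follows. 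The rational surfaces have singular fibres with trivial local monodromy kernel, and \(\pi_1\) of the fibre product is killed by the vanishing cycles of both fibrations, because the torus fibres of \(X\) acquire vanishing cycles in independent directions at the two disjoint sets of singular values. Serre duality then gives \(h^2(\mathcal O_X)=h^1(\mathcal O_X)=0\).

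For the Oka assertion, I would apply Theorem~\ref{thm:abelian-fibration} to \(f:X\to\mathbb P^1\). Its fibre over a common regular value is \(E_1\times E_2\), a connected two-dimensional torus. The section is \((\sigma_1,\sigma_2)\), and the base \(\mathbb P^1\) is Oka. Holomorphic ellipticity then follows from projectivity via \cite[Theorem 1.1]{FL2026}.

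The main step is constructing \(F:\mathbb C^3\to X\) with \(dF\) invertible everywhere, not just over \(U\). Theorem~\ref{thm:abelian-fibration} only controls \(dF\) over smooth fibres, and \(\alpha\) misses \(\infty\). My plan is to use each surface separately. On \(S_i\), the relative vector field construction in the proof of Theorem~\ref{thm:abelian-fibration} over the affine chart \(\alpha:\mathbb C\to\mathbb P^1\) gives a complete vertical field \(V_i\) on \(S_i|_{\mathbb C}\), generating \(f_*\) of the vertical sheaf. Since a rational elliptic surface has \(p_{i*}T_{S_i/\mathbb P^1}\cong\mathcal O(-1)\) up to twist, the field vanishes only on a codimension-one set. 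Its zeros lie over singular values and at the chart's excluded point. The vertical field of \(S_1\) is nonvanishing near fibres over the singular values of \(S_2\), because those fibres are smooth for \(p_1\), and symmetrically. Lift \(V_1\) and \(V_2\) to commuting complete fields on \(X\), acting on the two factors. Then choose \(\alpha\) as a map \(\mathbb C\to\mathbb P^1\) that is unramified and omits at most one point. Choose the affine chart so that the omitted point \(\infty\) is a regular value of both fibrations. Define
\[
F(z,t_1,t_2)=\Phi_{2}^{t_2}\Phi_{1}^{t_1}\bigl(\sigma(\alpha(z))\bigr).
\]
Invertibility of \(dF\) holds wherever both lifted fields are nonzero. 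This fails exactly on the \(F\)-preimage of the singular points of the fibres, that is, points on nodal or cuspidal fibre components where \(V_i\) vanishes.

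I expect this to be the main obstacle. To handle it, I would restrict attention to the orbit of the section. The zero-section meets each fibre of \(S_i\) at a smooth point of an identity component, where the normalized field is nonvanishing, provided the frame of \(\alpha^*\mathcal E_i\) is chosen so that \(V_i\) is nonzero along \(\sigma_i\). The orbit of \(\sigma_i\) is then the smooth locus of the identity component, a group \(\mathbb C^*\), \(\mathbb C\), or an elliptic curve. The field is nonvanishing along the whole orbit, since the orbit of a point where a complete field is nonzero stays in its nonvanishing locus. Hence \(dF\) is invertible everywhere. The image contains the complement of the fibre over \(\infty\) and of the non-identity components and singular points of the singular fibres, which is dense and Zariski open.
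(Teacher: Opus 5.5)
Your architecture matches the paper's. You use two commuting complete fibrewise flows, one per elliptic factor, applied to the section over an affine chart, and \(\partial_zF\) supplies the base direction. Deducing the Oka property from Theorem~\ref{thm:abelian-fibration} rather than from \(F\) is also legitimate; the paper notes that this case is covered. Your sketch of simple connectivity is not an argument, but citing Schoen, as the paper does, suffices.

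\textbf{The gap.} It lies in the one step the last assertion depends on: that \(V_i\) does not vanish at \(\sigma_i(b)\) when \(b\) is a singular value. You make this a proviso, ``provided the frame of \(\alpha^*\mathcal E_i\) is chosen so that \(V_i\) is nonzero along \(\sigma_i\)''. But \(\alpha^*\mathcal E_i\) is a line bundle. Any two frames differ by a nowhere-vanishing entire function and give fields with the same zero set, so nothing can be arranged by the choice. What has to be proved is that, for the vertical sheaf \(\mathcal V_i\), sections of \(p_{i*}\mathcal V_i\) do not vanish at points of the identity component of a singular fibre. Your remarks about \(\mathcal O(-1)\) ``up to twist'' and a codimension-one zero set do not rule this out. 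Vanishing along whole curves does occur in general: the frame field vanishes to order \(m-1\) along every fibre component of multiplicity \(m\ge2\) (fibres of type \(I_n^*\), \(II^*\), \ldots). Without this step, invertibility of \(dF\) over the singular values is unproved.

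\textbf{How to close it.} Since \(K_{S_i}\cong p_i^*\mathcal O(-1)\) is trivial on \(p_i^{-1}(\mathbb C)\), choose a nowhere-vanishing \(2\)-form \(\omega_i\) there. Define \(W_i\) by \(\iota_{W_i}\omega_i=p_i^*dz\). This is a vertical field vanishing exactly where \(dp_i=0\). Write \(W_i=h\,e\) for your frame field \(e\). Evaluating at a smooth point of the fibre over \(b\), which exists on the identity component, gives \(h(b)\ne0\). Hence \(e\) is nonzero wherever \(dp_i\ne0\), in particular along \(\sigma_i\).

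This is the surface version of the paper's device. There the fields are defined directly on \(X\) by contracting the nowhere-vanishing \(3\)-form, \(\iota_{V_i}\Omega=(df_i)^*\eta\) (Xie--Zhao). Nonvanishing wherever \(df_i\) has rank two is then built in.

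With this in hand your orbit argument is unnecessary. From \(f\circ F=\alpha\circ\mathrm{pr}_z\) you get \(df(\partial_zF)=\alpha'(z)\ne0\). Since \(T_xX=\{(v_1,v_2):dp_1v_1=dp_2v_2\}\), the condition \(df_x\ne0\) forces both \(dp_1\) and \(dp_2\) to be surjective at the components of \(x\). So both lifted fields are nonzero at every image point, which is essentially how the paper concludes.
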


\begin{proof}
Disjointness of the singular-value sets makes \(p_1\times p_2\)
transverse to the diagonal in \(\mathbb P^1\times\mathbb P^1\), so
\(X\) is smooth. The canonical bundle formula for a rational elliptic
surface with section gives
\(K_{S_i}\cong p_i^*\mathcal O_{\mathbb P^1}(-1)\); adjunction gives
\(K_X\cong\mathcal O_X\). The remaining Calabi--Yau and
simple-connectivity assertions are part of Schoen's construction
\cite{Schoen}; the section and canonical-bundle calculations are also
recorded in \cite[§2]{DOPW}.

Let \(f_i:X\to S_i\) be the projections, \(s_i\) sections of \(p_i\),
and \(\Omega\) a nowhere-vanishing holomorphic three-form on \(X\).
Contraction defines bundle maps
\[
V_i:f_i^*K_{S_i}\longrightarrow TX,
\qquad
\iota_{V_i(x,\eta)}\Omega_x=(df_i)_x^*\eta.
\]
By Xie--Zhao's construction \cite[Lemma 3.1 and Proposition 3.3]{XZ26},
these maps integrate to complete additive holomorphic actions
\(\mathcal A_1,\mathcal A_2\) along the fibres of the respective
projections.

Choose a biholomorphism
\(\alpha:\mathbb C\to\mathbb P^1\setminus\{b_\infty\}\) and set
\[
j(z)=\bigl(s_1(\alpha(z)),s_2(\alpha(z))\bigr).
\]
The canonical line bundle pulled back along \(f_1\circ j\) is
holomorphically trivial on \(\mathbb C\). Choose a nowhere-vanishing
section
\[
e_1\in H^0(\mathbb C,(f_1\circ j)^*K_{S_1})
\]
and define
\[
G(z,u)=\mathcal A_1(j(z),u e_1(z)).
\]
Likewise, choose a nowhere-vanishing section
\[
e_2\in H^0(\mathbb C^2,(f_2\circ G)^*K_{S_2})
\]
of the trivial pulled-back line bundle on \(\mathbb C^2\), and define
\[
F(z,u,v)=\mathcal A_2(G(z,u),v e_2(z,u)).
\]
Both maps are entire. Away from the singular values and \(b_\infty\),
the two actions are transitive on the two elliptic factors. Thus the
image of \(F\) contains the inverse image of this open subset of the
base.

We prove that \(F\) has invertible differential at every point of its
source. At \(x=(x_1,x_2)\in X\),
\[
T_xX=\{(v_1,v_2):dp_1(v_1)=dp_2(v_2)\}.
\]
Hence \(df_1\) is surjective wherever \(dp_2\) is surjective, and
\(df_2\) is surjective wherever \(dp_1\) is surjective.

Set \(H=f_2\circ G\). The identity
\(p_2(H(z,u))=\alpha(z)\) gives
\[
dp_2(\partial_zH)=d\alpha_z(\partial/\partial z)\ne0,
\qquad dp_2(\partial_uH)=0
\]
for every \((z,u)\in\mathbb C^2\). Thus \(p_2\) is a submersion at
\(H(z,u)\), and \(df_1\) has rank two at \(G(z,u)\). The action law
and contraction formula \cite[Proposition 3.3]{XZ26} give
\[
\partial_uG(z,u)=V_1(G(z,u),e_1(z))\ne0.
\]
Since \(f_1\circ G\) is independent of \(u\), this vector lies in the
kernel of \(df_1\). The differential \((df_1,df_2)\) of the inclusion
\(X\hookrightarrow S_1\times S_2\) is injective, so
\(\partial_uH=df_2(\partial_uG)\ne0\). Its image under \(dp_2\)
is zero, whereas the image of \(\partial_zH\) is nonzero. Consequently
\(dH\) is invertible on all of \(\mathbb C^2\).

The second action gives
\[
f_2(F(z,u,v))=H(z,u),
\qquad p_1(f_1(F(z,u,v)))=\alpha(z).
\]
Differentiating the second identity in \(z\) shows that \(dp_1\) is
nonzero at \(f_1(F(z,u,v))\) for every \((z,u,v)\). Thus \(df_2\)
has rank two there. The section \(e_2(z,u)\) is nonzero and independent
of \(v\), so the contraction formula gives
\[
\partial_vF(z,u,v)=V_2(F(z,u,v),e_2(z,u))\ne0.
\]
This vector spans \(\ker d(f_2)_{F(z,u,v)}\). If
\(\xi=(\xi_z,\xi_u,\xi_v)\in T_{(z,u,v)}\mathbb C^3\) satisfies
\(dF(\xi)=0\), apply \(d(f_2)\) to obtain
\[
dH_{(z,u)}(\xi_z,\xi_u)=0.
\]
Invertibility of \(dH\) gives \(\xi_z=\xi_u=0\), and
nonvanishing of \(\partial_vF\) then gives \(\xi_v=0\). Hence
\(dF\) is an isomorphism at every point of \(\mathbb C^3\).

For each point \(x\) in the dense Zariski-open subset covered by \(F\),
choose \(a\in\mathbb C^3\) with \(F(a)=x\). The map
\(w\mapsto F(a+w)\) dominates \(X\) at \(x\). Proposition~2.2 and
Theorem~1.9 of~\cite{DW} make \(X\) Oka.
It is holomorphically elliptic by \cite[Theorem 1.1]{FL2026}.
\end{proof}

\subsubsection{Gross--Popescu threefolds}

For a sufficiently general parameter \(y\in\mathbb P^2\), Gross and
Popescu \cite[§6]{GP01} construct a complete intersection
\(V_{8,y}\subset\mathbb P^7\) of four quadrics with 64 ordinary double
points and a smooth projective small resolution
\[
X=V^1_{8,y}\longrightarrow V_{8,y}.
\]
Gross--Pavanelli \cite[Theorem 1.1]{GP08} describe this resolution and
the morphism \(\pi:X\to\mathbb P^1\), whose general fibre is a
\((1,8)\)-polarized abelian surface. Their Proposition 1.3 identifies
the eight singular fibres as elliptic translation scrolls with smooth
elliptic singular loci \(E_1,\ldots,E_8\). A section gives
\[
G=X\setminus(E_1\sqcup\cdots\sqcup E_8)
\]
the structure of a smooth group scheme over \(\mathbb P^1\) with
connected fibres \cite[Lemma 1.2]{GP08}.

The fibrewise translation action of
\(G_8=(\mathbb Z/8\mathbb Z)^2\) on \(X\) is free
\cite[paragraph preceding Theorem 1.4]{GP08}. Schnell identifies the
quotient \(Y=X/G_8\) with the relative dual construction and proves
\(D^b(X)\simeq D^b(Y)\)
\cite[Theorem 4.1 and Lemmas 5.1 and 5.4]{Sch}.

\begin{theorem}\label{thm:gross-popescu}
The Gross--Popescu threefolds \(X\) and \(Y=X/G_8\) are smooth
projective Oka Calabi--Yau threefolds and are holomorphically elliptic.
\end{theorem}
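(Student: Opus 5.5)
For \(X\) itself I would apply Theorem~\ref{thm:abelian-fibration} to \(\pi:X\to\mathbb P^1\). The base \(\mathbb P^1\) is Oka, and \(X\) is a smooth connected projective threefold. By Gross--Pavanelli \cite[Theorem 1.1]{GP08}, the general fibre of \(\pi\) is a \((1,8)\)-polarized abelian surface, which is a connected complex torus of dimension \(r=2\). Take \(B^\circ\) to be the complement of the eight points under the singular fibres; it is the complement of a finite set, and by their Proposition~1.3 \(\pi\) is smooth over it. The group-scheme structure on \(G\) \cite[Lemma 1.2]{GP08} comes with a section \(\sigma\) of \(\pi\), which lands in \(G\subset X\). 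All hypotheses of Theorem~\ref{thm:abelian-fibration} then hold, so \(X\) is Oka, and since it is projective it is holomorphically elliptic. The Calabi--Yau property (smoothness, \(K_X\cong\mathcal O_X\), vanishing \(h^1(\mathcal O_X)\)) I would quote from \cite[§6]{GP01} and \cite{GP08}.

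For \(Y=X/G_8\) I would use descent, not the fibration theorem. The translation action of \(G_8\) is free \cite{GP08}, so \(X\to Y\) is a finite unramified covering of compact complex manifolds. Then \(Y\) is smooth and projective, being a quotient of a projective variety by a finite group. Oka descends along this surjective holomorphic map by \cite[Theorem 7.5]{DW}, or equally by covering permanence \cite[Corollaries 2.51--2.52]{For}. Holomorphic ellipticity of \(Y\) follows again from \cite[Theorem 1.1]{FL2026}.

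The main obstacle is the Calabi--Yau assertion for \(Y\). An étale quotient of a Calabi--Yau manifold need not have trivial canonical bundle, since \(G_8\) could act on \(H^0(K_X)\) by a nontrivial character. Here, though, \(G_8\) acts by fibrewise translations in the group scheme, and translations preserve the relative holomorphic volume form. So I would argue that the generator of \(H^0(K_X)\) is \(G_8\)-invariant and descends to a nowhere vanishing \(3\)-form on \(Y\). Alternatively, I would quote Schnell \cite[Theorem 4.1]{Sch}, where \(Y\) is identified with a relative dual Calabi--Yau fibration. In that picture \(Y\) also has a section and abelian-surface general fibres over \(\mathbb P^1\), so Theorem~\ref{thm:abelian-fibration} gives a second route to its Oka property.
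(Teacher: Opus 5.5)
Your proposal is correct. For \(Y\) it follows the paper's argument: free fibrewise translations, covering descent, invariance of the relative volume form, and \cite[Theorem 1.1]{FL2026}. For \(X\), however, you take a different route.

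The paper does not apply Theorem~\ref{thm:abelian-fibration} here. It uses the finer input \cite[Lemma 1.2]{GP08}: \(G=X\setminus(E_1\sqcup\cdots\sqcup E_8)\) is a smooth group scheme with connected fibres over all of \(\mathbb P^1\), including the eight singular values. Theorem~\ref{prop:connected-group-family} then makes \(G\) strongly dominable at every point. Analytic extension \cite[Theorem 1.9]{DW} only has to cross the eight elliptic curves \(E_i\), a set of codimension two.

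Your route asks less of Gross--Pavanelli: only a section and torus fibres away from finitely many points. In exchange it runs through the direct image of vertical vector fields, the Oka--Grauert trivialization over \(\mathbb C\), and completeness of the flows from properness. It produces the explicit entire map \(F:\mathbb C\times\mathbb C^2\to X\). But it dominates directly only off a union of finitely many fibres, so analytic extension must cross a divisor.

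In effect you are using the general criterion the paper states at the start of its Calabi--Yau subsection. Your argument shows that the group-scheme lemma is not needed for the Oka conclusion. The paper's proof, by contrast, avoids the direct-image machinery and gives dominating maps at every point off \(E\).

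One detail you should state explicitly. Theorem~\ref{thm:abelian-fibration} requires every fibre over \(B^\circ\), not just the general one, to be a connected complex torus. This follows from the Gross--Pavanelli description, in which the only singular fibres are the eight translation scrolls. It can also be obtained from deformation stability of complex tori in the smooth projective family over the connected set \(B^\circ\).
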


\begin{proof}
The group family \(G\) is Oka by
Theorem~\ref{prop:connected-group-family}. Its complement
\[
E=E_1\sqcup\cdots\sqcup E_8
\]
is a proper closed analytic subset of \(X\), so the analytic extension
theorem \cite[Theorem 1.9]{DW} makes \(X\) Oka.

The free action makes \(X\to Y\) a finite unramified covering, and the
Oka property descends to \(Y\). Fibrewise translations preserve the
relative volume form and fix the base. Thus the holomorphic three-form
on \(X\) descends to a nowhere-vanishing holomorphic three-form on
\(Y\). Projectivity and the remaining Calabi--Yau properties follow
from the Gross--Popescu construction cited above. Both \(X\) and
\(Y\) are holomorphically elliptic by \cite[Theorem 1.1]{FL2026}.
\end{proof}

\subsection{Compactified Jacobians and relative Hilbert schemes}\label{sec:compactified-jacobians}

The nodal smoothing \(uv=t\) has a singular central fibre and a smooth
total space. In the relative compactified Jacobian, this is the
transverse model at a non-invertible sheaf, with non-invertible locus
\(u=v=0\) of codimension two. Analytic extension carries the Oka
property from the group family of degree-zero invertible sheaves to the
compactification. The relative Abel map then realizes the relative
Hilbert schemes for \(n\ge2g-1\) as projective bundles over the
compactified Jacobians.

\begin{proof}[Proof of Theorem~\ref{thm:J1}]
Write \(\sigma:B\to S\) for the section. The identity
\(df\circ d\sigma=\mathrm{id}\) places it in the relative smooth
locus. At the node of a singular fibre, smoothness of \(S\) and the
holomorphic Morse lemma give coordinates in which \(f\) has equation
\(xy=t\).

For every \(n\ge0\), the relative Hilbert scheme is projective over
\(B\), and hence projective \cite{Gro61}. It is smooth near subschemes
supported in the relative smooth locus. Near a subscheme meeting a node,
disjoint support neighbourhoods express the Hilbert problem as the
fibre product over \(B\) of a relatively smooth factor and the relative
Hilbert scheme of \(xy=t\). The latter has
smooth total space for every length by the local calculation in
\cite[Theorem 3.4]{Smith}. This proves smoothness for all \(n\); for
\(n=0\) the relative Hilbert scheme is \(B\).

Altman--Kleiman's representability and projectivity theorems
\cite[Theorems 8.1 and 8.5]{AK80} give the relative compactified Picard
space. Its components are indexed by Euler characteristic; the
degree-\(d\) component therefore has index \(d+1-g\).

Consider a one-node fibre \(C\), with normalization
\(\nu:\widetilde C\to C\) and preimages \(p,q\) of the node. We use
the local description in Smith
\cite[§3.2, Theorem 3.4 and Proposition 3.8]{Smith}. The invertible
degree-\(d\) sheaves form a \(\mathbb C^*\)-bundle over
\(\operatorname{Pic}^d(\widetilde C)\). The normalization of the
compactified Jacobian is the corresponding \(\mathbb P^1\)-bundle;
the compactified Jacobian identifies its zero and infinity sections
by translation by \(\mathcal O(p-q)\). Every
non-invertible rank-one torsion-free sheaf is uniquely of the form
\(\nu_*L\), with \(L\in\operatorname{Pic}^{d-1}(\widetilde C)\).
The degree shift follows from
\[
\chi(\nu_*L)=\chi(L),\qquad
\chi(\mathcal O_C)=\chi(\mathcal O_{\widetilde C})-1.
\]
At such a sheaf, the relative deformation has analytic model
\[
\{uv=t\}\times\mathbb C^{g-1}.
\]
Eliminating \(t\) shows that the total space is smooth. The
non-invertible locus is \(u=v=0\) and has codimension two in the total
space.

The invertible-sheaf locus in degree zero is the group family
\[
G=\operatorname{Pic}^0(S/B).
\]
Over a singular value of \(f\), the fibre of \(G\) is the generalized
Jacobian of the corresponding curve. The group family is smooth,
connected, and commutative over the entire base, with
\[
\operatorname{Lie}(G)=R^1f_*\mathcal O_S.
\]
The complement \(\overline J^0\setminus G\) is a proper closed
analytic subset of the smooth total space. If \(B\) is Oka,
Theorem~\ref{prop:connected-group-family} makes \(G\) Oka, and
analytic extension \cite[Theorem 1.9]{DW} makes \(\overline J^0\) Oka.
The section of \(f\) is a Cartier divisor on \(S\) of degree one on
every fibre. Tensoring by its \(d\)-th power identifies
\(\overline J^0\) with \(\overline J^d\). Thus
\(\overline J^d\) is Oka in all degrees when \(B\) is Oka.

For the Hilbert scheme, let \(C\) be any fibre and \(F\) a rank-one
torsion-free sheaf of degree \(n\ge2g-1\). The curve \(C\) is
integral and Gorenstein, so Serre duality gives
\[
H^1(C,F)^*=\operatorname{Hom}(F,\omega_C).
\]
A nonzero homomorphism on the right is generically injective. Its
kernel is torsion, hence zero, and its cokernel has finite length. It
would follow that \(\deg F\le\deg\omega_C=2g-2\), contrary to the
choice of \(n\). Thus
\[
H^1(C,F)=0,\qquad h^0(C,F)=n-g+1.
\]

A nonzero section \(s:\mathcal O_C\to F\) is injective by
integrality and torsion-freeness, and its cokernel \(T\) has length
\(n\). On a Gorenstein curve, torsion-free sheaves are maximal
Cohen--Macaulay, equal to their double duals, and have vanishing local
\(\operatorname{Ext}^1(-,\mathcal O_C)\). Dualizing gives
\[
0\to F^\vee\to\mathcal O_C\to
\mathcal Ext^1(T,\mathcal O_C)\to0.
\]
Local Gorenstein duality makes the last sheaf length \(n\). Hence
\(F^\vee\) is the ideal of a unique length-\(n\) subscheme.
Conversely, dualizing the ideal of a length-\(n\) subscheme gives a
rank-one torsion-free degree-\(n\) sheaf with a nonzero section. These
operations are inverse, with sections taken modulo nonzero scalars.

To obtain the relative Abel map, rigidify along the section and choose a
universal degree-\(n\) rank-one torsion-free sheaf \(\mathcal U\) on
\(S\times_B\overline J^{n}\). Write
\(q:S\times_B\overline J^{n}\to\overline J^{n}\) for the
projection. The vanishing above and cohomology and base change
\cite[Tag 07VJ]{A2026} make \(q_*\mathcal U\) a vector bundle of rank
\(n-g+1\). Dualization of these families is flat and commutes with
base change by \cite[Theorem 1.10(ii)]{AK80}, since the fibres are
Gorenstein and the sheaves are torsion-free. The relative linear-system
description of the Abel map \cite[Lemma 5.17(ii)]{AK80} identifies
\[
\operatorname{Hilb}^n(S/B)
\simeq\mathbb P_{\overline J^n(S/B)}(q_*\mathcal U),
\qquad\operatorname{rk}(q_*\mathcal U)=n-g+1,
\]
where the projective bundle parametrizes nonzero sections modulo scalars. The degree conventions are related by
\[
Z\longmapsto I_Z\longmapsto I_Z^\vee,
\qquad \deg I_Z=-n,\quad \deg I_Z^\vee=n.
\]
Tensoring the ideal by the relative dualizing line bundle relates this
map to the dualizing-sheaf Abel map \cite[equation (8.2.3)]{AK80}.
Its smoothness for \(n\ge2g-1\), including equality, follows from
\cite[Theorem 8.4(v)]{AK80}.
The relative Hilbert scheme is therefore a holomorphically locally
trivial \(\mathbb P^{n-g}\)-bundle over \(\overline J^{n}\). When
\(B\) is Oka, it is Oka by the bundle theorem \cite[Corollaries 2.51--2.52]{For}.

Conversely, the family
\(\mathcal O_S(d\sigma(B))|_{S_b}\) gives a holomorphic section of
\(\overline J^d\to B\). The Cartier thickening \(n\sigma(B)\)
is finite flat of degree \(n\) over \(B\), locally with algebra
\(\mathcal O_B[u]/(u^n)\); it gives a section of
\(\operatorname{Hilb}^n(S/B)\to B\).
Thus \(B\) is a holomorphic retract of each total space, and either
total space being Oka implies that \(B\) is Oka. For a projective
curve this is equivalent to \(g(B)\le1\)
\cite[Corollary 2.10]{For}. In this range the smooth projective total
spaces are holomorphically elliptic by \cite[Theorem 1.1]{FL2026}.
\end{proof}

\section{Deformation spaces and complements}\label{sec:geometric-constructions}

We conclude with applications to deformation spaces and analytic
complements. The first constructions exhibit an Oka open subset or
a dominating entire map explicitly. For logarithmic complements,
the required curves come from the geometry of the boundary.

\subsection{Deformation to the normal cone}
Let \(B\) be a complex manifold and \(J\subset B\) a smooth closed
submanifold. The deformation to the normal cone
\cite[Section 5.1]{Fulton98} is the space
\[
\mathcal D_J(B)=\operatorname{Bl}_{J\times\{0\}}(B\times\mathbb C)
\setminus\widetilde{B\times\{0\}}.
\]
Here the tilde denotes the strict transform.

\begin{theorem}\label{thm:N6}
Let \(B\) be a connected complex manifold and \(J\subset B\) a smooth
closed complex submanifold. Then \(\mathcal D_J(B)\) is Oka if and only
if \(B\) is Oka.
\end{theorem}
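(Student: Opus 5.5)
Both directions will use the projection \(\pi:\mathcal D=\mathcal D_J(B)\to\mathbb C\) induced by the blowup of \(B\times\mathbb C\). Over \(\mathbb C^*\) this map is trivial: \(\pi^{-1}(\mathbb C^*)\cong B\times\mathbb C^*\). The central fibre \(\pi^{-1}(0)\) is the normal bundle \(N_{J/B}\), since the strict transform of \(B\times\{0\}\) has been removed. The proof is driven by the \(\mathbb C^*\)-action \(\lambda\cdot(b,t)=(b,\lambda t)\) and the scaling structure near \(J\).

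\emph{Necessity.} I will show that \(B\) is a holomorphic retract of \(\mathcal D\), so the Oka property passes from \(\mathcal D\) to \(B\). Consider the composite
\[
\mathcal D\longrightarrow B\times\mathbb C\longrightarrow B
\]
of the blowdown and the projection. The section \(b\mapsto(b,1)\) lands in \(\pi^{-1}(1)\cong B\) and avoids the exceptional locus. Its composite with the map above is the identity, so \(B\) is a retract of \(\mathcal D\) and retracts of Oka manifolds are Oka.

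\emph{Sufficiency.} Assume \(B\) is Oka. The open subset \(\pi^{-1}(\mathbb C^*)\cong B\times\mathbb C^*\) is Oka as a product of Oka manifolds. Its complement \(\pi^{-1}(0)\) is a smooth hypersurface, hence a proper closed analytic subset. The space \(\mathcal D\) is connected: it is the blowup of a connected manifold with a hypersurface removed. Analytic extension \cite[Theorem 1.9]{DW} then makes \(\mathcal D\) Oka. This step needs only that \(\mathcal D\) is a connected complex manifold containing \(B\times\mathbb C^*\) as a dense open subset whose complement is analytic, which is immediate from the blowup description.

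\emph{Main obstacle.} The subtle point is confirming that \cite[Theorem 1.9]{DW} applies to a Zariski-open set whose complement is merely a closed analytic hypersurface, with no algebraicity or codimension-two assumption. The earlier applications in this paper (Lemma~\ref{lem:M11-exceptional}, Theorem~\ref{thm:J1}, Proposition~\ref{thm:surface-hilbert}) use it in exactly this form, with hypersurface complements allowed. If a stronger hypothesis turned out to be needed, my fallback would be direct strong dominability at points of the central fibre. I would use the local model \(J\times\mathbb C^c\times\mathbb C\) with coordinates \((y,w,t)\mapsto(y,tw,t)\), compose a dominating map of \(B\) with this chart, and rescale so that the result is defined for every \(t\).
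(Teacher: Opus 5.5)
Your proposal is correct and matches the paper's proof: for necessity, the blowdown followed by projection to \(B\) retracts \(\mathcal D_J(B)\) onto the fibre at \(t=1\); for sufficiency, the Oka open set \(B\times\mathbb C^*\) is extended across the central fibre by \cite[Theorem 1.9]{DW}. The fallback you outline is not needed, since the paper applies that theorem to exactly this hypersurface complement (its only extra detail is a local chart check that the total space is smooth).
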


\begin{proof}
Suppose \(B\) is Oka. The complement of the central fibre is
\(B\times\mathbb C^*\), hence is
Oka.  The total space is smooth: in coordinates with
\(J=\{x_1=\cdots=x_c=0\}\), the blowup charts meeting the central fibre
have coordinates
\[
(u_1,\ldots,u_c,x_{c+1},\ldots,x_N,t),
\qquad x_i=tu_i.
\]
The central fibre is a proper closed analytic subset, so the analytic
extension theorem \cite[Theorem 1.9]{DW} applies.

Conversely, the blowdown followed by projection gives a holomorphic
map \(r:\mathcal D_J(B)\to B\), and the inclusion \(i\) of the
fibre at \(t=1\) satisfies \(r\circ i=\mathrm{id}_B\).
Composing convex approximation maps with \(i\) and then \(r\)
shows that the Oka property of \(\mathcal D_J(B)\) implies that of \(B\).
\end{proof}

For a smooth affine variety \(B=\operatorname{Spec}A\) and a smooth
closed algebraic subvariety \(J\) with ideal \(I\subset A\), the
deformation space is
\[
\mathcal D_J(B)=\operatorname{Spec}A[t,I/t],
\qquad
A[t,I/t]\subset A[t,t^{-1}].
\]
The ideal \(I\) is finitely generated, so this space is affine; it is
smooth by the local model above. If \(B\) is Oka,
Theorem~\ref{thm:N6} makes \(\mathcal D_J(B)\) Oka and hence
holomorphically elliptic \cite[Proposition 5.6.15]{F17}.
For example, the smooth affine curve
\(J=\{x^3+y^3=1\}\subset\mathbb C^2\) gives the smooth affine Oka
threefold
\[
\{(x,y,t,u)\in\mathbb C^4:tu=x^3+y^3-1\}.
\]
On \(t\ne0\), solving for \(u\) identifies this open subset with
\(\mathbb C^2\times\mathbb C^*\); the central fibre is
\(J\times\mathbb C\). Thus the equation displays the Oka open part and
the central fibre of the deformation. Being Stein and Oka, this
threefold is holomorphically elliptic \cite[Proposition 5.6.15]{F17}.

\subsection{Coordinate complements}
In a product of Oka manifolds, coordinate conditions can be avoided by
successive entire substitutions. Each substitution depends only on
the preceding coordinates, which keeps the differential triangular.

\begin{theorem}\label{thm:coordinate-arrangements}
Let \(X_1,\ldots,X_s\) be positive-dimensional connected Oka manifolds
and \(A_i\subsetneq X_i\) closed complex analytic subsets. Let
\(\mathcal H\) be a finite collection of subsets of
\(\{1,\ldots,s\}\), each containing at least two elements, and put
\[
M=\prod_{i=1}^sX_i,\qquad
W=\bigcup_{\tau\in\mathcal H}
\{x:x_i\in A_i\text{ for every }i\in\tau\}.
\]
For every closed complex analytic subset \(Z\subset M\) contained in
\(W\), the complement \(M\setminus Z\) is Oka. If \(Z\) is smooth,
the blowup \(\operatorname{Bl}_Z M\) is Oka as well.
\end{theorem}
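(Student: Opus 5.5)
To prove Theorem~\ref{thm:coordinate-arrangements}, I would use the strong dominability criterion \cite[Theorem 1.4]{DW}. At each point \(x^0\in M\setminus Z\) I will build an entire map \(F:\mathbb C^{N}\to M\setminus W\subset M\setminus Z\), with \(N=\dim M\), \(F(0)=x^0\), and \(dF_0\) invertible. The obvious difficulty is that \(x^0\) need not lie outside \(W\). Only \(x^0\notin Z\) is given, and the image must still avoid \(Z\).

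My plan is to avoid \(Z\) by avoiding the larger set \(W\) away from the base point. Since \(F(0)=x^0\notin Z\) and \(Z\) is closed, it suffices to have \(F(\mathbb C^N\setminus\{0\})\cap W=\varnothing\) together with \(F(0)=x^0\). Even that seems too strong. A cleaner target is \(F(w)\notin W\) for all \(w\) outside a proper analytic subset \(\Sigma\ni 0\) of \(\mathbb C^N\), with \(F(\Sigma)\) controlled. My fallback is analytic extension \cite[Theorem 1.9]{DW}, which only requires an Oka dense Zariski-open subset. The set \(M\setminus W\) is Zariski-open in \(M\setminus Z\). It is dense because \(W\) is a proper analytic subset, as every \(\tau\) has at least two elements and each \(A_i\) is proper. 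So it is enough to show that \(M\setminus W\) is Oka, and I will build dominating maps only at points \(x^0\in M\setminus W\).

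For \(x^0\notin W\), let \(I=\{i:x^0_i\in A_i\}\). Then no \(\tau\in\mathcal H\) is contained in \(I\). I take dominating maps \(f_i:\mathbb C^{n_i}\to X_i\) at \(x^0_i\). For \(i\notin I\), the set \(f_i^{-1}(A_i)\) is a proper analytic subset not containing \(0\). The triangular idea is to order the coordinates and substitute so that for each \(\tau\) some index \(i\in\tau\setminus I\) has its coordinate avoiding \(A_i\) on all of the source. A single factor cannot avoid \(A_i\) everywhere, since \(f_i^{-1}(A_i)\) is usually nonempty. So instead I would make the bad sets in different factors disjoint. I replace the \(j\)-th factor map by \(u\mapsto f_j(u_j+h_j(u_1,\dots,u_{j-1}))\), where \(h_j\) is entire, vanishes to second order at \(0\), and pushes \(u_j\) out of \(f_j^{-1}(A_j)\) whenever earlier coordinates lie in their bad sets. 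This keeps the differential block triangular with invertible diagonal blocks.

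The main obstacle is making this avoidance exact for all points simultaneously when two bad sets must not meet. I would instead reduce to the product-complement theorem \cite[Theorem 7.8]{DW} applied inductively. For \(\mathcal H=\{\{1,2\}\}\), \(M\setminus W=(X_1\times X_2\setminus A_1\times A_2)\times\prod_{i>2}X_i\) is Oka by that theorem. In general I would attempt induction on \(|\mathcal H|\), grouping factors and using that \(W\) is a finite union of such product sets, with \cite[Corollary 7.9]{DW} handling unions. I expect this inductive union to be the delicate step.

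For the blowup, \(\operatorname{Bl}_ZM\) contains \(M\setminus Z\) as the complement of the exceptional divisor, a proper closed analytic subset of the smooth connected blowup. Analytic extension then gives the Oka property.
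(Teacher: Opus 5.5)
Your reduction is correct and matches the paper. \(W\) is closed analytic of codimension at least two, so \(M\setminus W\) is a dense Zariski-open subset of the connected manifolds \(M\setminus Z\) and \(\operatorname{Bl}_ZM\), and \cite[Theorem 1.9]{DW} reduces everything to the Oka property of \(M\setminus W\).

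That core step, however, is only done for \(\mathcal H=\{\{1,2\}\}\). For general \(\mathcal H\) you leave it as the expected ``delicate step'', and neither route you sketch closes it.
\begin{itemize}
\item The additive substitution \(u_j\mapsto u_j+h_j(u_1,\ldots,u_{j-1})\) cannot work. For fixed earlier coordinates it is a translation of \(\mathbb C^{n_j}\), so its values meet \(f_j^{-1}(A_j)\) whenever that set is nonempty. This happens in particular when the earlier coordinates sit in their bad sets.
\item Induction on \(|\mathcal H|\) does not match \cite[Theorem 7.8]{DW}, which removes a single set \(A\times B\) from a product \(X\times Y\) of Oka manifolds. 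Once some of the sets \(\{x:x_i\in A_i,\ i\in\tau\}\) have been removed, the ambient space is no longer such a product. For \(\mathcal H=\{\{1,2\},\{1,3\},\{2,3\}\}\), the set \(M\setminus W\) of points with at most one coordinate in its \(A_i\) is not of the form \((X\times Y)\setminus(A\times B)\) for any grouping of the factors. Nothing in the proposal shows that Corollary 7.9 handles such unions.
\item Covering by pieces such as \(X_1\times(X_2\setminus A_2)\times(X_3\setminus A_3)\) also fails, since \(X_i\setminus A_i\) need not be Oka.
\end{itemize}

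The missing idea in the paper is a substitution that \emph{freezes} a coordinate instead of shifting it. At \(p\in M\setminus W\), call \(j\) good if \(p_j\notin A_j\), and order the good indices first. For each \(\tau\), choose a good \(j_\tau\in\tau\) with \(j_\tau<\max\tau\); the ordering and \(|\tau|\ge2\) allow this. Take dominating maps \(f_i\) at \(p_i\), points \(c_i\) with \(f_i(c_i)\notin A_i\), and entire \(h_j\) vanishing on \(f_j^{-1}(A_j)\) with \(h_j(0)=1\). Set
\[
u_i=c_i+(z_i-c_i)\prod_{\max\tau=i}h_{j_\tau}(u_{j_\tau}),\qquad
F(z)=\bigl(f_1(u_1),\ldots,f_s(u_s)\bigr).
\]
If \(f_{j_\tau}(u_{j_\tau})\in A_{j_\tau}\), then \(u_{\max\tau}=c_{\max\tau}\), so \(f_{\max\tau}(u_{\max\tau})\notin A_{\max\tau}\). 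Thus \(F\) avoids \(W\), while \(u(0)=0\) and \(dF_0\) is block triangular with invertible diagonal blocks.

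Alternatively, your product-complement idea does go through if you induct on the number of factors instead of on \(|\mathcal H|\). Let \(\mathcal H_0\) consist of the sets not containing \(1\), and let \(Y'=(X_2\times\cdots\times X_s)\setminus W_{\mathcal H_0}\). This is Oka by induction and connected by the codimension bound. Then
\[
M\setminus W=(X_1\times Y')\setminus(A_1\times B'),\qquad
B'=Y'\cap\bigcup_{\tau\ni1}\{y:y_i\in A_i\text{ for }i\in\tau\setminus\{1\}\}.
\]
Since \(|\tau|\ge2\), \(B'\) is a proper closed analytic subset of \(Y'\). So \cite[Theorem 7.8]{DW}, in the generality the paper uses for configuration spaces, finishes the step.

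The paper's construction is self-contained and yields strong dominability directly; it recovers \((X_1\times X_2)\setminus(A_1\times A_2)\) as the case \(\mathcal H=\{\{1,2\}\}\). The inductive route is shorter but uses the product-complement theorem as a black box.
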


\begin{proof}
We first prove the assertion for \(Z=W\).
Fix a point \(p=(p_i)\) of \(M\setminus W\).  Call \(j\) good when
\(p_j\notin A_j\), and reorder the factors so that the good indices precede
the others.  Every \(\tau\in\mathcal H\) contains a good index.  If
\(i=\max\tau\), choose a good
\(j_\tau\in\tau\setminus\{i\}\).  This is possible because either \(i\)
is not good or all indices in \(\tau\) are good and \(|\tau|\ge2\).

Strong dominability supplies entire maps
\(f_i:\mathbb C^{n_i}\to X_i\), where \(n_i=\dim X_i\), with
\(f_i(0)=p_i\) and \(df_i(0)\) invertible. The inverse function theorem
makes the image of \(f_i\) contain a neighbourhood of \(p_i\). Since
the proper analytic subset \(A_i\) has empty interior, choose
\(c_i\) with \(f_i(c_i)\notin A_i\). For every good \(j\),
Cartan's theorem A \cite[Section 2.6]{F17} gives an entire function
\(h_j\) that vanishes on \(f_j^{-1}(A_j)\) and satisfies \(h_j(0)=1\).
Writing \(z=(z_1,\ldots,z_s)\), form successively
\[
u_i=c_i+(z_i-c_i)
\prod_{\substack{\tau\in\mathcal H\\\max\tau=i}}
h_{j_\tau}(u_{j_\tau}),
\qquad
F(z)=\bigl(f_1(u_1),\ldots,f_s(u_s)\bigr),
\]
where an empty product is one.  Every \(u_i\) depends only on \(z_i\) and
the preceding \(u_j\), so the construction is entire.  Induction gives
\(u_i(0)=0\), and the differential of \(z\mapsto(u_i)\) at zero is block
triangular with identity diagonal blocks.  Thus \(F(0)=p\) and \(dF_0\)
is invertible.

If \(F(z)\) belonged to the forbidden locus indexed by \(\tau\), then
\(h_{j_\tau}(u_{j_\tau})=0\).  For \(i=\max\tau\), this would give
\(u_i=c_i\), contradicting \(f_i(u_i)\in A_i\).  Hence \(F\) takes
values in the complement, which is strongly dominable and therefore Oka
by \cite[Theorem 1.4]{DW}.

For \(Z\subset W\), the Oka manifold \(M\setminus W\) is a dense
Zariski-open subset of the connected manifold \(M\setminus Z\);
the arrangement \(W\) has codimension at least two.
Theorem~1.9 of~\cite{DW} makes \(M\setminus Z\) Oka.
When \(Z\) is smooth, the blowup is smooth and contains
\(M\setminus Z\) as a dense Zariski-open subset, so the same theorem
gives its Oka property.
\end{proof}

\subsection{Logarithmic complements}\label{sec:complements}
For a simple normal crossings boundary with ample components,
Chen--Zhu's theorem produces \(\mathbb A^1\)-curves from a rationally
connected deepest stratum.

\begin{theorem}\label{thm:multibranch-log-fano}
Let \(D=D_1+\cdots+D_r\), with \(r\ge1\), be a reduced simple normal
crossings divisor on a smooth connected projective manifold \(X\).
Suppose that
\begin{enumerate}[label=\textup{(\roman*)}]
\item every \(D_i\) is ample;
\item the classes \([D_1],\ldots,[D_r]\) are linearly independent in
\(N^1(X)_{\mathbb Q}\);
\item the intersection \(Y=D_1\cap\cdots\cap D_r\) is rationally
connected and has dimension at least two.
\end{enumerate}
Then \(X\setminus D\) is Oka and holomorphically elliptic.
\end{theorem}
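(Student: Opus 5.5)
The plan is to show that \(X\setminus D\) is strongly dominable at the points of a dense Zariski-open subset, and then to apply \cite[Theorem 1.9]{DW} (equivalently, Proposition~2.2 together with Theorem~1.9 there). Holomorphic ellipticity then follows because \(X\setminus D\) is affine. Indeed, the divisor \(D\) is ample, being a sum of ample divisors, so its complement is an affine Stein Oka manifold, and \cite[Proposition 5.6.15]{F17} applies. The entire maps will be built from \(\mathbb A^1\)-curves, that is, rational curves \(\mathbb P^1\to X\) meeting \(D\) only at \(\infty\). Such curves restrict to maps \(\mathbb C\to X\setminus D\), and a family of them produces maps \(\mathbb C^{\dim X}\to X\setminus D\) once the family is dominating and its evaluation map is submersive.

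\emph{Step 1: very free \(\mathbb A^1\)-curves.} Hypothesis (iii) says that \(Y\) is rationally connected of dimension at least two, so \(Y\) carries very free rational curves. Following Chen--Zhu, I take a very free curve \(C_0\subset Y\) and regard it as lying in the deepest stratum of the boundary. Ampleness of each \(D_i\), hypothesis (i), makes \(C_0\cdot D_i>0\). I then deform a comb built on \(C_0\) out of \(D\) so that the resulting rational curve meets \(D\) set-theoretically in a single point of \(Y\), with prescribed contact orders \(m_i=C\cdot D_i\) with each branch. Hypothesis (ii) enters here: linear independence of the classes \([D_i]\) lets me choose a curve class whose intersection numbers with the \(D_i\) are simultaneously positive and realizable by such a tangency profile. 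The log normal bundle \(N_C(-\log D)\) must then be ample, which is exactly Chen--Zhu's notion of a log very free \(\mathbb A^1\)-curve. The hypothesis \(\dim Y\ge2\) is what makes the smoothing of the comb inside \(Y\) available.

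\emph{Step 2: from log very free curves to dominating entire maps.} Ampleness of \(f^*T_X(-\log D)\) gives \(H^1\bigl(\mathbb P^1,f^*T_X(-\log D)(-\infty-0)\bigr)=0\). Hence the space of \(\mathbb A^1\)-curves with fixed contact profile at \(\infty\) is smooth at \(f\), and evaluation at \(0\) is a submersion there. I choose a smooth \(n\)-dimensional slice of parameters \(s\in\mathbb C^n\), with \(n=\dim X\), through \(f\) on which \((s,z)\mapsto f_s(z)\) has invertible differential at \((0,0)\) in the \(s\)-directions. A slice is only defined on a small ball, however, and the statement requires an entire source. To fix this, I use the fact that the relevant parameter space is itself rational; I plan to take linear families of sections in a trivialized chart of the deformation space, in the spirit of the generating-section construction of Theorem~\ref{thm:generating-sections}. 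Alternatively, I can use Gromov's composed-spray trick: sprays \(\mathbb C\times\mathbb C^k\to X\setminus D\) of the form \(t\mapsto f_{s(t)}(\lambda t)\) give a family of entire lines whose derivatives span \(T_x\) at every point of a Zariski-open set. This yields a finite dominating family of entire maps, which is enough for \cite[Theorem 1.9]{DW}.

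\emph{Main obstacle.} The hardest part is Step 1: producing a single-point-contact \(\mathbb A^1\)-curve with ample log normal bundle for several boundary components at once. Chen--Zhu treat this setting, and I will try to invoke their multi-component theorem directly, checking that (i)–(iii) are precisely its hypotheses. The secondary difficulty is making the families in Step 2 globally entire rather than local. Here I plan to reduce to lines \(\mathbb C\to X\setminus D\) through a general point, with spanning derivatives, and then to use Kusakabe-type or Gromov-type composition of complete families. In this way the only global input needed is the existence of enough entire curves.
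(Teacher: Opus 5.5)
\textbf{The gap is in Step~2.} Your outer frame agrees with the paper. Step~1 should simply be \cite[Theorem 1.5]{ChenZhu}, whose hypotheses are exactly (i)--(iii); your comb sketch of its proof is speculative but not needed. Holomorphic ellipticity follows, as you say, from affineness of \(X\setminus D\) and \cite[Proposition 5.6.15]{F17}.

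Step~2 has to pass from log very free \(\mathbb A^1\)-curves to entire maps \(\mathbb C^n\to X\setminus D\) with invertible differential, and your construction does not do this. Your slice \((s,z)\mapsto f_s(z)\) is defined only for \(s\) in a small ball, and none of your remedies extends it.
\begin{itemize}
\item The space of \(\mathbb A^1\)-curves with fixed contact profile is a quasi-projective variety, not a vector bundle. The linear families of Theorem~\ref{thm:generating-sections} therefore have no analogue there. Producing entire \((n-1)\)-parameter families through \(f\) with prescribed differential would itself be an Oka-type statement about that space.
\item The formula \(t\mapsto f_{s(t)}(\lambda t)\) needs an entire map \(t\mapsto s(t)\) into a bounded ball, which is constant by Liouville.
\item Composing sprays (Gromov) or complete flows requires each family to be defined at every point of the manifold. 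Your families of \(\mathbb A^1\)-curves are defined only for initial points in a small open set. Following one curve for large time leaves the domain of the next family.
\end{itemize}
So entire lines through a point with spanning velocities do not yield a map \(\mathbb C^n\to X\setminus D\) with invertible differential. Nor do they meet the hypotheses of \cite[Proposition 2.2 and Theorem 1.9]{DW} that you invoke. As the paper uses them, these need entire maps from \(\mathbb C^n\) with invertible differential, or a dense Zariski-open \emph{Oka} subset.

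\textbf{What is missing} is a theorem converting \(\mathbb A^1\)-connectedness into the Oka property. The paper never builds entire maps here: it applies \cite[Corollary 9.4]{DW} to Chen--Zhu's \(\mathbb A^1\)-connectedness and is done.

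If you want to keep a hands-on route, a log very free curve legitimately gives a local \emph{centered} family. Fix the source \((\mathbb P^1,0,\infty)\). The vanishing \(H^1(\mathbb P^1,f^*T_X(-\log D)(-2[0]))=0\) then yields holomorphic families \(\Phi(y,w,\cdot):\mathbb C\to X\setminus D\), for \(y\) near a general point and \(w\) in a small parameter set. These satisfy \(\Phi(y,w,0)=y\), and the velocities \(\partial_t\Phi(y,w,0)\) span \(T_yX\). You would still need a criterion turning such local centered entire families into the Oka property. One example is the result from \cite{DW} that the paper uses for elliptic functions (Lemma~2.1 together with Theorem~1.9 there). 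Without such a result, Step~2 is not a proof.
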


\begin{proof}
These are the hypotheses of Chen--Zhu's theorem
\cite[Theorem 1.5]{ChenZhu}, which makes \(X\setminus D\)
\(\mathbb A^1\)-connected. It is therefore Oka by
\cite[Corollary 9.4]{DW}. The divisor \(D\) is nonzero and ample, so
its complement is affine and hence Stein. A Stein Oka manifold is
holomorphically elliptic \cite[Proposition 5.6.15]{F17}.
\end{proof}

The log Fano case follows by adjunction. Under the first two
hypotheses, suppose that \(-(K_X+D)\) is ample and
\(\dim X-r\ge2\). The simple normal crossings condition and successive
connectedness for ample divisors make \(Y\) smooth and connected.
Adjunction gives
\[
-K_Y=-(K_X+D)|_Y.
\]
Thus \(Y\) is Fano and rationally connected \cite{Campana,KMM}, so
Theorem~\ref{thm:multibranch-log-fano} applies.

For example, take \(X=\mathbb P^3\times\mathbb P^3\), a general
\(D_1\in|\mathcal O_X(1,1)|\), and a general
\(D_2\in|\mathcal O_X(1,2)|\). Bertini's theorem gives smooth
transverse divisors; their classes are independent, and
\[
-(K_X+D_1+D_2)=\mathcal O_X(2,1).
\]
Thus \(X\setminus(D_1\cup D_2)\) is Oka and holomorphically elliptic.

\begin{example}\label{ex:bidegree-complement}
Let \(d\ge1\) and let \(D\) be a general divisor of bidegree
\((1,d)\) on \(X=\mathbb P^1\times\mathbb P^2\). Then \(X\setminus D\)
is Oka and holomorphically elliptic.
\end{example}

\begin{proof}
Write
\[
D=\{([s:t],x):sF_0(x)+tF_1(x)=0\},
\]
where \(F_0,F_1\) are general homogeneous polynomials of degree \(d\).
Their zero curves meet transversely in a set \(B\) of \(d^2\) points.
The incidence equation realizes \(D\) as
\(\operatorname{Bl}_B\mathbb P^2\): away from \(B\) it is the graph
of \(x\mapsto[F_1(x):-F_0(x)]\), and near a base point its equation
is \(su+tv=0\) in local coordinates \(u=F_0,v=F_1\).
Thus \(D\) is a smooth rational surface. Its line bundle
\(\mathcal O_X(1,d)\) is ample, so
Theorem~\ref{thm:multibranch-log-fano} applies. Here
\[
-(K_X+D)=\mathcal O_X(1,3-d),
\]
which is not ample for \(d\ge3\).

Projection onto \(\mathbb P^2\) also describes the complement as an
affine-line torsor over \(\mathbb P^2\setminus B\). On
\(\{F_0\ne0\}\), put \(h=F_1/F_0\) and
\(z=t/(s+ht)\); on \(\{F_1\ne0\}\), put
\(w=s/(h^{-1}s+t)\). The transition is
\[
w=h-h^2z.
\]
These two trivializations give a holomorphic fibre bundle with fibre
\(\mathbb C\). The base \(\mathbb P^2\setminus B\) is Oka: each
standard affine chart deletes only finitely many points, so the
codimension-two complement theorem \cite[Section 6.4]{F17} and
Zariski localization \cite[Theorem 1.4]{Kus21} apply. The Oka
fibre-bundle theorem \cite[Corollaries 2.51--2.52]{For} gives a second
proof of the Oka conclusion.
\end{proof}

For surfaces, Maeda's classification reduces the complement to a
homogeneous space or the total space of a line bundle on
\(\mathbb P^1\), giving holomorphic ellipticity.

\begin{theorem}\label{thm:log-fano-surfaces}
Let \(S\) be a smooth projective complex surface and \(D\ne0\) a
reduced simple normal crossings divisor on \(S\). If \(-(K_S+D)\) is ample, then
\(S\setminus D\) is holomorphically elliptic and Oka.
\end{theorem}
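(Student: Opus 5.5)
The plan is to follow the route announced before the statement: use Maeda's classification of log del Pezzo pairs with reduced simple normal crossings boundary to reduce \(S\setminus D\) to a short list of explicit open surfaces, and then exhibit a dominating spray on each one. Since \(-(K_S+D)\) is ample and \(D\ne0\), the pair \((S,D)\) is log del Pezzo. Maeda's list, which I will quote rather than rederive, then applies. Either \(S=\mathbb P^2\) and \(D\) is a line or a smooth conic, or \(S\) is a Hirzebruch surface \(\mathbb F_n\) with \(D\) a section, possibly together with fibres. The cases needing the most care are \(\mathbb F_n\) with \(D\) equal to the negative section \(\sigma_\infty\) (allowed when the adjunction numerics give ampleness), and \(\mathbb F_n\) with \(D=\sigma_\infty+F\). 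In a few further cases \(S=\mathbb P^1\times\mathbb P^1\) and \(D\) is a ruling line or a diagonal-type conic.

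\textbf{Step 1: case analysis.} For each entry I will identify \(S\setminus D\) explicitly.
\begin{itemize}
\item \(\mathbb P^2\) minus a line is \(\mathbb C^2\).
\item \(\mathbb P^2\) minus a smooth conic is \(\mathrm{PGL}_2(\mathbb C)/\mathrm{PO}_2(\mathbb C)\), a homogeneous space, namely the space of nondegenerate quadratic forms modulo scalars.
\item \(\mathbb P^1\times\mathbb P^1\) minus the diagonal is \(\mathrm{PGL}_2/\mathbb C^*\).
\item \(\mathbb F_n\) minus the negative section \(\sigma_\infty\) is the total space of \(\mathcal O_{\mathbb P^1}(n)\).
\item Removing a fibre in addition leaves \(\mathbb C\times\mathbb C\).
\end{itemize}
If Maeda's list also contains blowups with a boundary chain, I will check that ampleness of \(-(K_S+D)\) excludes them. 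The reason is that an exceptional \((-1)\)-curve \(E\) not contained in \(D\) has \(-(K_S+D)\cdot E=1-D\cdot E\), which must be positive and so forces \(D\cdot E=0\). A \((-1)\)-curve in \(D\) is excluded similarly by adjunction on the component.

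\textbf{Step 2: sprays.} I will produce a dominating spray in each case.
\begin{itemize}
\item Complex homogeneous spaces of complex Lie groups are holomorphically elliptic, because the Lie algebra fields are complete and span. The finite-flow construction of Section~\ref{sec:character-representations} gives the spray.
\item For the total space of \(\mathcal O_{\mathbb P^1}(n)\), I will use complete fields. The \(\mathrm{GL}_2\)-action on \(\mathcal O(n)\) lifts the base fields completely. The fibre directions come from the flows \(v\mapsto v+ts\) for global sections \(s\in H^0(\mathcal O(n))\), which span every fibre because \(\mathcal O(n)\) is globally generated for \(n\ge0\). The finite flow composition is then a dominating spray.
\item \(\mathbb C^2\) is trivial.
\end{itemize}
Holomorphic ellipticity gives the Oka property by Gromov's theorem.

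\textbf{Main obstacle.} The hard part is invoking Maeda's classification correctly: getting the precise list and its conventions, in particular which Hirzebruch surfaces and which boundary configurations actually have ample \(-(K_S+D)\). My first move will be to verify the list myself via adjunction on each boundary component \(D_i\). Ampleness gives \(-(K_S+D)\cdot D_i>0\), i.e. \(2-2g(D_i)-\sum_{j\ne i}D_j\cdot D_i>0\). This forces every component to be rational, and \(D\) to be a chain with at most two components.

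From there I will run a minimal-model argument: contract \((-1)\)-curves disjoint from \(D\), or show that none exist, to reach \(\mathbb P^2\) or \(\mathbb F_n\). As a fallback, if some case resists explicit identification, I will note that \(S\setminus D\) is affine whenever \(D\) supports an ample divisor. Then Theorem~\ref{thm:multibranch-log-fano} applies when \(D\) is irreducible and ample, since \(Y=D\) is a smooth rational curve but of dimension one. That dimension condition fails, so the fallback only works after reducing to the explicit models above.
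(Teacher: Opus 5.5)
Your strategy (quote Maeda's classification, identify each complement, write down a spray) is the paper's. However, your explicit list omits a case, and it is exactly the case your spray construction cannot handle. Maeda's list contains \((\mathbb F_1,D)\) with \(D\) a section of self-intersection \(1\), that is, \(\mathbb P^2\) blown up at a point off a boundary line. Here \(-(K_S+D)=C_1+2F\) is ample, and \(S\setminus D\cong\operatorname{Bl}_0\mathbb C^2\cong\operatorname{Tot}\mathcal O_{\mathbb P^1}(-1)\).

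Your Step 1 argument that ampleness excludes blowups is wrong. The identity \(-(K_S+D)\cdot E=1-D\cdot E\) only forces \(D\cdot E=0\), i.e.\ the blown-up point lies off \(D\); it does not rule the blowup out. Your claim that a \((-1)\)-curve inside \(D\) is excluded is also false: \(D=C_1\) on \(\mathbb F_1\) gives \(-(K_S+D)=C_1+3F\) ample, though that case is on your list anyway. You also omit \(\mathbb P^2\) minus two lines, but that complement is \(\mathbb C\times\mathbb C^*\), a Lie group, so the omission is harmless.

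For \(X=\operatorname{Tot}\mathcal O(-1)\) your spray fails, and not merely because \(H^0(\mathcal O(-1))=0\) removes your fibre translations. Every holomorphic vector field on \(X\) is tangent to the zero section \(Z\), since its normal component along \(Z\) is a section of \(N_Z\cong\mathcal O_{\mathbb P^1}(-1)\). Hence no finite composition of complete flows can be dominating along \(Z\). More generally, no spray defined on a trivial bundle can be dominating there, because it would yield a surjection \(\mathcal O_Z^N\to N_Z\cong\mathcal O(-1)\).

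What is needed is a spray on a nontrivial bundle. The paper uses the standard \(\mathrm{SL}_2\)-linearization of \(\mathcal O(k)\) and defines
\[
s:(X\times\mathbb C^2)\oplus\pi^*\mathcal O(k)\to X,\qquad s(x;a,b,v)=(I+aE_{12})(I+bE_{21})\cdot(x+v).
\]
The \(v\)-derivative supplies the vertical line. The two root directions project to vector fields on \(\mathbb P^1\) with no common zero. This works uniformly for every \(k\in\mathbb Z\).

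The Oka property of \(\operatorname{Bl}_0\mathbb C^2\) alone is easy, by analytic extension from \(\mathbb C^2\setminus\{0\}\). But this surface is neither Stein nor projective, so holomorphic ellipticity cannot be obtained by citation and requires such an explicit spray.
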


\begin{proof}
Write \(\mathbb F_m\) for the Hirzebruch surface, \(C_m\) for a section
with \(C_m^2=-m\), and \(F\) for a ruling fibre. Maeda's classification
\cite[Section 3.4, pp. 89--90]{Maeda1986} gives the following pairs and
complement models:
\begin{center}
\begin{tabular}{@{}ll@{}}
\toprule
\((S,D)\) & \(S\setminus D\)\\
\midrule
\(\mathbb P^2,\ \text{a line}\) & \(\mathbb C^2\)\\
\(\mathbb P^2,\ \text{two distinct lines}\) & \(\mathbb C\times\mathbb C^*\)\\
\(\mathbb P^2,\ \text{a smooth conic}\) &
\(\big((\mathbb P^1)^2\setminus\Delta\big)/\mathfrak S_2\)\\
\(\mathbb F_m,\ C_m\), \(m\ge0\) & \(\operatorname{Tot}\mathcal O_{\mathbb P^1}(m)\)\\
\(\mathbb F_m,\ C_m+F\), \(m\ge0\) & \(\mathbb C^2\)\\
\(\mathbb F_1,\ D^2=1\ \text{a section}\) &
\(\operatorname{Bl}_0\mathbb C^2=\operatorname{Tot}\mathcal O_{\mathbb P^1}(-1)\)\\
\(\mathbb F_0,\ D^2=2\ \text{a section}\) &
\((\mathbb P^1\times\mathbb P^1)\setminus\Delta\)\\
\bottomrule
\end{tabular}
\end{center}
The first two models are complex Lie groups. Binary quadratic forms
identify the conic complement with unordered pairs of distinct points
of \(\mathbb P^1\); the diagonal \(\operatorname{PGL}_2\)-action is
transitive. These homogeneous manifolds are holomorphically elliptic
\cite[§5.6]{F17}.

Deleting \(C_m\) gives the line bundle \(\mathcal O(m)\); deleting
one ruling fibre as well trivializes it over \(\mathbb C\).
A self-intersection-one section on \(\mathbb F_1\) is the strict
transform of a line avoiding the blowup point, giving the sixth model.
A self-intersection-two section on \(\mathbb F_0\) is the graph of
an automorphism of \(\mathbb P^1\); its complement is isomorphic to
the last model, which is homogeneous under the diagonal
\(\operatorname{PGL}_2\)-action.

Every \(X=\operatorname{Tot}\mathcal O_{\mathbb P^1}(k)\),
\(k\in\mathbb Z\), admits an explicit dominating algebraic spray.
Write \(\pi:X\to\mathbb P^1\) for the projection and give
\(\mathcal O(k)\) its standard algebraic
\(\operatorname{SL}_2\)-linearization. With \(E_{12},E_{21}\) the
matrix units, put
\[
\begin{aligned}
s:\bigl(X\times\mathbb C^2\bigr)\oplus\pi^*\mathcal O(k)&\longrightarrow X,\\
s(x;a,b,v)&=(I+aE_{12})(I+bE_{21})\cdot(x+v).
\end{aligned}
\]
This is algebraic and equals \(x\) at \(a=b=v=0\). The
\(v\)-differential supplies the vertical tangent line. The
\(a\)- and \(b\)-differentials project to the two root vector fields
on \(\mathbb P^1\), whose unique zeros are respectively
\(\infty\) and \(0\). They have no common zero, so the spray is
dominating. These line-bundle models are therefore algebraically
elliptic.
\end{proof}

A quadric and a transverse hyperplane
give a higher-dimensional example with an explicit affine model.

\begin{example}\label{prop:quadric-hyperplane}
Let \(Q\subset\mathbb P^n\), \(n\ge2\), be a smooth quadric and
\(H\subset\mathbb P^n\) a hyperplane transverse to \(Q\). Then
\(\mathbb P^n\setminus(Q\cup H)\) is holomorphically elliptic and Oka.
For \(n\ge3\), the reduced simple normal crossings pair
\((\mathbb P^n,Q+H)\) is log Fano, although its boundary classes are
numerically dependent.
\end{example}

\begin{proof}
In the affine chart \(\mathbb P^n\setminus H\), transversality
and completion of the square identify the complement with
\[
\{z\in\mathbb C^n:z_1^2+\cdots+z_n^2\ne1\}.
\]
For \(n=2\), this is linearly equivalent to
\(\mathbb C^2\setminus\{xy=1\}\), whose Oka property follows from
Hanysz's graph-complement theorem \cite[arXiv v2, Theorem 4.6]{Hanysz2014},
with \(m(x)=1/x\). Xie's theorem gives holomorphic ellipticity for
all \(n\ge2\) \cite[Theorem 6.1 and Corollary 6.3]{XieCubic}. Moreover,
\[
\mathcal O_{\mathbb P^n}(-(K_{\mathbb P^n}+Q+H))
\cong\mathcal O_{\mathbb P^n}(n-2),
\]
which is ample exactly when \(n\ge3\), while \([Q]=2[H]\) in
\(N^1(\mathbb P^n)_{\mathbb Q}\).
\end{proof}

\subsection{Toric boundaries and McLean's fourfold}
L\'arusson's theorem on smooth toric varieties also applies to
torus-invariant boundary complements.

\begin{example}\label{prop:toric-boundary}
Let \(X\) be a smooth complex toric variety and \(D\) a reduced
union of torus-invariant prime divisors. Then \(X\setminus D\) is Oka. In
particular, every reduced torus-invariant log Fano pair on a smooth
projective toric variety has Oka complement.
\end{example}

\begin{proof}
The complement is a smooth torus-invariant open subvariety containing the
dense torus, hence is itself a smooth toric variety. It is Oka by
L\'arusson's theorem \cite[Theorem 3]{LarussonToric}.
\end{proof}

McLean's modification of \(\mathbb C^4\) has an Oka open
subset that is particularly simple in affine coordinates.

\begin{proposition}\label{prop:mclean-fourfold}
For
\[
P=x^7+y^2+z^2+w^2,\qquad p=(0,0,1,i),
\]
the manifold
\(X=\operatorname{Bl}_p(\mathbb C^4)\setminus\widetilde{\{P=0\}}\),
where the tilde denotes strict transform, is Stein, Oka, and
holomorphically elliptic. It is diffeomorphic to \(\mathbb R^8\) and
not biholomorphic to \(\mathbb C^4\).
\end{proposition}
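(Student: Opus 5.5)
The plan is to split the statement into the complex-analytic assertions (Stein, Oka, elliptic) and the topological/rigidity assertions. For the latter I would simply cite McLean. He shows that \(X\) is a smooth affine variety, diffeomorphic to \(\mathbb R^8\), and not algebraically or holomorphically isomorphic to \(\mathbb C^4\), using symplectic homology of the Stein structure. Note first that \(P(p)=1+i^2=0\), so \(p\) lies on the hypersurface; this is McLean's setting. Affineness gives the Stein property. Once \(X\) is known to be Oka, holomorphic ellipticity then follows from \cite[Proposition 5.6.15]{F17}, exactly as for the other Stein examples above. So the real work is the Oka property.

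For the Oka property I would apply analytic extension \cite[Theorem 1.9]{DW} to a dense Zariski-open subset of \(X\). Let \(E\) be the exceptional divisor and \(\widetilde{\{x=0\}}\) the strict transform of the hyperplane \(\{x=0\}\), which passes through \(p\). Away from \(E\), the blowup is biholomorphic to \(\mathbb C^4\setminus\{p\}\). Hence
\[
U=X\setminus\bigl(E\cup\widetilde{\{x=0\}}\bigr)\cong\{(x,y,z,w)\in\mathbb C^4: x\ne0,\ P\ne0\}.
\]
This is the complement of a proper closed analytic subset of the connected manifold \(X\), so it suffices to show that \(U\) is Oka.

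To see that \(U\) is Oka, project to the \(x\)-coordinate, \(U\to\mathbb C^*\). The fibre over \(x\) is
\[
\{(y,z,w)\in\mathbb C^3: y^2+z^2+w^2\ne -x^7\}.
\]
Locally on \(\mathbb C^*\), choose a holomorphic square root \(c(x)\) of \(-x^7\). The scaling \((y,z,w)\mapsto(y,z,w)/c(x)\) identifies each fibre with \(\{v\in\mathbb C^3:v_1^2+v_2^2+v_3^2\ne1\}\), holomorphically in \(x\). Thus \(U\) is a holomorphically locally trivial fibre bundle over the Oka base \(\mathbb C^*\). Its fibre is the quadric complement treated in Example~\ref{prop:quadric-hyperplane}, which is holomorphically elliptic by \cite[Theorem 6.1 and Corollary 6.3]{XieCubic} and hence Oka. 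The bundle theorem \cite[Corollaries 2.51--2.52]{For} then makes \(U\) Oka, and \cite[Theorem 1.9]{DW} makes \(X\) Oka.

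The step I expect to need the most care is the identification of \(U\) inside \(X\) and the verification that \(X\setminus U\) is a proper closed analytic subset of a connected manifold. Both removed sets, \(E\cap X\) and the strict transform of \(\{x=0\}\) intersected with \(X\), are closed hypersurfaces in \(X\), and \(X\) is connected since it is diffeomorphic to \(\mathbb R^8\); so this should be routine. The substantive input, that \(X\) is affine and exotic, is taken from McLean rather than reproved.
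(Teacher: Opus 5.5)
Your Oka argument is correct, but it takes a different route from the paper. You delete \(E\) and the strict transform of \(\{x=0\}\), and fibre \(\{x\ne0,\ P\ne0\}\) over \(\mathbb C^*\) with fibre \(\{v\in\mathbb C^3:v_1^2+v_2^2+v_3^2\ne1\}\). This makes you rely on Xie's ellipticity theorem \cite{XieCubic} for the fibre and on the bundle theorem \cite{For}.

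The paper instead writes \(u=y+iz\), \(v=y-iz\), so that \(P=x^7+uv+w^2\), and deletes \(\{uP=0\}\), which contains \(p\) since \(P(p)=0\). On the remaining set \(v=(P-x^7-w^2)/u\), so \((x,w,u,P)\) are global coordinates and the open set is \(\mathbb C^2\times(\mathbb C^*)^2\). This is elementary and avoids both citations. The analogous substitution \(a=v_1+iv_2\) also shows directly that your fibre is Oka, so your detour through the quadric complement costs more than it buys.

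There is, however, a gap in the last assertion. The paper attributes to McLean that \(X\) is Stein, contractible and diffeomorphic to \(\mathbb R^8\), and that its finite-type Stein structure has nonzero symplectic homology. That invariant is attached to the finite-type Stein structure, not a priori to the complex manifold. A hypothetical biholomorphism \(X\to\mathbb C^4\) need not carry McLean's exhaustion to anything comparable with \(|z|^2\). So ``not biholomorphic to \(\mathbb C^4\)'' does not follow from citing the symplectic-homology computation, and the paper does not treat it as part of McLean's result; it derives it.

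The paper's derivation runs as follows.
\begin{enumerate}
\item If \(\Phi:X\to\mathbb C^4\) were a biholomorphism, \(|\Phi|^2\) would be a complete finite-type strictly plurisubharmonic exhaustion of \(X\).
\item Seidel--Smith's embedding lemma \cite[Lemma 7]{SeidelSmith05} then gives an exact symplectic embedding of McLean's Stein structure into standard \(\mathbb C^4\).
\item The image of a sufficiently large Stein sublevel set lies in a ball, a subcritical domain, which contradicts \cite[Corollary 10.4]{McLean09}.
\end{enumerate}
You need this step, or an equivalent one that makes nonvanishing symplectic homology an obstruction to biholomorphism, to complete the proof.
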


\begin{proof}
Write \(u=y+iz\) and \(v=y-iz\), so \(P=x^7+uv+w^2\). Let
\(\pi:X\to\mathbb C^4\) be the restriction of the blowdown. It identifies
\[
U=\{q\in X:(uP)(\pi(q))\ne0\}
\]
with \(\{uP\ne0\}\subset\mathbb C^4\). The coordinates
\[
(x,u,v,w)\longmapsto(x,w,u,t=P),
\qquad
v=\frac{t-x^7-w^2}{u},
\]
give \(U\cong\mathbb C^2\times(\mathbb C^*)^2\), so \(U\)
is Oka. Its complement is the zero set of \((uP)\circ\pi\), a
proper closed analytic subset of the connected manifold \(X\).
Analytic extension \cite[Theorem 1.9]{DW} makes \(X\) Oka.

McLean proves that this affine fourfold is Stein, contractible,
and diffeomorphic to \(\mathbb R^8\), and that its finite-type Stein
structure has nonzero symplectic homology
\cite[Sections 3.1--3.2]{McLean09}.
Since \(X\) is Stein and Oka, it is holomorphically elliptic
\cite[Proposition 5.6.15]{F17}.
If \(X\) were biholomorphic to \(\mathbb C^4\), pulling back the
standard quadratic exhaustion would give a complete finite-type
strictly plurisubharmonic exhaustion on \(X\).
Seidel--Smith's embedding lemma \cite[Lemma 7]{SeidelSmith05} would
then give an exact symplectic embedding of McLean's Stein structure
into standard \(\mathbb C^4\). The image of a sufficiently large
Stein sublevel would lie in a subcritical ball, contradicting
\cite[Corollary 10.4]{McLean09}.
\end{proof}

\section*{Grant support}
S.-Y. Xie acknowledges partial support from the National Key R\&D
Program of China under Grants No.~2023YFA1010500 and
No.~2021YFA1003100, and from the National Natural Science Foundation of
China under Grants No.~12288201 and No.~12471081, as well as support
from the Xiaomi Young Talents Program.

\section*{Statement on the use of artificial intelligence}
Generative artificial intelligence assisted in the preparation and revision of this manuscript. The authors have reviewed the text and take full responsibility for its mathematical content and final form.


\begin{thebibliography}{999}

\small

\bibitem{AF2025}
Antonio Alarc\'on and Franc Forstneri\v{c}. \emph{Oka-1 manifolds}.
Mathematische Zeitschrift 311 (2025), no. 2, article 33, 34 pp.
\href{https://doi.org/10.1007/s00209-025-03833-4}{doi:10.1007/s00209-025-03833-4}.

\bibitem{AK80}
Allen B. Altman and Steven L. Kleiman. \emph{Compactifying the Picard scheme}. Advances in Mathematics 35 (1980), no. 1, 50--112.
\href{https://doi.org/10.1016/0001-8708(80)90043-2}{doi:10.1016/0001-8708(80)90043-2}.

\bibitem{DW}
Sicheng An, Yun-Heng Du, Bin Guo, Peng-Chao Wang, and Song-Yan Xie. \emph{A Local Approximation Characterization of Oka Manifolds}. Revised manuscript, 14 October 2026.
\href{https://xiesongyan.github.io/papers/an_du_guo_wang_xie_local_approximation_oka_manifolds_2026.pdf}{Author's revised version}.
\href{https://arxiv.org/abs/2610.10475}{arXiv:2610.10475}.

\bibitem{Andrist2025}
Rafael B. Andrist. \emph{Holomorphic automorphisms of Markov-type surfaces}. Preprint, 2025.
\href{https://arxiv.org/abs/2502.15101}{arXiv:2502.15101}.

\bibitem{A2024}
Alexis Aumonier. \emph{The topology of spaces of holomorphic maps to projective space}. Advances in Mathematics 483 (2025), Article 110686.
\href{https://doi.org/10.1016/j.aim.2025.110686}{doi:10.1016/j.aim.2025.110686}.

\bibitem{BS2023}
Gwyn Bellamy and Travis Schedler. \emph{Symplectic resolutions of character varieties}. Geometry \& Topology 27 (2023), no. 1, 51--86.
\href{https://doi.org/10.2140/gt.2023.27.51}{doi:10.2140/gt.2023.27.51}.

\bibitem{B1992}
Aaron Bertram. \emph{Moduli of rank-2 vector bundles, theta divisors, and the geometry of curves in projective space}. Journal of Differential Geometry 35 (1992), no. 2, 429--469.
\href{https://doi.org/10.4310/jdg/1214448083}{doi:10.4310/jdg/1214448083}.

\bibitem{BB2012}
Usha N. Bhosle and Indranil Biswas. \emph{Brauer group and birational type of moduli spaces of torsionfree sheaves on a nodal curve}. Communications in Algebra 42 (2014), no. 4, 1769--1784.
\href{https://doi.org/10.1080/00927872.2012.749263}{doi:10.1080/00927872.2012.749263}.

\bibitem{BodenYokogawa1999}
Hans U. Boden and K\^oji Yokogawa. \emph{Rationality of moduli spaces of parabolic bundles}. Journal of the London Mathematical Society (2) 59 (1999), no. 2, 461--478.
\href{https://doi.org/10.1112/S0024610799007061}{doi:10.1112/S0024610799007061}.

\bibitem{Bowman2016}
David Bowman. \emph{Holomorphic Flexibility Properties of Spaces of Elliptic Functions}. Ph.D. thesis, University of Adelaide, 2016.
\href{https://arxiv.org/abs/1609.07184}{arXiv:1609.07184}.

\bibitem{Campana}
Fr\'ed\'eric Campana. \emph{Connexit\'e rationnelle des vari\'et\'es de Fano}. Annales scientifiques de l'\'Ecole Normale Sup\'erieure (4) 25 (1992), 539--545.
\href{https://doi.org/10.24033/asens.1658}{doi:10.24033/asens.1658}.

\bibitem{CantatLoray2009}
Serge Cantat and Frank Loray. \emph{Dynamics on character varieties and Malgrange irreducibility of Painlev\'e VI equation}. Annales de l'Institut Fourier 59 (2009), no. 7, 2927--2978.
\href{https://doi.org/10.5802/aif.2512}{doi:10.5802/aif.2512}.

\bibitem{ChenZhu}
Qile Chen and Yi Zhu. \emph{\(\mathbb A^1\)-curves on log smooth varieties}. Journal f\"ur die reine und angewandte Mathematik 756 (2019), 1--35.
\href{https://doi.org/10.1515/crelle-2017-0028}{doi:10.1515/crelle-2017-0028}.

\bibitem{deCataldoMigliorini2000}
Mark Andrea A. de Cataldo and Luca Migliorini. \emph{The Douady space of a
complex surface}. Advances in Mathematics 151 (2000), no. 2, 283--312.
\href{https://doi.org/10.1006/aima.1999.1896}{doi:10.1006/aima.1999.1896}.

\bibitem{Del70}
Pierre Deligne. \emph{\'Equations diff\'erentielles \`a points singuliers r\'eguliers}. Lecture Notes in Mathematics, vol. 163. Springer, 1970.
\href{https://doi.org/10.1007/BFb0061194}{doi:10.1007/BFb0061194}.

\bibitem{DN1989}
Jean-Marc Dr\'ezet and M. S. Narasimhan. \emph{Groupe de Picard des
vari\'et\'es de modules de fibr\'es semi-stables sur les courbes
alg\'ebriques}. Inventiones Mathematicae 97 (1989), 53--94.
\href{https://doi.org/10.1007/BF01850655}{doi:10.1007/BF01850655}.

\bibitem{DOPW}
Ron Donagi, Burt A. Ovrut, Tony Pantev, and Daniel Waldram. \emph{Spectral involutions on rational elliptic surfaces}. Advances in Theoretical and Mathematical Physics 5 (2001), no. 3, 499--561.
\href{https://doi.org/10.4310/ATMP.2001.v5.n3.a4}{doi:10.4310/ATMP.2001.v5.n3.a4}.

\bibitem{EFK2022}
Pavel Etingof, Edward Frenkel, and David Kazhdan. \emph{Analytic Langlands correspondence for \(\mathrm{PGL}_2\) on \(\mathbb P^1\) with parabolic structures over local fields}. Geometric and Functional Analysis 32 (2022), 725--831.
\href{https://doi.org/10.1007/s00039-022-00603-w}{doi:10.1007/s00039-022-00603-w}.

\bibitem{FarbWolfson2017}
Benson Farb and Jesse Wolfson. \emph{Topology and arithmetic of resultants, II: the resultant $=1$ hypersurface}. With an appendix by Christophe Cazanave. Algebraic Geometry 4 (2017), no. 3, 337--352.
\href{https://doi.org/10.14231/AG-2017-019}{doi:10.14231/AG-2017-019}.

\bibitem{F1968}
John Fogarty. \emph{Algebraic families on an algebraic surface}. American Journal of Mathematics 90 (1968), no. 2, 511--521.
\href{https://doi.org/10.2307/2373541}{doi:10.2307/2373541}.

\bibitem{F06}
Franc Forstneri\v{c}. \emph{Runge approximation on convex sets implies the Oka property}. Annals of Mathematics (2) 163 (2006), no. 2, 689--707.
\href{https://doi.org/10.4007/annals.2006.163.689}{doi:10.4007/annals.2006.163.689}.

\bibitem{For}
Franc Forstneri\v{c}. \emph{Oka manifolds: From Oka to Stein and back}. Annales de la Facult\'e des Sciences de Toulouse 22 (2013), no. 4, 747--809.
\href{https://doi.org/10.5802/afst.1388}{doi:10.5802/afst.1388}.

\bibitem{F17}
Franc Forstneri\v{c}. \emph{Stein Manifolds and Holomorphic Mappings: The Homotopy Principle in Complex Analysis}. Second edition. Ergebnisse der Mathematik und ihrer Grenzgebiete, 3. Folge, vol. 56. Springer, 2017.
\href{https://doi.org/10.1007/978-3-319-61058-0}{doi:10.1007/978-3-319-61058-0}.

\bibitem{FL2014}
Franc Forstneri\v{c} and Finnur L\'arusson. \emph{Holomorphic flexibility
properties of compact complex surfaces}. International Mathematics Research
Notices 2014, no. 13, 3714--3734.
\href{https://doi.org/10.1093/imrn/rnt044}{doi:10.1093/imrn/rnt044}.

\bibitem{FL2026}
Franc Forstneri\v{c} and Finnur L\'arusson. \emph{Every projective Oka
manifold is elliptic}. Mathematical Research Letters 33 (2026), no. 1,
297--312.
\href{https://doi.org/10.4310/MRL.260503003231}{doi:10.4310/MRL.260503003231}.
\href{https://arxiv.org/abs/2502.20028v6}{arXiv:2502.20028v6}.

\bibitem{Fulton98}
William Fulton. \emph{Intersection Theory}. Second edition. Ergebnisse der Mathematik und ihrer Grenzgebiete, 3. Folge, vol. 2. Springer, 1998.
\href{https://doi.org/10.1007/978-1-4612-1700-8}{doi:10.1007/978-1-4612-1700-8}.

\bibitem{Gold84}
William M. Goldman. \emph{The symplectic nature of fundamental groups of surfaces}. Advances in Mathematics 54 (1984), no. 2, 200--225.
\href{https://doi.org/10.1016/0001-8708(84)90040-9}{doi:10.1016/0001-8708(84)90040-9}.

\bibitem{Gold86}
William M. Goldman. \emph{Invariant functions on Lie groups and Hamiltonian flows of surface group representations}. Inventiones Mathematicae 85 (1986), no. 2, 263--302.
\href{https://doi.org/10.1007/BF01389091}{doi:10.1007/BF01389091}.

\bibitem{G2004}
William M. Goldman. \emph{The complex-symplectic geometry of $\mathrm{SL}_2(\mathbb C)$-characters over surfaces}. In \emph{Algebraic Groups and Arithmetic}, Tata Institute of Fundamental Research, Mumbai, 2004, 375--407.
\href{https://arxiv.org/abs/math/0304307}{arXiv:math/0304307}.

\bibitem{GoldmanNeumann2005}
William M. Goldman and Walter D. Neumann. \emph{Homological action of the modular group on some cubic moduli spaces}. Mathematical Research Letters 12 (2005), no. 4, 575--591.
\href{https://arxiv.org/abs/math/0402039v1}{arXiv:math/0402039v1}.

\bibitem{GX2011}
William M. Goldman and Eugene Z. Xia. \emph{Ergodicity of mapping class group actions on $\mathrm{SU}(2)$-character varieties}. In \emph{Geometry, Rigidity, and Group Actions}, University of Chicago Press, 2011, 591--608.
\href{https://arxiv.org/abs/0901.1402}{arXiv:0901.1402}.

\bibitem{Gottsche94}
Lothar G\"ottsche. \emph{Hilbert Schemes of Zero-Dimensional Subschemes of Smooth Varieties}. Lecture Notes in Mathematics, vol. 1572. Springer, Berlin, 1994.
\href{https://doi.org/10.1007/BFb0073491}{doi:10.1007/BFb0073491}.

\bibitem{Grauert58}
Hans Grauert. \emph{Analytische Faserungen \"uber holomorph-vollst\"andigen R\"aumen}. Mathematische Annalen 135 (1958), 263--273.
\href{https://doi.org/10.1007/BF01351803}{doi:10.1007/BF01351803}.

\bibitem{Grauert60}
Hans Grauert. \emph{Ein Theorem der analytischen Garbentheorie und die
Modulr\"aume komplexer Strukturen}. Publications Math\'ematiques de
l'IH\'ES 5 (1960), 5--64.
\href{https://doi.org/10.1007/BF02684746}{doi:10.1007/BF02684746}.
Correction: ibid. 16 (1963), 35--36.
\href{https://doi.org/10.1007/BF02684291}{doi:10.1007/BF02684291}.

\bibitem{Gromov89}
Mikhail Gromov. \emph{Oka's principle for holomorphic sections of elliptic bundles}. Journal of the American Mathematical Society 2 (1989), no. 4, 851--897.
\href{https://doi.org/10.1090/S0894-0347-1989-1001851-9}{doi:10.1090/S0894-0347-1989-1001851-9}.

\bibitem{GP08}
Mark Gross and Simone Pavanelli. \emph{A Calabi--Yau threefold with Brauer group $(\mathbb Z/8\mathbb Z)^2$}. Proceedings of the American Mathematical Society 136 (2008), no. 1, 1--9.
\href{https://doi.org/10.1090/S0002-9939-07-08840-5}{doi:10.1090/S0002-9939-07-08840-5}.

\bibitem{GP01}
Mark Gross and Sorin Popescu. \emph{Calabi--Yau threefolds and moduli of abelian surfaces I}. Compositio Mathematica 127 (2001), no. 2, 169--228.
\href{https://doi.org/10.1023/A:1012076503121}{doi:10.1023/A:1012076503121}.

\bibitem{Gro61}
Alexander Grothendieck. \emph{Techniques de construction et th\'eor\`emes d'existence en g\'eom\'etrie alg\'ebrique IV: les sch\'emas de Hilbert}. S\'eminaire Bourbaki, 13e ann\'ee, expos\'e 221, in S\'eminaire Bourbaki, vol. 6 (1961), 249--276.
\href{https://numdam.org/item/SB_1960-1961__6__249_0/}{numdam.org/item/SB\_1960-1961\_\_6\_\_249\_0}.

\bibitem{GHJW1997}
K. Guruprasad, J. Huebschmann, L. Jeffrey, and A. Weinstein. \emph{Group systems, groupoids, and moduli spaces of parabolic bundles}. Duke Mathematical Journal 89 (1997), no. 2, 377--412.
\href{https://arxiv.org/abs/dg-ga/9510006v1}{arXiv:dg-ga/9510006v1}.

\bibitem{Hanysz2014}
Alexander Hanysz. \emph{Oka properties of some hypersurface complements}.
Proceedings of the American Mathematical Society 142 (2014), no. 2,
483--496.
\href{https://doi.org/10.1090/S0002-9939-2013-11754-5}{doi:10.1090/S0002-9939-2013-11754-5}.
\href{https://arxiv.org/abs/1111.6655v2}{arXiv:1111.6655v2}.

\bibitem{Hanysz2015}
Alexander Hanysz. \emph{Holomorphic flexibility properties of the space of cubic rational maps}. Journal of Geometric Analysis 25 (2015), 1620--1649.
\href{https://doi.org/10.1007/s12220-014-9487-0}{doi:10.1007/s12220-014-9487-0}.

\bibitem{H2009}
Jochen Heinloth. \emph{Lectures on the moduli stack of vector bundles on a curve}. In \emph{Affine Flag Manifolds and Principal Bundles}, Trends in Mathematics, Birkh\"auser, 2010, 123--152.
\href{https://doi.org/10.1007/978-3-0346-0288-4_4}{doi:10.1007/978-3-0346-0288-4\_4}.

\bibitem{I2012}
Michi-aki Inaba. \emph{Moduli of parabolic connections on curves and the Riemann--Hilbert correspondence}. Journal of Algebraic Geometry 22 (2013), 407--480.
\href{https://doi.org/10.1090/S1056-3911-2013-00621-9}{doi:10.1090/S1056-3911-2013-00621-9}.

\bibitem{IIS2006}
Michi-aki Inaba, Katsunori Iwasaki, and Masa-Hiko Saito. \emph{Moduli of stable parabolic connections, Riemann--Hilbert correspondence and geometry of Painlev\'e equation of type VI, Part I}. Publications of the Research Institute for Mathematical Sciences 42 (2006), no. 4, 987--1089.
\href{https://doi.org/10.2977/prims/1166642194}{doi:10.2977/prims/1166642194}.

\bibitem{KMM}
J\'anos Koll\'ar, Yoichi Miyaoka, and Shigefumi Mori. \emph{Rational connectedness and boundedness of Fano manifolds}. Journal of Differential Geometry 36 (1992), 765--779.
\href{https://doi.org/10.4310/jdg/1214453188}{doi:10.4310/jdg/1214453188}.

\bibitem{Kus21}
Yuta Kusakabe. \emph{Elliptic characterization and localization of Oka manifolds}. Indiana University Mathematics Journal 70 (2021), no. 3, 1039--1054.
\href{https://doi.org/10.1512/iumj.2021.70.8454}{doi:10.1512/iumj.2021.70.8454}.

\bibitem{LarussonToric}
Finnur L\'arusson. \emph{Smooth toric varieties are Oka}. arXiv:1107.3604, 2013.
\href{https://arxiv.org/abs/1107.3604}{arXiv:1107.3604}.

\bibitem{Maeda1986}
Hironobu Maeda. \emph{Classification of logarithmic Fano threefolds}. Compositio Mathematica 57 (1986), 81--125.
\href{https://www.numdam.org/item/CM_1986__57_1_81_0/}{numdam.org/item/CM\_1986\_\_57\_1\_81\_0}.

\bibitem{McLean09}
Mark McLean. \emph{Lefschetz fibrations and symplectic homology}. Geometry \& Topology 13 (2009), no. 4, 1877--1944.
\href{https://doi.org/10.2140/gt.2009.13.1877}{doi:10.2140/gt.2009.13.1877}.

\bibitem{MS1980}
V. B. Mehta and C. S. Seshadri. \emph{Moduli of vector bundles on curves with parabolic structures}. Mathematische Annalen 248 (1980), 205--239.
\href{https://doi.org/10.1007/BF01420526}{doi:10.1007/BF01420526}.

\bibitem{MonavariRicolfi2022}
Sergej Monavari and Andrea T. Ricolfi. \emph{On the motive of the nested
Quot scheme of points on a curve}. Journal of Algebra 610 (2022), 99--118.
\href{https://doi.org/10.1016/j.jalgebra.2022.07.011}{doi:10.1016/j.jalgebra.2022.07.011}.
\href{https://arxiv.org/abs/2106.11817v2}{arXiv:2106.11817v2}.

\bibitem{NWY2002}
Junjiro Noguchi, J\"org Winkelmann, and Katsutoshi Yamanoi.
\emph{The second main theorem for holomorphic curves into semi-Abelian
varieties}. Acta Mathematica 188 (2002), no. 1, 129--161.
\href{https://doi.org/10.1007/BF02392797}{doi:10.1007/BF02392797}.

\bibitem{Oka39}
Kiyoshi Oka. \emph{Sur les fonctions analytiques de plusieurs variables. III---Deuxi\`eme probl\`eme de Cousin}. Journal of Science of the Hiroshima University, Series A 9 (1939), 7--19.
\href{https://doi.org/10.32917/HMJ/1558490525}{doi:10.32917/HMJ/1558490525}.

\bibitem{PS1998}
J\'ozef H. Przytycki and Adam S. Sikora. \emph{On skein algebras and $\mathrm{SL}_2(\mathbb C)$-character varieties}. Topology 39 (2000), no. 1, 115--148.
\href{https://doi.org/10.1016/S0040-9383(98)00062-7}{doi:10.1016/S0040-9383(98)00062-7}.

\bibitem{Sch}
Christian Schnell. \emph{The fundamental group is not a derived invariant}. In Yujiro Kawamata, editor, \emph{Derived Categories in Algebraic Geometry: Tokyo 2011}, EMS Series of Congress Reports, European Mathematical Society, 2013, 279--285.
\href{https://doi.org/10.4171/115-1/10}{doi:10.4171/115-1/10}.

\bibitem{Schoen}
Chad Schoen. \emph{On fiber products of rational elliptic surfaces with section}. Mathematische Zeitschrift 197 (1988), no. 2, 177--199.
\href{https://doi.org/10.1007/BF01215188}{doi:10.1007/BF01215188}.

\bibitem{SeidelSmith05}
Paul Seidel and Ivan Smith. \emph{The symplectic topology of Ramanujam's surface}. Commentarii Mathematici Helvetici 80 (2005), no. 4, 859--881.
\href{https://doi.org/10.4171/CMH/37}{doi:10.4171/CMH/37}.

\bibitem{S2012}
Adam S. Sikora. \emph{Character varieties}. Transactions of the American Mathematical Society 364 (2012), no. 10, 5173--5208.
\href{https://doi.org/10.1090/S0002-9947-2012-05448-1}{doi:10.1090/S0002-9947-2012-05448-1}.

\bibitem{Simpson94}
Carlos T. Simpson. \emph{Moduli of representations of the fundamental group of a smooth projective variety II}. Publications Math\'ematiques de l'IH\'ES 80 (1994), 5--79.
\href{https://doi.org/10.1007/BF02698895}{doi:10.1007/BF02698895}.

\bibitem{Smith}
Ivan Smith. \emph{Serre--Taubes duality for pseudoholomorphic curves}. Topology 42 (2003), no. 5, 931--979.
\href{https://doi.org/10.1016/S0040-9383(02)00025-3}{doi:10.1016/S0040-9383(02)00025-3}.

\bibitem{A2026}
The Stacks Project Authors. \emph{The Stacks Project}. 2026. Accessed 19 October 2026.
\href{https://stacks.math.columbia.edu/}{stacks.math.columbia.edu}.

\bibitem{S1999}
Xiaotao Sun. \emph{Degeneration of moduli spaces and generalized theta functions}. Journal of Algebraic Geometry 9 (2000), no. 3, 459--527.
\href{https://arxiv.org/abs/math/9907188}{arXiv:math/9907188}.

\bibitem{XieCubic}
Song-Yan Xie. \emph{Elliptic complements of cubic hypersurfaces}. arXiv:2608.02413, 2026.
\href{https://arxiv.org/abs/2608.02413}{arXiv:2608.02413}.

\bibitem{XZ26}
Song-Yan Xie and Shengyuan Zhao. \emph{Periods of Oka K3 surfaces are dense}. Preprint, 2026.
\href{https://arxiv.org/abs/2608.24392}{arXiv:2608.24392}.

\end{thebibliography}
\end{document}